\newtheorem{theorem}{Theorem}[section]
\newtheorem{lemma}[theorem]{Lemma}
\newtheorem{definition}[theorem]{Definition}
\newtheorem{proposition}[theorem]{Proposition}
\newtheorem{corollary}[theorem]{Corollary}
\newtheorem{question}[theorem]{Question}
\theoremstyle{definition}
\newtheorem{example}[theorem]{Example}
\newtheorem{remark}[theorem]{Remark}
\begin{document}

\title{Negative amphichiral knots and the half-{C}onway polynomial} 

\author{Keegan Boyle}  
\address{Department of Mathematics, University of British Columbia, Canada} 
\email{kboyle@math.ubc.ca}

\author{Wenzhao Chen}
\address{Department of Mathematics, University of British Columbia, Canada}
\email{chenwzhao@math.ubc.ca}

\setcounter{section}{0}

\begin{abstract}
In 1979, Hartley and Kawauchi proved that the Conway polynomial of a strongly negative amphichiral knot factors as $f(z)f(-z)$. In this paper, we normalize the factor $f(z)$ to define the \emph{half-Conway} polynomial. First, we prove that the half-Conway polynomial satisfies an equivariant skein relation, giving the first feasible computational method, which we use to compute the half-Conway polynomial for knots with 12 or fewer crossings. This skein relation also leads to a diagrammatic interpretation of the degree-one coefficient, from which we obtain a lower bound on the equivariant unknotting number. Second, we completely characterize polynomials arising as half-Conway polynomials of knots in $S^3$, answering a problem of Hartley-Kawauchi. As a special case, we construct the first examples of non-slice strongly negative amphichiral knots with determinant one, answering a question of Manolescu. The double branched covers of these knots provide potentially non-trivial torsion elements in the homology cobordism group. 
\end{abstract}
\maketitle
\tableofcontents
\section{Introduction}
A \emph{strongly negative amphichiral} knot is a smooth oriented knot $K \subset S^3$ along with an order 2 symmetry $\rho\colon (S^3,K) \to (S^3,K)$ which reverses the orientation of $S^3$ and of $K$, and which has fixed set $S^0$; see Figure \ref{fig:4_1} for some examples. Since this symmetry reverses the orientation on $S^3$, it is considerably more difficult to study than its orientation-preserving cousins: periodic and strongly invertible knots. For example, in the concordance group these knots are all torsion so that additive concordance invariants, such as the signature, must vanish. Consequently, it was only recently  shown that strongly negative amphichiral knots can have large 4-genus \cite{Miller2020}.

\begin{figure}
  \begin{overpic}[width=400pt, grid=false]{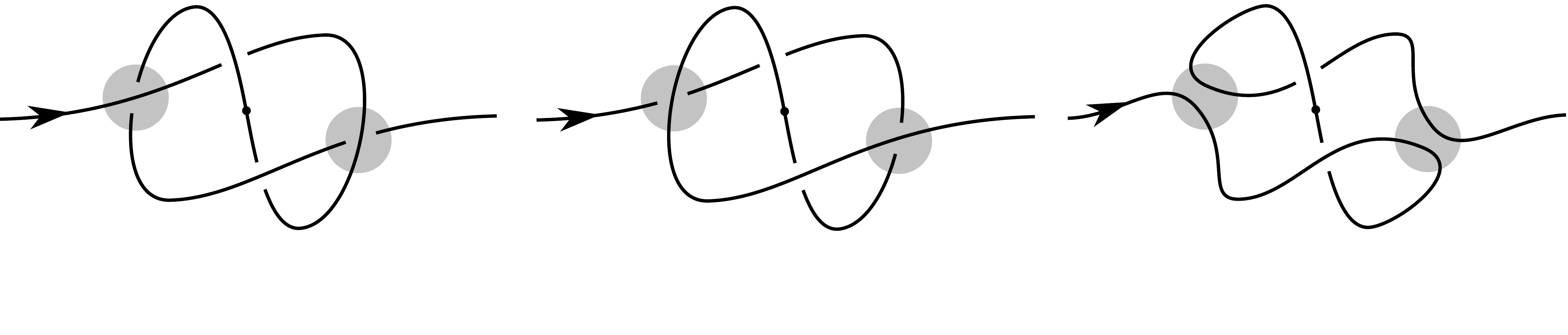}
    \put (13, 3) {$K_+$}
    \put (48, 3) {$K_-$}
    \put (82, 3) {$K_0$}
  \end{overpic}
\caption{A triple of knots related by the equivariant skein relation in Theorem \ref{thm:skeinrelation}; here $K_+$ is the figure-eight knot (left), and $K_-$ (center) and $K_0$ (right) are each the unknot. In each diagram the symmetry is point reflection across the marked point.}
\label{fig:4_1}
\end{figure}

In this paper we define and study an equivariant version of the Conway polynomial. Apart from our direct results and applications, one long-term goal of this project is to build a foundation to study equivariant knot Floer and Khovanov homology theories for strongly negative amphichiral knots. In the strongly invertible case, such homology theories have been used to show that certain slice disks are not isotopic \cite{DaiHeddenMallick} (see also \cite{Hayden} and \cite{HaydenSundberg}), and that a refinement of Khovanov homology can distinguish mutants \cite{MR4286365}. In particular, equivariant theories can provide inroads on non-equivariant problems.

\subsection{The half-Conway polynomial}

Hartley and Kawauchi showed that the Conway polynomial of a strongly negative amphichiral knot factors as $f(z)f(-z)$ for some polynomial $f(z)$ \cite[Theorem 1]{MR543095}. Here $f(z)$ corresponds to the Alexander polynomial of the non-orientable homology circle which is the quotient of the knot exterior by the amphichiral symmetry \cite{MR391110}. This factorization is recognized as an obstruction to the existence of a strongly negative amphichiral symmetry.  However, the polynomial $f(z)$ has not been studied as an invariant of the symmetry directly. Indeed, we provide the first method for computing $f(z)$ from a knot diagram.

To produce a diagrammatic computation method, we give a method to naturally choose an element of $\{f(z),f(-z),-f(z),-f(-z)\}$, which we define to be the \emph{half-Conway} polynomial $\nabla_{(K,\rho)}(z)$ of a strongly negative amphichiral knot $(K,\rho)$; see Definition \ref{def:half-Conway-normalization}. We can then show that the half-Conway polynomial satisfies an equivariant skein relation analogous to the skein relation for the Conway polynomial. Such a skein relation requires a sign associated with symmetric pairs of crossings, but there is no obvious choice; each pair consists of a positive and a negative crossing. Nonetheless, we assign a sign to such pairs (see Definition \ref{def:crossingpairsign}).


\begin{restatable}{theorem}{skeinrelation}\label{thm:skeinrelation}
The half-Conway polynomial satisfies the following equivariant skein relation:
\[
\nabla_{(K_+,\rho)}(z) - \nabla_{(K_-,\rho)}(z) = z \cdot \nabla_{(K_0,\rho)}(z),
\]
where $K_+, K_-,$ and $K_0$ are an equivariant skein triple as in Definition \ref{def:skeintriad}. 
\end{restatable}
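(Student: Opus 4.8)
The plan is to reduce \Cref{thm:skeinrelation} to a determinantal identity for an equivariant Seifert matrix, mirroring the classical derivation of the Conway skein relation but carried out for the half-size matrix that computes $\nabla_{(K,\rho)}(z)$. A naive alternative—applying the ordinary Conway skein relation twice, once at each crossing of the symmetric pair—is unattractive, because the intermediate knots in which only one crossing has been changed are not $\rho$-invariant, so their Conway polynomials do not factor and the bookkeeping never closes up into a clean statement for $f(z)$. So instead I would first fix equivariant Seifert surfaces $\Sigma_+,\Sigma_-,\Sigma_0$ for the three knots in the skein triple of \Cref{def:skeintriad}. Since an equivariant skein triple modifies a symmetric pair of crossings simultaneously, and $\rho$ acts freely on the knot exterior (its only fixed points lie on $K$), I can arrange these surfaces to be $\rho$-invariant and to agree away from equivariant neighborhoods of the two crossings; passing to the quotient, the symmetric pair becomes a single crossing and the three quotient surfaces are related by one band attachment or resolution, exactly as in the non-equivariant skein picture.

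Next I would express $\nabla_{(K,\rho)}(z)$ as the determinant of a half-dimensional matrix. The Hartley--Kawauchi factorization $\nabla_K(z)=f(z)f(-z)$ \cite{MR543095} reflects a block structure on the equivariant Seifert form induced by $\rho$: in a $\rho$-adapted basis the Seifert matrix $V$ decomposes so that $\det(t^{1/2}V - t^{-1/2}V^{T})$ splits as a product of two determinants, and the factor $f$ is computed by a matrix $W(z)$ of half the size, built from the quotient Seifert form (equivalently, from the Alexander data of the non-orientable quotient exterior \cite{MR391110}). The content of \Cref{def:half-Conway-normalization} is precisely the normalization that pins down $f$ among $\{\pm f(\pm z)\}$, and I would check that this normalization corresponds to taking $\det W(z)$ with a definite sign and a definite orientation of the variable $z$.

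With the half-matrix in hand, the skein relation becomes a cofactor computation. The single quotient band that distinguishes $\Sigma_+$ from $\Sigma_-$ alters exactly one diagonal entry of $W(z)$ by $\pm z$ (the framing/self-linking of that band in the symmetrized form), while $\Sigma_0$ deletes the row and column of that band, giving a matrix $W_0(z)$ of one smaller size. Expanding $\det W_+(z)$ and $\det W_-(z)$ along that row and column, the two determinants agree except in the contribution of the modified entry, whose difference is $z$ times the complementary minor, and that minor is exactly $\det W_0(z)$. This reproduces $\nabla_{(K_+,\rho)}(z)-\nabla_{(K_-,\rho)}(z)=z\,\nabla_{(K_0,\rho)}(z)$, and one can sanity-check it on the triple of \Cref{fig:4_1}, where $W$ is $1\times 1$ for the figure-eight and the identity reads $(1+z)-1=z$.

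The main obstacle I anticipate is sign bookkeeping, and this is exactly where \Cref{def:crossingpairsign} enters. Because a symmetric pair of crossings always consists of one positive and one negative crossing, there is no intrinsic ``$+$'' versus ``$-$'' at the level of a single crossing, and the normalization of $\nabla_{(K,\rho)}$ therefore carries a sign ambiguity that must be controlled. I must verify that the crossing-pair sign convention makes the labels $K_+,K_-$ consistent with the definite-sign choice of $\det W(z)$, so that the coefficient on the right-hand side is exactly $+z$ and the two terms on the left appear with a genuine minus sign, independently of all auxiliary choices—the equivariant surface, the $\rho$-adapted basis, and which crossing of the pair is declared positive. Establishing this coherence, rather than the determinant manipulation itself, is the crux of the argument.
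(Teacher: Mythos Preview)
Your proposal has a genuine gap at the very first step: strongly negative amphichiral knots do not bound $\rho$-invariant Seifert surfaces, so the objects $\Sigma_+,\Sigma_-,\Sigma_0$ on which your argument rests cannot be constructed. The obstruction is elementary. If $\Sigma$ were a $\rho$-invariant Seifert surface for $K$, then $\rho|_\Sigma$ would be an orientation-reversing involution of $\Sigma$ (since $\rho$ reverses the orientation on $K=\partial\Sigma$), and the fixed set of such an involution on a surface is a properly embedded $1$-submanifold. In particular it would contain an arc joining the two fixed points of $\rho$ on $K$. But $\rho$ has exactly two fixed points in all of $S^3$, so no such arc exists. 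Your parenthetical observation that $\rho$ acts freely on the knot exterior is correct, but it works against you rather than for you: the Seifert surface meets $K$, so the two fixed points lie on $\Sigma$, and the argument above applies. The paper flags this obstruction explicitly in the introduction and notes that the alternative of using a pair of Seifert surfaces exchanged by $\rho$ is also unworkable, since it is unclear how to extract the half-Alexander module from such a pair.

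The paper's proof takes a completely different route. It realizes $K_\pm$ and an auxiliary knot $K_0'$ (not $K_0$) by symmetric surgery on a framed link in the complement of an unknot, lifts to the infinite cyclic cover, and obtains square presentation matrices $M_+,M_-,M_0$ for the relevant $\mathbb{Z}[\tau,\tau^{-1}]$-modules. A direct cofactor expansion gives $\det M_+-\det M_-=\det M_0$, but the hard work lies elsewhere: one must identify $\det M_0$ with $\lambda(\tau-\tau^{-1})\Delta_{(K_0,\rho)}(\tau)$ via a Mayer-Vietoris comparison between $K_0$ and $K_0'$, and then pin down the scalar $\lambda$ and the global sign $\varepsilon$ by computing $\det M_0$ and its $z$-derivative at $z=0$. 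Your intuition that the crux is sign bookkeeping is correct, but the mechanism for resolving it is a careful linking-number calculation in the infinite cyclic cover (Lemmas~\ref{lemma:fminush}--\ref{lemma:hderivative}), not a Seifert-form normalization.
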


Here $(K_+,\rho)$ is any strongly negative amphichiral knot with a positive dichromatic\footnote{See Definition \ref{def:chromatic}.} symmetric crossing pair, and $(K_-,\rho)$ and $(K_0,\rho)$ are obtained from $(K_+,\rho)$ by changing and resolving this crossing pair respectively. (See Figure \ref{fig:4_1} for an example and Section \ref{sec:equivariantskeinrelation} for a precise definition of $K_+,K_-$, and $K_0$.)

\begin{remark}
Although Hartley proved that the Conway polynomial $\nabla_K(z)$ factors as $f(z)f(-z)$ for any negative amphichiral knot \cite[Theorem 3.1]{MR593625}, Theorem \ref{thm:skeinrelation} only makes sense when the negative amphichiral symmetry is realized by a diagrammatic involution; that is for \emph{strongly} negative amphichiral knots.
\end{remark}

Following the precedent of the Conway polynomial, one may hope to prove Theorem \ref{thm:skeinrelation} by examining a $\rho$-invariant Seifert surface or a pair of Seifert surfaces exchanged by $\rho$. However, strongly negative amphichiral knots cannot bound symmetric Seifert surfaces; such a surface must contain a point-wise fixed arc but there are only two fixed points in $S^3$. Additionally, it is not clear how to extract the necessary homological information from a pair of Seifert surfaces. Instead, we use a novel argument based on symmetric surgery diagrams to piece together the skein relation. 

For our first application of Theorem \ref{thm:skeinrelation}, we compute the half-Conway polynomial for all strongly negative amphichiral knots with 12 or fewer crossings; see Section \ref{sec:computations}. Using these computations of the half-Conway polynomial, we can distinguish many strongly negative amphichiral symmetries on a given knot. (The symmetries in the following proposition can also be distinguished by \cite[Theorem 15.1]{BonahonSiebenmann}.)

\begin{restatable}{proposition}{manysymmetries}\label{prop:manysymmetries}
For any $n >0$, there exists a knot $K$ with $n$ strongly negative amphichiral symmetries, distinguished by their half-Conway polynomials.
\end{restatable}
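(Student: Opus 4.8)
The plan is to construct, for each $n > 0$, a single knot $K$ that admits $n$ distinct strongly negative amphichiral symmetries $\rho_1, \dots, \rho_n$ whose half-Conway polynomials $\nabla_{(K,\rho_i)}(z)$ are pairwise distinct. The natural strategy is to realize these symmetries on a \emph{connected sum} of strongly negative amphichiral knots. The key structural fact I would exploit is that if $(K_1, \rho_1), \dots, (K_m, \rho_m)$ are strongly negative amphichiral, then the connected sum $K_1 \# \cdots \# K_m$ (performed equivariantly, respecting the symmetry) carries an induced strongly negative amphichiral symmetry, and its half-Conway polynomial should be the product $\prod_i \nabla_{(K_i,\rho_i)}(z)$ — this multiplicativity under connected sum is the analogue of the classical multiplicativity of the Conway polynomial, and it would follow either from the defining factorization $f(z)f(-z)$ together with the multiplicativity of the ordinary Conway polynomial, or directly from the Seifert-theoretic/surgery description. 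I would first establish (or cite) this multiplicativity as a lemma.

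Next I would engineer the different symmetries on a fixed underlying knot. The idea is to take a building-block knot, say $m$ copies of the figure-eight knot $4_1$ (which is strongly negative amphichiral), and form $K = 4_1 \# 4_1 \# \cdots \# 4_1$ ($m$ summands). On such a connected sum there are several inequivalent ways to position the point-reflection symmetry relative to the summands: one can choose a symmetry that swaps summands in pairs versus one that fixes each summand setwise, and by varying how many summands are swapped versus individually preserved, one obtains combinatorially distinct involutions $\rho_i$. Under multiplicativity, a symmetry that individually preserves $j$ of the figure-eight summands and swaps the remaining ones in pairs will have half-Conway polynomial equal to a product in which the individually-preserved summands each contribute $\nabla_{(4_1,\rho)}(z)$ while swapped pairs contribute the full Conway polynomial $\nabla_{4_1}(z) = \nabla_{(4_1,\rho)}(z)\,\nabla_{(4_1,\rho)}(-z)$ of a single factor. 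Since $\nabla_{(4_1,\rho)}(z)$ is a nontrivial degree-one polynomial (computable directly from the diagram in Figure \ref{fig:4_1} via Theorem \ref{thm:skeinrelation}), these products take distinct values as $j$ ranges, yielding the required distinctness.

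To actually produce $n$ distinct symmetries I would take $m$ large enough (roughly $m = 2(n-1)$ suffices, since swapping a pair consumes two summands while fixing one consumes a single summand) and read off $n$ admissible configurations. The concrete computation reduces to: compute the half-Conway polynomial $\nabla_{(4_1,\rho)}(z)$ of the figure-eight knot once, then observe that the $n$ candidate polynomials $\nabla_{(4_1,\rho)}(z)^{a}\,\bigl(\nabla_{(4_1,\rho)}(z)\,\nabla_{(4_1,\rho)}(-z)\bigr)^{b}$, for the relevant pairs $(a,b)$, are pairwise distinct as elements of $\mathbb{Z}[z]$ up to the sign/reflection ambiguity resolved by the normalization in Definition \ref{def:half-Conway-normalization}. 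Distinctness of polynomials immediately forces distinctness of the symmetries, since the half-Conway polynomial is an invariant of $(K,\rho)$.

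The main obstacle I anticipate is \textbf{not} the distinctness computation but rather verifying that the $n$ constructed symmetries are genuinely symmetries of \emph{one and the same} knot $K$ (i.e. that all the swapping/fixing configurations are supported on the identical underlying knot type) and that multiplicativity holds in the equivariant, sign-normalized setting — one must track how the normalization of Definition \ref{def:half-Conway-normalization} behaves under connected sum, since the product of normalized factors must again be correctly normalized rather than differing by an overall sign or a $z \mapsto -z$ reflection. Handling this normalization bookkeeping carefully, and confirming that a swapped pair of summands contributes the full (non-halved) Conway polynomial, is the delicate part; once it is settled, the distinctness of the resulting polynomials is a routine check on products of a fixed nonconstant polynomial.
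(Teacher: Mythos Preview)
Your approach is correct and will work, but it differs from the paper's in a way worth noting. The paper uses a more streamlined construction: it takes a single reversible strongly negative amphichiral knot $K'$ (specifically $10_{43}$, with $\nabla_{(K',\rho)}(z) = 1+z^2+z^3$), and forms the equivariant connected sums $(K_{i,j},\rho_{i,j}) = \widetilde{\#}^i(K',\rho)\,\widetilde{\#}^j(rK',\rho)$ with $i+j=n$. Since $K'$ is reversible, every $K_{i,j}$ is non-equivariantly $\#^n K'$, and by Proposition~\ref{prop:half-Conway-properties}(2),(3) the half-Conway polynomial is $(1+z^2+z^3)^i(1+z^2-z^3)^j$, giving $n+1$ distinct symmetries. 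Your figure-eight building block would work equally well here, yielding $(1+z)^i(1-z)^j$.

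What you do differently is introduce a second kind of building block, the ``swapped pair'' $J\#\overline{J}$, which contributes the full Conway polynomial $\nabla_J(z)$ via Proposition~\ref{prop:Conway_connect_sum}. This is perfectly valid, but it is more machinery than needed: the paper's trick of replacing a summand by its \emph{reverse} already produces all the variation required, using only multiplicativity and the reversal formula $\nabla_{(rK,\rho)}(z)=\nabla_{(K,\rho)}(-z)$. Your anticipated obstacle about normalization under connected sum is in fact a non-issue: Proposition~\ref{prop:half-Conway-properties}(3) gives exact multiplicativity of the normalized half-Conway polynomial, with no sign or $z\mapsto -z$ ambiguity to track. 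Likewise, the ``same underlying knot'' concern is resolved simply by choosing a reversible (and, in your swapped-pair version, fully amphichiral) building block, which both $4_1$ and $10_{43}$ are.
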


For a second application, we relate the half-Conway polynomial to another invariant, the \emph{half-linking number} $h(K)$ defined in Section \ref{sec:half-linking-number}. The integer $h(K)$ is the sum of signs of symmetric pairs of crossings in a symmetric diagram (see Definitions \ref{def:crossingpairsign} and \ref{def:half-linking-number}), and we use a theory of symmetric Reidemeister moves (developed in Section \ref{sec:background} and Appendix \ref{app:Rmoves}) to prove that $h(K)$ is invariant under equivariant isotopy. We also show that the half-linking number provides a lower bound on the equivariant unknotting number, the minimum number of symmetric pairs of crossing changes necessary to produce the unknot; see Section \ref{sec:equivariantunknotting}. 

\begin{restatable}{theorem}{unknotting} \label{thm:unknotting}
Let $K$ be a strongly negative amphichiral knot. Then the half-linking number $h(K)$ is a lower bound on the equivariant unknotting number. That is, $\widetilde{u}(K) \geq |h(K)|$. 
\end{restatable}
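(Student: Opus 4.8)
The plan is to prove this by combining two ingredients: first, the invariance of the half-linking number $h(K)$ under equivariant isotopy (which I would establish via the symmetric Reidemeister move theory developed in Section \ref{sec:background}), and second, a bound on how much $h$ can change under a single symmetric pair of crossing changes. Since $\widetilde{u}(K)$ counts the minimal number of symmetric crossing-pair changes needed to unknot $K$, and the unknot has half-linking number $0$, it suffices to show that each symmetric crossing-pair change alters $h$ by at most $1$; then the triangle inequality along an optimal sequence of $\widetilde{u}(K)$ such moves gives $|h(K)| = |h(K) - h(\text{unknot})| \leq \widetilde{u}(K)$.

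First I would recall the definition of $h(K)$ as the sum of the crossing-pair signs (Definitions \ref{def:crossingpairsign} and \ref{def:half-linking-number}) over a symmetric diagram, together with the already-stated fact that this sum is independent of the chosen symmetric diagram. The crucial reduction is that a symmetric crossing-pair change, applied equivariantly, switches a single symmetric pair of crossings from one sign to the opposite sign while leaving every other crossing pair untouched. I would verify this locally: changing a positive dichromatic crossing pair to a negative one (or vice versa) flips exactly one summand in the defining sum from $+1$ to $-1$, changing $h$ by $\pm 2$ in the dichromatic case, while a monochromatic pair contributes differently --- so I must track carefully which case yields the extremal change. The key computation is to confirm that in every case the net change to $h$ under one equivariant crossing-pair change is bounded in absolute value by $1$ (after accounting for how the symmetric pair is counted, since each geometric crossing change in the quotient corresponds to a pair upstairs).

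With the per-move bound in hand, the proof is a short induction or telescoping argument. Fix a minimal equivariant unknotting sequence $K = K^{(0)}, K^{(1)}, \dots, K^{(\widetilde{u}(K))} = U$, where each $K^{(i+1)}$ is obtained from $K^{(i)}$ by a single symmetric crossing-pair change and $U$ is the unknot. By equivariant isotopy invariance, $h$ is well-defined at each stage, and the per-move estimate gives $|h(K^{(i+1)}) - h(K^{(i)})| \leq 1$. Summing, $|h(K)| = |h(K^{(0)}) - h(U)| \leq \sum_{i} |h(K^{(i)}) - h(K^{(i+1)})| \leq \widetilde{u}(K)$, since $h(U) = 0$ for the standard symmetric unknot diagram.

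The main obstacle I anticipate is the per-move estimate, specifically the bookkeeping of signs under a symmetric crossing change. The subtlety is that the ambient symmetry $\rho$ forces crossings to come in pairs, and I must be certain that an equivariant crossing-pair change acts on the sign sum $h(K)$ as a single unit change rather than a double one --- otherwise the bound would degrade by a factor of $2$ and become $\widetilde{u}(K) \geq |h(K)|/2$. This hinges on the precise convention in Definition \ref{def:half-linking-number} for how a symmetric pair is weighted, and on checking that resolving the orientation and chromatic data (Definition \ref{def:chromatic}) of the changed pair does not introduce additional sign contributions from neighboring crossings. I would handle this by a careful local diagrammatic analysis of the four relevant configurations (positive/negative $\times$ mono/dichromatic), confirming that exactly one pair-sign flips and that its contribution to $h$ changes by exactly $1$ in the worst case.
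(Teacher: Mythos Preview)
Your approach is essentially the same as the paper's: both argue by tracking $h$ along an optimal equivariant unknotting sequence and bounding the change at each step by $1$. The paper phrases it as induction on $\widetilde{u}(K)$, you phrase it as a telescoping sum, but these are equivalent.

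Your anticipated obstacle dissolves once you read Definition~\ref{def:half-linking-number} carefully. First, monochromatic crossing pairs are simply not summed in $h(D)$, so changing a monochromatic pair leaves $h$ unchanged---there is no ``different contribution'' to track. Second, for a dichromatic pair the definition includes a factor of $\tfrac{1}{2}$: flipping one pair's sign changes the sum $\sum \mathrm{sign}(c,\rho(c))$ by $\pm 2$, hence changes $h$ by exactly $\pm 1$, not $\pm 2$. So the per-move bound is immediate and there is no risk of the estimate degrading to $\widetilde{u}(K) \geq |h(K)|/2$. The paper's proof records exactly these two cases (monochromatic: $h$ unchanged; dichromatic: $h$ changes by $\pm 1$) and then inducts.
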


Perhaps surprisingly, this bound turns out to be independent of the slice genus, and even the equivariant slice genus; see Example \ref{ex:eqslicegenusvshalflinking}.

In the following corollary of Theorem \ref{thm:skeinrelation}, we specify the relationship between the half-Conway polynomial and the half-linking number, which further implies a relationship between the half-linking number and the Arf invariant. 


\begin{restatable}{corollary}{halfConwayhalflinkingarf} \label{cor:halfConwayhalflinkingarf}
Let $(K,\rho)$ be an oriented strongly negative amphichiral knot. Then
\begin{enumerate}
\item The coefficient of $z$ in $\nabla_{(K,\rho)}(z)$ is equal to the half-linking number $h(K)$, and
\item $h(K) \equiv \textup{Arf}(K)$ (mod 2).
  \end{enumerate}
\end{restatable}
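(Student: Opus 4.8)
The plan is to prove part (1) by induction using the equivariant skein relation of Theorem \ref{thm:skeinrelation}, and then to deduce part (2) algebraically from part (1) together with the Hartley--Kawauchi factorization. For part (1), write $\nabla_{(K,\rho)}(z) = \sum_{i \ge 0} a_i(K)\, z^i$; the normalization of Definition \ref{def:half-Conway-normalization} gives $a_0(K) = 1$, and the goal is to show $a_1(K) = h(K)$. Both quantities vanish for the unknot with its standard symmetry, since it has no symmetric crossing pairs and $\nabla = 1$; this is the base case. For the inductive step I would apply Theorem \ref{thm:skeinrelation} to a positive dichromatic symmetric crossing pair and compare the coefficients of $z^1$ on the two sides: the left-hand side contributes $a_1(K_+) - a_1(K_-)$, while the right-hand side contributes the constant term $a_0(K_0) = \nabla_{(K_0,\rho)}(0)$. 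The claim then reduces to the key identity
\[
\nabla_{(K_0,\rho)}(0) \;=\; h(K_+) - h(K_-),
\]
after which a standard double induction (on crossing number and equivariant unknotting number, exactly as in the classical derivation of the Conway polynomial from its skein relation) propagates the equality $a_1 = h$ from the base case to all strongly negative amphichiral knots, reducing along the way to unknots and simpler equivariant links.

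The hard part will be establishing this key identity, which requires controlling both of its sides simultaneously. On the diagrammatic side, changing a single positive dichromatic crossing pair to a negative one alters only the sign contributed by that one pair to the half-linking number, so $h(K_+) - h(K_-)$ is computed purely locally from Definition \ref{def:crossingpairsign}; the substance is to check that this local difference is matched exactly by the arithmetic side. On the arithmetic side, the constant term $\nabla_{(K_0,\rho)}(0)$ must be pinned down: the oriented resolution of a dichromatic pair reconnects the two $\rho$-arcs of $K$ cut out by the two fixed points, and I would need to verify that the resolved diagram again represents a strongly negative amphichiral knot whose normalized half-Conway polynomial has the predicted constant term, with the sign dictated by the chromatic data of the resolved crossing pair. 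Reconciling the crossing-pair sign of Definition \ref{def:crossingpairsign} with the normalization of Definition \ref{def:half-Conway-normalization} is the delicate point, and I expect it to rest on a careful local analysis of the equivariant resolution rather than on any global argument.

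Part (2) then follows formally. By Hartley--Kawauchi we have $\nabla_K(z) = \nabla_{(K,\rho)}(z)\,\nabla_{(K,\rho)}(-z)$; writing $\nabla_{(K,\rho)}(z) = 1 + h(K)\,z + a_2(K)\,z^2 + \cdots$ via part (1), the coefficient of $z^2$ in the product is $2\,a_2(K) - h(K)^2$. Reducing modulo $2$ and using $h(K)^2 \equiv h(K) \pmod 2$, the $z^2$-coefficient of $\nabla_K(z)$ is congruent to $h(K)$ modulo $2$. Since Murasugi's theorem identifies the $z^2$-coefficient of the Conway polynomial with the Arf invariant modulo $2$, I conclude $\mathrm{Arf}(K) \equiv h(K) \pmod 2$, as desired.
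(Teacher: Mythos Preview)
Your identification of the ``hard part'' is mistaken: the identity $\nabla_{(K_0,\rho)}(0) = h(K_+) - h(K_-)$ is immediate. By Definition~\ref{def:half-Conway-normalization}(2) the constant term of \emph{every} half-Conway polynomial is $1$, so $\nabla_{(K_0,\rho)}(0)=1$ with no further work; and changing a single dichromatic pair from positive to negative flips its contribution to the sum in Definition~\ref{def:half-linking-number} from $+1$ to $-1$, giving $h(K_+)-h(K_-)=1$ directly. There is no delicate sign reconciliation to perform, and no local analysis of the resolution is needed.

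The genuine gap is in your induction scheme. The equivariant skein relation of Theorem~\ref{thm:skeinrelation} applies only to \emph{dichromatic} crossing pairs, and dichromatic crossing changes alone do not in general unknot a strongly negative amphichiral knot: they only reduce it to some $J\#\overline{J}$. (This is precisely the first stage of the proof of Proposition~\ref{prop:finiteunknotting}; the second stage, which reaches the unknot, uses \emph{monochromatic} crossing changes, to which the skein relation does not apply.) Consequently your base case must be the class of knots $J\#\overline{J}$, not just the unknot. For these one needs $a_1(J\#\overline{J})=0$, and this is where the actual content lies: it follows from Proposition~\ref{prop:Conway_connect_sum}, which identifies $\nabla_{(J\#\overline{J},\rho)}(z)$ with the ordinary Conway polynomial $\nabla_J(z)$ (a polynomial in $z^2$). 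That proposition is nontrivial---the paper proves it via symmetric surgery presentations---and your proposal omits it entirely. Relatedly, the oriented resolution of a dichromatic pair is again a strongly negative amphichiral \emph{knot}, never a link, so there is no ``simpler equivariant links'' branch as in the classical Conway argument; the $K_0$ term plays no role in the induction beyond supplying the constant $1$.

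With the corrected base case, the argument becomes exactly the paper's: both $a_1$ and $h$ change by $+1$ under each dichromatic crossing change and both vanish on $J\#\overline{J}$, hence they agree everywhere. Your part~(2) is correct and coincides with the paper's proof.
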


In particular, the equivalence $h(K) \equiv \textup{Arf}(K)$ (mod 2) gives a simple diagrammatic interpretation of the Arf invariant for strongly negative amphichiral knots. Corollary \ref{cor:halfConwayhalflinkingarf} also implies the invariance of the half-linking number, but we include an additional proof via SNA Reidemeister moves; these moves provide a platform for defining and studying diagrammatic invariants, and we believe that they will be an important tool for studying Questions \ref{question:diagrammatichalfConwayproof} and \ref{question:HOMFLY} below.

Finally, we construct knots with arbitrary prescribed half-Conway polynomials, solving an open problem from \cite[Remark (3)]{MR543095}. (The following theorem follows immediately from Theorem \ref{thm:realization}.)

\begin{theorem} \label{thm:Conwayrealization}
Let $f(z) \in \mathbb{Z}[z]$ such that $f(0) = 1$. Then there is an oriented strongly negative amphichiral knot $(K,\rho)$ with half-Conway polynomial $\nabla_{(K,\rho)}(z) = f(z).$
\end{theorem}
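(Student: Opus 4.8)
The plan is to prove the realization result constructively, building a strongly negative amphichiral knot whose half-Conway polynomial equals a prescribed $f(z)$ with $f(0)=1$. The key idea is to use the equivariant skein relation of Theorem~\ref{thm:skeinrelation} in reverse: since the skein relation lets us lower the $z$-degree by resolving crossing pairs, I should be able to raise the degree by inserting crossing pairs, assembling an arbitrary polynomial one coefficient at a time. The starting point is the unknot, which (after fixing the normalization in Definition~\ref{def:half-Conway-normalization}) has half-Conway polynomial $\nabla(z)=1=f(0)$. The base case thus matches the constant term, and the goal is to realize the higher coefficients by an explicit family of building blocks.

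First I would identify a fundamental ``tangle-insertion'' operation: a local equivariant modification of a symmetric diagram that inserts a positive dichromatic symmetric crossing pair into a strand, producing a skein triple $(K_+,K_-,K_0)$ in which $K_-$ is the previous knot and $K_0$ is a simpler auxiliary knot of known half-Conway polynomial. By Theorem~\ref{thm:skeinrelation}, $\nabla_{(K_+,\rho)}(z)=\nabla_{(K_-,\rho)}(z)+z\cdot\nabla_{(K_0,\rho)}(z)$, so each such insertion adds a controlled multiple of $z$ times a lower-degree polynomial. I would design a family of symmetric building blocks---likely twist regions or clasp-like tangles placed symmetrically about the fixed point set $S^0$---whose effect on the half-Conway polynomial is to add precisely $z^k$ (or an integer multiple thereof) while preserving strong negative amphichirality. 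Iterating and combining these blocks then produces any $\sum_{k\ge 1} a_k z^k$ on top of the constant term $1$.

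The main obstacle I expect is the bookkeeping of \emph{signs and chromaticity} of the crossing pairs, together with maintaining the normalization throughout the construction. The skein relation only applies to \emph{positive dichromatic} symmetric crossing pairs (see Definition~\ref{def:chromatic}), so each building block must be arranged so that the inserted pair has the correct sign and coloring; negative or monochromatic insertions would contribute with the wrong sign or not fit the hypotheses of Theorem~\ref{thm:skeinrelation}. Moreover, after each insertion I must verify that the resulting diagram is still genuinely symmetric under a point-reflection involution with fixed set $S^0$, and that the chosen orientation is compatible with the normalization of $\nabla_{(K,\rho)}(z)$ from Definition~\ref{def:half-Conway-normalization}; a sign error here would realize $-f(z)$ or $f(-z)$ instead of $f(z)$. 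Controlling these signs simultaneously with the degree and coefficient data is the delicate combinatorial heart of the argument.

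Finally, to handle coefficients of either sign and arbitrary magnitude, I would allow both the insertion and its inverse (changing a positive pair to a negative pair, which subtracts $z\cdot\nabla_{(K_0,\rho)}(z)$), and I would arrange the auxiliary knots $K_0$ to themselves be built recursively so that their half-Conway polynomials supply the lower-degree factors needed to reach a target coefficient $a_k$. Since the coefficients can be realized from the top degree downward---fixing the leading term first and then correcting lower-order terms without disturbing the higher ones---an inductive argument on the degree of $f(z)$ should close the construction, with the constraint $f(0)=1$ exactly matching the normalized value on the unknot. I would present the general realization (Theorem~\ref{thm:realization}) in this inductive form, from which Theorem~\ref{thm:Conwayrealization} follows immediately.
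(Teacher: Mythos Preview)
Your skein-theoretic strategy is genuinely different from the paper's, but as written it has a real gap. The crux of your argument is the assertion that you ``would design a family of symmetric building blocks \ldots\ whose effect on the half-Conway polynomial is to add precisely $z^k$,'' yet no such blocks are exhibited, and there is an immediate obstruction: in any equivariant skein triple the contribution $z\cdot\nabla_{(K_0,\rho)}(z)$ has linear term exactly $z$ (since $\nabla_{(K_0,\rho)}(0)=1$ by Definition~\ref{def:half-Conway-normalization}), so no single insertion can add a pure monomial $z^k$ for $k\geq 2$, and every insertion perturbs the $z$-coefficient. More seriously, in a skein triple the knots $K_-$ and $K_0$ are \emph{not} independent---both are determined by one symmetric diagram together with a choice of dichromatic crossing pair---so you cannot simply declare ``let $K_-$ realize $g(z)$ and $K_0$ realize $h(z)$'' for arbitrary $g,h$. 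Exhibiting a diagram in which the crossing-change and the resolution simultaneously yield knots with prescribed half-Conway polynomials is precisely the content of the realization problem; your sketch defers this construction rather than supplying it, so the induction does not close.

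For comparison, the paper does not use the skein relation at all. It proves Theorem~\ref{thm:realization} (the half-Alexander version) by writing down an explicit symmetric surgery description---a single $\rho$-invariant pair of $\pm 1$-framed unknotted curves on the trivial knot, decorated with twist regions labeled by the target coefficients (Figure~\ref{fig:halfconwayrealization})---and then computing $\Delta_{(K,\rho)}(\tau)$ directly as a sum of linking numbers of lifts in the infinite cyclic cover, following the equivariant analogue of Levine's classical construction. Theorem~\ref{thm:Conwayrealization} then follows from the substitution $z=\tau-\tau^{-1}$. This approach buys you an explicit one-shot knot for each polynomial, with no recursion and no need to control two knots in a skein triple simultaneously.
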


As a particular example, this theorem allows the construction of many non-slice strongly negative amphichiral knots with determinant 1 by choosing a half-Conway polynomial which fails the Fox-Milnor condition and gives determinant 1; see Figure \ref{fig:det1snak} for one example. This answers Problem 20(3) from \cite{FT}. 

\begin{corollary} \label{cor:det1snak}
There exist non-slice (strongly negative) amphichiral knots with determinant 1.
\end{corollary}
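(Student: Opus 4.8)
The plan is to combine Theorem~\ref{thm:Conwayrealization} with the Hartley--Kawauchi factorization and the standard relation between the determinant and the Conway polynomial. Recall that if $(K,\rho)$ has half-Conway polynomial $\nabla_{(K,\rho)}(z) = f(z)$, then its ordinary Conway polynomial is $\nabla_K(z) = f(z)f(-z)$, and that $\det K = |\Delta_K(-1)| = |\nabla_K(2i)|$, since the substitution $z = t^{1/2}-t^{-1/2}$ sends $t=-1$ to $z = 2i$. Because $f$ has integer coefficients, $f(-2i) = \overline{f(2i)}$, so
\[
\det K = |f(2i)\,f(-2i)| = |f(2i)|^2 .
\]
Thus it suffices to produce $f(z) \in \mathbb{Z}[z]$ with $f(0)=1$, with $|f(2i)| = 1$ (to force determinant $1$), and with $f(z)f(-z)$ violating the Fox--Milnor condition (to force non-sliceness); the desired knot then exists by Theorem~\ref{thm:Conwayrealization}.

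First I would pin down the constraint $|f(2i)| = 1$. Writing $f(2i) = R + iI$ with $R = \sum_j (-4)^j c_{2j}$ and $I = 2\sum_j (-4)^j c_{2j+1}$ (where $c_k$ are the coefficients of $f$), one sees $R \equiv 1 \pmod 4$ while $I$ is even, so $R^2 + I^2 = 1$ forces $R = 1$ and $I = 0$. These are two linear conditions on the coefficients. Note that if $f$ were even then $\nabla_K(z) = f(z)^2$ would automatically be a Fox--Milnor factorization, since $f(z)$ is then a symmetric function of $t$; hence the odd part of $f$ must be nontrivial. A convenient choice meeting all constraints is $f(z) = 1 + 4z + z^3$, for which $f(2i) = 1 + 8i - 8i = 1$, so the determinant is $1$.

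Next I would verify non-sliceness through Fox--Milnor. Expanding $\nabla_K(z) = (1+4z+z^3)(1-4z-z^3) = 1 - 16z^2 - 8z^4 - z^6$ and substituting $z^2 = t + t^{-1} - 2$ gives, up to units, the symmetric Alexander polynomial
\[
\Delta_K(t) \doteq t^6 + 2t^5 - t^4 - 5t^3 - t^2 + 2t + 1 .
\]
If $K$ were slice, Fox--Milnor would yield $\Delta_K \doteq g(t)\,g(t^{-1})$ with $g = at^3+bt^2+ct+d \in \mathbb{Z}[t]$. Clearing denominators, one compares $\Delta_K(t)$ with $g(t)\,\tilde g(t)$, where $\tilde g(t) = dt^3 + ct^2 + bt + a$; matching the top and constant coefficients forces $ad = 1$, hence $a = d$, and the coefficient of $t^3$ then equals $2ad + b^2 + c^2 = 2 + b^2 + c^2 \geq 2$, which can never equal $-5$. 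Therefore no such factorization exists and $K$ is non-slice.

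The main obstacle is precisely the non-sliceness at determinant $1$. Determinant $1$ kills the most familiar sliceness obstructions — the determinant itself, and the signatures, which in any case already vanish for amphichiral knots — so the argument must rest on the finer Fox--Milnor factorization of the full Alexander polynomial. The crux is to exhibit an $f$ for which the degree-six palindrome $f(z)f(-z)$ provably admits no integral factorization of the required paired form, while still satisfying the two linear conditions $R=1$ and $I=0$; the coefficient comparison above is what makes this verifiable by hand for the choice $f(z) = 1 + 4z + z^3$.
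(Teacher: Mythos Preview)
Your argument is correct and lands on essentially the same example as the paper: $f(z)=1+4z+z^3$ is exactly the paper's $n=1$ case (the paper's half-Alexander $\tau^{-3}+\tau^{-1}+1-\tau-\tau^3$ gives half-Conway $1-4z-z^3$, i.e.\ your $f(-z)$), and both verify that Fox--Milnor fails for the resulting degree-six Alexander polynomial. Two small points: the middle coefficient of $g(t)\tilde g(t)$ is $a^2+b^2+c^2+d^2$ (which equals your $2ad+b^2+c^2$ only because $a=d$ in the case you treat), and since $\doteq$ allows a global sign you must also dispose of $-\Delta_K = g\tilde g$, where $ad=-1$ forces $b^2+c^2=3$, again impossible---the paper sidesteps both issues for $n=1$ by simply observing that $\Delta_K$ is irreducible over $\mathbb{Q}$.
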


Such knots are interesting because their double branched covers represent potentially non-trivial torsion elements in the homology cobordism group. In fact, the involutive Heegaard Floer theoretic invariant called the $\iota$-complex could obstruct these double branched covers from bounding a homology 4-ball (see \cite{dai2018infinite}), although it is currently a challenge to compute these $\iota$-complexes. It is unknown whether there is torsion in the homology cobordism group; see \cite[Section 2]{MR3966804}. 

\begin{figure} 
\begin{overpic}[width=150pt, grid=false]{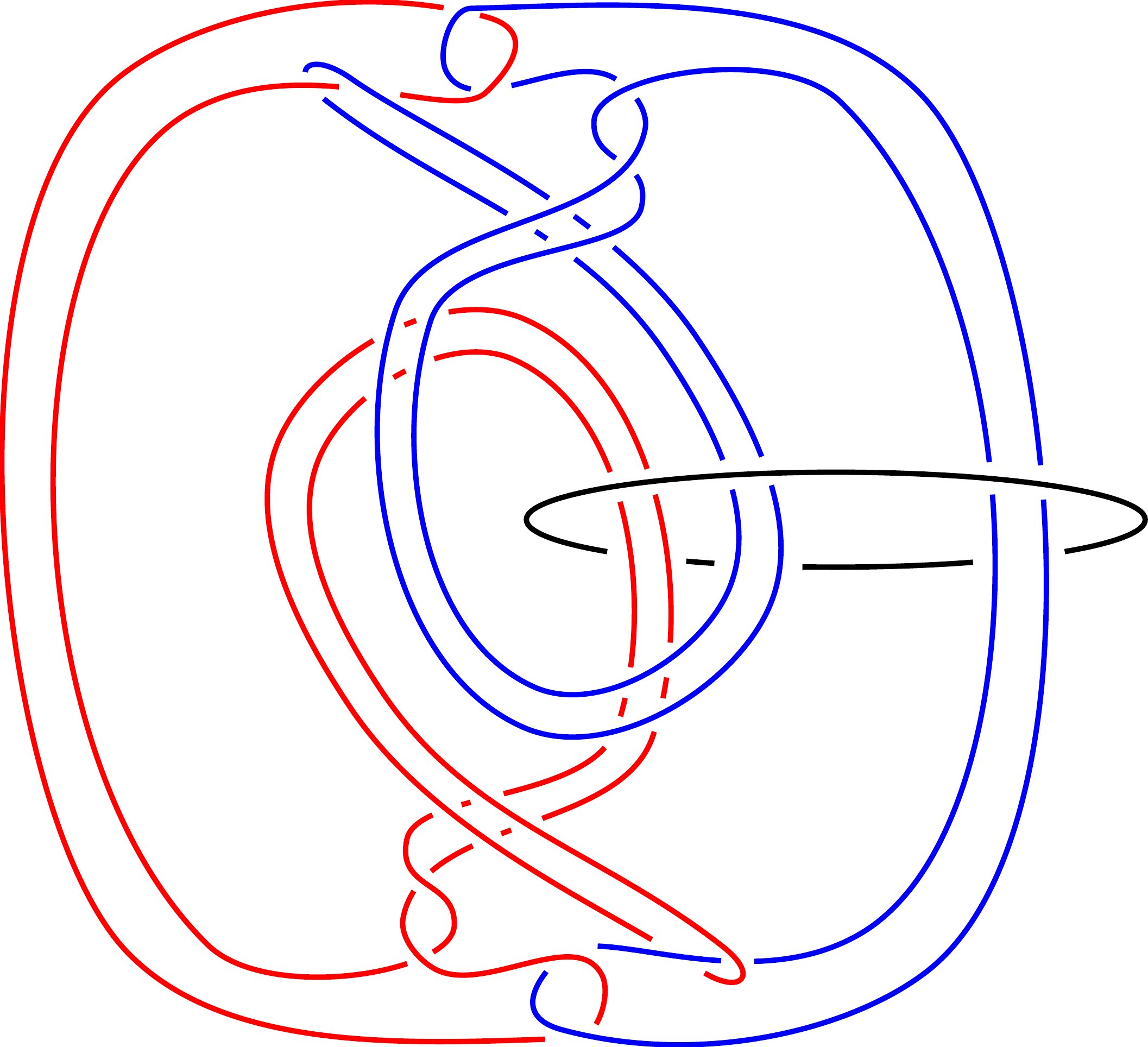}
\put (1,1) {$+1$}
\put (85,80) {$-1$}
\put (101,45) {$K$}
\end{overpic} \hspace{1 cm}
\begin{overpic}[width=150pt, grid=false]{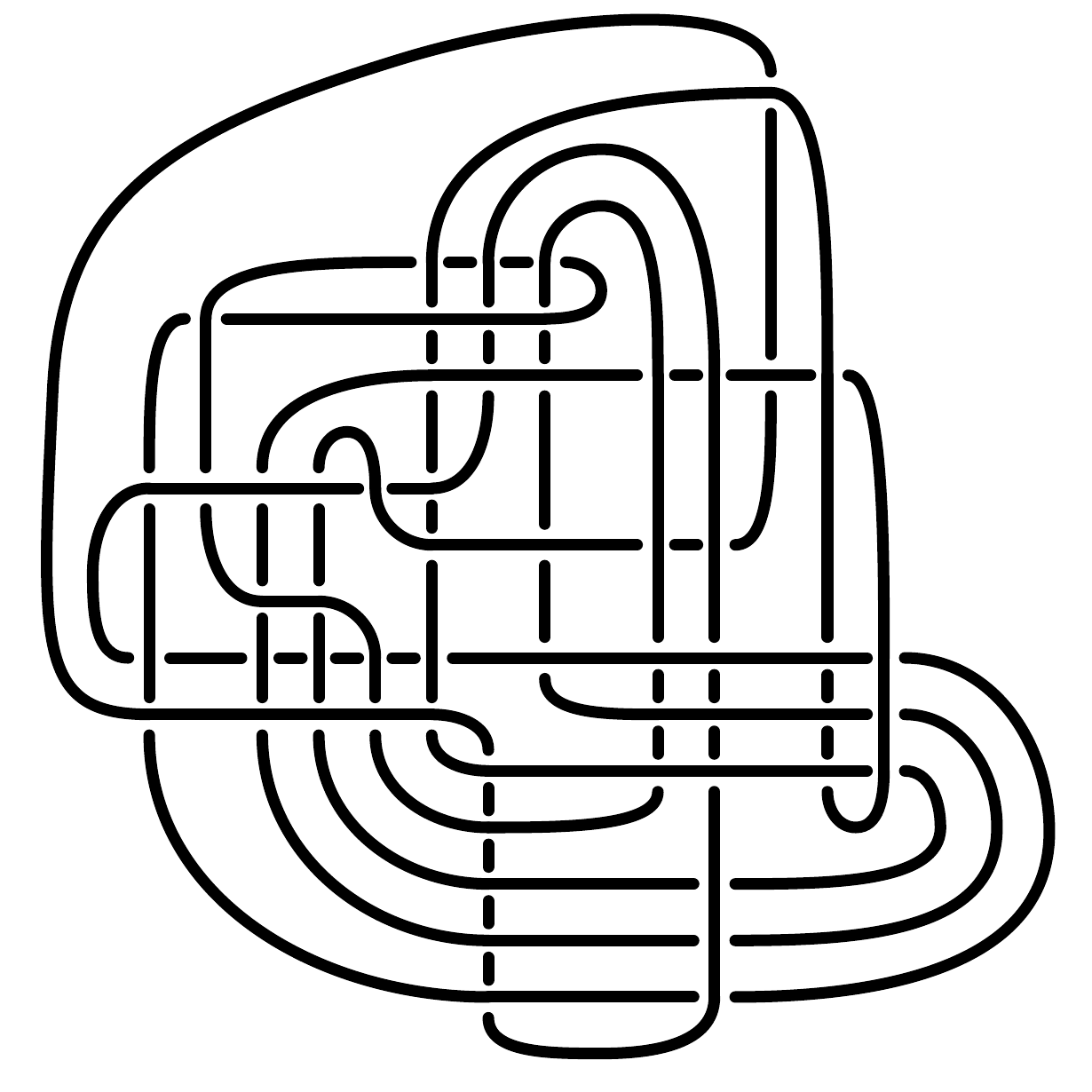}

\end{overpic}
\caption{A symmetric surgery diagram (left) and a knot diagram (right) for a non-slice strongly negative amphichiral knot $K$ with determinant 1. The diagram on the right was obtained using the KLO \cite{KLO} and SnapPy \cite{SnapPy} software.}
\label{fig:det1snak}
\end{figure}

\begin{remark}
To construct interesting integer homology spheres, Van Buskirk \cite{MR715764} constructed prime strongly negative amphichiral knots with Alexander polynomial 1, and Siebenmann and Van Buskirk \cite{MR694681} constructed prime strongly positive amphichiral knots with determinant 1. In both cases, it is unknown if the knots are non-slice. More generally, it is unknown if there are non-slice amphichiral knots with Alexander polynomial 1 \cite[Problem 21]{FT}. 
\end{remark}

\subsection{Open questions} We conclude with some open questions which we hope will inform future research in the area. 

\begin{question} \label{question:diagrammatichalfConwayproof}
Using the equivariant skein relation in Theorem \ref{thm:skeinrelation} as a definition for the half-Conway polynomial, is there a proof of equivariant isotopy invariance using symmetric Reidemeister moves?
\end{question} 

In this paper, we define the half-Conway polynomial by normalizing an Alexander polynomial associated to the quotient of the knot complement so that equivariant isotopy invariance is straightforward; in exchange the proof that it satisfies the equivariant skein relation is somewhat complicated. We attempted the proof of invariance using symmetric Reidemeister moves but were unable to check the SNA R4 move (see Theorem \ref{thm:Rmoves}). This proof method is of interest because it is related to the following question.

\begin{question}  \label{question:HOMFLY}
Is there an equivariant version of the Jones or HOMFLY-PT polynomial satisfying an equivariant skein relation?
\end{question}
It turns out that the obvious equivariant analog of the skein relation for the Jones polynomial does not define an invariant of strongly negative amphichiral knots. Nonetheless, we believe that some equivariant analog of the Jones polynomial should exist, albeit with a more complicated skein relation. Of course, one may further wish to define an equivariant Khovanov homology theory as has been done for strongly invertible knots; see \cite{MR4286365},\cite{MR3649241},\cite{MR2550464}. In a related direction, the following question may be more approachable.

\begin{question}  
Is there an equivariant version of knot Floer homology categorifying the half-Conway polynomial?
\end{question}
One reason to construct such a theory would be to generate an equivariant slice obstruction to finish the classification of equivariantly slice strongly negative amphichiral knots with 12 or fewer crossings (see \cite{BoyleIssa}). Additionally, it may answer the fundamental Question \ref{question:commutativesum} below.

\begin{question} \label{question:commutativesum}
Is the equivariant connected sum operation commutative up to equivariant isotopy? up to equivariant concordance? 
\end{question}
Here the equivariant connected sum depends on an orientation and choice of fixed point; see Definition \ref{def:connectsum}. Certainly the equivariant connected sum operation seems to be non-commutative; recently Di Prisa proved that a similar equivariant concordance group for strongly invertible knots is not abelian \cite{DiPrisa}. However, abelian invariants like the half-linking number and half-Conway polynomial cannot detect non-commutativity. 

\subsection*{Organization}

In Section \ref{sec:background} we give some background on strongly negative amphichiral knots and develop a theory of symmetric Reidemeister moves. In Section \ref{sec:half-linking-number} we introduce the half-linking number, and establish its additivity under equivariant connected sum. In Section \ref{sec:equivariantunknotting} we introduce the equivariant unknotting number and prove Theorem \ref{thm:unknotting}. In Section \ref{sec:half-Conway} we define the half-Conway polynomial, discuss its basic properties, and prove Proposition \ref{prop:manysymmetries} and Corollary \ref{cor:halfConwayhalflinkingarf}. Section \ref{sec:surgery-presentations} discusses an approach to computing the half-Conway polynomial via symmetric surgery diagrams. In Section \ref{sec:realization} we prove Theorem \ref{thm:realization} (equivalently, Theorem \ref{thm:Conwayrealization}) and Corollary \ref{cor:det1snak}. In Section \ref{sec:proofofmaintheorem} we prove Theorem \ref{thm:skeinrelation}. In Section \ref{sec:computations} we compute the half-Conway polynomial for knots with 12 or fewer crossings. In Appendix \ref{app:table} we tabulate symmetric diagrams for these knots.

\subsection*{Acknowledgments}
We would like to thank the organizers of the 2022 BIRS workshop \textit{Interactions of Gauge Theory with Contact and Symplectic Topology in Dimensions 3 and 4} where this project took shape. We would also like to thank Ahmad Issa, Robert Lipshitz, Charles Livingston, and Liam Watson for helpful comments. The second author is supported by the Pacific Institute for the Mathematical Sciences (PIMS). The research and findings may not reflect those of the institute. Finally, we would like to thank the anonymous referees for their careful reading and comments on a previous version.

\section{Background on strongly negative amphichiral knots}\label{sec:background}
We begin with some basic definitions about strongly negative amphichiral knots. 
\begin{definition}
A \emph{strongly negative amphichiral} knot is a knot $K \subset S^3$ along with a smooth involution $\rho\colon S^3 \to S^3$ with fixed-point set $S^0 \subset K$. 
\end{definition}

Note that by \cite{MR111044,MR155323}, all strongly negative amphichiral involutions on $S^3$ are conjugate in the diffeomorphism group. Thinking of $S^3$ as $\mathbb{R}^3 \cup \{\infty\}$, we will work exclusively with the representative of this conjugacy class given by the point reflection symmetry across the origin. Here the two fixed points are the origin and $\{\infty\}$.

\begin{definition}
A \emph{symmetric diagram} for a strongly negative amphichiral knot $(K,\rho)$ is a regular projection of $K$ to a $\rho$-invariant $S^2 \subset S^3$ along with under and over-crossing data. 
\end{definition}
Note that a symmetric diagram must contain both fixed points of $\rho$. We will place one of these fixed points at infinity, and the other in the center of our diagram (that we mark with a dot) so that $K$ may be viewed as a 2-ended tangle with a point reflection symmetry about its center. See Figure \ref{fig:4_1}.

\begin{definition}
A \emph{direction} on a strongly negative amphichiral knot $(K,\rho)$ is a choice of fixed point and orientation on $K$. 
\end{definition}
Diagrammatically, our convention is to place the chosen fixed point at infinity, and to orient the knot from the left to the right of the page. We will refer to the chosen fixed point as $x_{\infty}$ and the other fixed point as $x_0$. We will refer to the arc beginning at $x_{\infty}$ and ending at $x_0$ as $a_r$ (often drawn in red), and the arc beginning at $x_0$ and ending at $x_{\infty}$ as $a_b$ (often drawn in blue).

\begin{definition}
Two strongly negative amphichiral knots $(K,\rho)$ and $(K',\rho')$ are \emph{equivalent} if there is an equivariant orientation-preserving homeomorphism of pairs $\varphi\colon(S^3,K) \to (S^3,K')$. If $K$ and $K'$ are oriented, we further require that $\varphi$ preserves the orientation, and if $K$ and $K'$ are directed, we require that $\varphi$ takes the chosen fixed point for $K$ to the chosen fixed point for $K'$. 
\end{definition}

Note that this notion of equivalence is the same as an equivariant isotopy between $(K,\rho)$ and $(K',\rho)$ (see Theorem \ref{thm:Rmoves} below), but stronger than the existence of an orientation-preserving homeomorphism $\psi\colon (S^3,K) \to (S^3,K')$ with $\psi(\rho)$ isotopic to $\rho'$, the equivalence used in the mapping class group MCG$(S^3,K')$. In particular, changing the choice of direction in an equivariant connected sum (see below) may change the equivalence class of the symmetry, but not the isotopy class. 


In this paper, we will work with directed strongly negative amphichiral knots, although our invariants will be insensitive to the choice of fixed point. However, this choice is necessary in order to define an equivariant connected sum operation. 

\begin{definition} \label{def:connectsum}
The \emph{equivariant connected sum} $(K,\rho)\widetilde{\#}(K',\rho')$, or just $K \widetilde{\#}K'$, of two directed strongly negative amphichiral knots $(K,\rho)$ and $(K',\rho')$ is defined by removing a $\rho$-invariant neighborhood of $x_0$ from $(S^3,K,\rho)$ and a $\rho'$-invariant neighborhood of $x_{\infty}$ from $(S^3,K',\rho')$ and gluing the resulting 3-balls together in the way compatible with the chosen orientations on $K$ and $K'$ and with the symmetries $\rho$ and $\rho'$. The result is again a directed strongly negative amphichiral knot in $S^3$ with $x_0$ the remaining fixed point on $K'$ and $x_{\infty}$ the remaining fixed point on $K$. See Figure \ref{fig:connectsum}.
\end{definition}

\begin{remark}
We will also make use of the (non-equivariant) connected sum $K \# \overline{K}$ of a knot $K$ with its reverse mirror $\overline{K}$. Note that $K \# \overline{K}$ has an obvious strongly negative amphichiral symmetry exchanging $K$ and $\overline{K}$. If $K$ happens to be a strongly negative amphichiral knot, then $K \widetilde{\#} K$ should not be confused with $K \# \overline{K}$, which in general are two distinct symmetries on $K \# K$.
\end{remark}

\begin{figure}
\scalebox{.7}{\includegraphics{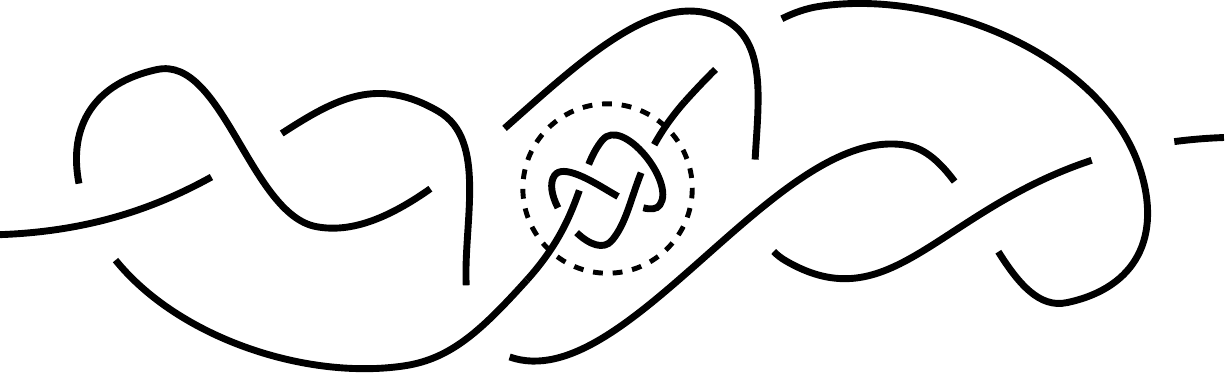}}
\caption{The equivariant connected sum of directed strongly negative amphichiral symmetries on the knots $8_9$ and $4_1$. The dotted circle indicates the connected summing sphere.}
\label{fig:connectsum}
\end{figure}

\subsection{Symmetric Reidemeister moves for strongly negative amphichiral knots}
Perhaps the simplest way to obtain invariants of strongly negative amphichiral knots is to define a theory of Reidemeister moves which are invariant under the symmetry, which we call strongly negative amphichiral (SNA) Reidemeister moves.
\begin{theorem} \label{thm:Rmoves}
Every pair of equivariant diagrams for a directed strongly negative amphichiral knot $K$ are related by a finite sequence of the following moves.
\begin{enumerate} 
  \item An equivariant planar isotopy (R0).
  \item An equivariant pair of any of the 3 standard Reidemeister moves R1, R2, and R3.
  \item The following new symmetric move R4 corresponding to pulling an equivariant pair of strands across one of the two fixed points.\\

\scalebox{.4}{\includegraphics{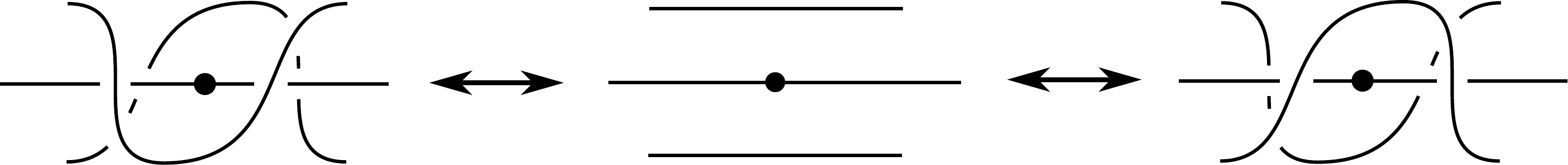}}\\
  \end{enumerate}
\end{theorem}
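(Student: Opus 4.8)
The plan is to run the classical Reidemeister-theorem argument equivariantly, working in the quotient orbifold, so that the manifold part of the analysis produces the symmetric moves (1)--(2) and the fixed points force the new move (3). First I would upgrade the equivalence of the two diagrams to an honest equivariant ambient isotopy. Given an equivariant orientation-preserving diffeomorphism $\varphi$ realizing the equivalence, I would show $\varphi$ is equivariantly isotopic to the identity: since all strongly negative amphichiral involutions on $S^3$ are conjugate (as noted above) and $\mathrm{Diff}^+(S^3)$ is connected, one reduces to analyzing the component of $\varphi$ in the group of $\rho$-equivariant orientation-preserving diffeomorphisms, which is connected for the standard linear involution. The resulting equivariant family $\varphi_t$ (with $\varphi_0=\mathrm{id}$, $\varphi_1=\varphi$) moves the whole symmetric diagram, gives $\varphi_t(K)$ as an equivariant ambient isotopy from $K$ to $K'$ fixing $x_0$ and $x_\infty$ throughout, and in particular establishes the asserted identification of equivalence with equivariant isotopy.

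Next I would pass to the quotient. The involution acts on the invariant projection sphere $S^2$ as rotation by $\pi$, so $\bar S^2 := S^2/\rho$ is a $2$-sphere with two order-two cone points $\bar x_0,\bar x_\infty$, while the ambient quotient $\bar S^3 := S^3/\rho$ carries cone singularities modeled on $\mathbb{R}^3/(x\mapsto -x)$, the cone on $\mathbb{RP}^2$. A symmetric diagram descends to a quotient diagram of the image arc, and the two symmetric diagrams of $(K,\rho)$ yield two quotient diagrams of the same object connected by the projected isotopy $\bar\Phi_t$. Away from $\bar x_0,\bar x_\infty$ the quotient is an honest manifold, so I would put $\bar\Phi_t$ in general position there by the usual Thom/jet-transversality argument: for all but finitely many $t$ the diagram is regular, between critical times it changes only by planar isotopy, and at each non-exceptional critical time exactly one standard move R1, R2, or R3 occurs (a cusp, a tangency, or a triple point). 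Each such move and each planar isotopy lifts to its $\rho$-symmetric pair upstairs, producing exactly items (1) and (2) of the statement.

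The heart of the argument is the local analysis at the two cone points, and this is the step I expect to be the main obstacle, since the manifold general-position theory does not apply where $\rho$ fails to act freely. I would fix small invariant balls about $x_0$ and $x_\infty$ and examine the finitely many times at which strands enter, leave, or rearrange inside them, using that $K$ itself runs through the cone point along a single arc while every other local strand carries its $\rho$-partner. The goal is to enumerate the generic local transitions and to verify that each, after equivariant R1--R3 and planar isotopy, is realized by a single application of the R4 move of pulling a symmetric pair of strands across the fixed point. The delicate points are (a) formulating the correct equivariant general-position condition at a cone point so that transitions occur one orbit at a time, and (b) checking that the normal form drawn for R4 is exhaustive---that no genuinely distinct way of crossing the fixed point arises---which I would settle by a direct case check on the cyclic arrangement of strand ends around the cone point. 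Concatenating the manifold and cone-point contributions along the time parameter then expresses the full equivariant isotopy as a finite sequence of moves (1)--(3), and tracking the marked fixed point $x_\infty$ throughout ensures the conclusion for directed knots.
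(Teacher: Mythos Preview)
Your outline is plausible but takes a genuinely different route from the paper's proof in Appendix~\ref{app:Rmoves}, and the two diverge at the very first reduction step. You keep the fixed points and pass to the singular orbifold $S^3/\rho$ (cones on $\mathbb{RP}^2$), planning to run a smooth general-position argument away from the cone points and a separate local analysis at them. The paper instead \emph{removes} small invariant balls about the fixed points so that the quotient becomes the honest manifold $\mathbb{RP}^2\times\mathbb{R}$, and the knot becomes a long knot there. The argument is then entirely PL: an explicit computation $\mathcal{M}(\mathbb{RP}^2\times I,\partial)\cong\mathbb{Z}$ (generated by a twist $T_1$) is used to promote the given homeomorphism to an ambient isotopy of long knots; this isotopy is chopped into ``narrowly ambient isotopic'' pieces, each supported in an embedded $D^2\times\mathbb{R}$; inside each piece one converts to a $\Delta$-equivalence via triangle moves; and finally the triangles are lifted in pairs to $S^3$. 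The R4 move appears not from a cone-point transversality analysis but from the combinatorial dichotomy on a lifted triangle pair: either the projections of $\Delta$ and $\rho(\Delta)$ are disjoint (yielding a symmetric R1/R2/R3) or they overlap, in which case their union is a $\rho$-invariant contractible region and hence contains exactly one fixed point, forcing the R4 configuration.

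Two points in your plan are genuinely nontrivial and are precisely where the paper invests its work differently. First, your assertion that the group of $\rho$-equivariant orientation-preserving diffeomorphisms of $S^3$ is connected is not justified; the paper sidesteps this entirely by working with end-fixing maps of $\mathbb{RP}^2\times\mathbb{R}$, computing their mapping class group, and observing that the generating twist fixes the long-knot axis so one may compose it away (Proposition~\ref{Proposition:HomoemorphicLongKnotsAreIsotopic}). Second, your cone-point step---equivariant general position near a non-manifold point and the exhaustiveness of the R4 normal form---is exactly what the paper's triangle-move reduction is designed to avoid: once the isotopy is PL and localized to $D^2\times\mathbb{R}$, the only way the symmetry can obstruct the usual subdivision into R1--R3 is the overlap case above, and that case is dispatched by a single observation rather than a case analysis of strand arrangements. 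Your approach could likely be made to work, but the paper's manifold/PL route trades your two delicate steps for one concrete mapping-class-group computation and a clean combinatorial endgame.
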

\begin{remark}
If $K$ is not directed, then there is an additional ``move'' which consists of redrawing the same diagram with the other fixed point at $\infty$. In part to avoid this, we work with directed strongly negative amphichiral knots throughout this paper.
\end{remark}
The proof of Theorem \ref{thm:Rmoves} is somewhat involved, and perhaps of independent interest; we provide it in Appendix \ref{app:Rmoves}.

\section{The half-linking number} \label{sec:half-linking-number}

The two fixed points on a strongly negative amphichiral knot separate it into a pair of arcs $a_r$ and $a_b$ exchanged by the involution, and a direction on the knot specifies an order on the crossings of any symmetric diagram as follows. We say that for a pair of crossings $c$ and $c'$ in $D$, $c<c'$ if we encounter $c$ before $c'$ when traversing $K$ following the orientation and starting at $x_{\infty}$. Using this structure, we associate a sign to each equivariant pair of crossings $(c,\rho(c))$. (Note that $c$ and $\rho(c)$ always have opposite signs.)

\begin{definition} \label{def:crossingpairsign}
Let $(K,\rho)$ be a directed strongly negative amphichiral knot, and fix an equivariant diagram $D$ for $K$. For an equivariant pair of crossings $(c,\rho(c))$, we define $\textup{sign}(c,\rho(c))$ as the sign of the larger of the two crossings $c$ and $\rho(c)$. In other words 
\[
\textup{sign}(c,\rho(c)) = \begin{cases}
\textup{sign}(c), &\mbox{if }c>\rho(c) \\
\textup{sign}(\rho(c)), &\mbox{if }\rho(c)>c.\\
\end{cases}
\]
\end{definition}

\begin{definition} \label{def:chromatic}
Let $c,\rho(c)$ be a pair of crossings in a strongly negative amphichiral knot diagram. If $c$ contains an arc from $a_r$ and an arc from $a_b$ then we call $(c,\rho(c))$ a \emph{dichromatic} crossing pair. If both arcs of $c$ belong to either $a_r$ or $a_b$, we call $(c,\rho(c))$ a \emph{monochromatic} crossing pair.
\end{definition}

\begin{remark}
As we will see, when $(c,\rho(c))$ is a dichromatic crossing pair, sign$(c,\rho(c))$ does not depend on the choice of fixed point in the direction of $K$, only on the orientation; see Proposition \ref{prop:half-linking-negation}. On the other hand, if $(c,\rho(c))$ is a monochromatic crossing pair, then sign$(c,\rho(c))$ does depend on the choice of fixed point.
\end{remark}

We can now define an invariant of strongly negative amphichiral knots by counting the signs of equivariant pairs of crossings involving an arc in $a_r$ and an arc in $a_b$.
\begin{definition} \label{def:half-linking-number}
Let $D$ be an equivariant diagram for a directed strongly negative amphichiral knot $(K,\rho)$. Then the \emph{half-linking number} is
\[
h(D) = \dfrac{1}{2} \sum \textup{sign}(c,\rho(c)),
\]
where the sum is over dichromatic crossing pairs $(c,\rho(c))$.
\end{definition}

\begin{example}
Consider the strongly negative amphichiral diagram $D$ for the directed strongly negative amphichiral knot $10_{88}$ shown in Figure \ref{fig:10_88}. The greater in each dichromatic crossing pair is marked with a gray circle, and since both are positive crossings we have that $h(D) = \frac{1}{2}(1+1) = 1$.
\end{example}

\begin{figure}
\scalebox{.6}{\includegraphics{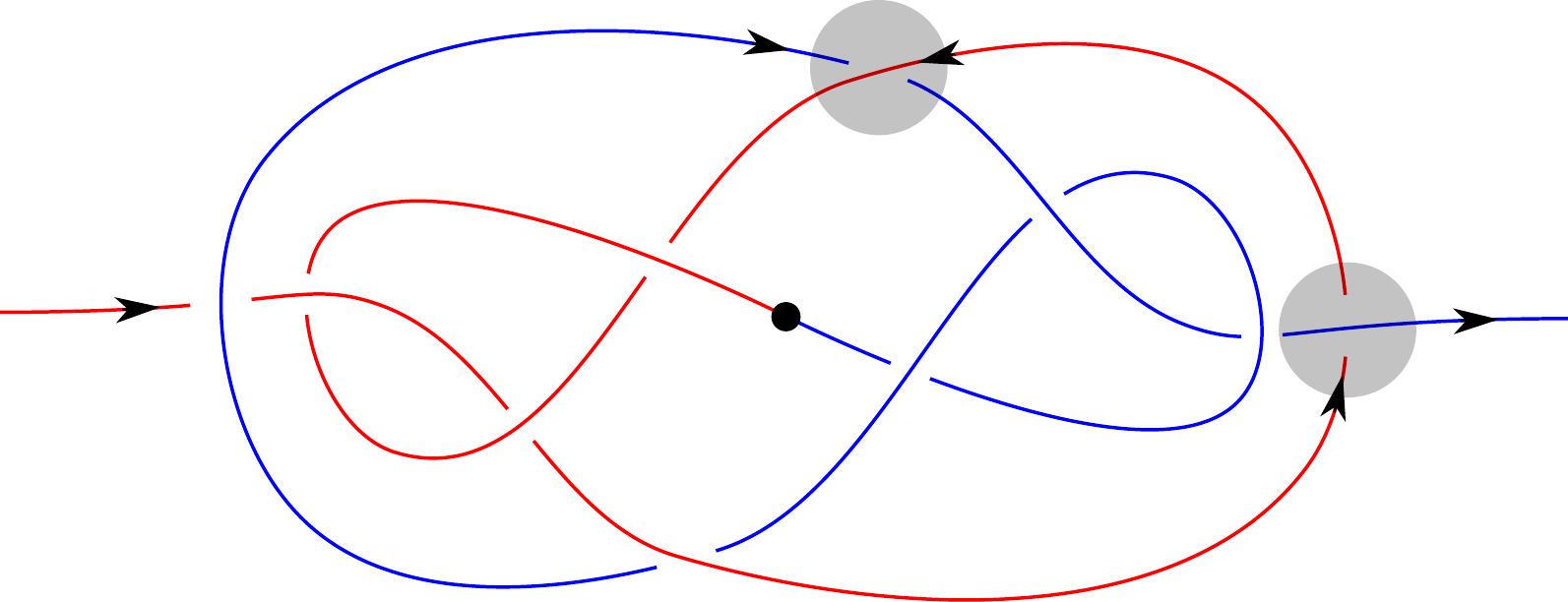}}
\caption{A strongly negative amphichiral diagram for $10_{88}$ with a choice of direction. For convenience the arcs $a_r$ and $a_b$ are colored red and blue respectively.}
\label{fig:10_88}
\end{figure}

In Theorem \ref{thm:half-linking-invariance} and Proposition \ref{prop:half-linking-negation} below, we show that the half-linking number is an invariant of an oriented strongly negative amphichiral knot. In other words, the half-linking number does not depend on the choice of equivariant diagram, or the choice of fixed point on the knot. We will then write $h(K)$ to refer to the half-linking number of the oriented strongly negative amphichiral knot $K$.

\begin{theorem} \label{thm:half-linking-invariance}
Let $D_1$ and $D_2$ be equivariant diagrams for a directed strongly negative amphichiral knot $(K,\rho)$. Then $h(D_1) = h(D_2)$. In particular, the half-linking number does not depend on the choice of equivariant diagram.
\end{theorem}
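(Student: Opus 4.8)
The plan is to prove the invariance of $h(D)$ by checking that it is unchanged under each of the SNA Reidemeister moves enumerated in Theorem \ref{thm:Rmoves}. Since that theorem guarantees any two equivariant diagrams for $(K,\rho)$ are connected by a finite sequence of R0, equivariant-paired R1/R2/R3, and the new SNA R4 move, it suffices to verify invariance move-by-move. Equivariant planar isotopy (R0) clearly changes nothing, so the real content lies in the four remaining move types, and the key observation throughout is that $h(D)$ only counts \emph{dichromatic} crossing pairs and weights each pair by the sign of its larger crossing with respect to the direction.

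First I would organize the crossings by how a move interacts with the red/blue partition of $K$ into $a_r$ and $a_b$. For the equivariant R1 move: it creates or destroys a single crossing $c$ together with its mirror $\rho(c)$. Because a kink is formed by a single arc crossing itself, both strands at $c$ lie in the same arc ($a_r$ or $a_b$), so the pair $(c,\rho(c))$ is \emph{monochromatic} and hence contributes nothing to the sum. Thus R1 leaves $h(D)$ invariant for free. For the equivariant R2 move: it introduces or removes two crossings $c_1,c_2$ (and their mirrors), and the two strands involved are either both from the same arc (monochromatic, contributing nothing) or one from each arc. In the dichromatic case one must check that $c_1$ and $c_2$ receive opposite-sign contributions to $h$: the two crossings of an R2 move have opposite signs, and I would argue that they are adjacent in the ordering determined by the direction—so each is the \emph{larger} crossing of its own equivariant pair, and their opposite signs cancel in the sum. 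The R3 move creates no new crossings and permutes three existing ones without changing their signs or their dichromatic/monochromatic type; here the subtlety is that R3 can reorder crossings along the knot, so I would verify that the ``larger crossing'' rule assigns the same sign to each equivariant pair before and after the move, which holds because R3 does not swap the relative order of the two crossings within any single equivariant pair.

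The main obstacle will be the new SNA R4 move, which pulls an equivariant pair of strands across one of the two fixed points. This is precisely the move the authors report being unable to verify for the full half-Conway polynomial (see Question \ref{question:diagrammatichalfConwayproof}), so it deserves the most care here. The difficulty is twofold: R4 can convert a crossing pair between dichromatic and monochromatic type, because pulling a strand across a fixed point changes which arc ($a_r$ or $a_b$) a strand segment belongs to; and R4 can alter the \emph{ordering} of crossings along the knot, hence change which crossing in a pair is the ``larger'' one and thus flip an individual sign$(c,\rho(c))$. My plan is to set up a careful local picture of the strands near the fixed point, track the color relabeling induced by the move, and show that any change in the dichromatic count is compensated by an offsetting change of sign, so that the total sum $\sum \mathrm{sign}(c,\rho(c))$ over dichromatic pairs is preserved.

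An attractive alternative, which I would flag as a cleaner route once the half-Conway polynomial is in hand, is to bypass the move-by-move R4 analysis entirely by appealing to Corollary \ref{cor:halfConwayhalflinkingarf}(1): since the coefficient of $z$ in $\nabla_{(K,\rho)}(z)$ equals $h(D)$ and the half-Conway polynomial is defined via the quotient of the knot complement (where equivariant isotopy invariance is immediate), invariance of $h$ follows at once. However, to keep this section self-contained and to demonstrate that the half-linking number is genuinely diagrammatic, I would present the direct SNA Reidemeister argument as the primary proof, using the fixed-point-crossing bookkeeping to settle R4, and relegate the $\nabla$-based argument to a remark.
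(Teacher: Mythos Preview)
Your overall strategy matches the paper's exactly: reduce to the SNA Reidemeister moves of Theorem~\ref{thm:Rmoves} and verify each one. Your handling of R0, R1, R2, and R3 is essentially the paper's argument (the paper is terser on R2 and adds an explicit case split for R3---either all three pairs are monochromatic, or one is monochromatic and two are dichromatic---but the content agrees).

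The gap is in your R4 analysis. You anticipate that R4 can change which arc ($a_r$ or $a_b$) a strand segment belongs to, and that it can reorder existing crossings along the knot; neither happens. The partition $K = a_r \cup a_b$ is determined by the fixed points as points \emph{on} $K$, not by their location in the projection plane. The R4 move slides an equivariant pair of strands past the fixed point \emph{in the diagram} without moving the fixed point along $K$, so no strand changes color and no pre-existing crossing is altered in type or order. What R4 actually does is introduce three \emph{new} equivariant crossing pairs near the fixed point. The paper's check is then a one-figure local computation: whatever the orientations and direction, exactly two of the three new pairs are dichromatic, contributing $+1$ and $-1$ respectively, while the third is monochromatic and contributes $0$. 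The global bookkeeping you propose---tracking color relabelings and order flips across the whole diagram---targets a difficulty that does not exist; the move you flag as ``the main obstacle'' is in fact no harder than R2. (Your alternative route via Corollary~\ref{cor:halfConwayhalflinkingarf}(1) is correct, and the paper notes it as well.)
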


\begin{proof}
By Theorem \ref{thm:Rmoves}, $D_1$ and $D_2$ are related by a sequence of the SNA Reidemeister moves R0, R1, R2, R3, and R4, so it is enough to check that the half-linking number is unchanged under each of these moves.

After an SNA R0 move, the crossings and order are identical so that the half-linking number is unchanged.

After an SNA R1 move there is a new invariant crossing pair, but it is monochromatic so that it does not contribute to the half-linking number.

After an SNA R2 move there are two new crossing pairs, and either both are dichromatic, or neither are. In the first case, the two crossing pairs contribute to the half-linking number with opposite signs so that in both cases the half-linking number is unchanged.

For SNA R3 moves there are two cases. Either all three crossing pairs are monochromatic, in which case invariance is clear, or one crossing pair is monochromatic and the other two are dichromatic. In the latter case, the order on each of the two dichromatic crossing pairs does not change. In particular, there are the same number of crossing pairs which contribute to the half-linking number, with the same signs.

Finally after an SNA R4 move there are three new pairs of crossings. Regardless of how we orient the strands (ensuring that it is compatible with the amphichiral symmetry) and which direction we choose, exactly two of these pairs of crossings are dichromatic; one contributes $+1$ to the half-linking number and one contributes $-1$ to the half-linking number. Figure \ref{fig:R4invariance} shows the result of an SNA R4 move with one choice of orientations and direction; the other choices are similar.
\end{proof}

\begin{figure}
\begin{overpic}[width=200pt, grid = false]{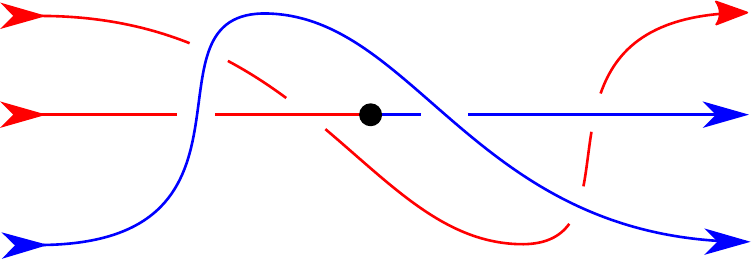}
\put (22, 32) {$+1$}
\put (18, 21) {$-1$}
\put (39, 14) {0}
\put (59, 21) {0}
\put (76,0) {$+1$}
\put (80,14) {$-1$}
\end{overpic}
\caption{The half-linking number is invariant under the R4 move. The crossing pairs are labeled with their contribution to the half-linking number.}
\label{fig:R4invariance}
\end{figure}

\begin{proposition} \label{prop:half-linking-negation}
Let $K$ be a directed strongly negative amphichiral knot. Let $rK$ be the reverse of $K$ and let $K^*$ be $K$ with the other choice of fixed point. Then
\begin{enumerate}
\item $h(K) = -h(rK)$, and
\item $h(K) = h(K^*)$.
\end{enumerate}
\end{proposition}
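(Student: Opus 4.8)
The plan is to prove both statements directly from the definitions of $\textup{sign}(c,\rho(c))$ (Definition \ref{def:crossingpairsign}) and the half-linking number (Definition \ref{def:half-linking-number}), by analyzing how the order on crossings and the individual crossing signs transform under the two operations. Throughout, I would fix a single symmetric diagram $D$ for $K$; since the half-linking number is a diagrammatic invariant by Theorem \ref{thm:half-linking-invariance}, it suffices to compute $h$ on this one diagram for $K$, $rK$, and $K^*$. The key observation to keep in mind is that $\textup{sign}(c,\rho(c))$ is built from two pieces of data: which of $c$ or $\rho(c)$ is \emph{larger} in the direction-induced order, and the sign of that larger crossing. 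The two operations affect these pieces differently.

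For part (1), reversing the orientation of $K$ reverses the traversal order, so for every equivariant pair $(c,\rho(c))$ the roles of ``larger'' and ``smaller'' are swapped. Thus the larger crossing in the order for $rK$ is $\rho(c)$ precisely when it was $c$ for $K$, and vice versa. Since $c$ and $\rho(c)$ always have opposite crossing signs (as noted before Definition \ref{def:crossingpairsign}, and because reversing orientation does not change the sign of any individual crossing — a crossing sign is unaffected by reversing the orientation of both strands), the selected sign flips: $\textup{sign}_{rK}(c,\rho(c)) = -\textup{sign}_K(c,\rho(c))$. The dichromatic/monochromatic classification depends only on which arcs ($a_r$ or $a_b$) meet at a crossing and is insensitive to orientation, so the index set of the defining sum is unchanged. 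Summing over the same set of dichromatic pairs with every term negated yields $h(rK) = -h(K)$.

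For part (2), changing the chosen fixed point swaps the roles of $x_\infty$ and $x_0$, which also swaps the labels of the two arcs $a_r$ and $a_b$. I would first note that this swap preserves the dichromatic condition (a crossing involving one arc of each color still does so after relabeling), so again the index set of the sum is preserved. The subtler point is that moving the basepoint from $x_\infty$ to $x_0$ cyclically shifts the starting point of the traversal. For a dichromatic pair, $c$ and $\rho(c)$ lie on opposite arcs, and I would argue that the relative order of $c$ versus $\rho(c)$ is unaffected by this basepoint change — this is exactly the content of the remark following Definition \ref{def:chromatic}, that for dichromatic pairs the sign does not depend on the choice of fixed point. Hence each dichromatic term is individually unchanged and $h(K^*) = h(K)$.

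The main obstacle I anticipate is part (2), specifically verifying carefully that the order relation between $c$ and $\rho(c)$ within a \emph{dichromatic} pair is genuinely invariant under the cyclic basepoint shift, whereas it generically fails for monochromatic pairs. The cleanest way to handle this is a parity/position argument: with the orientation fixed, a dichromatic crossing is traversed once along $a_r$ and once along $a_b$, and $\rho$ interchanges these two arcs while reversing the global orientation relationship; one must check that the crossing encountered later from $x_\infty$ is still encountered later from $x_0$. I would make this precise by tracking the four arc-passes through the pair $(c,\rho(c))$ and using the involution $\rho$ to match the two basepoint orderings, confirming that the larger crossing—and hence the selected sign—is the same in both cases.
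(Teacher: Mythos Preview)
Your argument for part (1) is correct and matches the paper's.

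For part (2) your route differs from the paper's, and the core step is left unproved. You correctly identify the claim that needs checking---that for a dichromatic pair $(c,\rho(c))$ the relative order of $c$ and $\rho(c)$ is unchanged by the basepoint shift---but note that the remark after Definition~\ref{def:chromatic} is a forward reference to this very proposition, so it cannot be invoked as support. Your proposed ``parity/position argument tracking the four arc-passes'' remains only a sketch; the actual verification (for instance, parametrizing $K$ by $[0,1]$ with fixed points at $0$ and $\tfrac12$ and checking that the condition $c<\rho(c)$ reduces to the \emph{same} inequality from either basepoint) is never carried out.

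The paper avoids this computation entirely. Rather than comparing $K$ with $K^*$ directly, it compares $K$ with $rK^*$: passing from $K$ to $rK^*$ keeps the underlying arcs $a_r,a_b$ but reverses the traversal along each, so the order of crossings along $a_r$---where every dichromatic crossing is first encountered---is reversed, giving $h(K)=-h(rK^*)$. Then part (1) applied to $K^*$ yields $h(K^*)=-h(rK^*)$, and combining gives $h(K)=h(K^*)$. This two-step reduction is shorter than the direct case analysis you propose, though your approach can be completed with the parametrization check indicated above.
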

\begin{proof}
Reversing the orientation on $K$ reverses the order on the crossings so that if $c < \rho(c)$ in a diagram for $K$, then $\rho(c) < c$ in the corresponding diagram for $rK$. However, the signs of the crossings themselves do not depend on the orientation. Hence the sign of $(c,\rho(c))$ in $K$ and $rK$ are different and $h(K) = -h(rK)$.

Next, note that in diagrams for $K$ and $rK^*$, the crossings along $a_r$ appear in the opposite order so that $h(K) = -h(rK^*)$. Then by (1), $h(K^*) = -h(rK^*)$. Therefore $h(K) = h(K^*)$.
\end{proof}

\begin{proposition}
Given two directed strongly negative amphichiral knots $K$ and $K'$,
\[
h(K\widetilde{\#}K') = h(K) + h(K').
\]
\end{proposition}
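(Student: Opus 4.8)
The plan is to establish additivity by analyzing how the equivariant connected sum assembles a symmetric diagram and then tracking which crossing pairs are dichromatic and what their signs are. First I would choose directed equivariant diagrams $D$ for $(K,\rho)$ and $D'$ for $(K',\rho')$, and form a symmetric diagram $D \widetilde{\#} D'$ for the equivariant connected sum directly from Definition \ref{def:connectsum}. Concretely, the construction removes a $\rho$-invariant ball around $x_0$ in $K$ and a $\rho'$-invariant ball around $x_\infty$ in $K'$ and glues them; diagrammatically this means I can draw $D'$ inside a small symmetric neighborhood of the center point $x_0$ of $D$, so that the resulting diagram is simply $D$ with the two strands near its center replaced by an inserted symmetric copy of $D'$. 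Since the gluing is along a $\rho$-invariant sphere and respects the symmetries, no new crossings are created by the gluing itself: the crossing set of $D \widetilde{\#} D'$ is exactly the disjoint union of the crossings of $D$ and the crossings of $D'$.

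The key step is then to verify two things: that each crossing pair of $D$ and each crossing pair of $D'$ retains its dichromatic/monochromatic type and its sign in the combined diagram. Using Theorem \ref{thm:half-linking-invariance}, the half-linking number is computed from any equivariant diagram, so it suffices to work with this particular $D \widetilde{\#} D'$. For the sign, recall from Definition \ref{def:crossingpairsign} that $\mathrm{sign}(c,\rho(c))$ depends only on the individual crossing signs and on the induced order along the knot starting from $x_\infty$. The crossings coming from $D$ are traversed in the same relative order as before, since inserting $D'$ near $x_0$ does not reorder them; likewise the crossings from $D'$ keep their internal order. Thus every sign $\mathrm{sign}(c,\rho(c))$ is unchanged from its value in the original diagram. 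For the dichromatic/monochromatic type I must check that the partition of the knot into the arcs $a_r$ and $a_b$ restricts correctly: in $D \widetilde{\#} D'$ the arc through $x_\infty$ and the arc through $x_0$ decompose so that crossings inherited from $D$ see the same coloring they had in $D$, and crossings inherited from $D'$ see the coloring they had in $D'$. Granting this, the sum in Definition \ref{def:half-linking-number} splits as a sum over the dichromatic pairs of $D$ plus a sum over those of $D'$, and dividing by $2$ gives $h(D \widetilde{\#} D') = h(D) + h(D') = h(K) + h(K')$.

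The main obstacle I expect is the bookkeeping for the coloring (dichromatic versus monochromatic) near the gluing region, since the arcs $a_r, a_b$ are global objects defined by the two fixed points, and after the connected sum the fixed point $x_0$ of $K$ is consumed while $x_0$ of $K'$ becomes the new central fixed point (as stated in Definition \ref{def:connectsum}). I would therefore spend care tracing how a strand colored red (part of $a_r$) in $D$ connects to a strand of $D'$ across the gluing sphere, and confirming that this re-routing never merges a red and a blue arc at a crossing in a way that would change a pair's type. The cleanest way to handle this is to observe that each of $a_r$ and $a_b$ in the connected sum is a concatenation of one arc from $K$ and one arc from $K'$, so any crossing whose two strands came entirely from $D$ lies within the $K$-portion and is dichromatic exactly when it was in $D$, and symmetrically for $D'$; the inserted copy of $D'$ does not introduce crossings between a $K$-strand and a $K'$-strand. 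Once this coloring-compatibility is confirmed, the additivity is immediate.
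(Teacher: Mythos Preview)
Your proposal is correct and follows essentially the same approach as the paper: form the diagram $D \widetilde{\#} D'$, observe that its crossing pairs are exactly those of $D$ and $D'$ with the same dichromatic/monochromatic type and the same sign, and conclude additivity. The paper's proof is a two-sentence version of this; your additional bookkeeping for the arc coloring and the traversal order is exactly the verification that the paper leaves as ``immediate from the definitions.''
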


\begin{proof}
This is immediate from the definitions by considering diagrams $D$ and $D'$ for $K$ and $K'$ respectively, and the corresponding diagram $D \widetilde{\#} D'$ for $K \widetilde{\#}K'$. The dichromatic crossing pairs in $D$ and $D'$ (see Definition \ref{def:half-linking-number}) correspond precisely with the dichromatic crossing pairs in $D \widetilde{\#}D'$, and since the corresponding crossing pairs have the same sign, their contributions to the half-linking number agree.
\end{proof}

\begin{remark}
One may wonder whether $h(K)$ is an equivariant concordance invariant; it is analogous to the linking number, it is additive under equivariant connected sum, and changing the direction on $K$ (as needed to define the inverse in the equivariant concordance group) negates $h(K)$. However, this is false; see the following example.
\end{remark}

\begin{example} \label{ex:eqslicegenusvshalflinking}
Consider the knot $K = 8_9$ (with the direction as shown in Figure \ref{fig:8_9}). We compute $h(K) = -2$, but $K$ is equivariantly slice (see \cite[Figure 1]{BoyleIssa}). 
\end{example}

\begin{figure}
\scalebox{.7}{\includegraphics{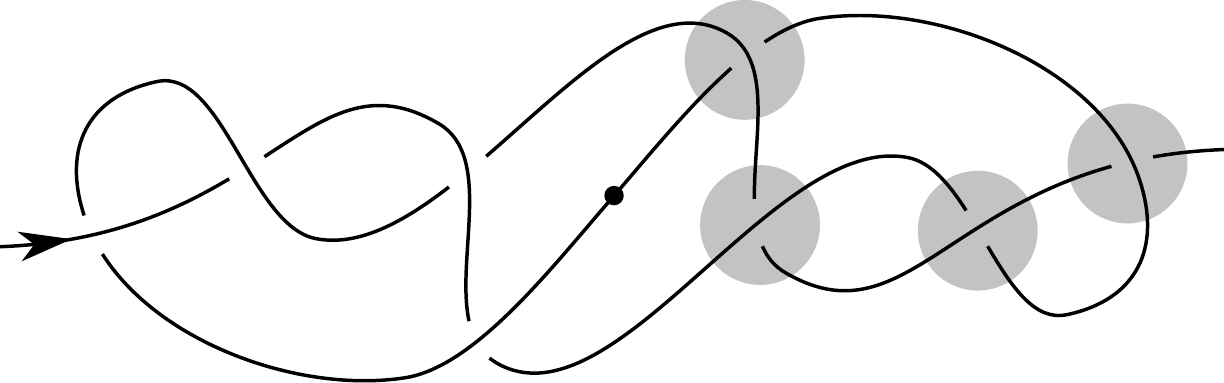}}
\caption{A strongly negative amphichiral diagram for the equivariantly slice knot $K = 8_9$ for which $h(K) = -2$. The four crossings contributing to $h(K)$ are indicated with a gray circle; they are all negative crossings.}
\label{fig:8_9}
\end{figure}

\subsection{Equivariant unknotting} \label{sec:equivariantunknotting} Next we discuss an application of the half-linking number to equivariant unknotting.
\begin{definition}
Given a strongly negative amphichiral knot $K$, the \emph{equivariant unknotting number} $\widetilde{u}(K)$ is the minimum number of equivariant pairs of crossing changes necessary to transform $K$ into the unknot. 
\end{definition}
Note that a single crossing change cannot be equivariant for a strongly negative amphichiral knot. We also have the immediate inequality $\widetilde{u}(K) \geq u(K)$, where $u(K)$ is the (usual) unknotting number of $K$. 

\begin{proposition} \label{prop:finiteunknotting}
The equivariant unknotting number $\widetilde{u}(K)$ is finite.
\end{proposition}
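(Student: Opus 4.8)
The plan is to show that any strongly negative amphichiral knot can be unknotted by a finite sequence of equivariant crossing changes. The key observation is that an equivariant pair of crossing changes is a symmetric operation, so I need a notion of ``distance to the unknot'' that respects the symmetry. First I would fix a symmetric diagram $D$ for $(K,\rho)$, which exists by definition, and recall from Section \ref{sec:background} that $K$ is presented as a $2$-ended tangle with point-reflection symmetry about the center fixed point $x_0$, with the other fixed point at $x_\infty$. The two arcs $a_r$ and $a_b$ are exchanged by $\rho$.

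The core idea is to reduce to the non-equivariant fact that any knot diagram can be unknotted by a finite sequence of (arbitrary) crossing changes, and then to equivariantize this. Concretely, I would consider the arc $a_r$ together with its crossing data. Because $\rho$ maps $a_r$ to $a_b$, every crossing of $D$ is either a self-crossing of $a_r$ (equivalently of $a_b$), in which case it comes paired with a symmetric self-crossing of $a_b$, or it is a crossing between $a_r$ and $a_b$, in which case $\rho$ pairs it with another such crossing. In all cases the crossings come in equivariant pairs $(c,\rho(c))$, so changing any single crossing $c$ together with its mirror $\rho(c)$ is exactly one equivariant crossing change. Thus the entire set of crossings of $D$ is partitioned into equivariant pairs, and I have complete freedom to change any subset of these pairs.

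The main step is then to choose which pairs to change so that the result is the unknot. Here I would use the standard ascending/descending diagram argument: pick a basepoint (say $x_\infty$) and a direction, traverse $K$, and change exactly those crossing pairs needed so that the diagram becomes a descending (hence unknotted) diagram. The subtlety, which is the main obstacle, is that the ordinary unknotting procedure changes individual crossings, whereas here I am forced to change crossings two at a time. I must verify that the descending-diagram modification can be carried out symmetrically: that is, whenever the procedure dictates changing a crossing $c$, changing its partner $\rho(c)$ simultaneously is compatible with producing a descending (or more carefully, an equivariantly standard) diagram. Because $\rho$ reverses the orientation of $K$, traversing from $x_\infty$ through $a_r$ to $x_0$ and then through $a_b$ back to $x_\infty$ hits $c$ and $\rho(c)$ in a controlled way, and one checks that making $a_r$ descending forces $a_b$ to be ascending under $\rho$, which still yields an unknotted diagram.

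To make this rigorous I would argue as follows. Make the tangle on $a_r$ (the strand from $x_\infty$ to $x_0$) into a trivial, monotone-height arc by changing each of its self-crossing pairs as needed; by symmetry $a_b$ is then automatically a trivial monotone arc as well, since $\rho$ carries the height function to its negative. The remaining crossings are the dichromatic pairs between $a_r$ and $a_b$; these can likewise be removed in symmetric pairs by pushing $a_r$ entirely above $a_b$, again a symmetric operation. The resulting diagram presents $K$ as two unknotted, unlinked arcs joined at the two fixed points, which is the unknot. Since $D$ has finitely many crossings, only finitely many equivariant crossing changes are performed, proving $\widetilde{u}(K) < \infty$. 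The delicate point to check carefully is the compatibility of the height/descending choices with $\rho$, i.e. that an equivariant choice of over/under at every pair genuinely trivializes both arcs simultaneously; everything else is a finite bookkeeping argument.
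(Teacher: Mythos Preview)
Your proposal is correct and follows essentially the same two-step strategy as the paper: use dichromatic crossing changes to push $a_r$ entirely over $a_b$, and use monochromatic crossing changes to trivialize each arc individually. The only real difference is the order. The paper first changes the dichromatic crossings, obtaining a knot of the form $J\#\overline{J}$ with the symmetry exchanging the summands; it then takes \emph{any} non-equivariant unknotting sequence for $J$ and observes that pairing each move with its $\rho$-image unknots $\overline{J}$ simultaneously. Doing the separation step first has the advantage that the ``delicate point'' you flag---checking that a descending choice on $a_r$ is compatible under $\rho$ with a trivializing choice on $a_b$---never needs to be addressed: once you have $J\#\overline{J}$, the monochromatic changes on $a_r$ are just an unknotting sequence for $J$ in its own $3$-ball, and the $\rho$-image is automatically an unknotting sequence for $\overline{J}$. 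Your order also works (and your symmetry check for the descending condition goes through, since $\rho$ reverses both the traversal order and the over/under data), but it requires the extra verification you identified.
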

\begin{proof}
Given an equivariant diagram $D$ for a strongly negative amphichiral knot $K$, we will produce a finite equivariant unknotting sequence. To begin, note that the two fixed points separate $K$ into two arcs $a_r$ and $a_b$. In each dichromatic pair of crossings in $D$, either $a_r$ passes over $a_b$ in both crossings, or vice versa. First, perform a finite sequence of equivariant crossing changes to ensure $a_r$ always passes over $a_b$. This produces a new strongly negative amphichiral knot $K'$ which is a connected sum $J \# \overline{J}$. Here $\overline{J}$ is the reverse mirror of $J$ and the symmetry on $K'$ exchanges $J$ and $\overline{J}$. Now take any finite unknotting sequence $\{c_i\}$ for $J$. Then $\{\rho(c_i)\}$ is a finite unknotting sequence for $\overline{J}$. Thus $\{c_i,\rho(c_i)\}$ is a finite equivariant unknotting sequence for $K'$. 
\end{proof}

\unknotting*

\begin{proof}
We will prove this by induction on $\widetilde{u}(K)$. Clearly $\widetilde{u}(K) = 0$ if and only if $K$ is the unknot, and the half-linking number of the unknot is 0. 

Now consider an equivariant pair of crossing changes which take a knot $K_1$ with $\widetilde{u}(K_1) = n$ to a knot $K_2$ with $\widetilde{u}(K_2) = n-1$. By inductive assumption, $|h(K_2)| \leq n-1$. Taking any symmetric diagram in which this equivariant pair of crossing changes is visible, let $a_r$ and $a_b$ be the two arcs of the diagram. On one hand, the crossing changes transforming $K_1$ into $K_2$ may occur between a monochromatic crossing pair. In this case the half-linking number is unchanged so that $|h(K_1)| = |h(K_2)| \leq n - 1 \leq n$, as desired. Otherwise, the crossing pair is dichromatic so that $h(K_1)$ and $h(K_2)$ differ by $\pm 1$. In particular, $|h(K_1)| \leq |h(K_2)| + 1 \leq n -1 +1 = n$.
\end{proof}
This theorem is sharp for many knots with 12 or fewer crossings, such as in the following example. 
\begin{example}
Consider the strongly negative amphichiral symmetry $\rho$ on $12a_{1287}$ shown in Figure \ref{fig:12a_1287}. The 6 crossings in the bottom right are all positive, so that $h(12a_{1287},\rho) = 3$ and hence $\widetilde{u}(12a_{1287},\rho) \geq 3$ by Theorem \ref{thm:unknotting}. On the other hand, changing the 3 equivariant pairs of crossings so that the red arc $a_r$ always passes over the blue arc $a_b$ produces the unknot, so that $\widetilde{u}(12a_{1287},\rho) \leq 3$. It is interesting to compare this to the (non-equivariant) unknotting number for $12a_{1287}$, which is unknown (it is either 2 or 3). 
\begin{figure}
\scalebox{.4}{\includegraphics{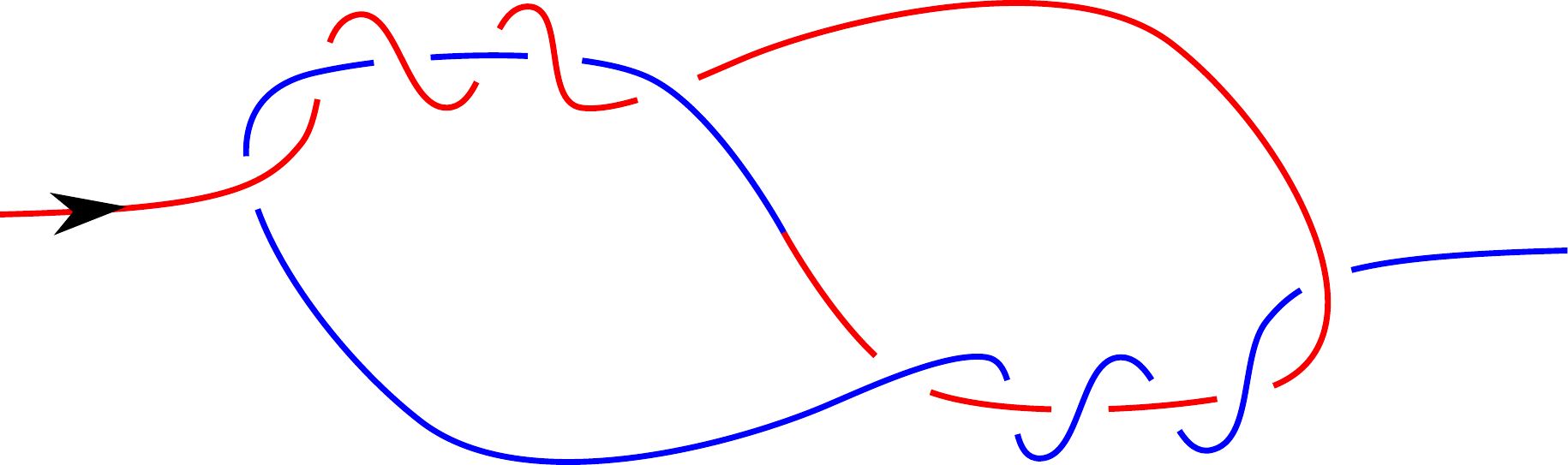}}
\caption{A strongly negative amphichiral diagram for $12a_{1287}$.}
\label{fig:12a_1287}
\end{figure}

\end{example}

\section{The half-Conway polynomial} \label{sec:half-Conway}
Let $(K,\rho)$ be an oriented strongly negative amphichiral knot. Then $\rho$ restricts to a free involution of $S^3 - \nu(K)$. The quotient $Q = (S^3 - \nu(K))/\rho$ is a non-orientable homology circle; that is, $H_*(Q;\mathbb{Z}) = H_*(S^1;\mathbb{Z})$ and $H_3(Q,\partial Q;\mathbb{Z}) = 0$ (see \cite[Lemma 1]{MR543095} for details). In particular, one can define the Alexander polynomial of $Q$ after choosing a generator of $H_1(Q;\mathbb{Z})$; see Kawauchi \cite{MR391110}. In the case of the quotient of a strongly negative amphichiral knot exterior, an oriented meridian of $K$ descends to a generator of $H_1(Q;\mathbb{Z})$ so that an orientation on $K$ specifies a generator of $H_1(Q;\mathbb{Z})$ by the right-hand rule. The Alexander polynomial of $Q$ is then the principal generator of the first elementary ideal of $H_1$ of the infinite cyclic cover, thought of as a $\mathbb{Z}[\tau,\tau^{-1}]$-module. Unlike the usual Alexander polynomial, it is important that the orientation of $K$ specifies a generator $\tau$ (as opposed to $\tau^{-1}$) since the polynomial need not be invariant under $\tau \mapsto \tau^{-1}$. 

\begin{definition} \label{def:KawauchiFactor}
The \emph{half-Alexander polynomial} $\Delta_{(K,\rho)}(\tau)$ of $(K,\rho)$ is the Alexander polynomial of $Q = (S^3 - \nu(K))/\rho$.
\end{definition}
Similarly to the usual Alexander polynomial, $\Delta_{(K,\rho)}(\tau)$ is defined only up to multiplication by a unit in $\mathbb{Z}[\tau,\tau^{-1}]$, which we indicate with the symbol $``\doteq"$. Clearly $\Delta_{(K,\rho)}(\tau)$ is an oriented equivariant isotopy invariant.

\begin{theorem}[{\cite[Theorem 1]{MR543095}}]\label{thm:Hartley-Kawauchi}
The half-Alexander polynomial $\Delta_{(K,\rho)}(\tau)$ satisfies the following.
\begin{enumerate}
\item $|\Delta_{(K,\rho)}(1)| = 1$,
\item $\Delta_{(K,\rho)}(\tau) \doteq \Delta_{(K,\rho)}(-\tau^{-1})$, and
\item $\Delta_{(K,\rho)}(\tau)\cdot \Delta_{(K,\rho)}(-\tau) \doteq \Delta_K(\tau^2)$.
  \end{enumerate}
\end{theorem}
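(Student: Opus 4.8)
The plan is to work directly with the infinite cyclic cover and to treat $\Delta_{(K,\rho)}(\tau)$ as the order of an Alexander module, deducing all three items from the geometry of the double cover $X = S^3 - \nu(K) \to Q$ together with one piece of module-theoretic algebra. First I would pin down the covering-space picture. Since $\rho$ acts freely on $X$, the projection $X \to Q$ is a connected double cover, classified by a surjection $\pi_1(Q) \to \mathbb{Z}/2$ which, as $H_1(Q) = \mathbb{Z}$, must be the reduction $H_1(Q) = \mathbb{Z} \to \mathbb{Z}/2$. Hence the meridian $\mu$, which generates $H_1(X) = \mathbb{Z}$ and is fixed by $\rho_*$ (a linking-number computation, using that $\rho$ reverses the orientations of both $S^3$ and $K$), maps to $2\tau$ in $H_1(Q)$, so $\tau$ is a genuine ``half-meridian''. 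Consequently the infinite cyclic cover $\tilde Q \to Q$ coincides with the ordinary infinite cyclic cover $\tilde X \to X$ as a space, but carries the extra deck transformation $\tau$ covering $\rho$ and satisfying $\tau^2 = t$ (the meridional translation). Thus $M := H_1(\tilde X)$ is simultaneously an $R := \mathbb{Z}[\tau^{\pm 1}]$-module with $R$-order $\Delta_{(K,\rho)}(\tau)$ and an $S := \mathbb{Z}[\tau^{\pm 2}] = \mathbb{Z}[t^{\pm 1}]$-module (by restriction) with $S$-order $\Delta_K(\tau^2)$. I would also record that the orientation character $w_1 \colon \pi_1(Q) \to \{\pm 1\}$ is exactly this covering character, so $w_1(\tau) = -1$.

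With this setup, item (3) becomes a statement about orders under the rank-two extension $S \subset R$, whose Galois involution is $\sigma \colon \tau \mapsto -\tau$. The cleanest route is to produce a square presentation matrix $A(\tau)$ over $R$ for $M$ --- for instance from a symmetric surgery diagram as in Section \ref{sec:surgery-presentations}, which yields an equivariant linking matrix, or by Fox calculus on a deficiency-one presentation of $\pi_1(Q)$ together with the homology-circle reduction --- so that $\Delta_{(K,\rho)}(\tau) \doteq \det_R A(\tau)$. Viewing the $R$-linear map $A \colon R^k \to R^k$ as an $S$-linear map $S^{2k} \to S^{2k}$ via the basis $\{1,\tau\}$, its $S$-determinant is the norm $N_{R/S}(\det_R A) = \det_R A(\tau)\cdot \det_R A(-\tau)$, while it also computes the $S$-order of $\operatorname{coker} A = M$, namely $\Delta_K(\tau^2)$. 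Equating the two gives $\Delta_{(K,\rho)}(\tau)\,\Delta_{(K,\rho)}(-\tau) \doteq \Delta_K(\tau^2)$.

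Item (2) I would obtain from twisted Poincaré--Lefschetz duality on the non-orientable homology circle $Q$: the duality pairing on the Alexander module is $R$-sesquilinear for the conjugation $\tau \mapsto w_1(\tau)\tau^{-1} = -\tau^{-1}$, and its nondegeneracy forces $\Delta_{(K,\rho)}(\tau) \doteq \Delta_{(K,\rho)}(-\tau^{-1})$; the homology-circle hypothesis is what controls the boundary (Klein-bottle) contribution so that it only changes the answer by a unit. Finally, item (1) is a corollary of (3): evaluating at $\tau = \pm 1$ (well defined up to sign, since units of $R$ become $\pm 1$) gives $|\Delta_{(K,\rho)}(1)\,\Delta_{(K,\rho)}(-1)| = |\Delta_K(1)| = 1$, and as these are integers we must have $|\Delta_{(K,\rho)}(1)| = 1$.

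I expect the main obstacle to be the algebra behind (3): guaranteeing a genuinely square $R$-presentation of the Alexander module (the naive Fox matrix is rectangular, and torsion modules over the two-dimensional ring $R$ need not split as sums of cyclics), and tracking the boundary factor carefully enough that the norm identity holds on the nose up to units rather than up to an unwanted extra polynomial. A secondary subtlety is setting up the twisted duality in (2) with the correct orientation character and with compactly supported cohomology on the non-compact cover $\tilde Q$; getting the sign $w_1(\tau) = -1$ right is exactly what distinguishes this from the classical symmetry $\Delta_K(t) \doteq \Delta_K(t^{-1})$ and is the whole source of the $-\tau^{-1}$ in the statement.
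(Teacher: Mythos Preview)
The paper does not supply its own proof of this theorem: it is quoted verbatim as \cite[Theorem 1]{MR543095} and used as a black box (to normalize the half-Conway polynomial in Definition \ref{def:half-Conway-normalization} and later in Proposition \ref{prop:half-Conway-properties} and Corollary \ref{cor:det1snak}). So there is no in-paper argument to compare against; what you have written is a self-contained sketch of the original Hartley--Kawauchi/Kawauchi proof.

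Your outline is correct and is essentially the argument in the sources the paper cites. The identification of $\tilde Q$ with $\tilde X$ equipped with the square root $\tau$ of $t$ is exactly the content of the unnumbered proposition at the start of Section \ref{sec:surgery-presentations}, so the paper itself already hands you the covering-space setup you describe. For (3), your norm/restriction-of-scalars argument is the standard one; the existence of a square $R$-presentation that you flag as the main obstacle is precisely what the paper's symmetric-surgery matrix $M$ in Section \ref{sec:surgery-presentations} provides, and the paper even observes the conjugate-symmetry $f_{i,j}(\tau)=f_{j,i}(-\tau^{-1})$ of that matrix, which is the concrete incarnation of your item (2). For (2) in general, twisted Poincar\'e--Lefschetz duality with conjugation $\tau\mapsto w_1(\tau)\tau^{-1}=-\tau^{-1}$ is indeed Kawauchi's approach in \cite{MR391110}. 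Your deduction of (1) from (3) is fine: the product $\Delta_{(K,\rho)}(1)\Delta_{(K,\rho)}(-1)$ has absolute value $|\Delta_K(1)|=1$, and both factors are integers, hence each has absolute value $1$.
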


In the following definition, we use Theorem \ref{thm:Hartley-Kawauchi} to resolve the ambiguity in $\Delta_{(K,\rho)}(\tau)$ of multiplication by a unit in $\mathbb{Z}[\tau,\tau^{-1}]$. This is analogous to the Conway normalization of the usual Alexander polynomial \cite{MR0258014} (see also \cite[Chapter 8]{MR1472978}).
\begin{definition} \label{def:half-Conway-normalization}
Let $(K,\rho)$ be an oriented strongly negative amphichiral knot. Then the \emph{half-Conway polynomial} $\nabla_{(K,\rho)}(z)$ is the unique polynomial of $z$ such that 
\begin{enumerate} 
\item $\nabla_{(K,\rho)}(\tau-\tau^{-1}) \doteq \Delta_{(K,\rho)}(\tau)$, where $\Delta_{(K,\rho)}(\tau)$ is the half-Alexander polynomial (see Definition \ref{def:KawauchiFactor}), and 
\item $\nabla_{(K,\rho)}(0) = 1$.
\end{enumerate}
\end{definition}
To see that $\nabla_{K,\rho}(z)$ is well-defined, note that Theorem \ref{thm:Hartley-Kawauchi}(2) implies that for some $a_i \in \mathbb{Z}$ and $n \geq 1$,
\[
\Delta_{(K,\rho)}(\tau) \doteq a_0 + \sum^n_{i=1} a_i(\tau^i + (-\tau^{-1})^i). 
\] 
In particular, by the fundamental theorem of symmetric polynomials, the right side can be written as a polynomial in $\tau\cdot(-\tau^{-1})$ and in $\tau - \tau^{-1}$, or just $\tau - \tau^{-1}$. Furthermore, Theorem \ref{thm:Hartley-Kawauchi}(1) allows us to pin down the overall sign of $\nabla_{K,\rho}(z)$ by specifying that $\nabla_{K,\rho}(0) = 1$. Note that $\nabla_{(K,\rho)}(z)$ is clearly an oriented equivariant isotopy invariant. 

In the second two parts of the following proposition, we show that the half-Conway polynomial $\nabla_{(K,\rho)}(z)$ has similar properties to the usual Conway polynomial $\nabla_K(z)$ (see \cite[Section 8]{MR1472978} for background about the Conway polynomial). In particular, we can describe the effect of orientation reversal (c.f. \cite[Proposition 6.12]{MR1472978}), and of connected sums. In Section \ref{sec:equivariantskeinrelation} we will see that the half-Conway also satisfies a skein relation.

\begin{proposition} \label{prop:half-Conway-properties}
Let $(K,\rho)$ be an oriented strongly negative amphichiral knot. Then the half-Conway polynomial $\nabla_{(K,\rho)}(z)$ has the following properties.
\begin{enumerate}
\item $\nabla_{(K,\rho)}(z)\cdot \nabla_{(K,\rho)}(-z) = \nabla_K(z)$.

\item $\nabla_{(K,\rho)}(z) = \nabla_{(rK,\rho)}(-z)$, where $rK$ is $K$ with the opposite orientation.
\item $\nabla_{(K \widetilde{\#} K',\rho \widetilde{\#} \rho')}(z) = \nabla_{(K,\rho)}(z)\cdot\nabla_{(K',\rho')}(z)$.
\end{enumerate}
\end{proposition}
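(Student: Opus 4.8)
The plan is to work entirely on the level of the half-Alexander polynomial $\Delta_{(K,\rho)}(\tau)$ and convert back to the Conway normalization of Definition \ref{def:half-Conway-normalization} only at the end. Since the conversion is identical for all three parts, I would isolate it as a single observation: the substitution $z \mapsto \tau - \tau^{-1}$ defines an injection $\mathbb{Z}[z] \hookrightarrow \mathbb{Z}[\tau,\tau^{-1}]$ under which a polynomial of degree $d$ has $\tau$-support exactly $[-d,d]$, with nonzero endpoint coefficients. Consequently, if two polynomials $P,R \in \mathbb{Z}[z]$ satisfy $P(\tau-\tau^{-1}) \doteq R(\tau-\tau^{-1})$, then comparing supports forces the unit to be $\pm 1$, so $P = \pm R$ in $\mathbb{Z}[z]$; evaluating at $z = 0$ and using that each half-Conway polynomial takes the value $1$ there (Definition \ref{def:half-Conway-normalization}(2)) removes the remaining sign. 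With this in hand it suffices to prove each identity up to $\doteq$ after substituting $z = \tau - \tau^{-1}$.

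For part (1), substituting $z = \tau - \tau^{-1}$ and noting that $-z = (-\tau) - (-\tau)^{-1}$ gives $\nabla_{(K,\rho)}(z)\,\nabla_{(K,\rho)}(-z) \doteq \Delta_{(K,\rho)}(\tau)\,\Delta_{(K,\rho)}(-\tau)$, which by Theorem \ref{thm:Hartley-Kawauchi}(3) is $\doteq \Delta_K(\tau^2)$. Since the Conway normalization of the ordinary Alexander polynomial gives $\nabla_K(\tau - \tau^{-1}) \doteq \Delta_K(\tau^2)$, the two sides agree and the conversion observation finishes the claim. For part (2), the only new input is that the orientation of $K$ is exactly what fixes the generator $\tau$ of $H_1(Q)$, via the right-hand rule on the meridian, as in the discussion preceding Definition \ref{def:KawauchiFactor}; reversing the orientation reverses the meridian and hence replaces $\tau$ by $\tau^{-1}$, so $\Delta_{(rK,\rho)}(\tau) \doteq \Delta_{(K,\rho)}(\tau^{-1})$. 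Rewriting $\Delta_{(K,\rho)}(\tau^{-1}) \doteq \nabla_{(K,\rho)}(\tau^{-1} - \tau) = \nabla_{(K,\rho)}(-z)$ gives $\nabla_{(rK,\rho)}(z) \doteq \nabla_{(K,\rho)}(-z)$, and the conversion observation yields the stated equality.

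The substance is in part (3), where I would prove multiplicativity of the half-Alexander polynomial, $\Delta_{(K\widetilde{\#}K',\,\rho\widetilde{\#}\rho')}(\tau) \doteq \Delta_{(K,\rho)}(\tau)\,\Delta_{(K',\rho')}(\tau)$, and then convert as above. The geometric input is an analysis of the quotient of the equivariant connected sum (Definition \ref{def:connectsum}). Removing the invariant ball used in the connected sum and passing to the quotient exhibits $Q_{K\widetilde{\#}K'}$ as a union of a copy of $Q$ and a copy of $Q' = (S^3 - \nu(K'))/\rho'$, each with a small neighborhood of the image of a fixed point deleted (which does not affect the Alexander module), glued along the quotient of the twice-punctured connect-sum sphere. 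Because the free involution swaps the two punctures, this gluing region is a M\"obius band $M$, whose core represents the generator $\tau$ of $H_1$ (it is the image of a meridian of $K$ under the two-to-one quotient). I would then run a Mayer--Vietoris argument for the infinite cyclic cover of $Q_{K\widetilde{\#}K'}$ with respect to this decomposition, showing that the preimage of $M$ is homologically negligible in the relevant degrees, so that the Alexander module of the whole splits as the direct sum of the Alexander modules of $Q$ and $Q'$ and the orders multiply.

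I expect the main obstacle to be precisely the bookkeeping for the gluing region $M$. Unlike the classical connected-sum argument, where the connect-sum sphere minus the knot is an annulus whose infinite cyclic cover is a union of contractible pieces, here one must track the $\mathbb{Z}[\tau,\tau^{-1}]$-module structure carried by the M\"obius band and verify that the connecting homomorphisms in the Mayer--Vietoris sequence vanish, so that one genuinely obtains a direct sum of Alexander modules rather than a nontrivial extension. It is worth noting that part (1), combined with multiplicativity of the ordinary Conway polynomial, only shows that $\nabla_{(K\widetilde{\#}K',\,\rho\widetilde{\#}\rho')}(z)\,\nabla_{(K\widetilde{\#}K',\,\rho\widetilde{\#}\rho')}(-z)$ equals the expected product of $f(z)f(-z)$ terms; since such factorizations are not unique, this shortcut does not prove part (3), which is why the direct module computation is needed.
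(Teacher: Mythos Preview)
Your proposal is correct and follows essentially the same route as the paper for all three parts: parts (1) and (2) are handled exactly as you describe, and for part (3) the paper likewise runs Mayer--Vietoris on the infinite cyclic cover of the quotient, using the image of the connect-sum sphere as the separating piece.

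The obstacle you anticipate in part (3) dissolves more quickly than you expect. Since the core of the M\"obius band $M$ maps to a generator of $H_1(Q_{K\widetilde{\#}K'})\cong\mathbb{Z}$, the preimage of $M$ in the infinite cyclic cover is the \emph{universal} cover of $M$, namely $\mathbb{R}\times I$, which is contractible. Thus $H_1$ of the separating piece vanishes and the connecting map into $H_0$ is visibly injective, so Mayer--Vietoris immediately gives the direct sum splitting of Alexander modules with no further bookkeeping. (One small slip: the core of $M$ is not literally the image of a meridian---the meridian maps homeomorphically to $\partial M$, which is twice the core in $H_1(M)$---but your conclusion that the core hits the generator $\tau$ is correct and is all that is needed.)
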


\begin{proof}
For statement (1), recall that $\Delta_K(\tau^2) = \nabla_K(\tau - \tau^{-1})$. Now rewriting Theorem \ref{thm:Hartley-Kawauchi}(3) in terms of the Conway and half-Conway polynomial, we get
\[
\nabla_{(K,\rho)}(\tau - \tau^{-1})\cdot \nabla_{(K,\rho)}(-\tau + \tau^{-1}) = \nabla_K(\tau - \tau^{-1}),
\]
as desired. 

For statement (2), note that reversing the orientation on $K$ corresponds to exchanging $\tau$ and $\tau^{-1}$, and hence $\Delta_{(K,\rho)}(\tau) \doteq \Delta_{(rK,\rho)}(\tau^{-1})$. Then substituting for the half-Conway polynomial we have $\nabla_{(K,\rho)}(z) = \nabla_{(rK,\rho)}(-z)$.

For statement (3), let $\widetilde{Q}, \widetilde{Q}'$, and $\widetilde{Q}''$ be the infinite cyclic covers of $(S^3 - \nu(K))/\rho$, $(S^3 - \nu(K'))/\rho'$, and $(S^3 - \nu(K \# K'))/(\rho \# \rho')$ respectively. Consider the connected summing sphere $S$ for $K\#K'$. Lifting $S/\rho$ to $\widetilde{Q}''$, we have $\widetilde{S} \cong \mathbb{R} \times I$, which separates $\widetilde{Q}''$ into two pieces: one $\tau$-equivariantly homeomorphic to $\widetilde{Q}$ and one $\tau$-equivariantly homeomorphic to $\widetilde{Q}'$. Now since $\mathbb{R} \times I$ is contractible, the Mayer-Vietoris sequence applied to this decomposition of $\widetilde{Q}''$ gives that the $\mathbb{Z}[\tau,\tau^{-1}]$-module $H_1(\widetilde{Q}'')$ splits as a direct sum of $H_1(\widetilde{Q})$ and $H_1(\widetilde{Q}')$. Hence the half-Alexander polynomial is multiplicative under connected sum, and so is the half-Conway polynomial.
\end{proof}

\manysymmetries*
\begin{proof}
Let $K' = 10_{43}$ with $\rho$ the strongly negative amphichiral symmetry shown in Figure \ref{fig:10_43} and let $(rK',\rho)$ be its reverse. In Section \ref{sec:computations} we compute that $\nabla_{(K',\rho)}(z) = 1+z^2+z^3$. Note that $K'$ is reversible so that $K'$ and $rK'$ are (non-equivariantly) isotopic. For any $n \in \mathbb{N}$ we can take equivariant connected sums to obtain $(K_{i,j},\rho_{i,j}) := \widetilde{\#}^i (K',\rho) \widetilde{\#}^j(rK',\rho)$, where $i,j\geq 0$ with $i + j = n$. Each $K_{i,j}$ is isotopic to $\#^n K' = K$, but computing the half-Conway polynomials (see Proposition \ref{prop:half-Conway-properties}(2) and (3)) gives that
\[
\nabla_{(K_{i,j},\rho_{i,j})}(z) = (1+z^2+z^3)^i(1+z^2-z^3)^j
\] 
so that each of these $n+1$ symmetries is distinct.
\end{proof}

\begin{figure}
  \begin{overpic}[width=250pt, grid=false]{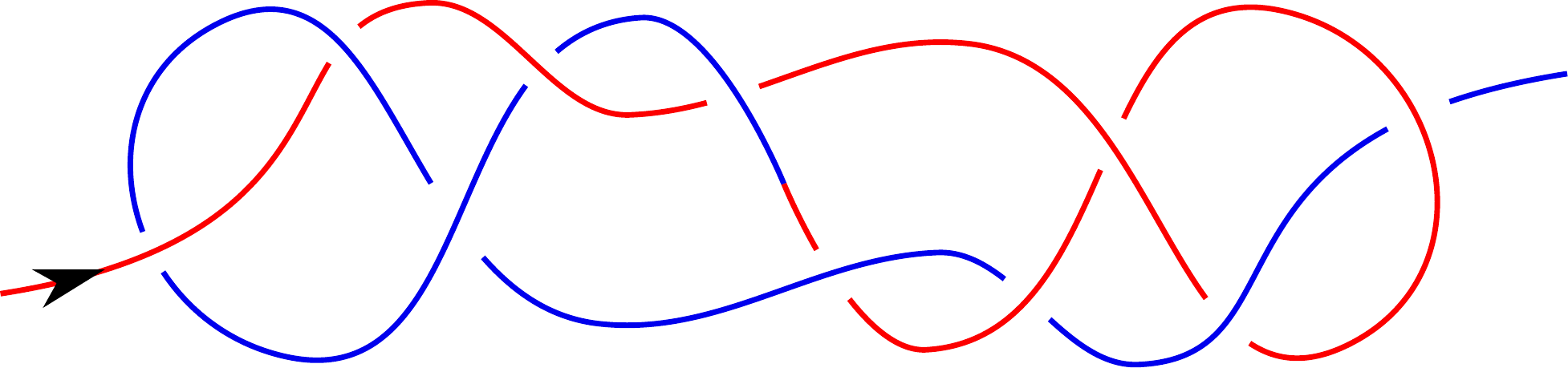}
  \end{overpic}
\caption{A directed strongly negative amphichiral symmetry on $10_{43}$ with half-Conway polynomial $1+z^2+z^3$.}
\label{fig:10_43}
\end{figure}

The following proposition will be an important first step towards computing the half-Conway polynomial for an arbitrary strongly negative amphichiral knot. 

\begin{proposition} \label{prop:Conway_connect_sum}
Let $J \subset S^3$ be any oriented knot, and let $\overline{J}$ be the reverse mirror of $J$ so that $J \# \overline{J}$ has a natural strongly negative amphichiral symmetry $\rho$. Then
\[
\nabla_{(J \# \overline{J},\rho)}(z) = \nabla_J(z),
\]
where $\nabla_J(z)$ is the usual Conway polynomial of $J$.
\end{proposition}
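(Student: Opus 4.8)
The plan is to compute the half-Alexander polynomial directly and show that
\[
\Delta_{(K,\rho)}(\tau) \doteq \Delta_J(\tau^2),
\]
after which the result is immediate: the right-hand side is exactly $\nabla_J(\tau-\tau^{-1})$ under the usual Conway normalization of $J$, and since both $\nabla_{(K,\rho)}(z)$ and $\nabla_J(z)$ are pinned down by the condition that they take the value $1$ at $z=0$ (using Theorem \ref{thm:Hartley-Kawauchi}(1) and Definition \ref{def:half-Conway-normalization}), this forces $\nabla_{(K,\rho)}(z)=\nabla_J(z)$. I would first emphasize \emph{why} a genuine computation is required: combining Proposition \ref{prop:half-Conway-properties}(1) with the connected-sum formula $\nabla_K(z)=\nabla_J(z)\nabla_J(-z)$ only yields $\nabla_{(K,\rho)}(z)\cdot\nabla_{(K,\rho)}(-z)=\nabla_J(z)\nabla_J(-z)$, which does not single out the factor $\nabla_J(z)$ (for instance $\nabla_J(-z)$ is an equally valid factor). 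So the product relation alone cannot finish the argument, and one must analyze the quotient module.

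The crucial step is a covering-space identification. Write $E_K=S^3-\nu(K)$, let $p\colon E_K\to Q$ be the free double cover, and let $\phi\colon \pi_1(Q)\to H_1(Q)=\mathbb{Z}=\langle\tau\rangle$ be abelianization. Since $H^1(Q;\mathbb{Z}/2)=\mathbb{Z}/2$, the space $Q$ has a \emph{unique} nontrivial double cover, which must be $p$; hence $\pi_1(E_K)=\phi^{-1}(2\mathbb{Z})$. Consequently the Alexander (infinite cyclic) cover $\widetilde{E}_K$ of $K$ and the infinite cyclic cover $\widetilde{Q}$ of $Q$ correspond to the \emph{same} subgroup $\ker\phi\subseteq\pi_1(Q)$, so they coincide as covering spaces. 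Under this identification the deck generator $\tau$ of $\widetilde{Q}\to Q$ covers the involution $\rho$, and its square $\tau^2$ is the deck transformation $t$ of $\widetilde{E}_K\to E_K$ (the meridian of $K$); this is the geometric source of the relation $\tau^2=t$ underlying Theorem \ref{thm:Hartley-Kawauchi}(3). Thus $H_1(\widetilde{Q})$ is just the Alexander module $H_1(\widetilde{E}_K)$ equipped with the refined $\mathbb{Z}[\tau^{\pm1}]$-structure in which $\tau$ is a square root of $t$ realizing the lifted symmetry.

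With this in hand, the computation is linear algebra. Since $K=J\#\overline{J}$, additivity of the Alexander module under connected sum gives $H_1(\widetilde{E}_K)\cong A\oplus\overline{A}$ as $\mathbb{Z}[t^{\pm1}]$-modules, where $A=H_1(\widetilde{E}_J)$ and $\overline{A}=H_1(\widetilde{E}_{\overline J})$. Because $\rho$ interchanges the two summands $J$ and $\overline{J}$, the chosen lift $\tau$ (the one with $\tau^2=t$) exchanges $A$ and $\overline{A}$; the relation $\tau^2=t$ then shows the two off-diagonal maps are mutually inverse up to $t$, so $\overline{A}\cong A$ and in suitable coordinates $\tau(a,a')=(t\,a',\,a)$. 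Taking a square presentation matrix $P(t)$ of $A$ over $\mathbb{Z}[t^{\pm1}]$ with $\det P(t)\doteq\Delta_J(t)$, one checks that generators of $A$ serve as $\mathbb{Z}[\tau^{\pm1}]$-module generators of $M:=H_1(\widetilde{Q})$ (the second summand being $\tau\cdot A$), with defining relations given by $P(\tau^2)$. Hence $M\cong \mathbb{Z}[\tau^{\pm1}]^n/P(\tau^2)\,\mathbb{Z}[\tau^{\pm1}]^n$, and its order is $\det P(\tau^2)=(\det P)(\tau^2)\doteq\Delta_J(\tau^2)$, which is exactly the target.

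I expect the main obstacle to be the equivariant bookkeeping of the second step rather than the algebra of the third: one must verify carefully that $\widetilde{Q}=\widetilde{E}_K$ with $\tau^2=t$, and that the lifted symmetry genuinely \emph{swaps} the two connected-sum summands of the Alexander module (as opposed to acting diagonally). This requires an equivariant Mayer--Vietoris analysis of the decomposition $E_K=E_J\cup E_{\overline J}$ along the connect-sum annulus, tracking the $\rho$-action through the covers; the factor-of-two subtleties here are exactly what make the product relation insufficient on its own. Once the module structure $\tau(a,a')=(t\,a',a)$ is established, the determinant computation and the final normalization are routine.
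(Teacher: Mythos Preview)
Your argument is correct and reaches the same intermediate target $\Delta_{(J\#\overline{J},\rho)}(\tau)\doteq\Delta_J(\tau^2)$ as the paper, but the execution is genuinely different. The paper does not work abstractly with the Alexander module: instead it fixes a surgery presentation $\{\alpha_1,\dots,\alpha_n;U\}$ for $J$, symmetrizes to $\{\alpha_i,\rho(\alpha_i);U\}$ for $J\#\overline{J}$, and lifts everything to $X_\infty(U)$. The half-Alexander presentation matrix $N$ then has entries $\sum_k lk(\overline{\alpha}_i,\tau^k\overline{\alpha}_j)(-\tau)^k$, and the key geometric point is that the lift of the connect-sum sphere separates $\overline{\alpha}_i$ from every odd $\tau$-translate, so only the $\tau^{2k}$ terms survive and $N$ coincides with the usual presentation matrix $M$ for $\Delta_J$ under $t\mapsto\tau^2$.

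Your route replaces this explicit linking-number computation with pure module theory: the Mayer--Vietoris splitting $H_1(\widetilde{E}_K)\cong A\oplus\overline{A}$, the identification of $\tau$ as the swap (using that $\tau$ commutes with $t=\tau^2$ so the swap is $\mathbb{Z}[t^{\pm1}]$-linear), and the resulting presentation $P(\tau^2)$. This is cleaner and more self-contained for this proposition, and your ``suitable coordinates'' claim $\tau(a,a')=(ta',a)$ checks out once one identifies $\overline{A}$ with $A$ via $\tau|_A$. The paper's approach, by contrast, is an instance of the surgery-presentation machinery developed in Section~\ref{sec:surgery-presentations}, which is indispensable for the proof of Theorem~\ref{thm:skeinrelation}; so while your argument is more economical here, the paper's method is the one that integrates with the rest of the work. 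Your diagnosis of the main obstacle is also accurate: the content really is in the equivariant identification $\widetilde{Q}=\widetilde{E}_K$ with $\tau^2=t$ and the swap action, both of which the paper handles via the geometry of the lifted connect-sum sphere rather than the covering-space/homological-algebra language you use.
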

We postpone the proof of this proposition until the end of Section \ref{sec:surgery-presentations}, after we discuss symmetric surgery presentations.

\subsection{The equivariant skein relation} \label{sec:equivariantskeinrelation}
Before stating the main theorem, we set up some notation and conventions as follows. Recall that the fixed points separate $K$ into a pair of arcs $a_r$ and $a_b$, and recall that in Definition \ref{def:crossingpairsign} we assigned a sign to each equivariant pair of crossings. 

\begin{definition}
\label{def:skeintriad}
An \emph{equivariant skein triple} is a triple $K_+, K_-,$ and $K_0$ of strongly negative amphichiral knots such that
\begin{enumerate}
\item There are diagrams $D_+,D_-$, and $D_0$ for $K_+,K_-$, and $K_0$ respectively which are identical outside of an equivariant pair of disks.
\item Within the equivariant pair of disks, $D_+$ is a positive dichromatic crossing pair, $D_-$ is a negative dichromatic crossing pair, and $D_0$ is the unique oriented resolution of that crossing pair; see Figure \ref{fig:skeinrelation}.
\end{enumerate}
\end{definition}

\begin{figure}
  \begin{overpic}[width=250pt, grid=false]{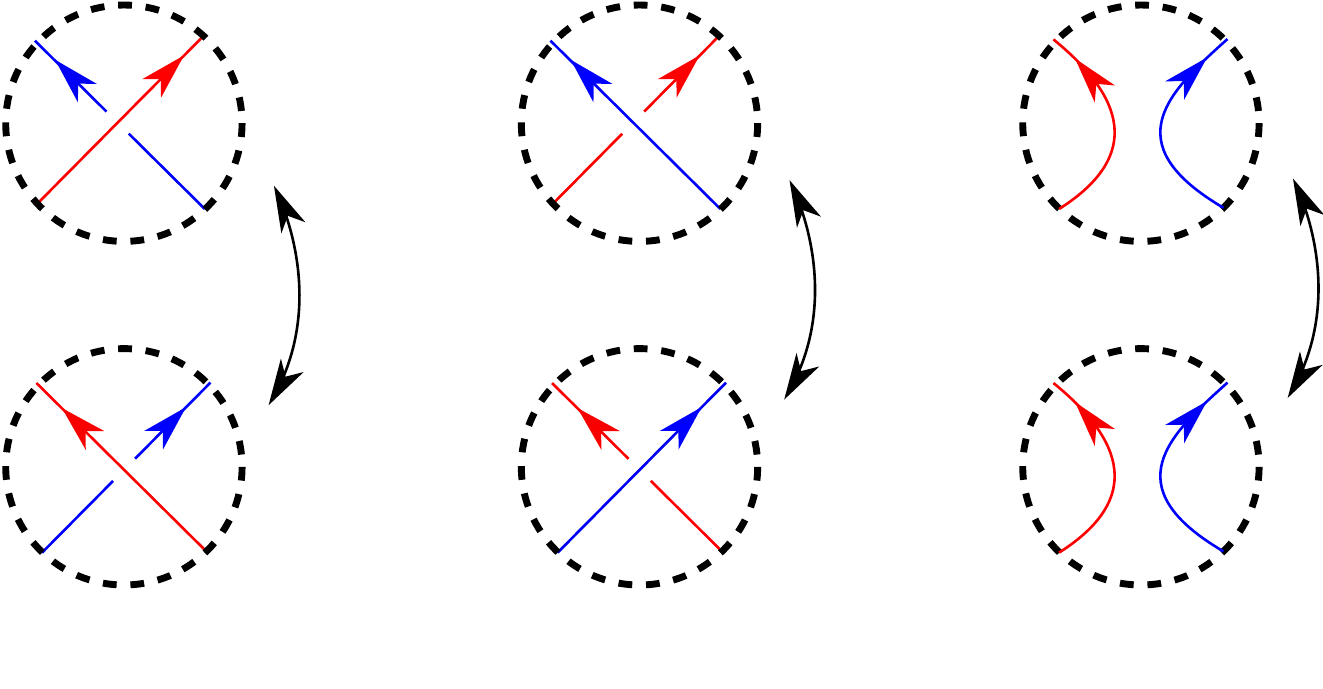}
    \put (24, 29) {$\rho$}
    \put (63, 29) {$\rho$}
    \put (101, 29) {$\rho$}
    \put (-6, 40) {$c_+$}
    \put (-12, 12) {$\rho(c_+)$}
    \put (33, 40) {$c_-$}
    \put (27, 12) {$\rho(c_-)$}
    \put (7,0) {$D_+$}
    \put (46,0) {$D_-$}
    \put (83,0) {$D_0$}
  \end{overpic}
\caption{The diagrams $D_+$, $D_-$, and $D_0$ are identical outside of the equivariant pair of disks shown. Here $c_+> \rho(c_+)$ and $c_- > \rho(c_-)$. Note that we require $(c_+,c_-)$ to be a dichromatic crossing pair; see Definition \ref{def:chromatic}.}
\label{fig:skeinrelation}
\end{figure}

\skeinrelation*
We defer the proof of Theorem \ref{thm:skeinrelation} until Section \ref{sec:proofofmaintheorem}.
\begin{remark}
In contrast with the skein relation for the usual Conway polynomial, the resolution of a dichromatic crossing pair is a knot rather than a link. One might wish to extend this skein relation to all equivariant pairs of crossings. However, to do so requires consideration of links, which arise when resolving monochromatic crossing pairs. Since there is no clear way to assign a sign to an equivariant pair of crossings in a link, we only define the skein relation on dichromatic crossing pairs. 
\end{remark}

Although the equivariant skein relation is not defined for every pair of crossings, we can still use it to compute the half-Conway polynomial $\nabla_{(K,\rho)}(z)$ for any strongly negative amphichiral knot $K$ as follows. Iteratively apply the equivariant skein relation to obtain a skein decomposition of $K$ such that in every element in the decomposition the arc $a_r$ always passes over the arc $a_b$. In such a diagram, we can perform a small equivariant isotopy to pull $a_r$ above the plane of the diagram, and $a_b$ behind the plane of the diagram, so that the plane of the diagram decomposes the knot as $J \# \overline{J}$ for some $J$, and $\nabla_{(J\#\overline{J},\rho)}(z) = \nabla_J(z)$; see Proposition \ref{prop:Conway_connect_sum}. The half-Conway polynomials of the summands in the skein decomposition then determine the half-Conway polynomial of $K$ by Theorem \ref{thm:skeinrelation}. An example computation is worked out in Figure \ref{fig:8_17skein}.

\begin{figure}
\scalebox{.3}{\includegraphics{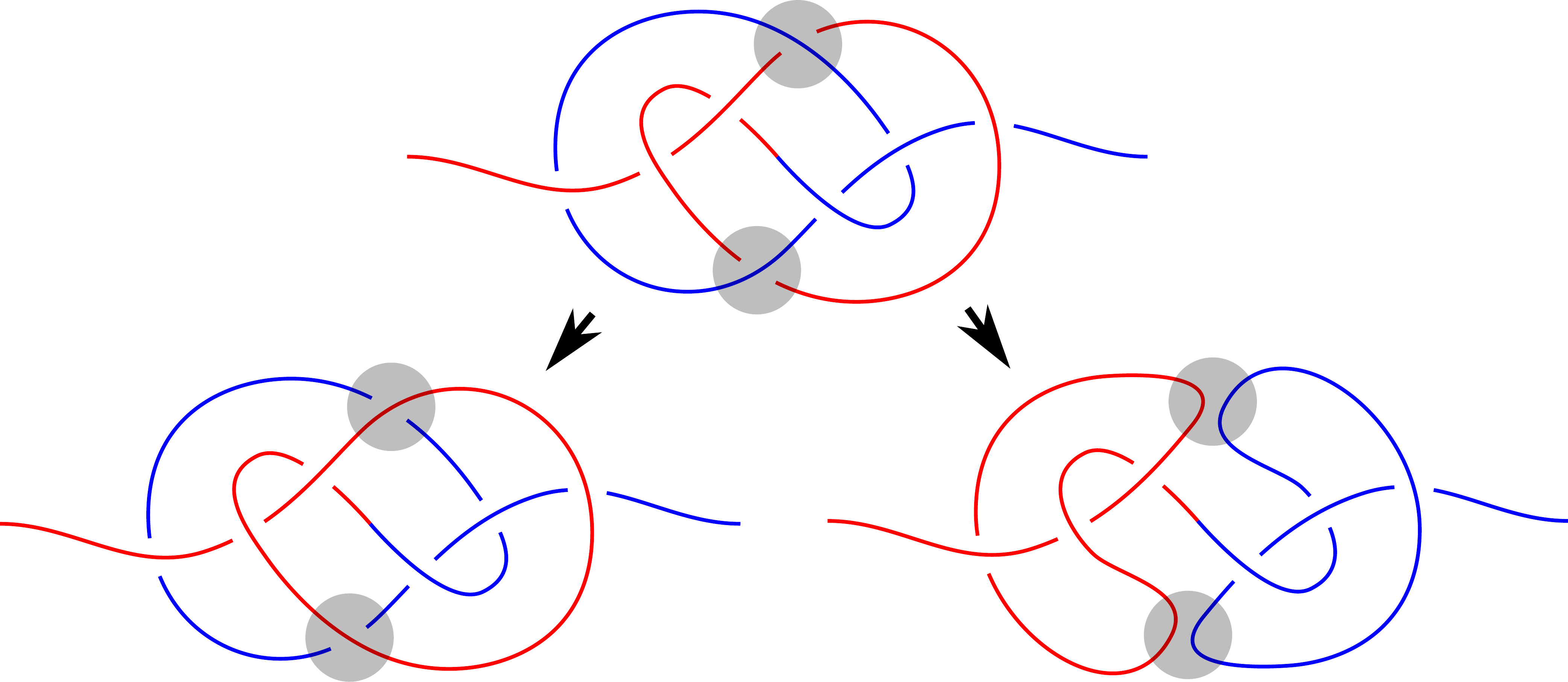}}
\caption{A strongly negative amphichiral diagram for $K_+ = 8_{17}$ (top), and the strongly negative amphichiral diagrams obtained by a crossing change ($K_-$ on the bottom left) and an oriented resolution ($K_0$ on the bottom right) on the indicated symmetric pair of crossings. Here $K_-$ is the unknot, and $K_0$ is $3_1 \# \overline{3_1}$, so that $\nabla_{(K_+,\rho)}(z) = \nabla_{(K_-,\rho)}(z) + z \cdot \nabla_{(K_0,\rho)}(z) = 1 + z(1+z^2) = 1+z+z^3$.}
\label{fig:8_17skein}
\end{figure}

In addition to providing an easy computational method, this equivariant skein relation also allows us to relate the half-Conway polynomial and the Arf invariant to the half-linking number of $K$.

\halfConwayhalflinkingarf*

\begin{proof}
We first prove claim (1). Let $a_1(K)$ denote the $z$-coefficient of $\nabla_{(K,\rho)}(z)$. First, consider strongly negative amphichiral knots of the form $J\#\overline{J}$ for some knot $J$. By definition, $h(J\#\overline{J}) = 0$, and by Proposition \ref{prop:Conway_connect_sum}, $a_1(J\#\overline{J}) = 0$ since the Conway polynomial $\nabla_{J}(z)$ is a polynomial in $z^2$. Next, note that the half-linking number $h(K)$ satisfies the relation
\begin{equation} \label{eqn:hcrossingchange}
h(K_+) - h(K_-) = 1,
\end{equation}
and Theorem \ref{thm:skeinrelation} (along with Definition \ref{def:half-Conway-normalization}) implies that 
\begin{equation}\label{eqn:a_1crossingchange}
a_1(K_+) - a_1(K_-) = 1.
\end{equation}
Now any strongly negative amphichiral knot $K$ admits a finite sequence of dichromatic crossing changes which transforms it to a knot of the form $J\#\overline{J}$, and by Equation \ref{eqn:hcrossingchange} and Equation \ref{eqn:a_1crossingchange}, both $h(K)$ and $a_1(K)$ are equal to the signed count of these crossing changes. Hence $h(K) = a_1(K)$.

For claim (2), recall that the $z^2$-coefficient of the Conway polynomial is the Arf invariant (mod $2$) \cite[Corollary 10.8]{MR712133} and by Proposition \ref{prop:half-Conway-properties}, 
\[
\nabla_{(K,\rho)}(z) \cdot \nabla_{(K,\rho)}(-z) = \nabla_K(z).\]
Letting $\nabla_{(K,\rho)}(z) = 1+ a_1z + a_2z^2 + \dots$, we have $\nabla_{(K,\rho)}(-z) = 1 - a_1z + a_2z^2 - \dots$ and hence
\[
\nabla_K(z) = 1 + (2a_2 - a_1^2)z^2 + \dots.
\]
Then since $(2a_2 - a_1^2) \equiv a_1$ (mod $2$), we have that $\textup{Arf}(K) \equiv a_1 \equiv h(K)$ (mod $2$).
\end{proof}

\section{Computing the half-Alexander polynomial with a surgery presentation} \label{sec:surgery-presentations}

In this section we use a symmetric surgery description of a strongly negative amphichiral knot to obtain an equivariant presentation for the Alexander module from which we can compute the half-Alexander polynomial. This is analogous to the non-equivariant discussion in \cite[Section 7]{MR1472978}.

We begin with some notation. Let $(K,\rho)$ be an oriented strongly negative amphichiral knot and let $X = S^3 -\nu(K)$ be the exterior of $K$ with quotient $Q = (S^3 - \nu(K))/\rho$. Let $X_{\infty}(K)$ be the infinite cyclic cover of $K$. Let $t$ be the generator of the deck transformation group specified by the orientation on $K$. Then $H_1(X_{\infty};\mathbb{Z})$ is naturally a $\mathbb{Z}[t,t^{-1}]$-module. The following proposition is implicit in the work of Kawauchi \cite{MR433474}, but we produce a proof here for convenience.

\begin{proposition}
The symmetry $\rho$ lifts to a homeomorphism $\tau\colon X_{\infty}(K) \to X_{\infty}(K)$ such that $\tau^2=t$. Furthermore, $X_{\infty}(K) \to X \to Q$ is the infinite cyclic cover of $Q$, and $\tau$ is the generator of the deck transformation group compatible with the orientation on $K$. In particular, $\tau$ is unique and the $\mathbb{Z}[\tau,\tau^{-1}]$-module $H_1(X_{\infty}(K);\mathbb{Z})$ is the Alexander module of $Q$.
\end{proposition}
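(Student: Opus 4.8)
The plan is to argue entirely at the level of fundamental groups and covering spaces, and to reduce every assertion to the single relation $\tau^2 = t$. Write $G = \pi_1(Q)$, $H = \pi_1(X)$, and $N = \pi_1(X_\infty(K))$. Since the fixed points of $\rho$ lie on $K$, the restriction $\rho|_X$ is a free involution, so $X \to Q$ is a double cover and $H$ is an index-$2$ subgroup of $G$; by definition $N = \ker(\phi)$, where $\phi\colon H \to H_1(X) \to \mathbb{Z}$ sends the meridian to $1$.

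First I would record the only geometric input: $\rho_*$ is the identity on $H_1(X)\cong\mathbb{Z}$. This is a linking-number computation. Orient the meridian $\mu$ so that $lk(\mu,K)=+1$. Because $\rho$ reverses the orientation of $S^3$ it negates all linking numbers, while it carries the oriented curve $K$ to $-K$; hence $-lk(\rho(\mu),K) = lk(\rho(\mu),\rho(K)) = -lk(\mu,K) = -1$, so $lk(\rho(\mu),K)=+1$ and $\rho_*[\mu]=[\mu]$. Since $\rho_*$ fixes $H_1(X)$ it preserves $N$, so $\rho$ lifts to $\tau\colon X_\infty(K)\to X_\infty(K)$, the lifts forming a torsor over the deck group $\langle t\rangle$. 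The same triviality shows that conjugation by any lift acts trivially on $\langle t\rangle$, so $\tau t = t\tau$; and as $\tau^2$ covers $\rho^2 = \mathrm{id}_X$ it is a deck transformation of $X_\infty(K)\to X$, giving $\tau^2 = t^k$ for some $k\in\mathbb{Z}$.

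The crux is to show $k$ is odd, which I would extract from $H_1(Q)$ rather than by a direct monodromy computation. Because $\rho_*$ is trivial on $H^{\mathrm{ab}} = H_1(X)$, the homomorphism $\phi$ is invariant under $G$-conjugation, so $N \triangleleft G$ and $X_\infty(K)\to X\to Q$ is a regular cover with deck group $G/N = \langle t,\tau\rangle$. By the previous step $t$ is central and $\tau^2 = t^k$, so $G/N$ is abelian; being a quotient of $G^{\mathrm{ab}} = H_1(Q)\cong\mathbb{Z}$ it is cyclic, and it is infinite since it contains $\langle t\rangle\cong\mathbb{Z}$ with index $2$. Hence $G/N\cong\mathbb{Z}$, which forces $k$ odd: if $k$ were even then $\tau t^{-k/2}$ would be an element of order $2$ and $G/N\cong\mathbb{Z}\oplus\mathbb{Z}/2$, not cyclic. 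With $k$ odd I would replace $\tau$ by $\tau t^{(1-k)/2}$ to arrange $\tau^2 = t$ exactly; this is the unique lift with that property, and since $\tau^2=t$ generates the index-$2$ subgroup $\langle t\rangle$ while $\tau\notin\langle t\rangle$, the element $\tau$ generates $G/N\cong\mathbb{Z}$.

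Finally, identifying $G/N\cong\mathbb{Z}$ with $G^{\mathrm{ab}} = H_1(Q)$ (a surjection $\mathbb{Z}\to\mathbb{Z}$ is an isomorphism) shows $N = \ker(G\to H_1(Q))$, so $X_\infty(K)\to Q$ is exactly the infinite cyclic cover of $Q$ and $\tau$ is its deck generator. It is the orientation-compatible generator because $\tau^2 = t$ with $t$ orientation-compatible, and $(\tau^{-1})^2 = t^{-1}\neq t$ pins down $\tau$ over $\tau^{-1}$. Consequently $H_1(X_\infty(K);\mathbb{Z})$, with the resulting $\mathbb{Z}[\tau,\tau^{-1}]$-module structure, is by definition the Alexander module of $Q$. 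The main obstacle is precisely the parity of $k$; the homological shortcut above sidesteps the more delicate alternative of computing $\tau^2$ by lifting a loop representing a generator of $H_1(Q)$.
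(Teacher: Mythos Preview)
Your proof is correct and takes a genuinely different route from the paper's. The paper constructs $\tau$ directly: it picks a $\rho$-invariant meridian $m$, observes that its image $l$ in $Q$ satisfies $(\pi_\rho)_*[m] = 2[l]$ in $H_1(Q)$, and then defines $\tau$ to be the deck transformation of $X_\infty(K) \to Q$ associated to $[l] \in \pi_1(Q)$; the relation $\tau^2 = t$ follows from a basepoint argument since both $\tau^2$ and $t$ correspond to $[m] \in \pi_1(X)$. You instead take an arbitrary lift of $\rho$, use $\rho_* = \mathrm{id}$ on $H_1(X)$ to get $\tau t = t\tau$ and $\tau^2 = t^k$, and then invoke $H_1(Q) \cong \mathbb{Z}$ to force $k$ odd before adjusting. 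The paper's route is more geometric and produces the correct $\tau$ from the outset; yours is more algebraic, avoids basepoint bookkeeping, and cleanly isolates where the hypothesis $H_1(Q) \cong \mathbb{Z}$ enters (the parity of $k$). Your argument also makes explicit the normality of $N$ in $G$ needed for $X_\infty(K) \to Q$ to be regular, which the paper's construction uses implicitly when producing a deck transformation from $[l]$.
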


\begin{proof}
Let $m$ be an oriented meridian of $K$ such that $lk(m,K)=1$; we may choose $m$ such that $\rho(m)=m$. Let $\pi_\rho$ denote the quotient map $X\rightarrow Q$. Note that $\pi_\rho$ is the orientation cover of $Q$ since it is an orientable double cover. Let $l = \pi_{\rho}(m)$ be the oriented loop in $Q$ with $({\pi_\rho})_*([m])=2[l]\in H_1(Q;\mathbb{Z})$. After choosing a basepoint $q \in l$, the covering space correspondence produces from $[l] \in \pi_1(Q,q)$ a deck transformation on $X \to Q$ and a deck transformation of $X_{\infty}(K) \to X \to Q$. By construction, the deck transformation of $X \to Q$ is $\rho$, and we call the other deck transformation $\tau\colon X_{\infty}(K) \to X_{\infty}(K)$. Clearly $\tau$ is a lift of $\rho$, and it remains to show that $\tau^2=t$. Since $\tau$ is a lift of $\rho$, we have that $\tau^2$ is a lift of $\rho^2 = id_X$, and hence that $\tau^2$ must be a deck transformation of the covering map $X_\infty(K)\rightarrow X$. Explicitly choosing a basepoint $p \in \pi^{-1}(q)$, we see that both $\tau^2$ and $t$ correspond to $[m] \in \pi_1(X,p)$ and hence $\tau^2 = t$.

To see that $X_{\infty}(K) \to X \to Q$ is the infinite cyclic cover of $Q$, note that $X_\infty(K)/\tau=(X_\infty(K)/\tau^2)/\rho=X/\rho=Q$, and in particular $\tau$ generates the (infinite cyclic) deck transformation group. It also follows from the construction in the previous paragraph that $\tau$ is compatible with the orientation of $K$.

Next we show that $\tau$ is the unique lift of $\rho$ satisfying $\tau^2=t$. Note that all lifts of $\rho$ are deck transformations of $X_\infty(K)\rightarrow Q$ and hence they are of the form $\tau^n$ for some $n\in \mathbb{Z}$. Clearly the only lift that squares to $t$ is $\tau$. The final statement, that the $\mathbb{Z}[\tau,\tau^{-1}]$-module $H_1(X_\infty(K);\mathbb{Z})$ is the Alexander module of $Q$, follows from the second part of the statement.
\end{proof}

We now describe how to compute the Alexander module of $Q$ in terms of a symmetric surgery description of $X_{\infty}(K)$. Here a symmetric surgery diagram of $K$ refers to a strongly negative amphichiral unknot $U$ along with a symmetric framed link $L$ such that surgery along $L$ gives $S^3$ and $U$ becomes $K$. We further require that the linking number of each component of $L$ with $U$ is $0$ so that the link lifts to $X_{\infty}(U)$. See for example Figures \ref{fig:4_1surgery} and \ref{fig:det1snak} (left). 

We now describe one way to obtain a symmetric surgery description. Start with a symmetric diagram $(D,\rho)$, and let $\{c_i,\rho(c_i)\}$ be a symmetric collection of crossings such that changing all of these crossings produces an unknot (see Proposition \ref{prop:finiteunknotting}). We now change the crossings $\{c_i,\rho(c_i)\}$ and introduce a pair of framed surgery circles $\{\alpha_i, \beta_i\}$; see Figure \ref{fig:4_1surgery} for the case of the figure-eight knot. The resulting unknot $U$, along with $\{\alpha_i,\beta_i\}$, produces a symmetric description of $K$. Note that $lk(U,\alpha_i) = lk(U,\beta_i) = 0$. 

\begin{figure}
  \begin{overpic}[width=400pt, grid=false]{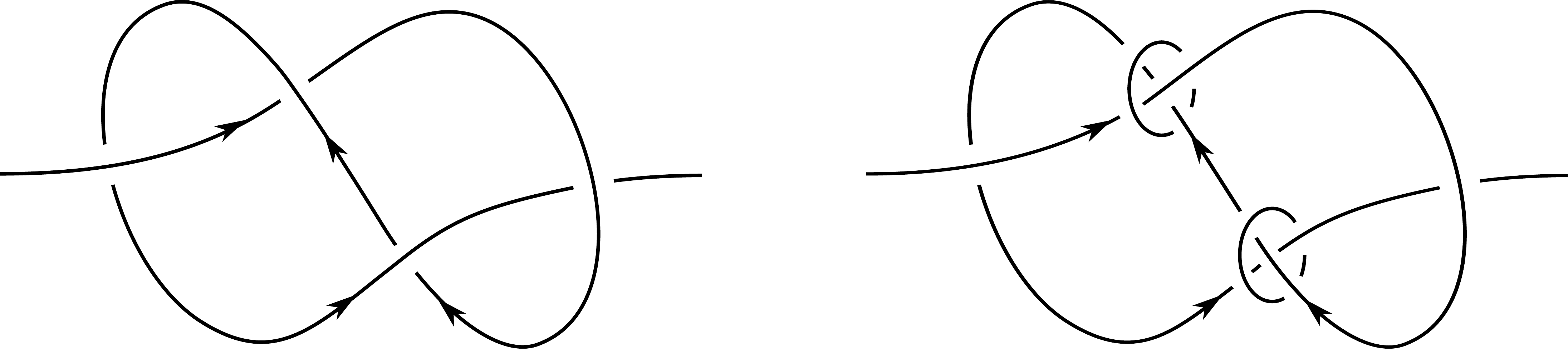}
  \put (72,21) {$-1$}
  \put (72,12) {$\alpha$}
  \put (79,0) {$+1$}
  \put (81,10) {$\beta$}
  \put (49,10) {$=$}
  \end{overpic}
\caption{An amphichiral link for which $-1$-surgery on the $\alpha$ component and the symmetric $+1$-surgery on the $\beta = \rho(\alpha)$ component produces the strongly negative amphichiral figure-eight knot.}
\label{fig:4_1surgery}
\end{figure}

We can now lift the framed link $\bigcup_{i}(\alpha_i \cup \beta_i)$ to the infinite cyclic cover of $U$ to obtain a surgery description for $X_{\infty}(K)$. Since $U$ is an unknot, the infinite cyclic cover is $D^2 \times \mathbb{R}$ which we project to $D^1 \times \mathbb{R}$; see Figure \ref{fig:4_1surgerysimplified} and \ref{fig:4_1surgerycover} for the case of the figure-eight knot. Note that $\tau$ acts as the composition of a horizontal translation with a reflection across $D^1 \times \mathbb{R}$, taking a lift of $\alpha_i$ to a lift of $\beta_i$, and a lift of $\beta_i$ to a lift of $\alpha_{i}$.

Next we write a presentation matrix for the $\mathbb{Z}[\tau,\tau^{-1}]$-module $H_1(X_{\infty}(K);\mathbb{Z})$. To do so, we choose for each $i$ a lift $\overline{\alpha}_i$ of $\alpha_i$, and an oriented meridian $m_i$ of $\overline{\alpha_i}$. Then $H_1(X_{\infty}(K);\mathbb{Z})$ is generated over $\mathbb{Z}[\tau,\tau^{-1}]$ by $\{m_i\}$. Each symmetric pair of surgery circles $(\alpha_i,\beta_i)$ then gives a relator as follows. Orient $\overline{\alpha}_i$ so that the linking with $m_i$ is $+1$ and let $l_i$ be the oriented parallel push-off of $\overline{\alpha}_i$ representing the framing. Now define 
\begin{equation}\label{eqn:matrix-entries}
f_{i,j}(\tau) = \sum_k lk(l_i,\tau^k\overline{\alpha}_j)(-\tau)^k \in \mathbb{Z}[\tau,\tau^{-1}],
\end{equation}
and the relator corresponding to $(\alpha_i,\beta_i)$ is
\[
r_i = \sum_j f_{i,j}(\tau)m_j. 
\]
The $(i,j)$th entry of the presentation matrix $M$ is then given by $f_{i,j}$. Since $M$ is a square matrix, we have $$\Delta_{(K,\rho)}(\tau) \doteq \det(M).$$ Note that $M$ has an unusual symmetry: $f_{i,j}(\tau) = f_{j,i}(-\tau^{-1})$. Indeed, using the deck transformation $\tau^{-k}$, we have
\[
lk(l_i,\tau^k\overline{\alpha}_j) = (-1)^k\cdot lk(\tau^{-k}l_i,\overline{\alpha_j}) = lk((-\tau)^{-k}\overline{\alpha}_i,l_j),
\]
where the $(-1)^k$ factor appears because $\tau$ is orientation reversing. This symmetry implies $\det(M)(\tau) = \det(M)(-\tau^{-1})$ and hence $\det(M)$ already gives rise to a symmetrized half-Alexander polynomial, but there is still an overall sign ambiguity. 

\begin{example}
We will compute the half-Alexander polynomial for the strongly negative amphichiral symmetry $\rho$ on the figure-eight knot $K$ shown on the left in Figure \ref{fig:4_1surgery}. Note that $K$ can be equivariantly unknotted with two crossing changes so that we have a symmetric surgery diagram for $K$ shown on the right in Figure \ref{fig:4_1surgery}. We can then simplify this link with an equivariant isotopy to obtain Figure \ref{fig:4_1surgerysimplified}. From here, we can lift the link to the infinite cyclic cover; see Figure \ref{fig:4_1surgerycover}. Note that the $\mathbb{Z}[\tau,\tau^{-1}]$-module $H_1(X_{\infty}(K);\mathbb{Z})$ is generated by a single element, the meridian of $\overline{\alpha}$. Furthermore, there is a single relator which we now compute. Since the framing on $\overline{\alpha}$ is $-1$, we have that $lk(l_0,\overline{\alpha}) = -1$, and also
\begin{align*}
lk(\overline{\alpha},\tau\overline{\alpha}) &= lk(\overline{\alpha},\overline{\beta}) = 1, \mbox{ and } \\
lk(\overline{\alpha},\tau^{-1}\overline{\alpha}) &= lk(\overline{\alpha},t^{-1}\overline{\beta}) = -1.
\end{align*}
When computing the linking numbers above, we can choose an arbitrary orientation on $\overline{\alpha}$, and applying $\tau$ or $\tau^{-1}$ will produce an orientation on $\overline{\beta}$ or $t^{-1}\overline{\beta}$. In Figure \ref{fig:4_1surgerycover} this corresponds to all curves being oriented clockwise, or all oriented counterclockwise. As a result we have the relator $\tau^{-1} -1 - \tau$ (see Equation \ref{eqn:matrix-entries}), and therefore the half-Alexander polynomial is 
\[
\Delta_{(K,\rho)}(\tau) \doteq -\tau^{-1}  +1 +\tau.
\]
\end{example}

\begin{figure}
\begin{overpic}[width = 220pt, grid = false]{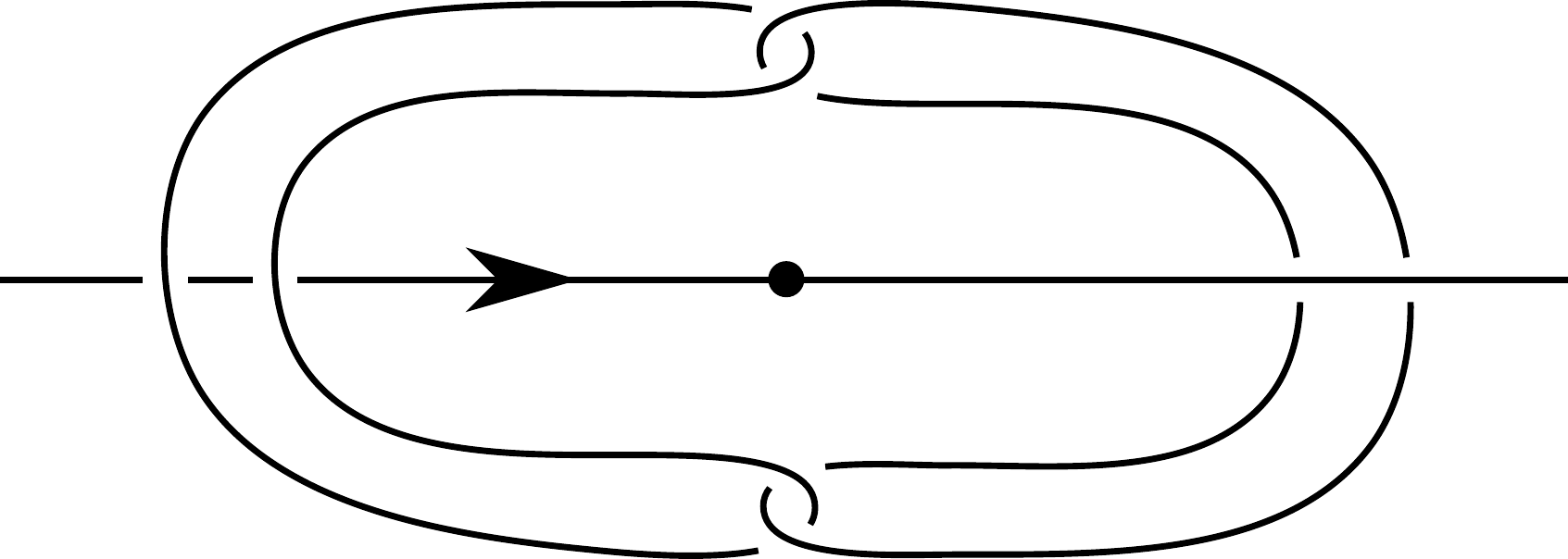}
  \put (14,2) {$\alpha$}
  \put (8,32) {$-1$}
  \put (87,2) {$\beta$}
  \put (82,32) {$+1$}
\end{overpic}
\caption{The symmetric link from Figure \ref{fig:4_1surgery}, simplified equivariantly so that the unknot $U$ has no crossings.}
\label{fig:4_1surgerysimplified}
\end{figure}

\begin{figure}
  \begin{overpic}[width=250pt, grid=false]{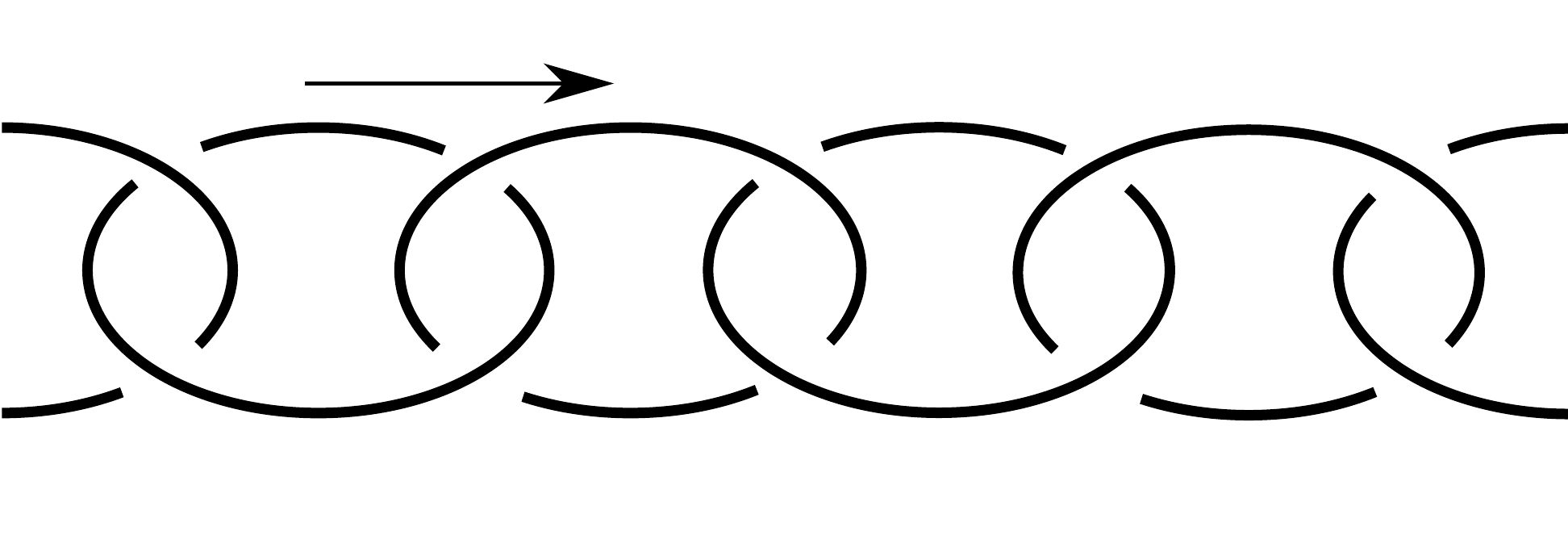}
    \put (28, 32) {$\tau$}
    \put (15,21) {$t^{-1}\overline{\beta}$}
    \put (38.5,21) {$\overline{\alpha}$}
    \put (58,21) {$\overline{\beta}$}
    \put (77.5,21) {$t\overline{\alpha}$}
    \put (100,18) {$\dots$}
    \put (-8,18) {$\dots$}

    \put (17,5) {$+1$}
    \put (37,5) {$-1$}
    \put (57,5) {$+1$}
    \put (77,5) {$-1$}
  \end{overpic}
\caption{The lifts of $\alpha$ and $\beta$ in Figure \ref{fig:4_1surgerysimplified} to the infinite cyclic cover of $U$. Here $-1$-surgery on the $t^n\overline{\alpha}$ components and $+1$-surgery on the $t^n\overline{\beta}$ components produces the infinite cyclic cover of the figure-eight knot, and $\overline{\alpha}$ and $\overline{\beta}$ are lifts from Figure \ref{fig:4_1surgerysimplified} of $\alpha$ and $\beta$ respectively. Note that $\tau^2 = t$.}
\label{fig:4_1surgerycover}
\end{figure}
\subsection{Proof of Proposition \ref{prop:Conway_connect_sum}}
Now that we can compute the half-Alexander polynomial from a symmetric surgery presentation, we are prepared to give the proof of Proposition \ref{prop:Conway_connect_sum}.
\begin{proof}[Proof of Proposition \ref{prop:Conway_connect_sum}]
Converting this back to a statement about the half-Alexander polynomial by setting $z = \tau - \tau^{-1}$, we have the equivalent statement
\[
\Delta_{(J\#\overline{J},\rho)}(\tau) \doteq \Delta_J(\tau^2).
\]
To prove this, we will compare surgery descriptions for the infinite cyclic covers $X_{\infty}(J)$ and $X_{\infty}(J\#\overline{J})$. Let $\rho$ be the strongly negative amphichiral symmetry on $J \#\overline{J}$ stated in the proposition. Let $\Sigma$ be a connected summing sphere which is $\rho$-invariant, separating $S^3$ into two 3-balls: $B_+$ containing $J$ and $B_-$ containing $\overline{J}$. Note that $(J\#\overline{J}) \cap \Sigma$ is exactly the two fixed points of $\rho$. Start with a surgery presentation $\{\alpha_1,\alpha_2,\dots,\alpha_n; U\}$ for $J$, where each $\alpha_i$ is a framed circle and $U$ is the unknot. Removing a neighborhood of a point on $J$ where we perform the connected sum, this also gives a surgery description for $(B_+,J \cap B_+)$. Applying the symmetry $\rho$, we then have a symmetric surgery description $\{\alpha_1, \alpha_2,\dots, \alpha_n, \rho(\alpha_1), \rho(\alpha_2), \dots, \rho(\alpha_n); U\}$ for $J\#\overline{J}$.

Lifting this to the infinite cyclic cover $X_{\infty}(U)$ we get a surgery presentation of the infinite cyclic cover $X_{\infty}(J\#\overline{J})$. The lift of the connected summing sphere $\Sigma$ is $\mathbb{R} \times I$ which separates $X_{\infty}(U)$ into the infinite cyclic cover $X_{\infty}(B_+)$ of $B_+$ over $U \cap B_+$ and the infinite cyclic cover $X_{\infty}(B_-)$ of $B_-$ over $U \cap B_-$. For each $i$, let $\overline{\alpha_i}$ be a lift of $\alpha_i$ to $X_{\infty}(U)$. Now $X_{\infty}(B_+)$ along with the lifts $\displaystyle \bigsqcup_{i,j}\tau^{2j}\overline{\alpha}_i$ is a surgery presentation for $X_{\infty}(J)$, where $\tau^2 = t$ is the generator of the deck transformation group. Applying the lift $\tau$ of the symmetry $\rho$, we have that $X_{\infty}(B_-)$ along with the lifts $\displaystyle \bigsqcup_{i,j} \tau^{2j+1}\overline{\alpha}_i$ of the surgery curves $\rho(\alpha_i)$ is a surgery presentation for $X_{\infty}(\overline{J})$. We then compute $\Delta_J(t) \doteq \det M$, where $M$ is the $n\times n$ matrix whose $(i,j)$ entry is
\begin{equation} \label{eqn:MentriesConwayconnectsum}
\sum_k lk(\overline{\alpha}_i,t^k\overline{\alpha}_j)\cdot t^k \in \mathbb{Z}[t,t^{-1}].
\end{equation}
Here $lk(\overline{\alpha}_i,\overline{\alpha}_i)$ is understood to be the linking of $\overline{\alpha}_i$ with its appropriately framed longitude. On the other hand, the half-Alexander polynomial $\Delta_{(J\#\overline{J},\rho)}(\tau) \doteq \det N$, where $N$ is the $n\times n$ matrix whose the $(i,j)$ entry is
\[
\sum_k lk(\overline{\alpha}_i,\tau^k\overline{\alpha}_j)\cdot (-\tau)^k \in \mathbb{Z}[\tau,\tau^{-1}].
\]
Note that $lk(\overline{\alpha}_i,\tau^{2m+1}\overline{\alpha}_j) = 0$ for all $m$ since $\overline{\alpha}_i$ and $\tau^{2m+1}\overline{\alpha}_j$ are separated by the lift of the connected summing sphere $\Sigma$. Therefore the $(i,j)$ entry of $N$ is 
\begin{equation} \label{eqn:NentriesConwayconnectsum}
\sum_k lk(\overline{\alpha}_i,(\tau^2)^k\overline{\alpha}_j)\cdot (\tau^2)^k.
\end{equation}
Comparing Equations \ref{eqn:MentriesConwayconnectsum} and \ref{eqn:NentriesConwayconnectsum} we see that $\Delta_{J}(\tau^2) \doteq \det M \big{|}_{t = \tau^2} \doteq \det N \doteq \Delta_{(J\#\overline{J},\rho)}(\tau)$.
\end{proof}

\section{Realizations of half-Conway polynomials} \label{sec:realization}

In this section we characterize which polynomials can be realized as the half-Alexander polynomial of a strongly negative amphichiral knot. Work of Kawauchi \cite[Theorem 1.6]{MR391110} implies that any polynomial $f(\tau) \in \mathbb{Z}[\tau,\tau^{-1}]$ with $f(\tau) \doteq f(-\tau^{-1})$ and $|f(1)| = 1$ is the half-Alexander polynomial of a strongly negative amphichiral knot in an integer homology $3$-sphere. However, it was unknown if these polynomials can be realized by knots in $S^3$ (see \cite[Remark (3)]{MR543095}). We show that this is the case by directly constructing such knots using an equivariant version of the construction in \cite{MR180964}. The precise statement follows.

\begin{theorem} \label{thm:realization}
Let $\displaystyle f(\tau) = \sum_{i=-N}^N a_i\tau^i \in \mathbb{Z}[\tau,\tau^{-1}]$ such that 
\begin{enumerate}
\item $a_i = (-1)^i a_{-i}$, and
\item $\displaystyle \sum_i a_i = 1$.
\end{enumerate}
Then there is a strongly negative amphichiral knot $(K,\rho)$ such that $\Delta_{(K,\rho)}(\tau) \doteq f(\tau)$.
\end{theorem}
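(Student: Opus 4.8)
The plan is to build $K$ as surgery on an explicit symmetric framed link and to compute $\Delta_{(K,\rho)}(\tau)$ directly from the presentation matrix developed in Section~\ref{sec:surgery-presentations}. Recall that a symmetric surgery description $\{\alpha_i, \beta_i=\rho(\alpha_i); U\}$ of a strongly negative amphichiral knot $(K,\rho)$ yields $\Delta_{(K,\rho)}(\tau) \doteq \det M$, where the $(i,j)$ entry of $M$ is $f_{i,j}(\tau) = \sum_k lk(l_i,\tau^k\overline{\alpha}_j)(-\tau)^k$ from Equation~\ref{eqn:matrix-entries}, and that these entries satisfy the symmetry $f_{i,j}(\tau) = f_{j,i}(-\tau^{-1})$. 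It therefore suffices to exhibit a symmetric surgery description whose matrix $M$ has $\det M \doteq f$, with the surgered manifold equal to $S^3$.

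The two hypotheses on $f$ are precisely the constraints visible in this framework. Because every entry obeys $f_{i,j}(\tau)=f_{j,i}(-\tau^{-1})$, the determinant automatically satisfies $\det M(\tau) \doteq \det M(-\tau^{-1})$, which is exactly condition (1), $a_i = (-1)^i a_{-i}$ (compare Theorem~\ref{thm:Hartley-Kawauchi}(2)). Condition (2), $\sum_i a_i = f(1) = 1$, is the evaluation $|\Delta_{(K,\rho)}(1)|=1$ of Theorem~\ref{thm:Hartley-Kawauchi}(1), and is already known to be the only obstruction to realizing $f$ by an SNA knot in an integral homology sphere by Kawauchi~\cite{MR391110}. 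The new content, and the reason a fresh construction is needed, is to realize $f$ by a knot in $S^3$ itself, the open problem of \cite[Remark (3)]{MR543095}.

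The first concrete step is to reduce to a linking problem upstairs. Drawing $X_\infty(U) = D^2\times\mathbb{R}$ and its projection to $D^1\times\mathbb{R}$ as in Section~\ref{sec:surgery-presentations}, I would lay out an equivariant family of surgery curves and clasp each $\overline{\alpha}_j$ against the translates $\tau^k\overline{\alpha}_j$ so as to install the prescribed linking numbers realizing $\det M \doteq f$; this is the equivariant analogue of the clasp-and-band realization of \cite{MR180964}. The structural simplification is that, since $\tau$ is orientation-reversing (it lifts $\rho$), one has $lk(\overline{\alpha}, \tau^{-k}\overline{\alpha}) = (-1)^k\, lk(\overline{\alpha}, \tau^k\overline{\alpha})$ automatically, so the sign pattern of condition (1) is produced for free by the symmetry: it is enough to install the coefficients $a_0,\dots,a_N$ on the $k\geq 0$ side and let $\tau$ generate the rest. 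I expect a short symmetric chain of curves, whose matrix $M$ is essentially tridiagonal with the $a_i$ as entries, so that $\det M$ telescopes to $f$.

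The hard part will be guaranteeing that the surgery returns $S^3$ rather than a general homology sphere --- this is exactly where the construction must improve on Kawauchi's. The tension is genuine: forcing the linking numbers $lk(l,\tau^k\overline{\alpha})=(-1)^ka_k$ pushes the surgery curves to be heavily clasped, whereas a clean reduction to $S^3$ wants them to be unknots with framings $\pm 1$ that blow down by Rolfsen twists. I would resolve this by arranging the curves as an equivariant ladder of $\pm1$-framed unknots that can be blown down one symmetric pair at a time, each blow-down being a diffeomorphism of $S^3$ that twists the ambient unknot $U$ toward $K$ while preserving both the ambient $S^3$ and the symmetry $\rho$. Verifying that such a ladder simultaneously realizes arbitrary $(a_0,\dots,a_N)$, reduces to $S^3$, and remains compatible with the involution throughout is the crux of the argument.
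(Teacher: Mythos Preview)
Your framework is exactly right and matches the paper: build a symmetric surgery description, compute the presentation matrix from Equation~\ref{eqn:matrix-entries}, and arrange the linking numbers so that $\det M \doteq f$. You have also correctly identified the crux, namely that the surgered manifold must be $S^3$ rather than merely a homology sphere.

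Where your plan diverges from the paper is in the expected complexity of the link. You anticipate a chain of several symmetric pairs with a tridiagonal matrix whose determinant telescopes to $f$, and you worry (rightly) that verifying such a ladder blows down to $S^3$ is delicate. The paper sidesteps this entirely by using a \emph{single} symmetric pair $(\alpha,\beta=\rho(\alpha))$, so that $M$ is $1\times 1$ and $\det M$ is literally the single relator $\sum_k lk(\overline\alpha,\tau^k\overline\alpha)(-\tau)^k$. The construction then reduces to designing one curve $\alpha$ in the complement of the unknot $U$ whose lifts have prescribed linking numbers $lk(\overline\alpha,\tau^k\overline\alpha)$ for each $k$; this is done by threading $\alpha$ through twist boxes indexed by the coefficients $b_i=a_{2i}$ and $c_i=-a_{2i-1}$, exactly mimicking Levine's clasp construction but with the arcs of $\alpha$ linking $U$ the appropriate number of times so that the twist box labeled by $a_k$ contributes only to $lk(\overline\alpha,\tau^k\overline\alpha)$.

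The payoff of the single-pair approach is that the $S^3$ verification becomes trivial: one simply checks that, ignoring $U$, the curves $\alpha$ and $\beta$ form a two-component unlink (the twist boxes untwist once $U$ is forgotten), so $\pm 1$ surgery on them manifestly returns $S^3$. Your ladder approach is not wrong in principle, but you have not supplied the verification you yourself flag as the crux, and carrying it out for a multi-component link is genuinely harder than the one-line unlink check the paper's construction affords.
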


\begin{proof}
We directly construct a knot $(K,\rho)$ with $\Delta_{(K,\rho)}(\tau) \doteq f(\tau)$ via a symmetric surgery description as follows. Let $b_i = a_{2i}$ for $1 \leq i \leq k = \lfloor \frac{N}{2} \rfloor$. Likewise, let $c_i = -a_{2i-1}$ for $1 \leq i \leq l = \lfloor\frac{N+1}{2}\rfloor$. Then the knot $(K,\rho)$ is shown in Figure \ref{fig:halfconwayrealization}. First, observe that $K$ is a strongly negative amphichiral knot in $S^3$, since the blue and red surgery curves form a $\rho$-invariant 2-component unlink. Indeed, ignoring $K$, the $b_i$ crossing boxes each untwist and then the $c_i$ crossing boxes cancel in pairs. 

Now let $\alpha$ be the $+1$-surgery curve and $\beta$ be the $-1$-surgery curve, and choose a lift $\overline{\alpha}$ of $\alpha$ to the infinite cyclic cover of the unknot. Then the $i$th coefficient of $\Delta_{(K,\rho)}(\tau)$ is $lk(\overline{\alpha},\tau^i\overline{\alpha}),$ for $i \neq 0$, since $H_1(X_{\infty};\mathbb{Z})$ is $\mathbb{Z}[\tau,\tau^{-1}]$ modulo a single relation (see Equation \ref{eqn:matrix-entries}). By examining a surgery diagram for the infinite cyclic cover of $K$, we can see that $lk(\overline{\alpha},\tau^{2i}\overline{\alpha}) = b_i$ and that $lk(\overline{\alpha},\tau^{2i-1}\overline{\alpha}) = c_i$. Specifically, observe that the only crossings between $\overline{\alpha}$ and $\tau^{2i}\overline{\alpha}$ are the lifts of the crossings in the twisting region labeled $-b_i$. Indeed, the two arcs of $\alpha$ disjoint from the $-b_i$ twisting region link the knot $i$ and $-i$ times, and $\tau^{2i}$ is the $i$th power of the deck transformation. Note that the crossings in the $-b_i$ regions are positive crossings since the two arcs are oriented in opposite horizontal directions. (This computation is identical to the non-equivariant case in \cite[Section 5]{MR180964}; see in particular Figures 1 and 3 of \cite{MR180964}.)

Similarly, the only crossings between $\overline{\alpha}$ and $\tau^{2i-1}\overline{\alpha}$ are the lifts of the crossings in the twisting region labeled $-c_i$. See Figures \ref{fig:4_1surgerysimplified} and \ref{fig:4_1surgerycover} for the case where $c_1 = 1$ and all other coefficients are $0$. Generalizing this, the arc of $\alpha$ between the $-c_1$ and $-c_i$ twisting regions links the knot $i-1$ times so that the $-c_i$ twisting region contributes to the linking number between $\overline{\alpha}$ and $\tau^{2i-1}\overline{\alpha}$. Finally, the constant term of $\Delta_{(K,\rho)}(\tau)$ is forced by condition (2). As a result, we have that $\Delta_{(K,\rho)}(\tau) \doteq f(\tau)$.
\end{proof}

\begin{figure}
  \begin{overpic}[width=400pt, grid=false]{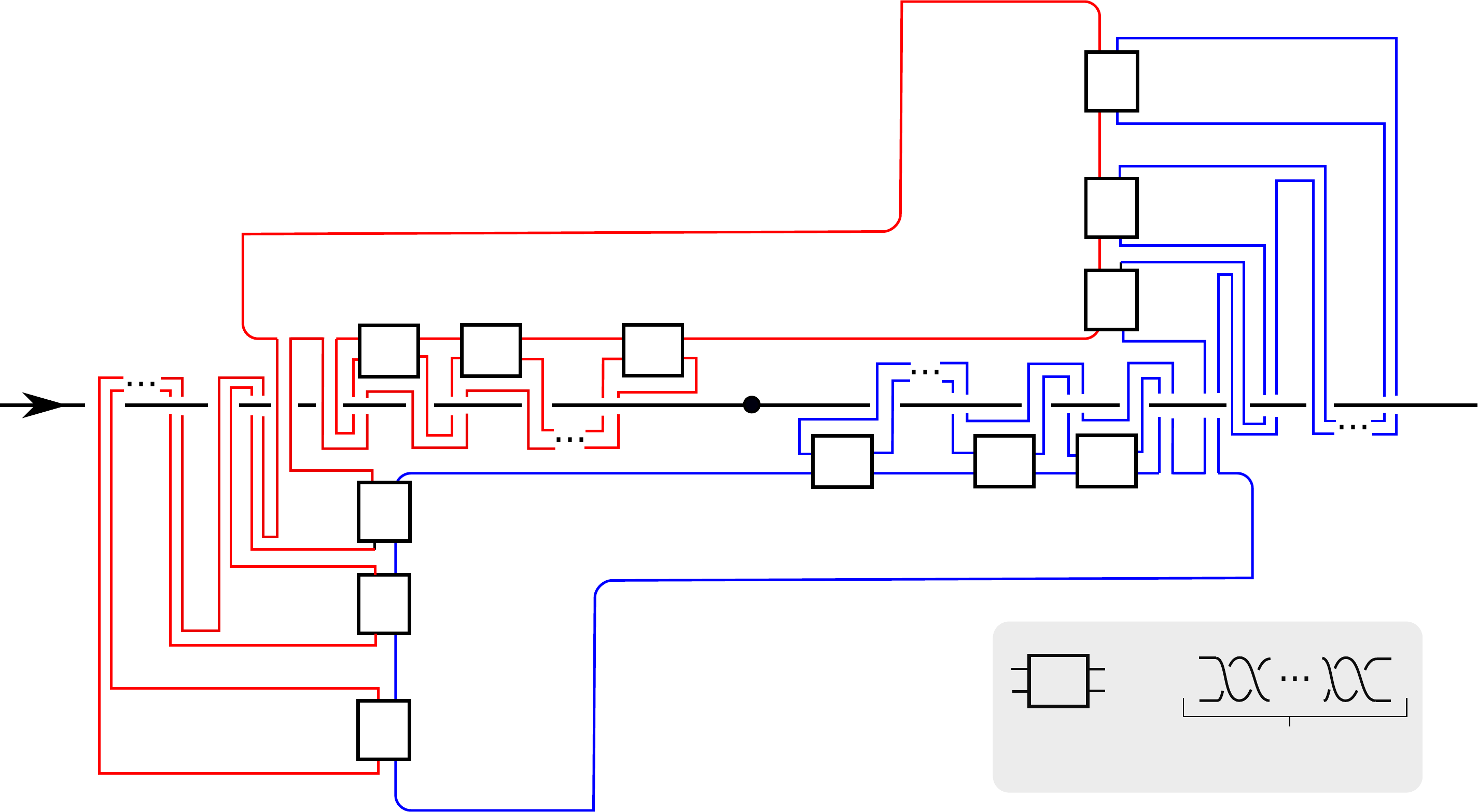}
  \put (70.7,8.3) {$n$}
  \put (80,3) {$n$ full twists}
  \put (76.5,8.3) {$=$}
  \put (41,41) {$+1$}
  \put (51,13) {$-1$}

  \put (0,30) {$K$}

  \put (25,30.8) {\tiny-$b_1$}
  \put (32,30.8) {\tiny-$b_2$}
  \put (43,30.8) {\tiny-$b_k$}

  \put (74,23.3) {\tiny$b_1$}
  \put (67.1,23.3) {\tiny$b_2$}
  \put (56,23.3) {\tiny$b_k$}

  \put (25,5.1) {\tiny-$c_l$}
  \put (24.8,13.7) {\tiny-$c_2$}
  \put (24.9,20) {\tiny-$c_1$}

  \put (74.5, 34.3) {\tiny $c_1$}
  \put (74.5,40.5) {\tiny $c_2$}
  \put (74.5, 49) {\tiny$c_l$}
  \end{overpic}
\caption{A symmetric surgery description of a strongly negative amphichiral knot $K$ which has a prescribed half-Alexander polynomial.}
\label{fig:halfconwayrealization}

\end{figure}

Using Theorem \ref{thm:realization}, we can construct strongly negative amphichiral knots with a specific Alexander polynomial. In particular, we can construct strongly negative amphichiral knots for which the determinant is 1 and simultaneously the Fox-Milnor condition obstructs sliceness.

\begin{proof}[Proof of Corollary \ref{cor:det1snak}]
Let $K$ be a strongly negative amphichiral knot with half-Alexander polynomial $n\tau^{-3} + n\tau^{-1} + 1 - n\tau - n\tau^3$ for either $n = 1$ or $n \geq 3$ prime. The case $n = 1$ is shown in Figure \ref{fig:det1snak}. Let $\Delta_K(t)$ be the Alexander polynomial of $K$. Then by Theorem \ref{thm:Hartley-Kawauchi}, 
\begin{align*}
\Delta_K(\tau^2) &=(n\tau^{-3} +n\tau^{-1} + 1 -n\tau - n\tau^3)(- n\tau^{-3} - n\tau^{-1} + 1 +n\tau + n\tau^3),
\end{align*}
so that
\[
\Delta_K(t) = -n^2t^{-3} - 2n^2t^{-2} + n^2t^{-1} + (1+4n^2) + n^2t -2n^2t^2-n^2t^3.
\]
Plugging in $t = -1$, we have that the determinant is $1$, and we will use the Fox-Milnor condition \cite{MR211392} to see that $K$ is not slice by showing that $\Delta_K(t)$ does not factor as $f(t)f(t^{-1})$ for any Laurent polynomial $f(t)$. For $n = 1$, this is apparent since $\Delta_K(t)$ is irreducible over $\mathbb{Q}$. For $n \geq 3$ prime, we will argue by contradiction. Let $f(t) = a_0 + a_1t + a_2t^2 + a_3t^3$, and suppose that $f(t)f(t^{-1}) = \Delta_K(t)$. That is
\[
(a_0 + a_1t + a_2t^2 + a_3t^3)(a_0 + a_1t^{-1} + a_2t^{-2} + a_3t^{-3}) = \Delta_K(t).
\]
Comparing coefficients, we have the system of equations
\begin{align*}
a_0a_3 &= -n^2, \\
a_0a_2 + a_1a_3 &= -2n^2, \\
a_0a_1 + a_1a_2 + a_2a_3 &= n^2,  \\
a_0^2 + a_1^2 + a_2^2 + a_3^2 &= 4n^2 + 1.
\end{align*}
Since $n$ is prime, the first equation implies that $a_0,a_3 \in \{\pm 1,\pm n, \pm n^2\}$. However, the last equation immediately rules out $a_0 = \pm n^2$ or $a_3 = \pm n^2$ since $n \geq 3$. We may then assume without loss of generality that $a_0 = n$ and $a_3 = -n$. In this case the second, third, and fourth equations reduce to
\begin{align*}
a_2 - a_1 &= -2n, \\
a_1a_2 &= -n^2, \\
a_1^2 + a_2^2 &= 1+2n^2,
\end{align*}
which has no solutions.
\end{proof}

There are no strongly negative amphichiral knots with determinant 1 which have 12 or fewer crossings, so that this knot is necessarily somewhat complicated. Moreover, this is the first example of a non-slice strongly negative amphichiral knot with determinant 1, answering Problem 20.3 in \cite{FT}. Such knots are of interest because their double branched covers potentially represent non-trivial torsion elements in the homology cobordism group; non-trivial torsion elements in this group are currently not known to exist (see \cite[Section 2]{MR3966804}). 

\section{Proof of Theorem \ref{thm:skeinrelation}} \label{sec:proofofmaintheorem}

Let $K_+$ and $K_-$ be strongly negative amphichiral knots related by a dichromatic equivariant crossing change $(c_+,\rho(c_+)) \to (c_-,\rho(c_-))$ such that the pair of crossings $(c_+,\rho(c_+))$ in $K_+$ is positive (see Definition \ref{def:crossingpairsign}). We can take an equivariant unknotting sequence for $K_-$ outside of the neighborhood of $(c_-,\rho(c_-))$. This gives us a symmetric surgery description for $K_-$ from the unknot $U$, which determines a symmetric surgery description of $X_{\infty}(K_-)$ in $X_{\infty}(U) = D^2 \times \mathbb{R}$; see Section \ref{sec:surgery-presentations}. Let $M_-$ be the presentation matrix of $H_1(X_{\infty}(K_-);\mathbb{Z})$ induced by this surgery. The unknotting sequence for $K_-$ gives an unknotting sequence for $K_+$ by adding in the additional crossing change $(c_+,\rho(c_+)) \to (c_-,\rho(c_-))$. The surgery presentation for $K_+$ is then obtained from the surgery presentation for $K_-$ by adding an equivariant pair of surgery circles; see Figure \ref{fig:surgery_skein_relation}. Let $M_+$ be the corresponding presentation matrix for $H_1(X_{\infty}(K_+);\mathbb{Z})$. Finally, replacing the $\pm1$-framed circles corresponding to $(c_+,\rho(c_+)) \to (c_-,\rho(c_-))$ with $0$-framed circles, we obtain a new framed link in $S^3 - \nu(U)$ which lifts to a framed link $\widetilde{L}_0$ in $X_{\infty}(U)$. Let $X_{\infty}^0$ be the manifold obtained by surgery on $\widetilde{L}_0$ and let $M_0$ be the corresponding presentation matrix for the $\mathbb{Z}[\tau,\tau^{-1}]$-module $H_1(X_{\infty}^0;\mathbb{Z})$. 

\begin{figure}
  \begin{overpic}[width=250pt, grid=false]{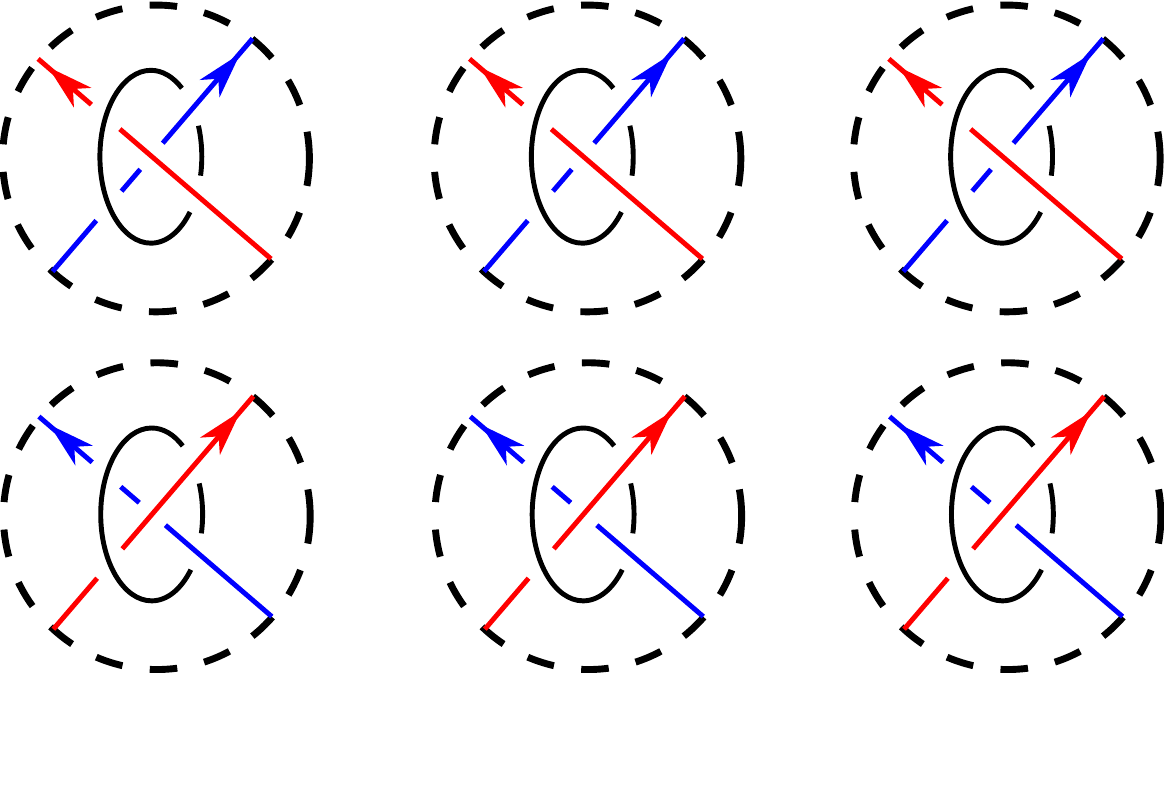}
    \put (9.5, 62.5) {$+1$}
    \put (9.5, 32) {$-1$}
    \put (47.5, 62.5) {$\infty$}
    \put (47.5, 32) {$\infty$}
    \put (85, 62.5) {$0$}
    \put (85, 32) {$0$}
    \put (10, 3) {$K_+$}
    \put (47.5, 3) {$K_-$}
    \put (85, 3) {$K_0'$}
  \end{overpic}
\caption{The local surgeries needed to obtain $K_+,K_-$, and $K_0'$. Note that $K_0'$ is distinct from the resolution $K_0$ in Figure \ref{fig:skeinrelation}. The crossings in the top row come after the crossings in the bottom row, so that the sign of each pair is the sign of the top crossing; see Definition \ref{def:crossingpairsign}.}
\label{fig:surgery_skein_relation}
\end{figure}

\begin{remark}
Note that the $\mathbb{Z}[\tau,\tau^{-1}]$-module $H_1(X_{\infty}^0;\mathbb{Z})$ is the Alexander module of the strongly negative amphichiral knot $K_0'$ in $(S^1 \times S^2) \# (S^1 \times S^2)$ obtained from $K_-$ by $0$-surgery on the pair of circles on the right in Figure \ref{fig:surgery_skein_relation}.
\end{remark}
The following proposition (which depends on Lemmas \ref{lemma:fminush}, \ref{lemma:vectorvalues}, and \ref{lemma:hderivative} below) is our first step towards proving Theorem \ref{thm:realization}.
\begin{proposition} \label{prop:matrix-skein-relation}
Let $M_+, M_-$, and $M_0$ be as above. Then
\begin{enumerate}
\item $\det(M_+) - \det(M_-) = \det(M_0)$,
\item $\det(M_+)\big{|}_{\tau = 1} = \det(M_-)\big{|}_{\tau = 1}$, and 
\item letting $z = \tau - \tau^{-1}$,
\[
\frac{\partial \det(M_0)}{\partial z}\big{|}_{z = 0} = \frac{1}{2}\cdot\frac{\partial \det(M_0)}{\partial \tau}\big{|}_{\tau = 1} =  \det(M_-)\big{|}_{\tau = 1}.
\]
\end{enumerate}
\end{proposition}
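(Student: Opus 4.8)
The plan is to reduce the identity to the block structure relating $M_0$, $M_-$, and $M_+$, and then to the first-order behavior of the newly adjoined row, column, and diagonal entry at $\tau=1$. First I would dispose of the chain rule. Since $z=\tau-\tau^{-1}$ we have $\frac{dz}{d\tau}=1+\tau^{-2}$, which equals $2$ at $\tau=1$, and $z=0$ corresponds to $\tau=1$; hence $\frac{\partial}{\partial z}\big|_{z=0}=\frac{1}{2}\frac{\partial}{\partial \tau}\big|_{\tau=1}$, which gives the first equality for free. It remains to prove $\frac{\partial}{\partial z}\det(M_0)\big|_{z=0}=\det(M_-)\big|_{\tau=1}$. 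By the symmetry $f_{i,j}(\tau)=f_{j,i}(-\tau^{-1})$ recorded after Equation \ref{eqn:matrix-entries}, each of $\det(M_+)$, $\det(M_-)$, $\det(M_0)$ is invariant under $\tau\mapsto-\tau^{-1}$ and hence is a genuine polynomial in $z$, so the $z$-derivative makes sense. Moreover parts (1) and (2) give $\det(M_0)\big|_{\tau=1}=\det(M_+)\big|_{\tau=1}-\det(M_-)\big|_{\tau=1}=0$, so $\det(M_0)$ has no constant term in $z$, and the claim is precisely that its linear coefficient equals $\det(M_-)\big|_{\tau=1}$.

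Next I would exploit the block structure. By construction $M_+$ and $M_0$ are obtained from $M_-$ by adjoining the single generator and single relator of the new equivariant pair of surgery circles: their top-left $n\times n$ blocks both equal $M_-$, they share the same new column $v(\tau)$ and new row $w(\tau)$ (the linking of the new circle with the old ones, which is insensitive to the framing of the new circle), and they differ only in the corner entry, with $(M_+)_{n+1,n+1}-(M_0)_{n+1,n+1}=1$ coming from the $+1$ versus $0$ framing. Cofactor expansion along the last row and column gives $\det(M_0)=d_0(\tau)\det(M_-)-w(\tau)^{T}\,\mathrm{adj}(M_-)(\tau)\,v(\tau)$, where $d_0$ is the new corner entry and $\mathrm{adj}$ denotes the adjugate; the same formula with $d_0$ replaced by $d_0+1$ computes $\det(M_+)$, which re-derives part (1). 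Writing $Q(\tau)=w(\tau)^{T}\,\mathrm{adj}(M_-)(\tau)\,v(\tau)$ and differentiating $\det(M_0)=d_0\det(M_-)-Q$ in $z$ at $z=0$, the claim reduces to two facts about the new, $0$-framed circle: (a) $d_0(0)=0$ and $\partial_z d_0\big|_{0}=1$; and (b) $Q$ vanishes to second order in $z$, that is $Q(0)=\partial_z Q\big|_0=0$. Indeed $d_0(0)=0$ together with $\det(M_0)\big|_0=0$ forces $Q(0)=0$ automatically, after which (a) and (b) yield $\partial_z\det(M_0)\big|_0=\big(\partial_z d_0\big|_0\big)\det(M_-)\big|_0=\det(M_-)\big|_{\tau=1}$.

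The main obstacle is establishing (a) and (b), which is where the geometry of the crossing-change circle enters, and which I expect to be the content of Lemmas \ref{lemma:fminush}, \ref{lemma:vectorvalues}, and \ref{lemma:hderivative}. To prove (b) I would show the new column satisfies $v(1)=v(-1)=0$ --- equivalently $v(1)=w(1)=0$ by the symmetry $w_i(\tau)=v_i(-\tau^{-1})$ --- so that $Q(0)$ and all three terms of $\partial_z Q\big|_0$ vanish; this should follow from $\mathrm{lk}(U,\alpha)=0$ together with the fact that the new pair is a local crossing-change circle, making its relevant alternating linking with the distant unknotting circles vanish at the characters $\tau=\pm1$. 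To prove (a) I would compute the self-relator $d_0(\tau)=\sum_{k\neq 0}\mathrm{lk}(l,\tau^{k}\overline{\alpha})(-\tau)^{k}$ of the $0$-framed circle directly from its lifts in $X_{\infty}(U)=D^2\times\mathbb{R}$, reducing $d_0$ near $\tau=1$ to the two linkings $\mathrm{lk}(\overline{\alpha},\tau^{\pm1}\overline{\alpha})$; their signs are controlled by the fact that the crossing pair is dichromatic and positive with $c_+>\rho(c_+)$, and the resulting leading term is exactly the $z$ that produces the skein relation.

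The delicate part is this last local linking computation in the infinite cyclic cover, and in particular the sign bookkeeping: because $\tau$ is orientation-reversing, each term in $d_0$ and in $v,w$ carries a sign $(-1)^k$, and getting the normalization $\partial_z d_0\big|_0=+1$ (rather than $-1$) requires matching the framing and orientation conventions of Figure \ref{fig:surgery_skein_relation} against the dichromatic-sign convention of Definition \ref{def:crossingpairsign}. As a consistency check, the bound $|\det(M_-)\big|_{\tau=1}|=1$ from Theorem \ref{thm:Hartley-Kawauchi}(1) shows the target $\det(M_-)\big|_{\tau=1}$ is $\pm1$, so once (a) and (b) are in hand the linear coefficient of $\det(M_0)$ is pinned down exactly, completing the identity.
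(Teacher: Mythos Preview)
Your proposal is correct and follows essentially the same route as the paper: both exploit the block structure of $M_+$ and $M_0$ over $M_-$ and reduce everything to the three lemmas on $f-h=1$, $\vec g(\pm 1)=0$, and $h'(1)=2$. The only cosmetic difference is that you package the determinant computation via the adjugate/Schur formula $\det(M_0)=d_0\det(M_-)-w^{T}\mathrm{adj}(M_-)\,v$, whereas the paper differentiates $\det(M_0)$ row by row; these are equivalent, and in both cases part (2) and the vanishing $\det(M_0)|_{\tau=1}=0$ fall out of the same inputs $\vec g(1)=\vec g(-1)=0$ and $h(1)=0$ rather than needing to be assumed.
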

\begin{proof}
We start by writing the matrices $M_+$ and $M_0$ in terms of $M_-$. Recall that the entries of a presentation matrix are linking numbers between lifts of surgery curves (see equation (\ref{eqn:matrix-entries})). Hence $M_+$ and $M_0$ are each obtained from $M_-$ by adding a row and a column containing linking numbers involving lifts of the extra equivariant pair of surgery circles as shown in Figure \ref{fig:surgery_skein_relation}. Furthermore, the added surgery circles for $K_0'$ and $K_+$ differ only by their framings, so the off-diagonal entries in $M_+$ and $M_0$ are identical. Concretely, let $n = \operatorname{rank}(M_-)$. Then there are Laurent polynomials $f(\tau), h(\tau)$, and a $n$-tuple of Laurent polynomials $\vec{g}(\tau) = (g_1(\tau),g_2(\tau),\dots,g_n(\tau))$ such that 
\begin{equation}\label{eqn:M+andM-}
M_+ = 
\begin{bmatrix}
M_- & \vec{g}(\tau)\\
\vec{g}^T(-\tau^{-1})& f(\tau) 
\end{bmatrix}, \mbox{ and }
M_0 = \begin{bmatrix}
M_- & \vec{g}(\tau)\\
\vec{g}^T(-\tau^{-1})& h(\tau) 
\end{bmatrix}.
\end{equation}
By Lemma \ref{lemma:fminush} below, $f(\tau) = h(\tau) + 1$ and we compute 
\begin{align*}
\det(M_+) - \det(M_-) &= \det\begin{bmatrix}
M_- & \vec{g}(\tau)\\
\vec{g}^T(-\tau^{-1})& f(\tau) 
\end{bmatrix} - 
\det\begin{bmatrix}
M_- & 0\\
0 & 1
\end{bmatrix}\\
&=\det\begin{bmatrix}
M_- & \vec{g}(\tau)\\
\vec{g}^T(-\tau^{-1})& f(\tau) 
\end{bmatrix} - 
\det\begin{bmatrix}
M_- & \vec{g}(\tau)\\
0 & 1
\end{bmatrix}\\
&=\det\begin{bmatrix}
M_- & \vec{g}(\tau)\\
\vec{g}^T(-\tau^{-1})& f(\tau)-1 
\end{bmatrix}\\
 &= \det(M_0),
\end{align*}
proving Proposition \ref{prop:matrix-skein-relation} (1). Next, we will show that $\det(M_+)\big{|}_{\tau = 1} = \det(M_-)\big{|}_{\tau = 1}$. By Lemma \ref{lemma:vectorvalues} below, $\vec{g}(1) = 0 = \vec{g}^T(-1)$ and $f(1) = 1$ so that
\[
\det(M_+)\big{|}_{\tau = 1} = \det \begin{bmatrix}
M_-\big{|}_{\tau = 1} & \vec{g}(1)\\
\vec{g}^T(-1) & f(1)
\end{bmatrix} = 
\det\begin{bmatrix}
M_-\big{|}_{\tau = 1} & 0\\
0 & 1
\end{bmatrix} = 
\det(M_-)\big{|}_{\tau = 1}.
\]
For the final claim, we have 
\[
\frac{\partial \det(M_0)}{\partial z}\big{|}_{z = 0} = \frac{1}{2}\cdot\frac{\partial \det(M_0)}{\partial \tau}\big{|}_{\tau = 1}
\]
by the chain rule. Let $\vec{r}_i$ be the $i$th row of $M_-$. Then we compute 
\[
\frac{1}{2} \cdot \frac{\partial \det{M_0}}{\partial \tau}\big{|}_{\tau = 1} = \frac{1}{2}\bigg{(}\det \begin{bmatrix}
M_-\big{|}_{\tau = 1}&\vec{g}(1)\\
(\vec{g}^T)'(-1)&h'(1)
\end{bmatrix} + \sum_{i=1}^n H_i\bigg{)},
\]
where
\[
H_i = \det\begin{bmatrix}
\vec{r}_1(1) & g_1(1)\\
\vec{r}_2(1) & g_2(1)\\
\vdots & \\
(\vec{r}_i)'(1) & g_i'(1)\\
\vdots & \\
\vec{r}_n(1) & g_n(1)\\
\vec{g}^T(-1) & h(1)
\end{bmatrix} = \det\begin{bmatrix}
\vec{r}_1(1) & g_1(1)\\
\vec{r}_2(1) & g_2(1)\\
\vdots & \\
(\vec{r}_i)'(1) & g_i'(1)\\
\vdots & \\
\vec{r}_n(1) & g_n(1)\\
0 & 0
\end{bmatrix} = 0,
\]
in which the bottom row is 0 by Lemma \ref{lemma:vectorvalues}. Hence
\begin{align*}
\frac{1}{2} \cdot\frac{\partial \det{M_0}}{\partial \tau}\big{|}_{\tau = 1} &= \frac{1}{2}\det \begin{bmatrix}
M_-\big{|}_{\tau = 1}&\vec{g}(1)\\
(\vec{g}^T)'(-1)&h'(1)
\end{bmatrix} \\
&=\frac{1}{2} \det\begin{bmatrix}
M_-\big{|}_{\tau = 1}&0\\
(\vec{g}^T)'(-1)&2
\end{bmatrix} = \det(M_-)\big{|}_{\tau =1},
\end{align*}
where in the second equality we have $\vec{g}(1) = 0$ by Lemma \ref{lemma:vectorvalues} and $h'(1) = 2$ by Lemma \ref{lemma:hderivative}, also below.
\end{proof}

\begin{lemma} \label{lemma:fminush}
In Equation (\ref{eqn:M+andM-}), $f(\tau) = h(\tau) + 1$.
\end{lemma}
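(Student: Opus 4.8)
The plan is to trace the two diagonal entries $f(\tau)$ and $h(\tau)$ back to the linking-number formula \eqref{eqn:matrix-entries} and to isolate the single place where the surgery descriptions of $K_+$ and $K_0'$ differ. Recall that the new bottom-right entry comes from the added equivariant pair of surgery circles, where we have chosen the lift $\overline{\alpha}$ of the distinguished circle $\alpha$; this is the $+1$-framed circle in the description of $K_+$ and the $0$-framed circle in the description of $K_0'$ (see Figure \ref{fig:surgery_skein_relation}). Writing out the diagonal entry via \eqref{eqn:matrix-entries}, we have
\[
f(\tau) = \sum_k lk(l, \tau^k \overline{\alpha})(-\tau)^k, \qquad h(\tau) = \sum_k lk(l', \tau^k \overline{\alpha})(-\tau)^k,
\]
where $l$ and $l'$ are the framed longitudes of $\overline{\alpha}$ recording the framings $+1$ and $0$ respectively.

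First I would argue that the two sums agree term by term for every $k \neq 0$. For such $k$ the lift $\tau^k \overline{\alpha}$ is disjoint from $\overline{\alpha}$, and changing the framing alters the longitude only by adding some number of meridians of $\overline{\alpha}$. Since each such meridian bounds a small disk punctured once by $\overline{\alpha}$ but disjoint from the distant lift $\tau^k\overline{\alpha}$, it contributes nothing to the linking; hence $lk(l, \tau^k\overline{\alpha}) = lk(l', \tau^k \overline{\alpha})$ whenever $k \neq 0$. Next I would treat the $k=0$ terms, which are exactly the self-linking numbers $lk(l,\overline{\alpha})$ and $lk(l',\overline{\alpha})$, i.e.\ the framings of $\overline{\alpha}$. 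For $K_+$ this equals $+1$ and for $K_0'$ it equals $0$, and both occur with coefficient $(-\tau)^0 = 1$. Subtracting the two sums, every term with $k \neq 0$ cancels and only the $k=0$ terms survive, giving $f(\tau) - h(\tau) = 1$, as claimed.

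The point that requires care, rather than being a genuine obstacle, is the independence of the off-diagonal column $\vec{g}(\tau)$ (and the corresponding row) from the framing, which is what licenses writing $M_+$ and $M_0$ as in \eqref{eqn:M+andM-} with the same $\vec{g}$. The entries pairing the new circle with an old circle $\overline{\alpha}_j$ involve either $\overline{\alpha}$ itself (not its longitude) or the longitude of $\overline{\alpha}$ linked against the lifts $\tau^k\overline{\alpha}_j$; in each case the meridians introduced by a framing change of $\overline{\alpha}$ link none of the relevant curves, so these entries are unchanged. Once this is verified, the term-by-term comparison above is the entire argument, and the only substantive observation is that a framing change is supported precisely in the $k=0$ coefficient of the diagonal entry.
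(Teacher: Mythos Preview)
Your argument is correct and follows essentially the same route as the paper: unpack the diagonal entries via the linking-number formula \eqref{eqn:matrix-entries} and observe that only the framing distinguishes the two surgery descriptions, so only the constant term can differ. One small imprecision to fix: you assert that the $k=0$ terms $lk(\overline{l},\overline{\alpha})$ and $lk(\overline{l}',\overline{\alpha})$ individually equal the downstairs framings $+1$ and $0$. That need not hold, since the downstairs framing is recovered as $\sum_i lk(\overline{l},t^i\overline{\alpha})$ rather than the single summand $lk(\overline{l},\overline{\alpha})$. What is true---and what your meridian argument already shows---is that their \emph{difference} equals $1$, because $\overline{l}$ and $\overline{l}'$ differ by exactly one meridian of $\overline{\alpha}$, which links $\overline{\alpha}$ once and links no other lift $\tau^k\overline{\alpha}$. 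The paper reaches the same conclusion via the identity $\sum_i lk(\overline{l},\tau^{2i}\overline{\alpha}) = lk(l,\alpha)$ downstairs; your direct meridian argument is if anything cleaner, once the wording is corrected.
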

\begin{proof}
Let $\alpha$ be the $+1$-surgery curve for $K_+$ in Figure \ref{fig:surgery_skein_relation} and let $l$ be the $+1$-framed longitude of $\alpha$. Similarly, let $\alpha'$ be the $0$-surgery curve for $K_0'$ and let $l'$ be the $0$-framed longitude of $\alpha'$. Additionally, choose a lift $\overline{\alpha}$ of $\alpha$ in $X_{\infty}(K_+)$ and a push-off $\overline{l}$ of $\overline{\alpha}$ that lifts $l$, and choose lifts $\overline{\alpha}'$ and $\overline{l}'$ in $X_{\infty}^0$ similarly. Then 
\[
f(\tau) = \sum_i lk(\overline{l},\tau^i\overline{\alpha})(-\tau)^i, \mbox{ and } h(\tau) = \sum_i lk(\overline{l}',\tau^i\overline{\alpha}')(-\tau)^i.
\]
Only the framings on $\alpha$ and $\alpha'$ differ between these two sums, and the framing only affects the $i = 0$ term. Hence the non-constant terms in $f(\tau)$ and $h(\tau)$ are identical. To compare the constant terms, note that 
\[
\sum_i lk(\overline{l},\tau^{2i}\overline{\alpha})= lk(\alpha,l) = 1, \mbox{ and } \sum_i lk(\overline{l'},\tau^{2i}\overline{\alpha}') = lk(\alpha',l') = 0,
\]
where $\tau^{2i} = t^i$ are the usual deck translations. We can then compute the difference 
\begin{align*}
f(\tau) - h(\tau) &= lk(\overline{l},\overline{\alpha}) - lk(\overline{l}',\overline{\alpha}') \\
 &=\sum_i lk(\overline{l},\tau^{2i}\overline{\alpha}) - \sum_i lk(\overline{l'},\tau^{2i}\overline{\alpha}') \\
 &= 1- 0=1,
\end{align*}
where the first two equalities both use the fact that the non-constant terms in $f(\tau)$ and $h(\tau)$ are equal. Hence $f(\tau) = h(\tau) + 1$.
\end{proof}

\begin{lemma} \label{lemma:vectorvalues}
In Equation (\ref{eqn:M+andM-}), $\vec{g}(1) = \vec{g}(-1) = 0$ and $f(1) = 1$. In particular, $h(1) = 0$.
\end{lemma}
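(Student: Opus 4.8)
The plan is to interpret each of $g_j(\pm 1)$ and $f(1)$ as an honest linking number in $S^3$, computed by summing a single lift over its $\tau$-orbit, and then to reduce everything to the statement that the relevant surgery circles are unlinked in $S^3$. Write $\alpha$ for the new $+1$-framed surgery curve coming from the skein crossing change, $\beta = \rho(\alpha)$ for its $-1$-framed symmetric partner, $\overline{\alpha}$ for a chosen lift to $X_{\infty}(U)$, and $\overline{\beta} = \tau\overline{\alpha}$; recall $\tau^2 = t$ generates the deck group of $X_{\infty}(U) \to S^3 - \nu(U)$. By Equation~(\ref{eqn:matrix-entries}), the $j$th entry of $\vec{g}$ is $g_j(\tau) = \sum_k lk(l_j,\tau^k\overline{\alpha})(-\tau)^k$, where $l_j$ is the framed longitude of the $j$th old surgery curve $\overline{\alpha}_j$, while $f(\tau) = \sum_k lk(l,\tau^k\overline{\alpha})(-\tau)^k$ with $l$ the $+1$-framed longitude of $\overline{\alpha}$.

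First I would split each sum by the parity of $k$. For even $k = 2m$ the translates $\tau^{2m}\overline{\alpha} = t^m\overline{\alpha}$ form the $t$-orbit of $\overline{\alpha}$, and for odd $k = 2m+1$ they are $t^m\overline{\beta}$, the $t$-orbit of $\overline{\beta}$. The standard identity for linking in an infinite cyclic cover, $\sum_m lk(\overline{x}, t^m\overline{y}) = lk(x,y)$ with the right-hand side in $S^3$, then yields
\[
g_j(1) = lk(\alpha_j,\alpha) - lk(\alpha_j,\beta), \qquad g_j(-1) = lk(\alpha_j,\alpha) + lk(\alpha_j,\beta),
\]
and $f(1) = lk(l,\alpha) - lk(l,\beta) = 1 - lk(\alpha,\beta)$, all computed in $S^3$. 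The sign discrepancy between the two evaluations is exactly the $(-1)^k$ surviving in the odd terms, and $lk(l,\alpha) = 1$ is the $+1$ framing; here I use that a longitude links curves disjoint from its framing annulus the same way its core does.

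It then remains to verify the purely three-dimensional fact that the surgery circles are pairwise unlinked in $S^3$, namely $lk(\alpha_j,\alpha) = lk(\alpha_j,\beta) = 0$ for every old curve $\alpha_j$, and $lk(\alpha,\beta) = 0$. This holds because each surgery circle is a small unknot encircling the two strands at a single crossing of the diagram for $U$, hence bounds a local meridional disk; distinct crossings sit at distinct points of the plane, so any two such circles lie in disjoint balls and link trivially. In particular $c_+$ and $\rho(c_+)$ are distinct crossings, so $\alpha$ and $\beta$ are unlinked. Substituting these vanishings gives $\vec{g}(1) = \vec{g}(-1) = 0$ and $f(1) = 1$, and then $h(1) = f(1) - 1 = 0$ follows at once from Lemma~\ref{lemma:fminush}.

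I expect the main obstacle to be this last geometric step: justifying that the circles are unlinked despite the fact that the strands they encircle may wander globally through the diagram. The point to make precise is that $lk(\alpha_i,\alpha_j)$ depends only on whether the circle $\alpha_j$ pierces the small meridional disk of $\alpha_i$, not on the behavior of the encircled strands, so that localizing each circle in a ball near its crossing suffices; I would exhibit the disjoint balls explicitly. A secondary point needing care is the convergence and validity of the cyclic-cover linking identity, which I would handle by noting that only finitely many translates $\tau^k\overline{\alpha}$ meet any compact set, together with the orientation bookkeeping already encoded in the weights $(-\tau)^k$.
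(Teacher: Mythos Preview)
Your proposal is correct and follows the same parity-splitting strategy as the paper: separate the $\tau$-sum into even and odd powers, identify each as a $t$-orbit sum descending to a linking number in $S^3$, and then use that the surgery circles are pairwise unlinked. For $\vec{g}(\pm 1)$ your argument is essentially identical to the paper's.

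The one genuine difference is in the computation of $f(1)$. You evaluate the odd part as $-lk(\alpha,\beta)$ and then invoke the geometric fact that $\alpha$ and $\beta=\rho(\alpha)$ sit at distinct crossings and are therefore unlinked. The paper instead never computes $lk(\alpha,\beta)$: it uses that $\tau$ is orientation-reversing to show $lk(\overline{l},\tau^{2j+1}\overline{\alpha}) = -lk(\overline{l},\tau^{-(2j+1)}\overline{\alpha})$, so the odd-index terms cancel in pairs and only the even part survives, giving $f(1)=lk(l,\alpha)=1$ directly. Your route is more uniform---it treats $f$ and the $g_j$ on exactly the same footing and avoids the orientation-reversal sign bookkeeping---while the paper's route avoids having to argue that $\alpha$ and $\rho(\alpha)$ are unlinked. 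Both are short and valid.
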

\begin{proof}
Let $\alpha, l, \overline{\alpha},$ and $\overline{l}$ be defined as in Lemma \ref{lemma:fminush}. Then
\[
f(1) = \sum_i lk(\overline{l},\tau^i\overline{\alpha})(-1)^i.
\]
Furthermore, for all $j \in \mathbb{Z}$ we have
\[
lk(\overline{l},\tau^{2j+1}\overline{\alpha}) = -lk(\tau^{-2j-1}\overline{l},\overline{\alpha}) = -lk(\tau^{-2j-1}\overline{\alpha},\overline{l})
\]
so that the odd terms in the previous equation cancel, giving us
\[
f(1) = \sum_i lk(\overline{l},\tau^{2i}\overline{\alpha}) = lk(l,\alpha) = 1.
\]
Then $h(1) = 0$ since $f(\tau) = h(\tau) + 1$ (Lemma \ref{lemma:fminush}). We will now show that $g_j(1) = 0$ for $1\leq j \leq n$. Let $\alpha_j$ and $\rho(\alpha_j)$ be the equivariant pair of surgery curves corresponding to the $j$th row of $M_-$. Let $\overline{\alpha}_j$ be a lift of $\alpha_j$ to $X_{\infty}(K_-)$. Then
\begin{align*}
g_j(1) = \sum_i lk(\overline{\alpha},\tau^i\overline{\alpha}_j)(-1)^i &= \sum_{i}lk(\overline{\alpha},\tau^{2i}\overline{\alpha}_j) - \sum_{i}lk(\overline{\alpha},\tau^{2i+1}\overline{\alpha}_j)\\
&= lk(\alpha,\alpha_j) - lk(\alpha,\rho(\alpha_j)) = 0-0 = 0.
\end{align*}
Here $lk(\alpha,\alpha_j) = lk(\alpha,\rho(\alpha_j)) = 0$ since the surgery curves are all pairwise unlinked. The argument that $g_j(-1) = 0$ is similar. Therefore $\vec{g}(1) = \vec{g}(-1) = 0$.
\end{proof}

\begin{lemma} \label{lemma:hderivative}
Let $h$ be defined as in Equation \ref{eqn:M+andM-}. Then
\[
\frac{\partial h}{\partial z}\big{|}_{z = 0} = \frac{1}{2} \cdot \frac{\partial h}{\partial \tau}\big{|}_{\tau = 1} = 1.
\]
\end{lemma}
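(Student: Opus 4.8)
The first equality is just the chain rule, so I would dispense with it immediately. Since $z = \tau - \tau^{-1}$ is invariant under $\tau \mapsto -\tau^{-1}$ (indeed $-\tau^{-1} - (-\tau^{-1})^{-1} = -\tau^{-1}+\tau = z$), and since the diagonal entry $h$ satisfies $h(\tau) = h(-\tau^{-1})$ by the presentation-matrix symmetry $f_{i,j}(\tau) = f_{j,i}(-\tau^{-1})$ established in Section \ref{sec:surgery-presentations}, the Laurent polynomial $h$ descends to a polynomial in $z$. Differentiating and using $\frac{dz}{d\tau}\big|_{\tau=1} = (1+\tau^{-2})\big|_{\tau=1} = 2$ gives $\frac{\partial h}{\partial z}\big|_{z=0} = \frac12 \frac{\partial h}{\partial \tau}\big|_{\tau=1}$, so everything reduces to showing $\frac{\partial h}{\partial \tau}\big|_{\tau=1} = 2$.

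Next I would unwind the definition of $h$ from Lemma \ref{lemma:fminush}. Because $\alpha'$ is $0$-framed, the framing affects only the $k=0$ term, which is $lk(\overline{l}',\overline{\alpha}') = 0$; and for $k \neq 0$ the longitude $\overline{l}'$ may be replaced by $\overline{\alpha}'$, since the two are parallel and $\tau^k\overline{\alpha}'$ is disjoint from a neighborhood of $\overline{\alpha}'$. Writing $\mu_k = lk(\overline{\alpha}',\tau^k\overline{\alpha}')$, this gives
\[
h(\tau) = \sum_{k \neq 0}\mu_k(-\tau)^k, \qquad \mu_{-k} = (-1)^k\mu_k,
\]
where the coefficient symmetry again follows from $h(\tau) = h(-\tau^{-1})$. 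Differentiating, evaluating at $\tau = 1$, and pairing the $k$ and $-k$ contributions via this symmetry, all even-index terms cancel and one is left with $\frac{\partial h}{\partial \tau}\big|_{\tau=1} = -2\sum_{k \geq 1 \text{ odd}} k\,\mu_k$. Thus the entire lemma reduces to computing the odd linking numbers $\mu_k$, which are precisely the linkings of $\overline{\alpha}'$ with the lifts $\tau^k\overline{\alpha}'$ of $\beta' = \rho(\alpha')$.

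The crux is therefore this geometric computation, and I would carry it out directly from the local picture in Figure \ref{fig:surgery_skein_relation}. The circle $\alpha'$ bounds a small disk meeting $K$ in the two antiparallel strands of the resolved dichromatic crossing (so that $lk(\alpha',K) = 0$, consistent with the lifting hypothesis), and tracking these two strands into the infinite cyclic cover should show that the only lift of $\beta'$ linking $\overline{\alpha}'$ is the adjacent one $\tau\overline{\alpha}'$, with $\mu_1 = lk(\overline{\alpha}',\tau\overline{\alpha}') = -1$ and $\mu_k = 0$ for odd $k \geq 3$. Substituting yields $\frac{\partial h}{\partial \tau}\big|_{\tau=1} = -2\mu_1 = 2$. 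I expect the main obstacle to be exactly this last point: establishing both that only the adjacent lift contributes and the precise sign $\mu_1 = -1$ (note this sign is opposite to the one appearing in the figure-eight computation, and so must be forced by the skein-relation conventions, namely the positivity of $(c_+,\rho(c_+))$, the dichromatic condition, and the ordering $c_+ > \rho(c_+)$). Concretely, I would organize the bookkeeping by first simplifying the surgery diagram equivariantly, as in Figures \ref{fig:4_1surgerysimplified} and \ref{fig:4_1surgerycover}, so that $\alpha'$ and $\beta'$ lie in a standard position and the linking numbers $\mu_k$ can be read off directly from a planar diagram of the lifted circles, just as in the figure-eight example and the proof of Theorem \ref{thm:realization}.
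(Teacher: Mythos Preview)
Your reduction to the identity $\sum_{k\geq 1,\text{ odd}} k\,\mu_k = -1$ is correct and matches the paper. The gap is in the final step: your expectation that $\mu_k = 0$ for all odd $k \geq 3$ is false in general.

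The issue is that $\alpha'$ is a small circle around the crossing $c_-$ in a diagram for $K_-$, but the linking numbers $\mu_k$ are computed in the infinite cyclic cover of the \emph{unknot} $U$, not of $K_-$. The equivariant isotopy that straightens $U$ drags $\alpha'$ along, and after simplification $\alpha'$ becomes the boundary of a band sum of two meridional disks $D_1$ (around $a_r$) and $D_2$ (around $a_b$) joined by a band $b$; the partner $\beta' = \rho(\alpha')$ is the corresponding band sum with band $\rho(b)$. The bands $b$ and $\rho(b)$ may be arbitrarily knotted and linked---indeed, this freedom is exactly what Theorem~\ref{thm:realization} exploits to realize arbitrary half-Alexander polynomials---and each crossing between $b$ and $\rho(b)$ produces nonzero contributions to $\mu_{2n\pm 1}$ for some $n$ depending on where along $U$ that crossing sits. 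So the figure-eight picture you are extrapolating from is misleadingly simple.

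The paper's argument handles this by decomposing $\overline{\alpha}'$ into ``local'' arcs $\overline{\eta}_1,\overline{\eta}_2$ near the meridional disks and ``band'' arcs $\overline{\gamma},\overline{\delta}$ along the edges of $b$. The local arcs contribute exactly $\tau - \tau^{-1}$ to $h(\tau)$, which accounts for your $\mu_1 = -1$. For the band arcs, the key observation is that the two strands of the lifted band differ by a $\tau^2$-shift, so each band crossing between $b$ and $\rho(b)$ contributes $\pm\tfrac{1}{2}(\tau^{2n-1} - 2\tau^{2n+1} + \tau^{2n+3})$ to $h(\tau)$. Since $(2n-1) - 2(2n+1) + (2n+3) = 0$, each such term vanishes after differentiating and setting $\tau = 1$. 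Thus the band crossings alter the individual $\mu_k$'s but not the weighted sum $\sum k\mu_k$. This second-difference cancellation is the missing idea in your proposal.
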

\begin{proof}
Using the notation from Lemma \ref{lemma:fminush}, we have
\[
h(\tau) = \sum_i lk(\overline{l}',\tau^i\overline{\alpha}')(-\tau)^i,
\]
so that
\[
\frac{1}{2}h'(1) = \frac{1}{2}\sum_i i\cdot lk(\overline{l}',\tau^i\overline{\alpha}')(-1)^i.
\]
By the symmetry, $lk(\overline{l}',\tau^{2i}\overline{\alpha}') = lk(\overline{l}',\tau^{-2i}\overline{\alpha}')$ for all $i \in \mathbb{Z}$ so that the terms corresponding to index $2i$ and $-2i$ cancel each other for all $i$, and we are left with 
\[
\frac{1}{2}h'(1) = -\dfrac{1}{2}\sum_i (2i+1)lk(\overline{l}',\tau^{2i+1}\overline{\alpha}') = -\dfrac{1}{2}\sum_i (2i+1)lk(\overline{\alpha}',\tau^{2i+1}\overline{\alpha}').
\]
It remains to check that 
\begin{equation} \label{eqn:oddlinking}
-\dfrac{1}{2}\sum_i (2i+1)lk(\overline{\alpha}',\tau^{2i+1}\overline{\alpha}') = 1.
\end{equation}
We begin by putting the surgery curves $\alpha'$ and $\rho(\alpha)'$ in a particular position relative to the unknot $U$ as follows. Pull $U$ straight and let $a_r$ and $a_b$ be the two arcs of $U$ separated by the fixed points of $\rho$. The surgery curve $\alpha'$ (see Figure \ref{fig:surgery_skein_relation}) bounds a disk which intersects $U$ in two points, one on $a_r$ and one on $a_b$. This disk is isotopic, under the isotopy which pulls $U$ straight, to the band sum of a pair of meridional disks, $D_1$ around $a_r$ and $D_2$ around $a_b$. Let $b$ be the band connecting $D_1$ and $D_2$. Equivariantly, let $D_1' = \rho(D_1)$ and $D_2' = \rho(D_2)$ so that $\rho(\alpha')$ bounds a disk which is a band sum of $D_1'$ and $D_2'$. See Figure \ref{fig:horizontaldiskform}. Note that $b$ and $\rho(b)$ may be knotted and linked, but are disjoint from $U$ and the interiors of $D_1,D_2,D_1'$, and $D_2'$. 

\begin{figure}
\begin{overpic}[width=300pt, grid=false]{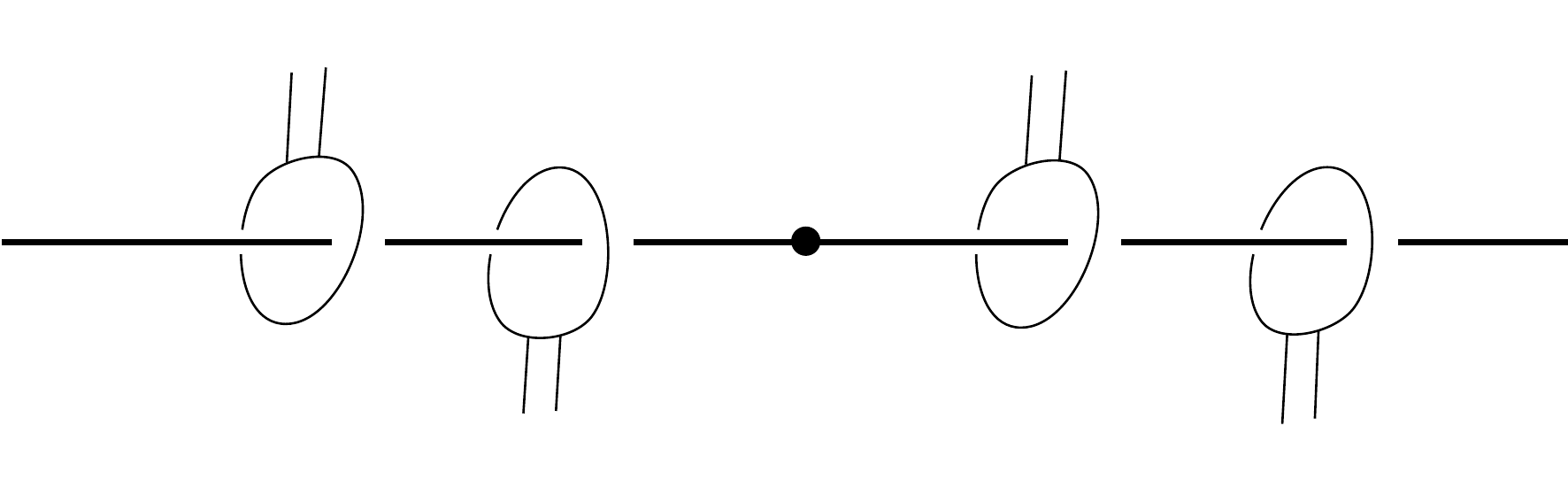}
  \put (16, 5) {$D_1'$}
  \put (34, 22) {$D_2$}
  \put (63, 5) {$D_2'$}
  \put (82, 22) {$D_1$}
  \put (11,22) {$\rho(b)$}
  \put (19,27) {$\vdots$}
  \put (58,22) {$\rho(b)$}
  \put (66,27) {$\vdots$}
  \put (37,5) {$b$}
  \put (34, 0) {$\vdots$}
  \put (85,5) {$b$}
  \put (82.4, 0) {$\vdots$}
  \put (0,10) {$U$}
\end{overpic}
\caption{The $0$-framed symmetric surgery curves $\alpha' = \partial(D_1 \cup D_2\cup b)$ and $\rho(\alpha') = \partial(D_1' \cup D_2' \cup \rho(b))$ for $K_0'$ are isotopic to this standard position in a neighborhood of $U$.}
\label{fig:horizontaldiskform}
\end{figure}

\begin{figure}
\begin{overpic}[width=180pt, grid=false]{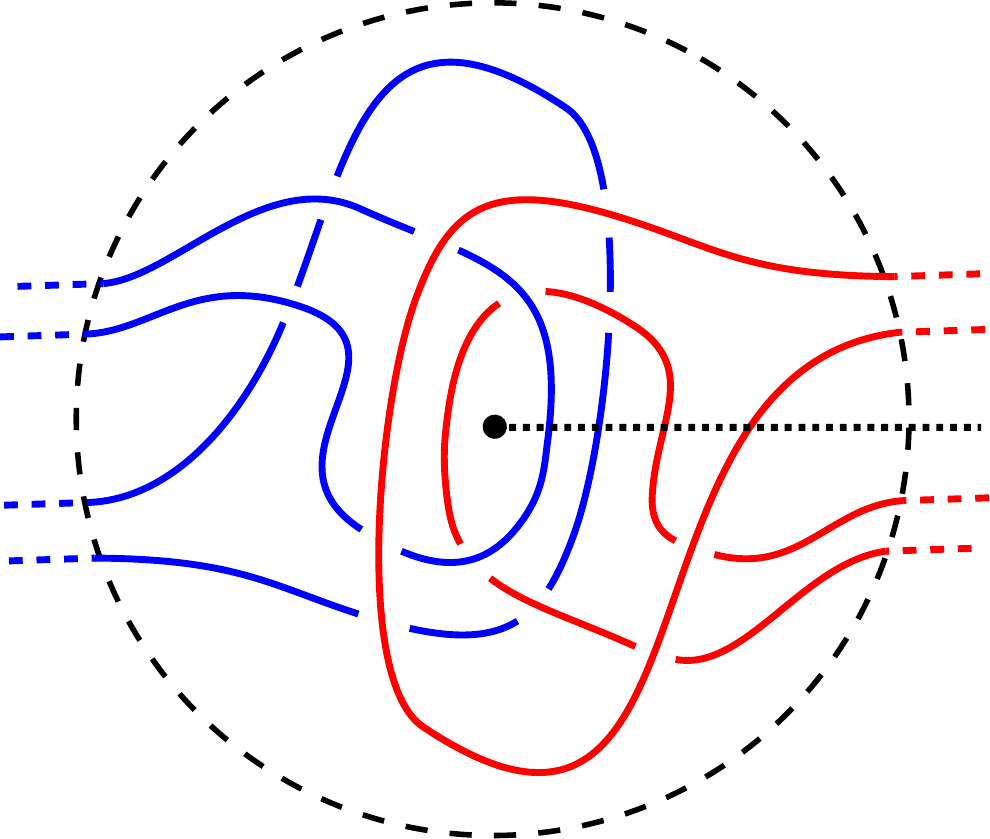}
  \put (14, 7) {$N$}
  \put (95, 46) {$\gamma$}
  \put (95, 35.5) {$\delta$}
  \put (94, 59) {$\delta$}
  \put (93, 24) {$\gamma$}
  \put (76,17) {$\eta_2$}
  \put (75, 61) {$\eta_1$}
\end{overpic}
\caption{Figure \ref{fig:horizontaldiskform} rotated so that $U$ is perpendicular to the diagram. For clarity, $\alpha'$ has been colored red and $\rho(\alpha')$ has been colored blue. The intersection $\alpha' \cap N$ consists of two components: $\eta_1$ and $\eta_2$. The arcs $\delta$ and $\gamma$ are the edges of the band $b$, so that $\alpha' = \eta_1 \cup \eta_2 \cup \delta \cup \gamma.$}
\label{fig:verticaldiskform}
\end{figure}

\begin{figure}
\begin{overpic}[width=250pt, grid=false]{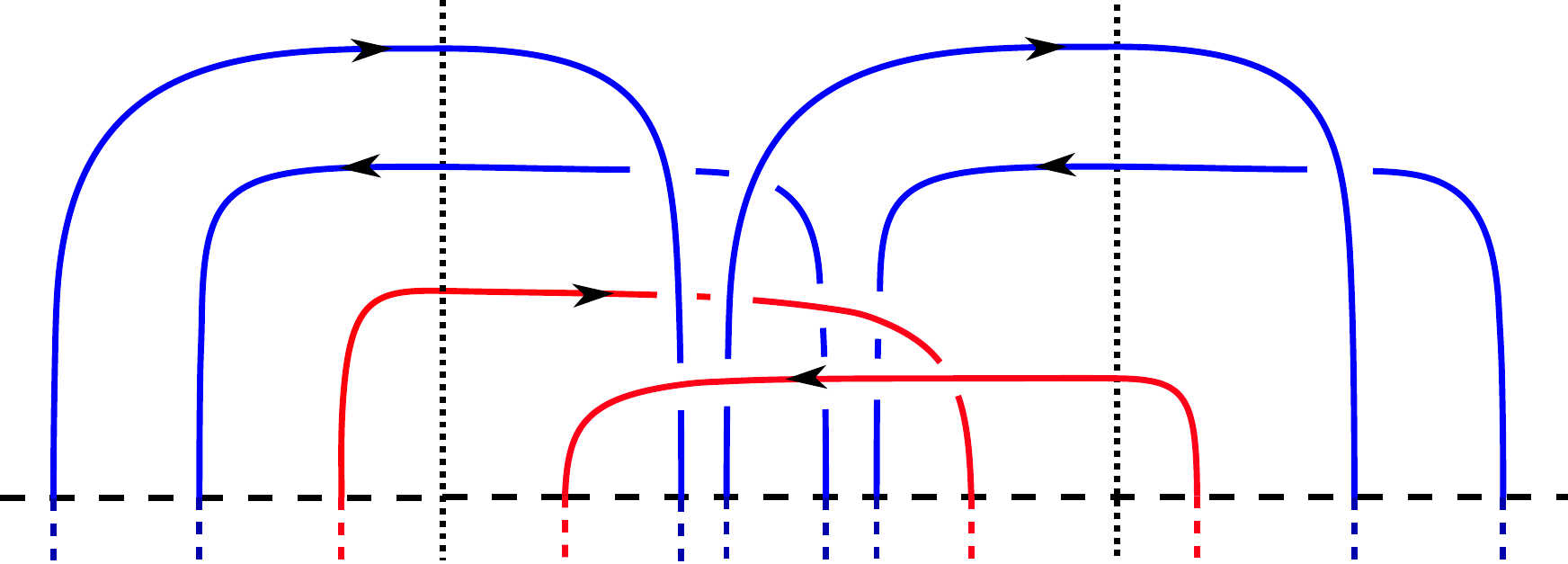}
  \put (-10, 20) {$\dots$}
  \put (105, 20) {$\dots$}
  \put (30,20) {$\overline{\eta}_2$}
  \put (80,34) {$\tau \overline{\eta}_2$}
  \put (90,27) {$\tau \overline{\eta}_1$}
  \put (1,34) {$\tau^{-1} \overline{\eta}_2$}
  \put (16, 27) {$\tau^{-1}\overline{\eta}_1$}
  \put (65,14) {$\overline{\eta}_1$}
  \put (35,-5) {$\overline{\delta}$}
  \put (20.5,-5) {$\overline{\delta}$}
  \put (75,-4) {$\overline{\gamma}$}
  \put (61,-4) {$\overline{\gamma}$}
\end{overpic}
\caption{The lift of Figure \ref{fig:verticaldiskform} to $X_{\infty}(U)$.}
\label{fig:verticaldiskformlift}
\end{figure}

We can then choose a new projection for Figure \ref{fig:horizontaldiskform} where $U$ is perpendicular to the diagram, and where a neighborhood $N$ of the projection of $U$ containing $D_1,D_2,D_1',$ and $D_2'$ is disjoint from the interiors of $b$ and $\rho(b)$; see Figure \ref{fig:verticaldiskform}. Let $\gamma$ and $\delta$ be the arcs on the boundary of $b$ disjoint from $N$ as indicated. Then lift Figure \ref{fig:verticaldiskform} to the infinite cyclic cover $X_{\infty}(U)$ as shown in Figure \ref{fig:verticaldiskformlift}. Here $\overline{\delta}$ and $\overline{\gamma}$ are lifts of $\delta$ and $\gamma$ respectively, and $\overline{\eta}_1$ and $\overline{\eta_2}$ are lifts of boundary pieces of $D_1$ and $D_2$ respectively so that $\overline{\alpha}' = \overline{\delta} \cup \overline{\gamma} \cup \overline{\eta}_1 \cup \overline{\eta_2}$ as shown. 

We can now compute Equation \ref{eqn:oddlinking} directly by counting signs of crossings between $\overline{\alpha}'$ and $\bigcup_i \tau^{2i+1}\overline{\alpha}'$ in Figure \ref{fig:verticaldiskformlift}. First, consider $\overline{\eta}_1$ and $\overline{\eta}_2$. We directly see that crossings involving these arcs contribute $\tau - \tau^{-1}$ to $h(\tau)$: a $\tau$ contribution from the crossings with $\tau\overline{\eta}_2$ and a $-\tau^{-1}$ contribution from the crossings with $\tau^{-1}\overline{\eta}_2$. This contributes $-\frac{1}{2}\cdot (-2) = 1$ to the left hand side of Equation \ref{eqn:oddlinking}. It remains to show that the crossings involving $\overline{\delta}$ and $\overline{\gamma}$ contribute 0. To see this, observe that the crossings between $\overline{\gamma}$ or $\overline{\delta}$ and $\bigcup_i \tau^{2i+1}\overline{\alpha}'$ occur as shown in Figure \ref{fig:bandcrossing}, where the pair of strands in each band are related by a $\tau^2$ (or equivalently, $t$) shift since $\eta_i$ loops around $K$ once. Indeed, each band crossing in $S^3$ consists of 4 crossings. Since we are only interested in crossings involving $\tau^{2i+1}\overline{\alpha}'$, we only consider band crossings between $b$ and $\rho(b)$. Such band crossings lift to exactly two band crossings in $X_{\infty}(U)$ which involve $\overline{\gamma}$ or $\overline{\delta}$. In these two band crossings there are 4 total crossings involving $\overline{\delta}$ or $\overline{\gamma}$. These contribute
\[
\pm\frac{1}{2}\big{(}\tau^{2n-1} - \tau^{2n+1} + \tau^{2n+3} - \tau^{2n+1}\big{)}
\] 
to $h(\tau)$, where the sign depends on the orientation of the band crossing and the $\frac{1}{2}$ is part of the usual linking number formula. This results in a contribution of 0 to the left hand side of Equation \ref{eqn:oddlinking}.
\end{proof}

\begin{figure}
\begin{overpic}[width=350pt, grid=false]{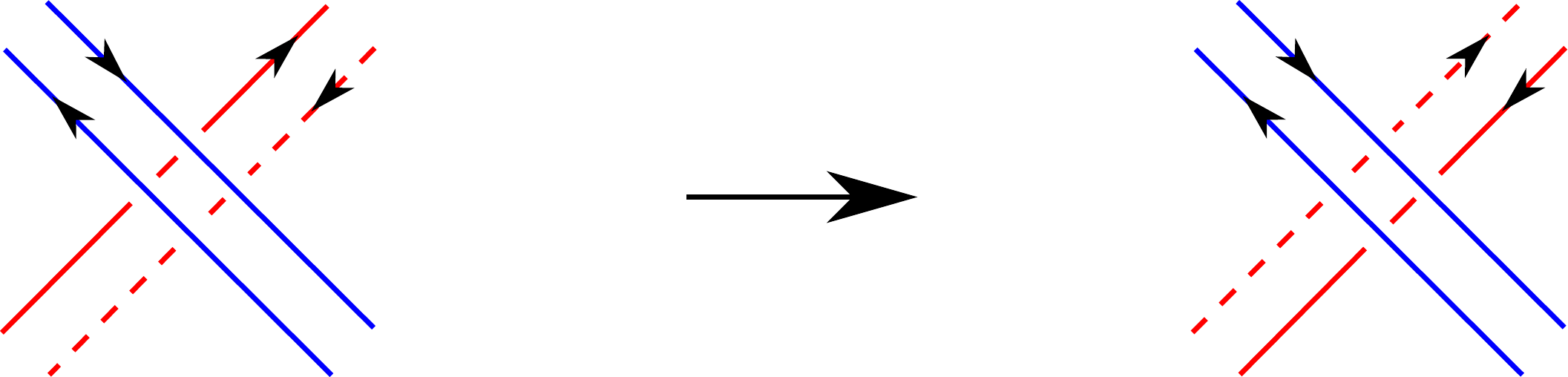}
\put (45,14) {$\tau^2 = t$}
\put (-3,3) {$\overline{\gamma}$}
\put (25,20) {$\tau^{-2}\overline{\delta}$}
\put (25, 3) {$\tau^{2n+1}\overline{\alpha}'$}
\put (-11, 20) {$\tau^{2n-1}\overline{\alpha}'$}

\put (70,3) {$\tau^2\overline{\gamma}$}
\put (101,20) {$\overline{\delta}$}
\put (101, 3) {$\tau^{2n+3}\overline{\alpha}'$}
\put (65, 20) {$\tau^{2n+1}\overline{\alpha}'$}
\end{overpic}
\caption{The two lifts of a band crossing to $X_{\infty}(U)$ involving $\overline{\gamma}$ and $\overline{\delta}$. Note that the two strands in each band are in distinct components of the lift of $\alpha$; one is a $\tau^2$ (or equivalently, $t$) shift of the other.}
\label{fig:bandcrossing}
\end{figure}

We need one additional lemma before proving Theorem \ref{thm:skeinrelation}. 

\begin{lemma} \label{lemma:M0equivalence}
For $M_0$ and $K_0$ as above, $\det(M_0) = \lambda \cdot (\tau - \tau^{-1}) \cdot \Delta_{(K_0,\rho)}(\tau)$, for some $\lambda \in \mathbb{Q}$.
\end{lemma}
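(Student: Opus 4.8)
The plan is to recognize $\det(M_0)$ as the order of the Alexander module $H_1(X_\infty^0;\mathbb Z)$, hence (up to units) the half-Alexander polynomial $\Delta_{(K_0',\rho)}(\tau)$ of the auxiliary knot $K_0'\subset W:=(S^1\times S^2)\#(S^1\times S^2)$, and then to compare $K_0'$ with the oriented resolution $K_0\subset S^3$. I would split the argument into two stages: first peel off the factor $(\tau-\tau^{-1})$ using the algebraic structure already in hand, and then identify the remaining cofactor with $\Delta_{(K_0,\rho)}(\tau)$ by a topological comparison of infinite cyclic covers.

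For the first stage, recall that the entries of any of these presentation matrices satisfy $f_{i,j}(\tau)=f_{j,i}(-\tau^{-1})$, so $\det(M_0)(\tau)=\det(M_0)(-\tau^{-1})$. By Lemma \ref{lemma:vectorvalues} we have $\vec g(1)=\vec g(-1)=0$ and $h(1)=0$, so the last row and column of $M_0$ vanish at $\tau=1$; hence $\det(M_0)(1)=0$, and applying the symmetry $\tau\mapsto-\tau^{-1}$ gives $\det(M_0)(-1)=0$ as well. Since $\mathbb Z[\tau,\tau^{-1}]$ is a unique factorization domain and $(\tau-1),(\tau+1)$ are coprime, it follows that $(\tau-1)(\tau+1)$, and therefore $(\tau-\tau^{-1})=\tau^{-1}(\tau^2-1)$, divides $\det(M_0)$. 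Thus I may write $\det(M_0)=(\tau-\tau^{-1})\,g(\tau)$ for a Laurent polynomial $g$, and it remains to identify $g$ with $\Delta_{(K_0,\rho)}(\tau)$ up to a rational scalar.

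For the second stage I would compare covers geometrically. The two $0$-framed surgeries that produce $K_0'$ from $K_-$ have belt $2$-spheres forming an equivariant pair $S,\rho(S)\subset W$; because each surgery disk met $K_-$ in the two points of an oriented smoothing, each of $S$ and $\rho(S)$ meets $K_0'$ transversely in two points of opposite sign, and cutting $W$ along $S\cup\rho(S)$ and capping the resulting sphere boundaries with balls recovers $S^3$, carrying $K_0'$ to the oriented resolution $K_0$. In the exterior of $K_0'$ each belt sphere becomes an annulus whose core is a meridian of $K_0'$, so, exactly as for the connected-summing sphere in the proof of Proposition \ref{prop:Conway_connect_sum}, it lifts to a single properly embedded strip $\mathbb R\times I$ in $X_\infty^0$, with $\tau$ interchanging the strip coming from $S$ and the one coming from $\rho(S)$. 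I would then run a Mayer-Vietoris argument for $X_\infty^0$ decomposed along this $\tau$-invariant pair of strips, in the spirit of Proposition \ref{prop:half-Conway-properties}(3): capping the cut-open cover recovers $X_\infty(K_0)$, while the non-separating nature of $S$ and $\rho(S)$ (reflected in $H_1(W)\cong\mathbb Z^2$) contributes one extra cyclic $\mathbb Z[\tau,\tau^{-1}]$-summand. Computing the order of this summand to be $(\tau-\tau^{-1})$ up to units — consistent with the divisibility of the first stage — then yields $g\doteq\Delta_{(K_0,\rho)}(\tau)$ and hence $\det(M_0)=\lambda\,(\tau-\tau^{-1})\,\Delta_{(K_0,\rho)}(\tau)$.

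I expect the Mayer-Vietoris bookkeeping to be the main obstacle. The delicate points are verifying that cutting along the equivariant pair of non-separating strips leaves the Alexander module of $K_0$ together with exactly one additional cyclic summand — not two, despite the two spheres, the pairing being forced by the $\tau$-action with $\tau^2=t$ — and pinning the order of that summand to $(\tau-\tau^{-1})$ rather than a unit multiple thereof. Once the divisibility of the first stage and this module splitting are in place, matching the overall rational constant $\lambda$ is routine, for instance by differentiating at $\tau=1$ and invoking Lemma \ref{lemma:hderivative} together with the normalizations used in Proposition \ref{prop:matrix-skein-relation}.
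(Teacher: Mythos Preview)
Your first stage is correct, and your second stage has sound geometric intuition: cutting $W$ along the belt spheres and capping does recover $(S^3,K_0)$, and the annuli $A,\rho(A)$ do lift to strips in $X_\infty^0$. But the route you sketch is genuinely harder than the paper's, and you have not closed it. The belt spheres live only in the ambient manifold of $K_0'$, so your decomposition is one-sided: you must cut $X_\infty^0$, cap, and then argue separately that the result matches $X_\infty(K_0)$. Moreover the annuli $A,\rho(A)$ are \emph{non-separating} in the exterior of $K_0'$ (since $S,\rho(S)$ are non-separating in $W$), so the sequence you need is a Wang-type sequence for a cut-and-reglue, not an ordinary Mayer-Vietoris; isolating the ``one extra cyclic summand'' and pinning its order to $(\tau-\tau^{-1})$ through that machinery is exactly the work your sketch defers.

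The paper avoids all of this by decomposing along a \emph{separating} surface that is \emph{common} to both pictures: the $4$-punctured sphere $\partial B_0=\partial B_0'$, where $B_0,B_0'$ are the local balls in which $K_0$ and $K_0'$ differ. Since the two exteriors agree outside these balls, both $X_\infty(K_0)$ and $X_\infty^0$ split along the \emph{same} lifted surface as $E_\infty\cup(\overline{B}_0\cup\tau\overline{B}_0)$ and $E_\infty\cup(\overline{B}_0'\cup\tau\overline{B}_0')$ respectively. One standard Mayer-Vietoris over $\Lambda_{\mathbb Q}=\mathbb Q[\tau,\tau^{-1}]$ then gives explicit presentation matrices $A,A'$ for the two Alexander modules that are identical except in a single entry, where $A$ has $1$ and $A'$ has $1-\tau^2$. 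Comparing $(n{+}1)\times(n{+}1)$ minors immediately shows the first elementary ideals differ by the factor $(1-\tau^2)$, and the symmetry under $\tau\mapsto-\tau^{-1}$ forces the remaining unit to lie in $\mathbb Q$. Your algebraic first stage is thereby subsumed; the paper never needs to peel off $(\tau-\tau^{-1})$ in advance.
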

\begin{proof}
Recall that $\det(M_0)$ is the Alexander polynomial of $K_0'$ (see Figure \ref{fig:surgery_skein_relation}). Let $B_0$ be the ball shown on the top right in Figure \ref{fig:skeinrelation}, and let $B_0'$ be the corresponding ball for $K_0'$ shown on the top right in Figure \ref{fig:surgery_skein_relation}. Take their pre-images $\overline{B}_0$ and $\overline{B_0}'$ in the infinite cyclic covers of $K_0$ and $K_0'$ respectively, and let $E_{\infty} = (X_{\infty}(K_0) - (\overline{B}_0 \cup \tau \overline{B}_0))$. Note there is a natural identification between $E_{\infty}$ and $(X_{\infty}(K_0') - (\overline{B}_0' \cup \tau \overline{B}_0'))$. We can decompose 
\begin{align*}
X_{\infty}(K_0) &= E_{\infty} \cup (\overline{B}_0\cup \tau\overline{B}_0), \mbox{ and }\\
X_{\infty}(K_0') &= E_{\infty} \cup (\overline{B}'_0\cup \tau\overline{B}'_0).
\end{align*}

\begin{figure}
\begin{overpic}[width=100pt, grid=false]{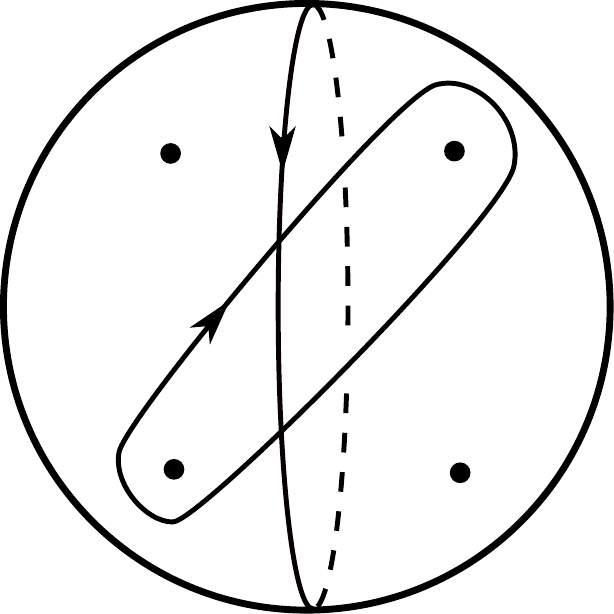}
\put (30,80) {$a$}
\put (66,68) {$c$}
\put (30,27) {$b$}
\put (77,27) {$d$}
\put (55,8) {$m_1$}
\put (70,46) {$m_2$}
\end{overpic} \hspace{1cm}
\begin{overpic}[width=100pt, grid=false]{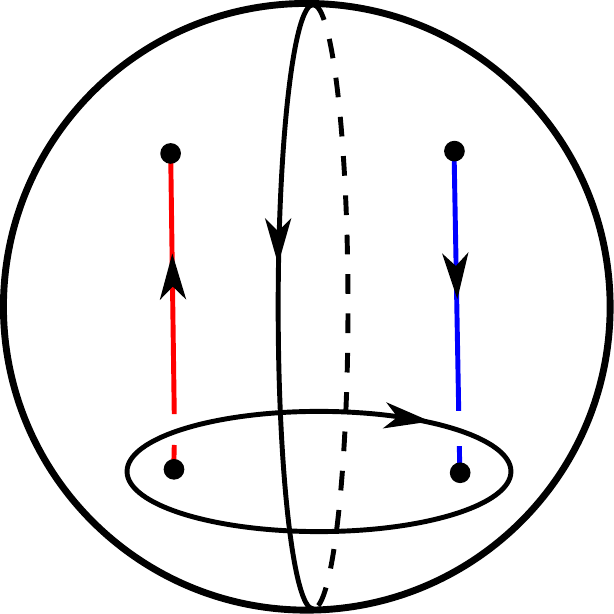}
\put (30,68) {$a$}
\put (64,68) {$d$}
\put (33,20) {$b$}
\put (65,20) {$c$}
\put (31,85) {$m_1$}
\put (9,30) {$m_2$}
\end{overpic} \hspace{1cm}
\begin{overpic}[width=100pt, grid=false]{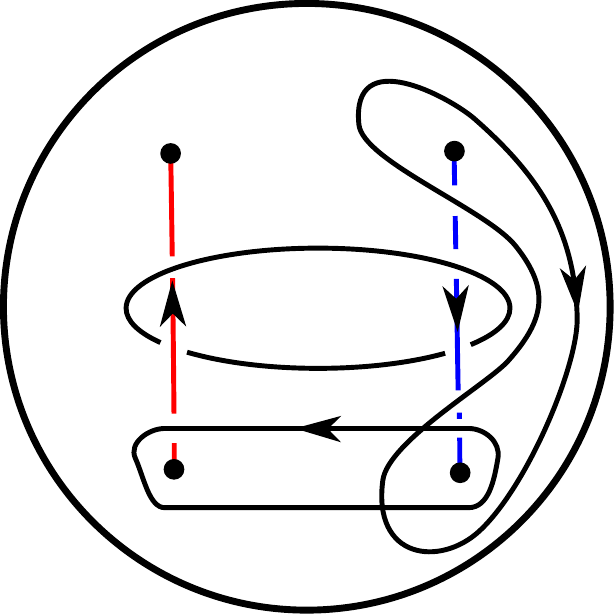}
\put (30,68) {$a$}
\put (66,75) {$b$}
\put (33,20) {$d$}
\put (65,20) {$c$}
\put (42,85) {$m_2$}
\put (40,9) {$m_1$}
\put (50,50) {$0$}
\end{overpic}
\caption{The left figure shows curves $m_1$ and $m_2$ in the 4-punctured sphere $S^2 - \{a,b,c,d\} \subset \partial B_0 = \partial B_0'$. The center figure shows $m_1$ and $m_2$ on the boundary of $B_0$ after an isotopy which rotates the blue arc upside down. The right figure shows $m_1$ and $m_2$ on the boundary of $B_0'$ after an isotopy which uncrosses the red and blue arcs.}
\label{fig:K0generators}
\end{figure}

Let $\Lambda_{\mathbb{Q}} = \mathbb{Q}[\tau,\tau^{-1}]$. We can compare the $\Lambda_{\mathbb{Q}}$-modules $H_1(X_{\infty}(K_0);\mathbb{Q})$ and $H_1(X_{\infty}(K_0');\mathbb{Q})$ using the Mayer-Vietoris sequence on this $\tau$-equivariant decomposition. We use $\mathbb{Q}$ coefficients to simplify the linear algebra since $\Lambda_{\mathbb{Q}}$ is a principal ideal domain. Let $\overline{S}$ be the preimage of $\partial B_0$ in the infinite cyclic cover (either $X_{\infty}(K_0)$ or $X_{\infty}(K_0')$). Note that $\overline{S}$ and $\tau\overline{S}$ are disjoint. We have the Mayer-Vietoris sequences as follow:
\begin{align*}
H_1(\overline{S} \cup \tau\overline{S};\mathbb{Q}) \overset{i_1\oplus i_2}{\xrightarrow{\hspace*{1cm}}} H_1(E_{\infty};\mathbb{Q}) \oplus H_1(\overline{B}_0 \cup \tau \overline{B}_0;\mathbb{Q}) \to H_1(X_{\infty}(K_0);\mathbb{Q}) \to 0&, \mbox{ and } \\
H_1(\overline{S} \cup \tau\overline{S};\mathbb{Q}) \overset{i_1\oplus i_2'}{\xrightarrow{\hspace*{1cm}}} H_1(E_{\infty};\mathbb{Q}) \oplus H_1(\overline{B}_0' \cup \tau \overline{B}_0';\mathbb{Q}) \to H_1(X_{\infty}(K_0');\mathbb{Q}) \to 0&.
\end{align*}
In particular,  $H_1(X_{\infty}(K_0);\mathbb{Q})$ and $H_1(X_{\infty}(K_0');\mathbb{Q})$ are the cokernels of $i_1 \oplus i_2$ and $i_1 \oplus i_2'$ respectively. We can then obtain presentation matrices for $H_1(X_{\infty}(K_0);\mathbb{Q})$ and $H_1(X_{\infty}(K_0');\mathbb{Q})$ as follows. Since $\Lambda_{\mathbb{Q}}$ is a principal ideal domain, $H_1(E_{\infty};\mathbb{Q})$ can be written as 
\[
\Lambda_{\mathbb{Q}}/f_1 \oplus \Lambda_{\mathbb{Q}}/f_2 \oplus \cdots \oplus\Lambda_{\mathbb{Q}}/f_n
\]
for some $f_1,f_2,\dots,f_n \in \Lambda_{\mathbb{Q}}$. Therefore, $H_1(E_{\infty};\mathbb{Q})$ admits a square presentation matrix $A_E$: the diagonal matrix with entries $f_1,f_2,\dots,f_n$. Note that $\overline{S}$ is homeomorphic to two copies of $\mathbb{R} \times [0,1]$ with infinitely many tubes connecting them, as shown on the left in Figure \ref{fig:ladders}. Here $\tau^2$ acts by shifting the tubes one place to the right, and $\tau$ exchanges $\overline{S}$ and $\tau\overline{S}$. Let $\overline{m}_1$ and $\overline{m}_2$ be lifts of $m_1$ and $m_2$ from Figure \ref{fig:K0generators} as shown. Then $H_1(\overline{S} \cup \tau\overline{S};\mathbb{Q}) \cong \Lambda_{\mathbb{Q}}\langle\overline{m}_1,\overline{m}_2\rangle$. Similarly, $\overline{B}_0$ is homeomorphic to an infinite ladder: two copies of $D^2 \times \mathbb{R}$ connected by infinitely many solid tubes; see Figure \ref{fig:ladders}. As with $\overline{S}$, $\tau^2$ acts by shifting the rungs of the ladder one place to the right, so that $H_1(\overline{B}_0 \cup \tau\overline{B}_0;\mathbb{Q}) \cong \Lambda_{\mathbb{Q}}[\overline{m}_2]$. Now we obtain a presentation matrix for $H_1(X_{\infty}(K_0);\mathbb{Q})$ by enlarging $A_E$:
\[
A = \begin{bmatrix}
0& 1&0\\
\vec{v}_1&\vec{v}_2 &A_E
\end{bmatrix}.
\]
Here $\vec{v}_1$ and $\vec{v}_2$ are the coefficients of $i_1([\overline{m}_1])$ and $i_1([\overline{m}_2])$ respectively. Furthermore, the (1,1)-entry is 0 since $i_2([\overline{m}_1]) = 0$, and the (1,2)-entry is $1$ since $i_2([\overline{m}_2]) = [\overline{m}_2]$.

 Similarly, $\overline{B}_0'$ can be obtained from an infinite ladder by performing surgeries along the lifts of the $0$-surgery curves, each of which encircles the gap between an adjacent pair of rungs; see Figure \ref{fig:ladders}. Then $H_1(\overline{B}_0' \cup \tau \overline{B}_0';\mathbb{Q}) \cong \Lambda_{\mathbb{Q}}[\overline{\mu}]$, where $\overline{\mu}$ is the meridian of one of the surgery curves; see Figure \ref{fig:ladders}. Here $\overline{m}_1$ is isotopic to one of the surgery curves, and $\overline{m}_2$ is a meridian of a rung as shown. In particular, $i_2'([\overline{m}_1]) = 0$ and $i_2'([\overline{m}_2]) = (1-\tau^2)[\mu]$. Therefore $H_1(X_{\infty}(K_0');\mathbb{Q})$ has a presentation matrix
\[
A' = \begin{bmatrix}
0& (1-\tau^2) &0\\
\vec{v}_1&\vec{v}_2 &A_E
\end{bmatrix}.
\]

\begin{figure}
\begin{overpic}[width=400pt, grid=false]{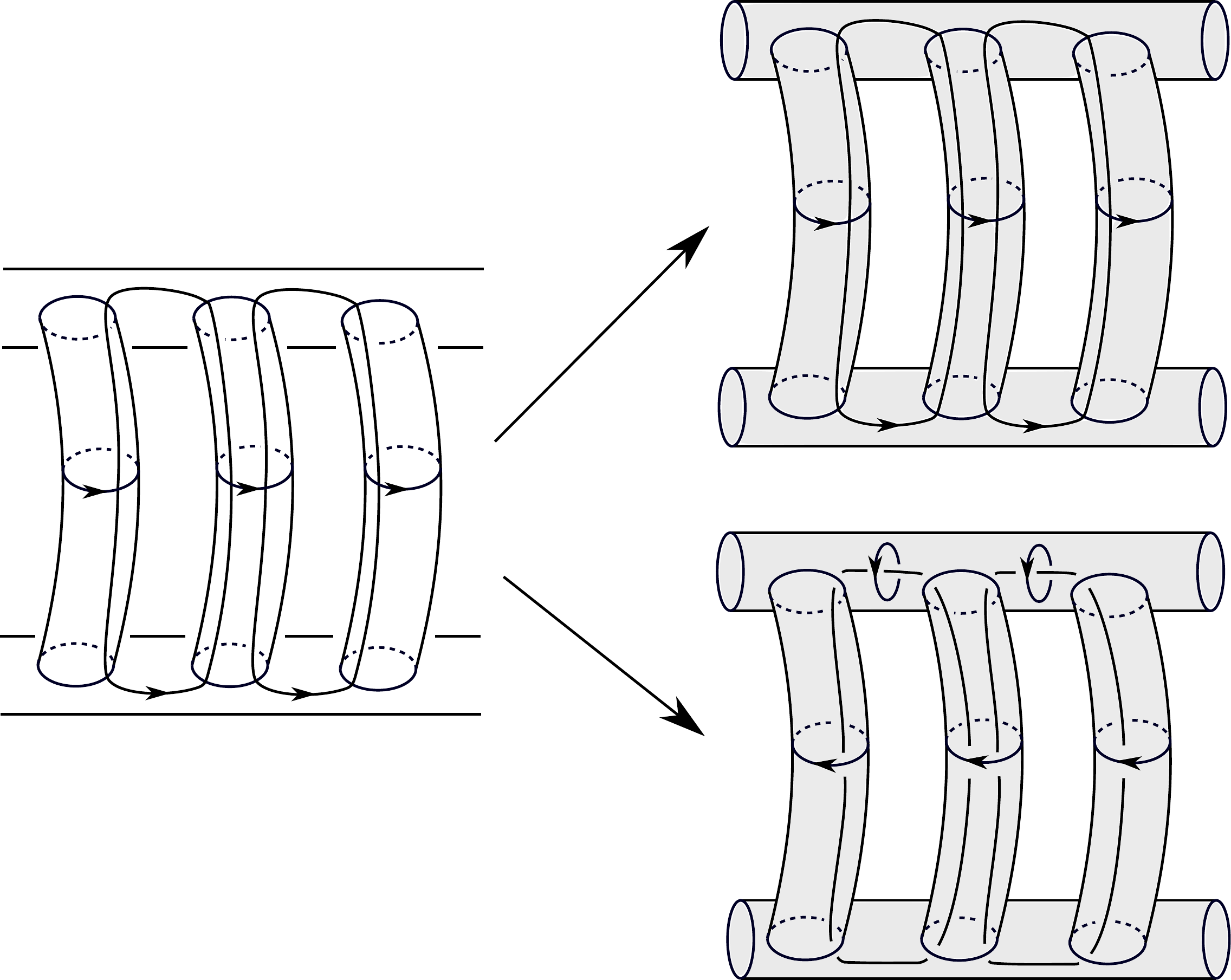}
\put (13.5,41) {$\overline{m}_1$}
\put (36.5,41) {$t\overline{m}_1$}
\put (-3.5, 41) {$t^{-1}\overline{m}_1$}
\put (11,24.5) {$\overline{m}_2$}
\put (22.5,24.5) {$t \overline{m}_2$}

\put (73,62.5) {$\overline{m}_1$}
\put (96,62.5) {$t\overline{m}_1$}
\put (56, 62.5) {$t^{-1}\overline{m}_1$}
\put (70.5,46) {$\overline{m}_2$}
\put (82,46) {$t \overline{m}_2$}

\put (72.5,19) {$\overline{m}_2$}
\put (95.5,19) {$t\overline{m}_2$}
\put (55.5, 17) {$t^{-1}\overline{m}_2$}
\put (71.5,2) {$0$}
\put (83.5,2) {$0$}
\put (74,34.5) {$\mu$}
\put (86.3,34.1) {$t\mu$}
\end{overpic}
\caption{One component of each of $\overline{B}_0$ (top right), and $\overline{B}_0'$ (bottom right). On the left is the pre-image $\overline{S}$ of $\partial B_0$ in the infinite cyclic cover, $X_{\infty}(K_0)$ or $X_{\infty}(K_0')$. In the bottom right, the $0$-framing refers to the lift of the $0$-framing in Figure \ref{fig:K0generators}.}
\label{fig:ladders}
\end{figure}

We now compare the first elementary ideals of $H_1(X_{\infty}(K_0);\mathbb{Q})$ and $H_1(X_{\infty}(K_0');\mathbb{Q})$ from these matrices. The first elementary ideal $\mathcal{E}$ of $H_1(X_{\infty}(K_0);\mathbb{Q})$ is generated by the $(n+1) \times (n+1)$ minors of $A$, and the first elementary ideal $\mathcal{E}'$ of $H_1(X_{\infty}(K_0');\mathbb{Q})$ is generated by the $(n+1) \times (n+1)$ minors of $A'$. Since $A$ and $A'$ are identical apart from the $(1,2)$-entry, we can see that the minors of $A'$ are precisely $(1-\tau^2)$ times the corresponding minors of $A$. In particular, a generator for $\mathcal{E}'$ is $(1-\tau^2)$ times a generator of $\mathcal{E}$, up to multiplication by a unit in $\Lambda_{\mathbb{Q}}$. 

Note that $\det(M_0) = \Delta_{(K_0',\rho)}(\tau)$ generates $\mathcal{E}'$ and that $\Delta_{(K_0,\rho)}(\tau)$ generates $\mathcal{E}$. Therefore, $\det(M_0)$ and $(\tau-\tau^{-1})\Delta_{(K_0,\rho)}(\tau)$ are equal up to multiplication by a unit in $\Lambda_{\mathbb{Q}}$. Furthermore, since both $\det(M_0)$ and $(\tau-\tau^{-1})\Delta_{(K_0,\rho)}(\tau)$ are symmetric under $\tau \to -\tau^{-1}$, they are in fact equal up to multiplication by a unit in $\mathbb{Q}$.
\end{proof}

\begin{proof}[Proof of Theorem \ref{thm:skeinrelation}]
Let $\varepsilon = \det(M_+)\big{|}_{\tau = 1}$, which is $1$ or $-1$ by Theorem \ref{thm:Hartley-Kawauchi}. Note that by Proposition \ref{prop:matrix-skein-relation} (2), $\varepsilon = \det(M_-)\big{|}_{\tau = 1}$ as well. By construction, $\det(M_\pm) \doteq \Delta_{(K_{\pm},\rho)}(\tau)$. Then since $\det(M_{\pm})$ is symmetric and $\nabla_{(K_{\pm},\rho)}(z)$ is normalized to have constant term 1, letting $z = \tau - \tau^{-1}$ gives us 
\[
\det(M_+) =  \varepsilon\cdot\nabla_{(K_+,\rho)}(z) \mbox{ and } \det(M_-) = \varepsilon\cdot\nabla_{(K_-,\rho)}(z).
\]
By Lemma \ref{lemma:M0equivalence},
\[
\det(M_0) = \lambda(\tau - \tau^{-1})\nabla_{(K_0,\rho)}(\tau - \tau^{-1}) = \lambda z \nabla_{(K_0,\rho)}(z).
\] 
To determine $\lambda$, note that Proposition \ref{prop:matrix-skein-relation} (3) gives that the $z$-coefficient of $\det(M_0)$ is $\varepsilon$. Furthermore, since $\nabla_{(K_0,\rho)}$ is normalized to have constant term $1$, $\lambda = \varepsilon$ whence we have
\[
\det(M_0) = \varepsilon \cdot z\nabla_{(K_0,\rho)}(z).
\]
Then by Proposition \ref{prop:matrix-skein-relation} (1), $\det(M_+) - \det(M_-) = \det(M_0)$ so that
\[
\varepsilon\cdot\nabla_{(K_+,\rho)}(z) - \varepsilon\cdot\nabla_{(K_-,\rho)}(z) = \varepsilon \cdot z\nabla_{(K_0,\rho)}(z).
\]
Dividing by $\varepsilon$, we have 
\[
\nabla_{(K_+,\rho)}(z) - \nabla_{(K_-,\rho)}(z) = z\nabla_{(K_0,\rho)}(z),
\]
as desired.
\end{proof}

\section{The half-Conway polynomials for knots with 12 or fewer crossings} \label{sec:computations}
In this section we list the values of the half-Conway polynomial for all strongly negative amphichiral knots with 12 or fewer crossings. Since the choice of orientation can replace $z$ with $-z$, we normalize so that the lowest order odd power of $z$ has a positive coefficient. For compactness we list the coefficients ordered by degree. For example, $[1,0,-2]$ represents $1-2z^2$.\\

Most of these polynomials can be computed from only the Conway polynomial (when it factors uniquely), see Proposition \ref{prop:half-Conway-properties}(1). Eleven of the remaining polynomials can be computed with the additional data of the half-linking number (indicated by bold typeface in the table); see Corollary \ref{cor:halfConwayhalflinkingarf}(1), and Example \ref{example:12a_435}. The remaining example $12a_{1152}$ (boxed in the table) requires some extra effort using Theorem \ref{thm:skeinrelation}; see Example \ref{example:skeincomputation}. In Appendix \ref{app:table} we also provide symmetric diagrams for these knots, which we used to compute the half-linking number.

\begin{tabular}{l|l||l|l||l|l}
$K$ & \multicolumn{1}{c}{$\nabla_{(K,\rho)}(z)$} &\multicolumn{1}{c}{} &\multicolumn{1}{c}{} &\multicolumn{1}{c}{} &\multicolumn{1}{c}{}  \\\midrule\midrule
$4_1$ & $[1,1]$& $6_3$ & $[1,1,1]$& $8_3$ &$[1,2]$\\
$8_9$ &$[1,2,1,1]$ &$8_{12}$ &$[1,1,-1]$ &$8_{17}$&$[1,1,0,1]$\\
$8_{18}$&$[1,1,1,1]$&$10_{17}$&$[1,2,3,1,1]$ &$10_{33}$ &$[1,2,2]$ \\
$10_{37}$ &$[1,1,2]$&$10_{43}$ & $[1,0,1,1]$&$10_{45}$&$[1,0,-1,1]$\\
$10_{79}$ &$[1,1,3,1,1]$&$10_{81}$ &$[1,1,2,1]$& $10_{88}$ &$[1,1,0,-1]$\\
$10_{99}$ &$[1,0,2,0,1]$&$10_{109}$ &$[1,1,2,0,1]$& $10_{115}$&$[1,1,1,-1]$\\
$10_{118}$ &$[1,2,2,1,1]$&$10_{123}$ &$[1,0,-1,0,-1]$& $12a_{4}$ &$[1,1,-1,-1,-1]$\\
$12a_{58}$ &$[1,1,1,1,1]$& $12a_{125}$ &$[1,1,-2,1]$& $12a_{268}$&$[1,1,2,2,1]$\\
$12a_{273}$ &$[1,1,1,2]$& $12a_{341}$ &$[1,1,0,0,-1]$& $\bm{12a_{435}}$ &$[1,2,2,2,1]$\\
$12a_{458}$ &$[1,0,0,1,-1]$& $12a_{462}$ &$[1,1,-1,1,-1]$&$12a_{465}$&$[1,2,0,0,-1]$\\
$\bm{12a_{471}}$ &$[1,1,-2]$& $12a_{477}$ &$[1,2,-1,-1]$& $12a_{499}$ &$[1,0,1,1,1]$\\
$\bm{12a_{506}}$ &$[1,0,-1,1,-1]$& $12a_{510}$ &$[1,2,2,2]$& $12a_{627}$&$[1,1,0,0,-1]$\\
$12a_{819}$ &$[1,2,3,3,1,1]$& $12a_{821}$ &$[1,2,2,2]$& $\bm{12a_{868}}$ &$[1,0,-1,2,-1]$\\
$12a_{887}$ &$[1,0,0,1,-1]$& $12a_{890}$ &$[1,1,1,2]$& $12a_{906}$&$[1,1,-2,2,-1]$\\
$12a_{960}$ &$[1,0,-1,2]$& $\bm{12a_{990}}$ &$[1,2,2,2,1]$& $12a_{1008}$ &$[1,1,0,2]$\\
$12a_{1019}$ &$[1,0,1,0,-1]$& $12a_{1039}$ &$[1,2,-1,1,-1]$& $\bm{12a_{1102}}$&$[1,2,0,0,1]$\\
$12a_{1105}$ &$[1,0,0,0,1]$& $12a_{1123}$ &$[1,1,0,1,-1]$& $\bm{12a_{1124}}$ &$[1,3,1,-1]$\\
$12a_{1127}$ &$[1,2,-2]$& $\boxed{12a_{1152}}$ &$[1,1,0,-1,-1]$& $12a_{1167}$&$[1,2,1,2,1]$\\
$12a_{1188}$ &$[1,0,0,1,1]$& $12a_{1202}$ &$[1,0,-3]$& $12a_{1209}$ &$[1,1,2,3,1,1]$\\
$12a_{1211}$ &$[1,0,1,2,0,1]$& $12a_{1218}$ &$[1,1,1,3,0,1]$& $12a_{1225}$&$[1,2,2,3,1,1]$\\
$\bm{12a_{1229}}$&$[1,1,-1,3,-1,1]$& $12a_{1249}$ &$[1,0,0,2,0,1]$& $\bm{12a_{1251}}$ &$[1,2,3,2]$\\
$12a_{1254}$ &$[1,2,0,3,0,1]$& $12a_{1260}$ &$[1,2,1,3,0,1]$& $12a_{1267}$&$[1,2,0,2]$\\
$12a_{1269}$&$[1,2,-1,2]$& $12a_{1273}$ &$[1,3,3,4,1,1]$& $12a_{1275}$ &$[1,3,2,2]$\\
$12a_{1280}$ &$[1,2,0,1,-1]$& $12a_{1281}$ &$[1,3,1,2]$& $12a_{1287}$&$[1,3]$\\
$12a_{1288}$ &$[1,3,2,4,1,1]$& $12n_{356}$ &$[1,1,-1,1]$& $\bm{12n_{462}}$ &$[1,2,1]$\\
$12n_{706}$ &$[1,0,-2,0,-1]$& $\bm{12n_{873}}$ &$[1,3,2,1,1]$& &\\
\end{tabular}

\begin{example} \label{example:12a_435}
Consider the strongly negative amphichiral knot $(K,\rho)$ shown in Figure \ref{fig:12a435}. Here the Conway polynomial is $\nabla_K(z) = 1-2z^4+z^8 = (1-z)^2(1+z)^2(1+z^2)^2$ so that the half-Conway polynomial is either $(1-z)(1+z)(1+z^2) = 1-z^4$ or $(1+z)^2(1+z^2) = 1+2z+2z^2+2z^3+z^4$ by Proposition \ref{prop:half-Conway-properties}. However, from Figure \ref{fig:12a435} we compute that the half-linking number is $h(K) = 2$ so that $\nabla_{(K,\rho)}(z) = 1
+2z+2z^2+2z^3+z^4$ by Corollary \ref{cor:halfConwayhalflinkingarf}.
\end{example}

\begin{figure}[htb!]
\begin{overpic}[width=200pt, grid=false]{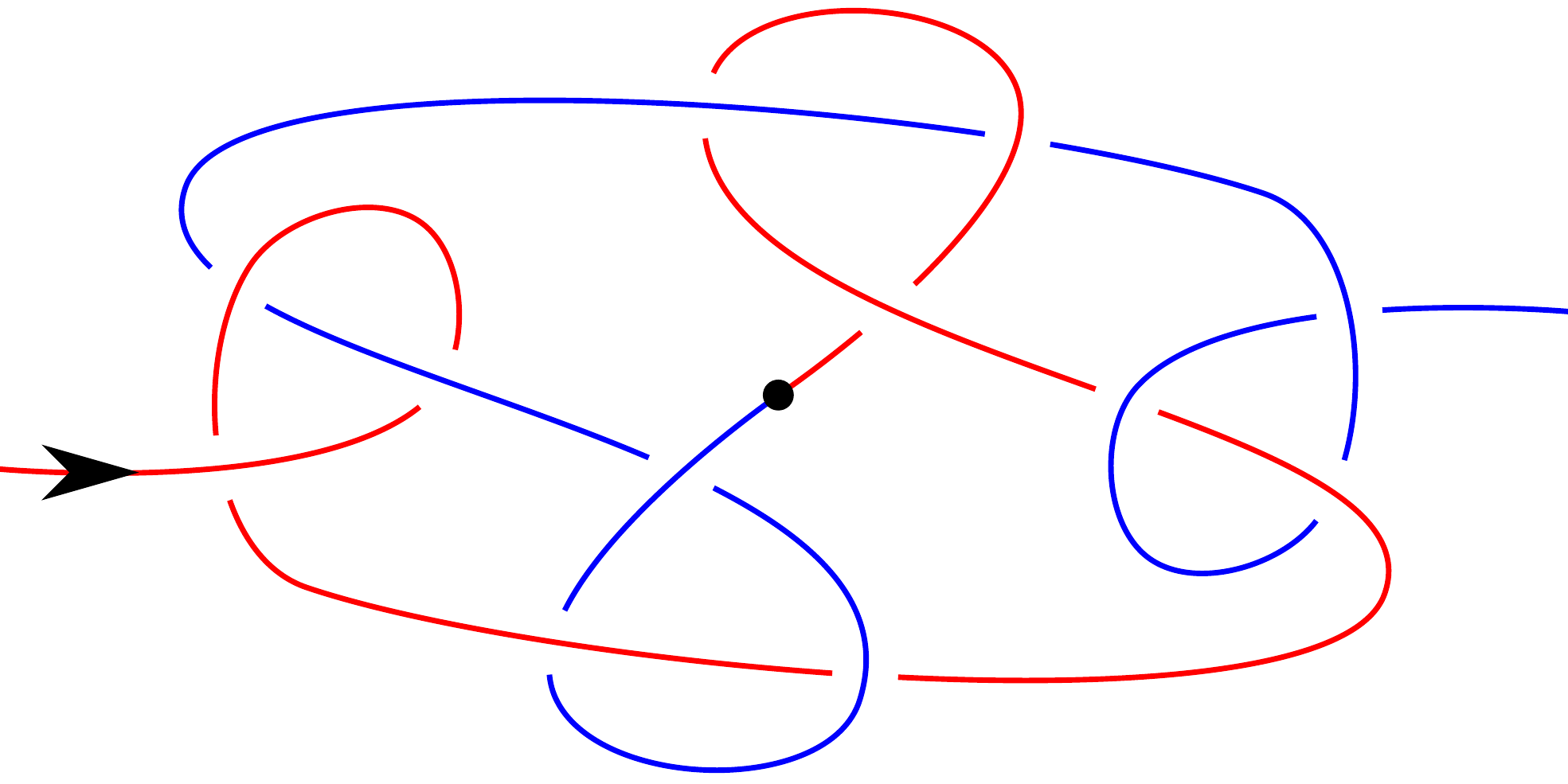}
\end{overpic}
\caption{A strongly negative amphichiral symmetry on $12a_{435}$.}
\label{fig:12a435}
\end{figure}

\begin{figure}
\begin{overpic}[width=400pt, grid=false]{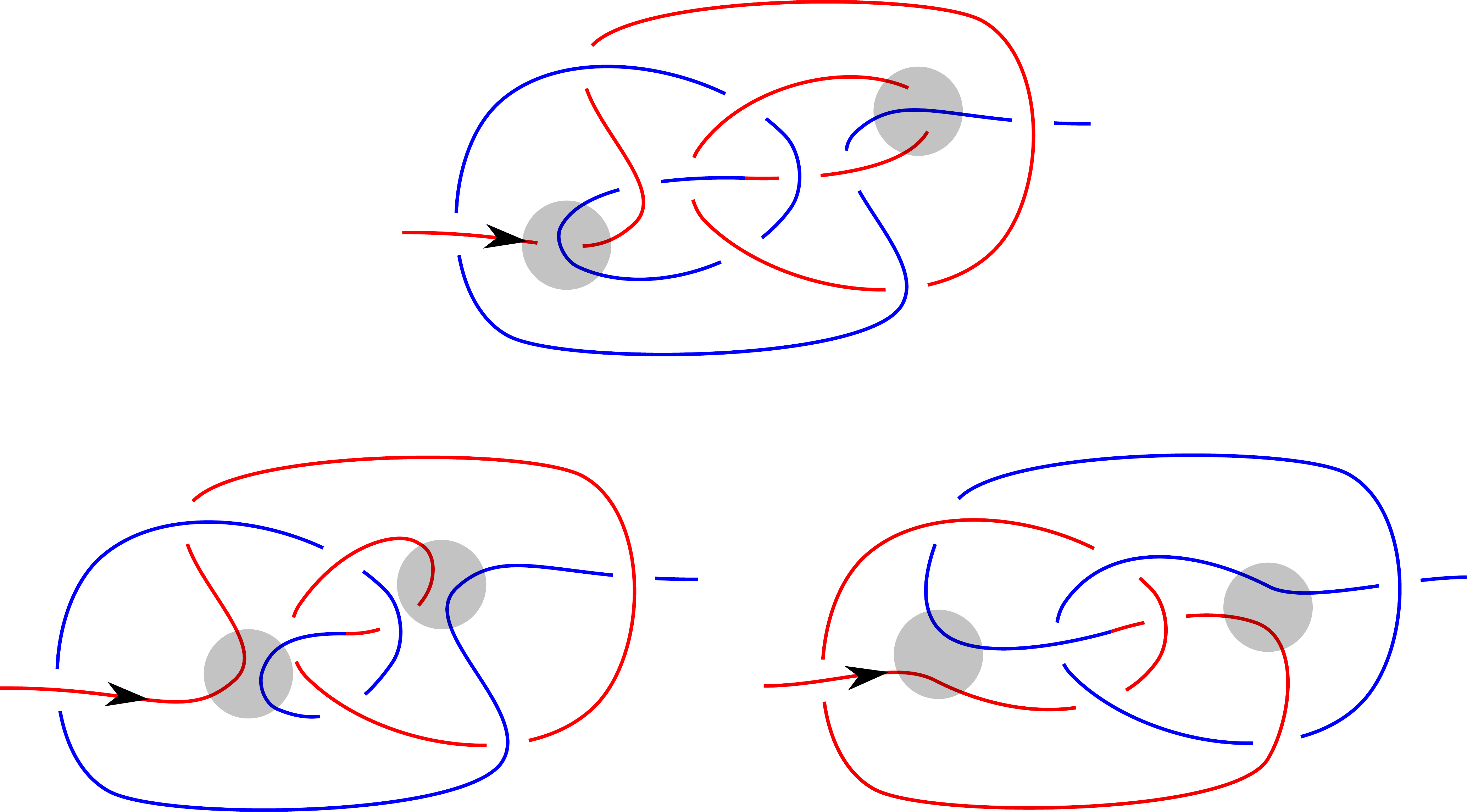}
\end{overpic}
\caption{Strongly negative amphichiral symmetries on $12a_{1152}$ (top), $4_1\#4_1$ (bottom left), and $8_{18}$ (bottom right) which are related by an equivariant skein relation on the regions indicated in gray. Note that we performed an SNA R2 move to obtain the diagram for $4_1\#4_1$ and an SNA R1 move to obtain the diagram for $8_{18}$.}
\label{fig:12a1152}
\end{figure}

\begin{example} \label{example:skeincomputation}
Let $K = 12a_{1152}$ and let $\rho$ be the strongly negative amphichiral symmetry on $K$ with $h(K) = 1$ as shown in Figure \ref{fig:12a1152}. We know that $\nabla_K(z) = 1-z^2-z^6+z^8$. This polynomial factors in multiple ways, so that the half-Conway polynomial is not determined by the Conway polynomial. Indeed, the possibilities are $1+z-z^3-z^4, 1+3z+4z^2+3z^3+z^4,$ or $1+z+z^3+z^4$. Noting that the half-linking number (which can readily be computed from Figure \ref{fig:12a1152}) is $1$, we still cannot determine whether the half-Conway polynomial is $1+z-z^3-z^4$ or $1+z+z^3+z^4$. However, we can use the equivariant skein relation (see Theorem \ref{thm:skeinrelation}) on the indicated pair of crossings. Changing the indicated crossings in the top diagram in Figure \ref{fig:12a1152} gives $(K',\rho') \widetilde{\#} (K',\rho')$ (Figure \ref{fig:12a1152}, bottom left), where $(K',\rho')$ is the figure-eight knot with $h(K',\rho') = 1$. On the other hand, resolving these crossings gives $(K'',\rho'')$ with $K'' = 8_{18}$ and $h(K'',\rho'') = 1$ (Figure \ref{fig:12a1152}, bottom right). For both $(K',\rho')$ and $(K'',\rho'')$ we can compute the half-Conway polynomial by using only the Conway polynomial and the half-linking number. In particular, $\nabla_{(K'',\rho'')}(z) = 1+z+z^2+z^3$ and $\nabla_{(K',\rho')\widetilde{\#}(K',\rho')}(z) = (\nabla_{(K',\rho')}(z))^2 = 1+2z+z^2$. Now Theorem \ref{thm:skeinrelation} gives that 
\[
\nabla_{(K',\rho')\widetilde{\#}(K',\rho')}(z) - \nabla_{(K,\rho)}(z) = z\nabla_{(K'',\rho'')}(z),
\]
and hence $\nabla_{(K,\rho)}(z) = 1+z-z^3-z^4$.

\end{example}

\appendix
\section{Proof of Theorem \ref{thm:Rmoves}}
\label{app:Rmoves}
In this appendix, we prove Theorem \ref{thm:Rmoves}, which follows immediately from Theorem \ref{thm:Rmovesappendix}. The proof works with long knots in $\mathbb{R}P^2 \times I$, which naturally arise as quotient objects of strongly negative amphichiral knots. Roughly, we will produce triangle moves relating equivalent long knots which then lift to symmetric triangle moves relating strongly negative amphichiral knots. The projections of these triangle moves decompose as Reidemeister moves. We begin by setting up some definitions.
\label{sec:SNAmoves}
\begin{definition}
Let $(F,*)$ be a pointed surface. A (smooth, tame, or polygonal) \emph{long knot} $K$ in $F \times \mathbb{R}$, is a (smooth, tame, or polygonal) embedding $K:\mathbb{R} \to F \times \mathbb{R}$ such that there is a constant $C(K) > 0$ and for all $|x| > C(K)$, $K(x) = *$. 
\end{definition}
For convenience, we take $D^2$ to be the unit disc and set the basepoint of $D^2$ (and its quotient $\mathbb{R}P^2$) to be the origin when dealing with long knots in $D^2 \times \mathbb{R}$ or $\mathbb{R}P^2 \times \mathbb{R}$. 

\begin{definition}
Let $F$ be a surface. A map $f:F \times \mathbb{R} \to F \times \mathbb{R}$ is $\emph{end-fixing}$ if there is a constant $C(f) > 0 $ so that for all $x \in \mathbb{R}$ with $|x| > C(f)$, $f(p,x) = (p,x)$ for all $p \in F$.
\end{definition}

\begin{definition}
	Two long knots $K_1$ and $K_2$ in $F\times \mathbb{R}$ are ambient isotopic if there exists a family of end-fixing homeomorphisms $h_t:F\times \mathbb{R}\rightarrow F\times \mathbb{R}$ such that $h_0=id$ and $h_1(K_1)=K_2$.
\end{definition}

\begin{remark}
We will be interested in the isotopy classes of these end-fixing homeomorphisms and in particular the mapping class group of end-fixing self-homeomorphisms of $\mathbb{R}P^2 \times \mathbb{R}$, which is equivalent to $\mathcal{M}(\mathbb{R}P^2\times I, \partial)$, the mapping class group of $\mathbb{R}P^2 \times I$ fixing the boundary.
\end{remark}
Given two elements $f,g \in \mathcal{M}(\mathbb{R}P^2\times I, \partial)$, in addition to their composition $f \circ g$, we can also concatenate them to get a map $(\mathbb{R}P^2 \times [0,2] ,\partial) \to (\mathbb{R}P^2\times [0,2],\partial)$ given by $f$ on $(\mathbb{R}P^2\times [0,1], \partial)$ and $g$ on $(\mathbb{R}P^2\times [1,2], \partial)$. Rescaling $[0,2]$ back to $[0,1]$ gives us a map $f*g:(\mathbb{R}P^2\times I, \partial) \to (\mathbb{R}P^2\times I, \partial)$. We will use this concatenation operation as the group operation in $\mathcal{M}(\mathbb{R}P^2\times I, \partial)$, as justified by the following lemma.

\begin{lemma}
The homeomorphisms $f \circ g$ and $f*g$ represent the same class in $\mathcal{M}(\mathbb{R}P^2\times I, \partial)$.
\end{lemma}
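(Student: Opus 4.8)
The plan is to produce an explicit isotopy rel $\partial$ from $f \circ g$ to $f * g$ by first isotoping each of $f$ and $g$ so that its ``support'' is compressed into one half of the cylinder, and then observing that the composite of two such disjointly-supported homeomorphisms is literally the stacked map defining $f * g$. Throughout I write $M = \mathbb{R}P^2$ and regard $f,g$ as self-homeomorphisms of $M \times I$ fixing $M \times \{0,1\}$ pointwise.

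First I would establish a compression lemma: $f$ is isotopic rel $\partial$ to the homeomorphism $f'$ that is a rescaled copy of $f$ on $M \times [0,\tfrac12]$ and the identity on $M \times [\tfrac12,1]$. Concretely, for $u \in [0,1]$ let $c_u \colon [0,1] \to [0, 1-\tfrac{u}{2}]$ be the linear rescaling $c_u(t) = (1-\tfrac{u}{2})\,t$, and set
\[
f_u(x,t) = \begin{cases} (\mathrm{id}_M \times c_u)\circ f \circ (\mathrm{id}_M \times c_u^{-1})(x,t), & t \le 1-\tfrac{u}{2}, \\ (x,t), & t \ge 1-\tfrac{u}{2}. \end{cases}
\]
Because $f$ fixes $M \times \{1\}$ pointwise, the two pieces agree along the level $t = 1-\tfrac{u}{2}$, so each $f_u$ is a well-defined homeomorphism fixing $\partial$, with $f_0 = f$ and $f_1 = f'$. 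The mirror-image construction, pushing into the top half via $d_u\colon [0,1]\to[\tfrac{u}{2},1]$, isotopes $g$ rel $\partial$ to a homeomorphism $g'$ supported in $M \times [\tfrac12,1]$.

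Next I would combine these. Since composition of homeomorphisms is continuous, the two families assemble into an isotopy $u \mapsto f_u \circ g_u$ rel $\partial$ from $f \circ g$ to $f' \circ g'$. The final step is then an \emph{equality} of honest homeomorphisms (no further isotopy needed): $f' \circ g' = f * g$. Indeed, applying $g'$ first leaves $M \times [0,\tfrac12]$ pointwise-invariant and acts as the rescaled $g$ on $M \times [\tfrac12,1]$; applying $f'$ afterward acts as the rescaled $f$ on the bottom half and as the identity on the top half. Hence $f'\circ g'$ is exactly ``rescaled $f$ on the bottom, rescaled $g$ on the top,'' which is precisely $f * g$ after the $[0,2]\to[0,1]$ rescaling used to define the concatenation.

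I do not expect a deep obstruction here; the work is essentially bookkeeping, and the two points requiring care are the following. One must track the reparametrizations $c_u, d_u$ carefully so that the compressed pieces glue continuously across the middle slice $M \times \{\tfrac12\}$; this works precisely because $f$ and $g$ fix $M \times \{0,1\}$ pointwise, which forces $f'$ and $g'$ to fix the interior slice $M \times \{\tfrac12\}$ as well. The one genuinely load-bearing observation is that once the supports overlap only in this fixed slice, the composite $f' \circ g'$ is order-independent and coincides on the nose with the stacked map, converting the abstract comparison of $\circ$ and $*$ into a one-line identity.
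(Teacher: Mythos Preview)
Your proof is correct and takes essentially the same approach as the paper: compress $f$ and $g$ into disjoint halves of the cylinder via an explicit vertical rescaling isotopy, then observe that the composition of the compressed maps literally equals the concatenation $f*g$. The paper's version is terser (one sentence of ``squashing'') and happens to push $f$ into the \emph{top} half and $g$ into the bottom, whereas you do the reverse; your choice matches the stated definition of $f*g$ more directly, but the argument is otherwise identical.
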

\begin{proof}
By squashing $f$ vertically along the $I$ direction, we see that $f$ is isotopic to a homeomorphism $f'$ which fixes $\mathbb{R}P^2 \times [0,0.5]$, and similarly $g$ is isotopic to a homeomorphism which fixes $\mathbb{R}P^2 \times [0.5,1]$. Then composing $f'$ with $g'$ is exactly $f*g$.
\end{proof}
We can now compute $\mathcal{M}(\mathbb{R}P^2\times I, \partial)$.
\begin{theorem}\label{Theorem: MCGofRP2Cylinder}
 Then $\mathcal{M}(\mathbb{R}P^2\times I, \partial)\cong \mathbb{Z}$, and a generator is represented by the homeomorphism $T_1$ shown in Figure \ref{fig:MCGgenerator}.
\end{theorem}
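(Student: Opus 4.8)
The plan is to reduce the computation to the fundamental group of a homeomorphism group of $\mathbb{R}P^2$ by means of a restriction fibration, and then to compute that fundamental group. The homeomorphisms under consideration are end-fixing and, since $\{*\}\times\mathbb{R}$ is the image of the fixed-point set of the symmetry, they fix the basepoint arc $\{*\}\times I$; the relevant surface group is therefore $\mathrm{Homeo}(\mathbb{R}P^2,*)$, the group of homeomorphisms of $\mathbb{R}P^2$ fixing $*$. This basepoint condition is exactly what distinguishes the answer $\mathbb{Z}$ from the answer $\mathbb{Z}/2$ that one would get for $\mathbb{R}P^2\times I$ with no marked arc, as I explain in the last paragraph.

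First I would set up the fibration. Let $G'$ denote the group of homeomorphisms of $\mathbb{R}P^2\times I$ fixing $\mathbb{R}P^2\times\{0\}$ and the arc $\{*\}\times I$ pointwise, and let $r\colon G'\to \mathrm{Homeo}(\mathbb{R}P^2,*)$ be restriction to the top copy $\mathbb{R}P^2\times\{1\}$. The fiber of $r$ over the identity is precisely the group defining $\mathcal{M}(\mathbb{R}P^2\times I,\partial)$. The two technical inputs are that $G'$ is contractible, via an Alexander-trick deformation that pushes all of the action toward the fixed bottom end, and that $r$ is a fibration onto the identity component, via the parametrized isotopy extension theorem. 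Granting these, the long exact sequence collapses and yields $\mathcal{M}(\mathbb{R}P^2\times I,\partial)\cong \pi_1\big(\mathrm{Homeo}(\mathbb{R}P^2,*)\big)$.

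I would then compute this fundamental group. Using the homotopy equivalence $\mathrm{Homeo}(\mathbb{R}P^2)\simeq SO(3)$ together with the evaluation fibration $\mathrm{Homeo}(\mathbb{R}P^2,*)\to \mathrm{Homeo}(\mathbb{R}P^2)\to \mathbb{R}P^2$, the basepoint stabilizer is homotopy equivalent to the isometry stabilizer of a point, namely $O(2)\subset SO(3)$, whose identity component is $SO(2)=S^1$. Hence $\pi_1\big(\mathrm{Homeo}(\mathbb{R}P^2,*)\big)\cong\mathbb{Z}$, generated by the loop $\theta\mapsto R_\theta$ of rotations of $\mathbb{R}P^2$ about the axis through $*$, for $\theta\in[0,2\pi]$. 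The image of this loop under the isomorphism above is the level-preserving twist $(x,t)\mapsto (R_{2\pi t}(x),t)$, which fixes $\partial$ and the basepoint arc and is exactly the map $T_1$ of Figure \ref{fig:MCGgenerator}; this identifies $T_1$ with a generator.

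The main obstacle is the pair of inputs to the fibration — the contractibility of $G'$ and the surjectivity of $r$ onto the identity component — which together amount to the injectivity of the resulting invariant: a homeomorphism inducing the trivial class must be isotopic, rel $\partial\cup(\{*\}\times I)$, to the identity. The conceptual subtlety behind the answer is that, without the marked arc, a $2\pi$-twist can be unwound by rotating its axis (reflecting $\pi_1 SO(3)=\mathbb{Z}/2$), so $T_1$ would have order two; fixing $\{*\}\times I$ pins the axis and obstructs this unwinding, forcing $T_1$ to have infinite order. The cleanest way to make this rigorous, and a convenient substitute for the abstract injectivity argument, is to define directly a twisting number $w\colon \mathcal{M}(\mathbb{R}P^2\times I,\partial)\to\mathbb{Z}$ as the winding class in $\pi_1(SO(2))$ of the induced framing of the normal bundle of $\{*\}\times I$, check that $w$ is a well-defined homomorphism with $w(T_1)=1$, and then invoke the fibration to conclude that $w$ is injective.
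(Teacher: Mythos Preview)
Your approach is genuinely different from the paper's. The paper argues geometrically: it fixes a properly embedded annulus $A_0\subset\mathbb{R}P^2\times I$, shows via an innermost-disk argument and irreducibility that any boundary-fixing $h$ carries $A_0$ to an annulus isotopic to some $A_i:=T_1^i(A_0)$ (detected by the winding of the intersection arc $h(A_0)\cap C$ in the annulus $C$), and then cuts along $A_0$ to reduce $h\circ T_{-i}$ to an element of $\mathcal{M}(B^3,\partial)=0$. You instead run a restriction fibration and reduce to $\pi_1$ of a surface homeomorphism group.

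There is, however, a genuine gap. You assert that the homeomorphisms in question fix the arc $\{*\}\times I$, on the grounds that it is ``the image of the fixed-point set of the symmetry.'' This is not correct: the fixed set of $\rho$ on $S^3$ is two isolated points, which are \emph{removed} before one forms the quotient $\mathbb{R}P^2\times\mathbb{R}$, and nothing in the definition of an end-fixing homeomorphism (equivalently, of $\mathcal{M}(\mathbb{R}P^2\times I,\partial)$) forces any interior arc to be preserved. Consequently the fiber of your map $r$ is not the group whose $\pi_0$ the theorem concerns, but the strictly smaller group of homeomorphisms fixing $\partial$ \emph{and} the arc. As you yourself note, dropping the arc condition changes the base of the fibration from $\mathrm{Homeo}(\mathbb{R}P^2,*)$ to $\mathrm{Homeo}(\mathbb{R}P^2)$, and the same argument then yields $\pi_1(SO(3))=\mathbb{Z}/2$ rather than $\mathbb{Z}$. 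To repair this you would need to show that the forgetful map from the arc-fixing mapping class group to $\mathcal{M}(\mathbb{R}P^2\times I,\partial)$ is an isomorphism; but the discrepancy you have already identified between $\pi_1(\mathrm{Homeo}(\mathbb{R}P^2,*))\cong\mathbb{Z}$ and $\pi_1(\mathrm{Homeo}(\mathbb{R}P^2))\cong\mathbb{Z}/2$ is precisely the obstruction to that map being injective, so the argument as written cannot close. (It is worth scrutinizing the paper here as well: the formula $T_1(r,\theta,y)=(r,\theta+\pi y,y)$ does not literally fix $\mathbb{R}P^2\times\{1\}$ pointwise, since rotation by $\pi$ is not the identity on interior points of the disk model, so some care is needed in interpreting the generator regardless of method.)
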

\begin{proof}
We begin by identifying $\mathbb{R}P^2\times I$ with the quotient of $D^2 \times I$ which identifies antipodal points of the boundary of each $D^2 \times \{*\}$. Let $C$ be the cylinder which is the quotient of $\partial D^2 \times I$ in $\mathbb{R}P^2 \times I$. Now consider the vertical annulus $A_0 \subset \mathbb{R}P^2 \times I$ defined as the quotient of the product $\ell \times I$, where $\ell$ is a diameter of $D^2$. Furthermore, for $i \in \mathbb{Z}$, let $A_i$ be the image of $A_0$ under the homeomorphism $T_i := (T_1)^i$. Note that the $A_i$'s can be distinguished by their intersections with $C$; each $A_i$ intersects $C$ in a curve which wraps $i$ times around $C$.

We will show that for any homeomorphism $h\colon(\mathbb{R}P^2\times I, \partial) \to (\mathbb{R}P^2\times I, \partial)$, $h(A_0)$ is ambient isotopic to some $A_i$ fixing $\partial(\mathbb{R}P^2\times I)$. Note that the boundary circles of $A_0$ intersect $C$ in two points, and are fixed by $h$, so that $\partial (h(A_0) \cap C)$ is still two points and hence the intersection $h(A_0) \cap C$, after a small isotopy to make the intersection transverse, contains a single arc $\gamma$. Now $h(A_0) \cap C$ may also contain some closed loops. Note that these closed loops are disjoint from the arc $\gamma$. In particular, they bound disks both in $C$ and in $h(A_0)$. Working from an innermost closed loop, the union of these disks is an embedded sphere. Then since $\mathbb{R}P^2\times I$ is irreducible, this sphere bounds a ball, and we can isotope $h(A_0)$ to remove the innermost intersection circle with $C$. (Note that the only prime reducible 3-manifolds are $S^2$-bundles over $S^1$, see for example \cite[Lemma 3.13]{MR2098385}, so that $\mathbb{R}P^2\times I$ is irreducible since it is prime.) Repeating this process, we may assume that $h(A_0) \cap C = \gamma$. Now in $C$, keeping the boundary fixed, the arc $\gamma$ is isotopic to $C \cap A_i$ for exactly one $i \in \mathbb{Z}$. Additionally, $h(A_0) - C$ and $A_i - C$ are ambient isotopic disks in $(\mathbb{R}P^2 \times I - C) \cong (D^2 \times I)$, fixing the boundary. Hence after an ambient isotopy, we may assume that $h(A_0) = A_i$. Given this, we will show that $h$ is isotopic to $T_i$. To see this, note that $h \circ T_{-i} (A_0) = A_0$ so that $h \circ T_{-i}$ produces a homeomorphism of $(\mathbb{R}P^2 \times I) - A_0 = B^3$, fixing the boundary. Then since $\mathcal{M}(B^3,\partial) = 0$, we conclude that $h \circ T_{-i}$ is isotopic to the identity and so $h$ is isotopic to $T_i$. Finally, note that for $i \neq 0$, $(A_i \cap C,\partial)$ is not isotopic to $(A_0 \cap C,\partial)$ so that the $T_1$ has infinite order in $\mathcal{M}(\mathbb{R}P^2\times I, \partial)$.
\end{proof}

\begin{figure}
  \begin{overpic}[width=200pt, grid=false]{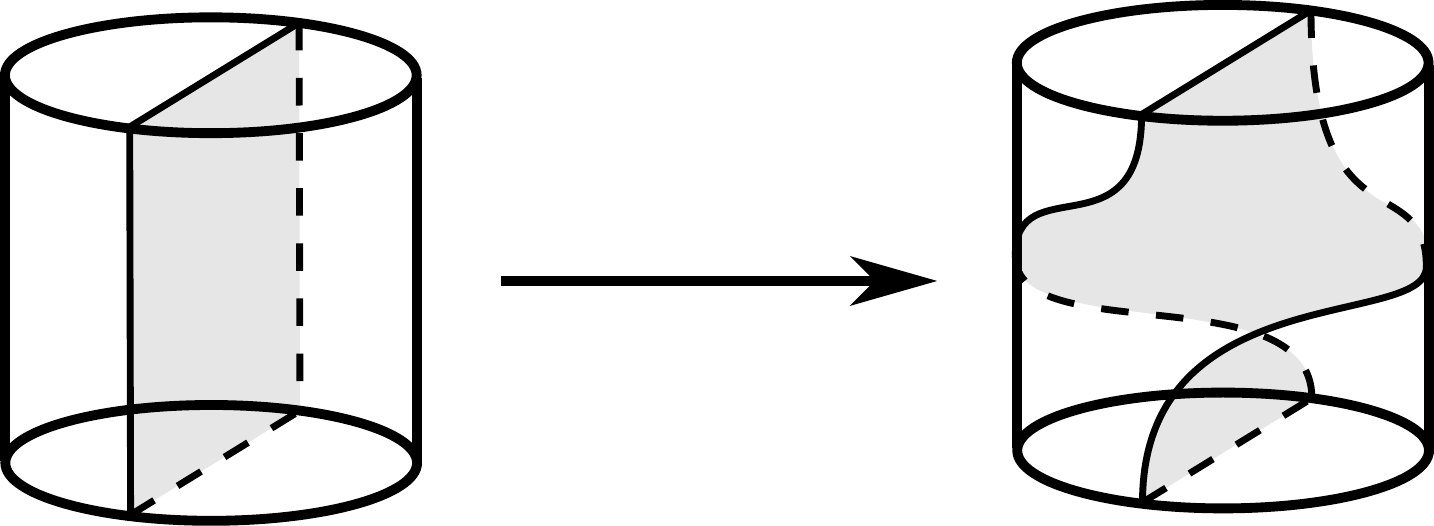}
    \put (47,20) {$T_1$}
  \end{overpic}
\caption{The homeomorphism $T_1:\mathbb{R}P^2 \times I \to \mathbb{R}P^2 \times I$ defined by $T_1(r,\theta,y) = (r, \theta + \pi y,y)$, where $\mathbb{R}P^2$ is the quotient of $D^2(r,\theta)$ by the antipodal map on the boundary. The map $T_1$ is a generator of $\mathcal{M}(\mathbb{R}P^2\times I, \partial) \cong \mathbb{Z}$.}
\label{fig:MCGgenerator}
\end{figure}

\begin{proposition}\label{Proposition:HomoemorphicLongKnotsAreIsotopic}
	Let $\mathbb{K}_1$ and $\mathbb{K}_2$ be two long knots in $\mathbb{R}P^2\times \mathbb{R}$. 
	If there exists an end-fixing homeomorphism $h\colon \mathbb{R}P^2\times \mathbb{R}\rightarrow \mathbb{R}P^2\times \mathbb{R}$ so that $h(\mathbb{K}_1)=\mathbb{K}_2$, then $\mathbb{K}_1$ and $\mathbb{K}_2$ are ambient isotopic.  
\end{proposition}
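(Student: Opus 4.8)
The plan is to leverage the computation $\mathcal{M}(\mathbb{R}P^2\times I,\partial)\cong\mathbb{Z}$ from Theorem~\ref{Theorem: MCGofRP2Cylinder}. Since $h$ is end-fixing it represents a class $[h]=k[T_1]$ for some $k\in\mathbb{Z}$, so $h$ is isotopic through end-fixing homeomorphisms to $T_1^k$; in particular $h\circ T_1^{-k}\simeq\mathrm{id}$. As any homeomorphism isotopic to the identity carries a long knot to an ambient isotopic long knot, applying the isotopy $h\circ T_1^{-k}\simeq\mathrm{id}$ to the long knot $T_1^k(\mathbb{K}_1)$ shows that $\mathbb{K}_2=h(\mathbb{K}_1)=(h\circ T_1^{-k})\big(T_1^k(\mathbb{K}_1)\big)$ is ambient isotopic to $T_1^k(\mathbb{K}_1)$. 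Thus everything reduces to the key claim that the generator $T_1$ acts trivially on ambient isotopy classes, i.e.\ $T_1(\mathbb{K})\simeq\mathbb{K}$ for every long knot $\mathbb{K}$; iterating this claim (and the analogous one for $T_1^{-1}$) gives $T_1^k(\mathbb{K}_1)\simeq\mathbb{K}_1$ and hence $\mathbb{K}_2\simeq\mathbb{K}_1$.

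To prove the key claim I would exploit two features of $T_1$: it fixes the central axis $\mathcal{A}=\{*\}\times\mathbb{R}$ pointwise (each disk fiber is rotated about its center), and, being end-fixing, it is supported in some compact slab $\mathbb{R}P^2\times[-C',C']$. By definition a long knot agrees with $\mathcal{A}$ for $|y|>C(\mathbb{K})$. The idea is to slide the twisting region of $T_1$ out toward an end where $\mathbb{K}$ is already standard. Concretely, choose a compactly supported reparametrization $\phi(p,y)=(p,\beta(y))$ of the $\mathbb{R}$-factor, where $\beta$ is a diffeomorphism of $\mathbb{R}$, equal to the identity outside a compact set, that compresses $[-C',C']$ into an interval $[N,N+1]$ with $N>C(\mathbb{K})$. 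Then $\phi$ is end-fixing and isotopic to the identity, and $T_1':=\phi\circ T_1\circ\phi^{-1}$ is an end-fixing homeomorphism isotopic to $T_1$ whose support lies in $\mathbb{R}P^2\times[N,N+1]$. Since $\mathbb{K}\cap(\mathbb{R}P^2\times[N,N+1])\subset\mathcal{A}$ and $T_1'$ fixes $\mathcal{A}$ pointwise while being the identity off the slab, we obtain $T_1'(\mathbb{K})=\mathbb{K}$ on the nose.

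It then remains to convert the equality $T_1'(\mathbb{K})=\mathbb{K}$ back into the desired ambient isotopy $T_1(\mathbb{K})\simeq\mathbb{K}$. Writing $\mathbb{K}'=\phi^{-1}(\mathbb{K})$, the relation $\phi T_1\phi^{-1}(\mathbb{K})=\mathbb{K}$ rearranges to $T_1(\mathbb{K}')=\mathbb{K}'$. Let $\phi_s$ be an end-fixing isotopy with $\phi_0=\mathrm{id}$ and $\phi_1=\phi$. The family $h_s:=T_1\circ\phi_s^{-1}\circ T_1^{-1}$ is then end-fixing with $h_0=\mathrm{id}$ and $h_1\big(T_1(\mathbb{K})\big)=T_1\phi^{-1}(\mathbb{K})=T_1(\mathbb{K}')=\mathbb{K}'$, so $T_1(\mathbb{K})\simeq\mathbb{K}'$; meanwhile $\phi_s^{-1}$ itself is an ambient isotopy from $\mathbb{K}$ to $\mathbb{K}'$. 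By transitivity $T_1(\mathbb{K})\simeq\mathbb{K}$, which establishes the key claim and with it the proposition.

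The reduction via the mapping class group is formal once Theorem~\ref{Theorem: MCGofRP2Cylinder} is in hand, so I expect the main obstacle to be the key claim: making the ``slide the twist off toward an end'' maneuver precise, and in particular keeping careful track that every auxiliary homeomorphism remains end-fixing and that each ambient isotopy genuinely begins at the identity. The last paragraph's conjugation bookkeeping is where this care is concentrated, while the geometric heart is simply that $T_1$ pins the central axis along which a long knot is eventually standard.
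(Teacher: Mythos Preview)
Your proof is correct and rests on the same geometric idea as the paper's: use Theorem~\ref{Theorem: MCGofRP2Cylinder} to identify $h$ with a power of the twist, and then exploit that the twist fixes the central axis $\{*\}\times\mathbb{R}$ along which a long knot is eventually standard, so a copy of the twist supported out near an end fixes the knot setwise. The difference is organizational. You first reduce to the claim $T_1(\mathbb{K})\simeq\mathbb{K}$ and then conjugate $T_1$ by a reparametrization $\phi$ of the $\mathbb{R}$-factor to push its support past $C(\mathbb{K})$; this forces the extra bookkeeping in your last paragraph to unwind the conjugation. The paper instead skips this factoring: it directly defines $t_{-i}$ to be the $(-i)$-twist supported on a slab $[C,C+1]$ already beyond both $C(\mathbb{K}_1)$ and $C(\mathbb{K}_2)$, so $t_{-i}(\mathbb{K}_1)=\mathbb{K}_1$ on the nose, $h\circ t_{-i}(\mathbb{K}_1)=\mathbb{K}_2$, and $h\circ t_{-i}\simeq\mathrm{id}$ since $t_{-i}$ represents $T_{-i}$ in the mapping class group. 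This yields the ambient isotopy in a single step, without any conjugation or iteration on $k$. Your version is a bit longer but equally valid; the paper's shortcut is to place the compensating twist at the end from the outset rather than moving it there afterward.
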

\begin{proof}
As pointed out earlier, the mapping class group of end-fixing homeomorphisms of $\mathbb{R}P^2\times \mathbb{R}$	can be identified with $\mathcal{M}(\mathbb{R}P^2\times I,\partial)$.
By Theorem \ref{Theorem: MCGofRP2Cylinder}, $h$ is isotopic to $T_i$ for some $i\in \mathbb{Z}$. Let $C >> 0$ be a constant so that both $\mathbb{K}_1$ and $\mathbb{K}_2$ restrict to $0\times[C,\infty)\subset \mathbb{R}P^2\times [C,\infty)$ and so that $h$ fixes $\mathbb{R}P^2\times[C,\infty)$. We define $t_{-i}\colon \mathbb{R}P^2\times \mathbb{R}\rightarrow \mathbb{R}P^2\times \mathbb{R}$ by $$t_{-i}(r,\theta,y)=
\begin{cases}
(r,\theta-i\pi(y-C), y)\ \ y\in[C,C+1],\\
(r,\theta, y)\ \ \text{otherwise.} 
\end{cases}$$ 
Note  $h\circ t_{-i}(\mathbb{K}_1)=\mathbb{K}_2$. Moreover, since $t_i$ is isotopic to $T_{-i}$, the map $h\circ t_{-i}$ is isotopic to the identity map through end-fixing homeomorphisms $H_t$, $t\in[0,1]$; $H_t$ provides an ambient isotopy connecting $\mathbb{K}_1$ and $\mathbb{K}_2$.   
\end{proof}

\begin{definition}
Two long knots $\mathbb{K}_1, \mathbb{K}_2\subset \mathbb{R}P^2\times \mathbb{R}$ are said to be \textbf{narrowly ambient isotopic} if there exists a neighborhood $N$ of $\mathbb{K}_1$ such that $N\cong D^2\times \mathbb{R}$ and an ambient isotopy $H_t: N\rightarrow N$ that fixes $\partial N$ and deforms $\mathbb{K}_1$ to $\mathbb{K}_2$. 
\end{definition}
In other words, two long knots in $\mathbb{R}P^2\times \mathbb{R}$ are narrowly ambient isotopic as long knots if they are ambient isotopic as long knots in some $D^2\times \mathbb{R}$ embedded in $\mathbb{R}P^2\times \mathbb{R}$.   

\begin{definition}
	Two long knots $\mathbb{K}_1, \mathbb{K}_2\subset \mathbb{R}P^2\times \mathbb{R}$ are said to be \textbf{narrowly equivalent} if there exists finitely many long knots $\mathbb{J}_1,\mathbb{J}_2,\ldots,\mathbb{J}_m$ such that $\mathbb{J}_1=\mathbb{K}_1$, $\mathbb{J}_m=\mathbb{K}_2$, and $\mathbb{J}_i$ is narrowly ambient isotopic to $\mathbb{J}_{i+1}$ for $i=1,2,\ldots m-1$. 
\end{definition}

\begin{lemma} \label{lemma:narrowequivalence}
	Let $\mathbb{K}_1$ and $\mathbb{K}_2$ be two long knots in $\mathbb{R}P^2\times \mathbb{R}$. Suppose $\mathbb{K}_1$ and $\mathbb{K}_2$ are ambient isotopic, then they are narrowly equivalent. 
\end{lemma}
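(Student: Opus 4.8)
The plan is to chop the given ambient isotopy into finitely many short time-intervals and to show that over each short interval the motion of the knot can be realized by an isotopy supported in a single tubular neighborhood of the knot at the start of that interval. Concretely, let $H_t$, $t \in [0,1]$, be an end-fixing ambient isotopy of $\mathbb{R}P^2 \times \mathbb{R}$ with $H_0 = \mathrm{id}$ and $H_1(\mathbb{K}_1) = \mathbb{K}_2$.

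First I would invoke the tubular neighborhood theorem. Each $H_t(\mathbb{K}_1)$ is a properly embedded copy of $\mathbb{R}$ whose normal bundle, being a rank-$2$ bundle over a contractible base, is trivial; hence it has a neighborhood homeomorphic to $D^2 \times \mathbb{R}$. The map $(x,t) \mapsto H_t(\mathbb{K}_1(x))$ is continuous and constant near infinity, so by continuity each $t_0 \in [0,1]$ admits a neighborhood $N_{t_0} \cong D^2 \times \mathbb{R}$ of $H_{t_0}(\mathbb{K}_1)$ and an $\varepsilon > 0$ with $H_t(\mathbb{K}_1) \subset \mathrm{int}(N_{t_0})$ for $|t - t_0| < \varepsilon$. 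By compactness of $[0,1]$ I then choose a partition $0 = s_0 < s_1 < \cdots < s_m = 1$ fine enough that on each $[s_i, s_{i+1}]$ the knot stays inside a fixed neighborhood $N_i \cong D^2 \times \mathbb{R}$ of $K_i := H_{s_i}(\mathbb{K}_1)$; this partition step is routine.

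Next I would localize each step. Over $[s_i, s_{i+1}]$ the formula $x \mapsto H_{(1-s)s_i + s\, s_{i+1}}(\mathbb{K}_1(x))$, $s \in [0,1]$, defines an isotopy of proper embeddings of $\mathbb{R}$ whose images all lie in $\mathrm{int}(N_i)$, starting at $K_i$ and ending at $K_{i+1}$, and which is constant near the two ends because $H_t$ is end-fixing. By the isotopy extension theorem applied inside $N_i \cong D^2 \times \mathbb{R}$, this isotopy of submanifolds extends to a compactly supported ambient isotopy $\Psi^{(i)}_s : N_i \to N_i$ fixing $\partial N_i$, with $\Psi^{(i)}_0 = \mathrm{id}$ and $\Psi^{(i)}_1(K_i) = K_{i+1}$. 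This exhibits $K_i$ and $K_{i+1}$ as narrowly ambient isotopic, and chaining over $i = 0, 1, \ldots, m-1$ shows that $\mathbb{K}_1 = K_0$ and $\mathbb{K}_2 = K_m$ are narrowly equivalent.

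The main obstacle is precisely this localization: the global isotopy $H_t$ does not fix the boundary of any $N_i$, so one cannot simply restrict it, and the content of the argument is replacing it on each interval by an isotopy supported in $N_i$ with $\partial N_i$ fixed. The point requiring care is arranging that the moving knot remains in the \emph{interior} of $N_i$ throughout $[s_i, s_{i+1}]$ — achieved by taking $N_i$ slightly fatter than a minimal tube — so that the submanifold isotopy is compactly supported away from $\partial N_i$ and its ambient extension can be taken to fix the boundary and the ends.
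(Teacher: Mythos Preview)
Your proof is correct and follows essentially the same strategy as the paper: partition $[0,1]$ so that over each subinterval the moving knot stays inside a fixed $D^2\times\mathbb{R}$ tube, then replace the global isotopy on each piece by one supported in that tube. The only difference is that where you invoke the isotopy extension theorem as a black box, the paper carries out that step explicitly by defining the time-dependent vector field $X_t$ along the trace $\Phi$ and extending it to one compactly supported away from $\partial N_i$; the flow of $X_t$ is exactly your $\Psi^{(i)}_s$.
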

\begin{proof}
Up to small isotopy and perturbations, we may assume $\mathbb{K}_1$ and $\mathbb{K}_2$ are smooth and that there is a smooth ambient isotopy $H_t$ that deforms $\mathbb{K}_1$ to $\mathbb{K}_2$. Let $\Phi\colon\mathbb{R}\times I\rightarrow \mathbb{R}P^2\times \mathbb{R}$ denote the trace of the ambient isotopy. Specifically, if $\mathbb{K}_1$ is represented by the embedding $f\colon \mathbb{R}\rightarrow \mathbb{R}P^2\times \mathbb{R}$ then $\Phi(x, t)=H_t(f(x))$. Then there exist $0=a_0<a_1<\cdots<a_m=1$ such that the long knots $\mathbb{J}_i:=\Phi(-,a_i)$ admit neighborhoods $N_i\cong D^2\times \mathbb{R}$ and such that $\Phi(x, t)\in N_i$ for $t\in[a_i,a_{i+1}]$, $i=1,2,\ldots,m-1$. Next, for $i=1,2,\ldots,m-1$, we show $\mathbb{J}_i$ and $\mathbb{J}_{i+1}$ are narrowly ambient isotopic by constructing an ambient isotopy $H^{(i)}_t\colon N_i\rightarrow N_i$, $t\in[0,1]$. We will define $H^{(i)}_t$ to be the time-$t$ flow map of a time-dependent vector field $X_t$ supported in a compact subset of $N_i$ away from the boundary. More specifically, we define $X_t(\Phi(x,(1-t)a_i+ta_{i+1}))=\frac{1}{a_{i+1}-a_i}\frac{\partial\Phi}{\partial t}$ and extend it smoothly to a $t$-dependent vector field compactly supported away from $\partial N_i$. By construction, $H^{(i)}_t$ fixes $\partial N_i$ and deforms $\mathbb{J}_i$ to $\mathbb{J}_{i+1}$.    
\end{proof}

\begin{definition}
	Let $K_1$ and $K_2$ be polygonal knots in a PL manifold. The knots $K_1$ and $K_2$ are related by a \emph{triangle move} if there is a (possibly degenerate) PL 2-simplex with two edges on $K_1$ and one edge on $K_2$, or vice versa, and $K_1$ and $K_2$ are identical away from this simplex. The knots $K_1$ and $K_2$ are \emph{$\Delta$-equivalent} if they are related by a finite sequence of triangle moves. 
\end{definition}
\begin{lemma} \label{lemma:deltaequivalence}
	Let $\mathcal{K}_1$ and $\mathcal{K}_2$ be two polygonal long knots in $D^2\times \mathbb{R}$. If there exists an end-fixing PL homeomorphism $f\colon D^2\times \mathbb{R}\rightarrow D^2\times \mathbb{R}$ so that $f(\mathcal{K}_1)=\mathcal{K}_2$, then $\mathcal{K}_1$ and $\mathcal{K}_2$ are $\Delta$-equivalent.   
\end{lemma}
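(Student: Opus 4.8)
The plan is to reduce the statement to a purely isotopy-theoretic one and then dispose of a single twisting subtlety coming from the boundary of $D^2$. The classical tool I would invoke is that two polygonal knots in a PL $3$-manifold are $\Delta$-equivalent if and only if they are related by a PL ambient isotopy; this is standard PL knot theory, and its proof subdivides the trace of an ambient isotopy into finitely many stages, each supported in a single PL ball and hence realized by one (possibly degenerate) triangle move, exactly in the spirit of Lemma \ref{lemma:narrowequivalence}. The polygonal hypothesis and the compact support of $f$ (it is end-fixing) guarantee that only finitely many such moves are needed. With this reduction it suffices to produce an end-fixing PL ambient isotopy of $D^2\times\mathbb{R}$ carrying $\mathcal{K}_1$ to $\mathcal{K}_2=f(\mathcal{K}_1)$.

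The naive hope that $f$ is isotopic to the identity is \emph{false}: restricting an end-fixing homeomorphism to a large box $D^2\times[-C,C]\cong B^3$ gives a homeomorphism fixing the two caps $D^2\times\{\pm C\}$ pointwise but possibly Dehn-twisting the free side annulus $\partial D^2\times[-C,C]$. Combining the PL Alexander trick (which gives $\mathcal{M}(B^3,\partial)=0$, already used in the proof of Theorem \ref{Theorem: MCGofRP2Cylinder}) with restriction to this side annulus shows that the group of end-fixing PL self-homeomorphisms of $D^2\times\mathbb{R}$, up to end-fixing isotopy, is infinite cyclic, generated by the full twist $T\colon(r,\theta,y)\mapsto(r,\theta+2\pi\varphi(y),y)$, where $\varphi$ increases from $0$ to $1$ across the support interval. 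Thus $f$ is end-fixing PL isotopic to $T^k$ for some $k\in\mathbb{Z}$, so $\mathcal{K}_2$ is end-fixing ambient isotopic to $T^k(\mathcal{K}_1)$.

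It then remains to show that the twist acts trivially on long knots, i.e. that $T^k(\mathcal{K}_1)$ is ambient isotopic to $\mathcal{K}_1$; this is where the \emph{long} structure is essential. Since $\mathcal{K}_1(x)=*$ is the core $\{0\}\times\mathbb{R}$ for $|x|$ large, and since $T$ fixes the core pointwise (rotation fixes $r=0$), one can slide the support of the twist upward into a region where $\mathcal{K}_1$ is already the straight core. Concretely, conjugating $T$ by a vertical reparametrization that is itself end-fixing isotopic to the identity shows that the twist $T_{[-C,C]}$ is end-fixing isotopic to a twist $T_{[a,a+1]}$ whose support interval lies above the entire non-straight part of $\mathcal{K}_1$; applying this isotopy to the knot exhibits $T^k(\mathcal{K}_1)$ as ambient isotopic to $(T_{[a,a+1]})^k(\mathcal{K}_1)=\mathcal{K}_1$, the last equality because $T_{[a,a+1]}$ is the identity below its support and fixes the core where $\mathcal{K}_1$ meets its support. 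Chaining the three isotopies ($\mathcal{K}_2\sim T^k(\mathcal{K}_1)\sim\mathcal{K}_1$) and invoking the classical reduction yields the desired $\Delta$-equivalence.

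The main obstacle is the second step: the end-fixing mapping class group of $D^2\times\mathbb{R}$ is \emph{not} trivial, so one cannot simply quote the Alexander trick as for $B^3$, and the twisting ambiguity must be isolated precisely and then shown, via the sliding argument above, to act trivially on long knots. By contrast the reduction from $\Delta$-equivalence to ambient isotopy and the realization of an ambient isotopy by triangle moves are routine once the PL/polygonal hypothesis is in place.
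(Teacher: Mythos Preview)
Your proposal is correct, and the overall architecture matches the paper's: identify the end-fixing mapping class group of $D^2\times\mathbb{R}$ with $\mathbb{Z}$ generated by a twist, then observe that the twist can be supported where the long knot is already the straight core and hence acts trivially on the knot. The paper does exactly this, composing $f$ with a twisting map $t_{-i}$ supported above level $C$ (so that $t_{-i}$ fixes both $\mathcal{K}_1$ and $\mathcal{K}_2$) to reduce to a map isotopic to the identity.

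Where you diverge is in the final step. You invoke the classical black box ``PL ambient isotopy $\Rightarrow$ $\Delta$-equivalence'' and stop there. The paper instead gives a direct, self-contained construction: after arranging that $f$ fixes a collar of $\partial D^2\times\mathbb{R}$, it horizontally translates $\mathcal{K}_1$ into that collar to produce an intermediate knot $\mathcal{K}_1'$; the horizontal shift is realized by explicit triangle moves, and applying $f$ to those same triangles (which live partly in the fixed collar) produces triangle moves from $\mathcal{K}_2=f(\mathcal{K}_1)$ to $\mathcal{K}_1'=f(\mathcal{K}_1')$. This buys the paper independence from the cited classical result and makes the triangle moves completely explicit, at the cost of a slightly longer argument. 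Your route is shorter and perfectly valid provided one is willing to quote the ambient-isotopy-to-triangle-moves theorem for long knots; the compact-support observation you make is exactly what is needed to justify that citation in this noncompact setting.
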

\begin{proof}
We may assume the PL homeomorphism $f$ is isotopic to the identity map through end-fixing homeomorphisms; if not, using a similar argument as in the proof of Proposition \ref{Proposition:HomoemorphicLongKnotsAreIsotopic}, we may replace $f$ by its composition with an appropriate twisting map that fixes both $\mathcal{K}_1$ and $\mathcal{K}_2$. (Here, we used the fact that $\mathcal{M}(D^2 \times I, D^2 \times \{0,1\})\cong \mathbb{Z}$, where the mapping classes are represented by twisting maps; we omit the proof and remark that this can be seen by a similar argument as in the proof of Theorem \ref{Theorem: MCGofRP2Cylinder}.) In particular, now we may isotope $f$ in the complement of $\nu(\mathcal{K}_1)\cup\nu(\mathcal{K}_2)$ so that it fixes a collar neighborhood of $\partial(D^2\times \mathbb{R})$, where $\nu(\mathcal{K}_i)$ denotes a regular neighborhood of $\mathcal{K}_i$. Let $D_r$ denote the disc $\{(x,y)|\ x^2+y^2\leq r^2\}$. Up to reparametrization, we may assume $\mathcal{K}_1\cup\mathcal{K}_2\subset D_{1/5}\times \mathbb{R}$ and $f$ fixes the complement of $D_{1/2}\times \mathbb{R}$.
	
Let $C\gg 0$ so that both $\mathcal{K}_1$ and $\mathcal{K}_2$ restricts to $\{0\}\times(-\infty,C]\cup\{0\}\times[C,\infty)$ and so that $f$ fixes $\mathbb{R}P^2\times \{y\}$ for $|y| \geq C$. Let $\ell$ be an arc obtained by shifting $\mathcal{K}_1\cap D_{1/5}\times [-C,C]$ horizontally so that $\ell$ is contained in the neighborhood of $\partial(D^2 \times \mathbb{R})$ which is fixed by $f$. Let $\mathcal{K}_1'$ be the knot obtained by joining the upper end of $\ell$ to the lower end of $\{0\}\times[C,\infty)$ and joining the lower end of $\ell$ to the upper end of $\{0\}\times(-\infty,C]$. 

We now claim that both $\mathcal{K}_1$ and $\mathcal{K}_2$ are $\Delta$-equivalent to $\mathcal{K}_1'$. To see this, we can perform triangle moves along the horizontal shift between $\mathcal{K}_1$ and $\mathcal{K}_1'$. Applying $f$ then produces triangle moves relating $\mathcal{K}_2 = f(\mathcal{K}_1)$ and $\mathcal{K}_1' = f(\mathcal{K}_1')$. Composing these gives the $\Delta$-equivalence between $\mathcal{K}_1$ and $\mathcal{K}_2$.
\end{proof}

\begin{theorem} \label{thm:Rmovesappendix}
Let $(K_1,\rho)$ and $(K_2,\rho)$ be equivalent strongly negative amphichiral knots in $S^3$. Then any symmetric diagram for $K_1$ is related to any symmetric diagram for $K_2$ by a finite sequence of SNA Reidemeister moves. 
\end{theorem}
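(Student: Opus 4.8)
\section*{Proof proposal for Theorem \ref{thm:Rmovesappendix}}

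The plan is to descend the given equivalence to the quotient long knots, use the isotopy machinery already developed to replace it by a finite sequence of triangle moves each supported in a thickened disk, and then lift these triangle moves back to $S^3$ and project them to recover the SNA Reidemeister moves. To begin, write $x_0,x_\infty$ for the two fixed points. Since $\varphi\colon(S^3,K_1)\to(S^3,K_2)$ is equivariant, orientation-preserving, and (for directed knots) matches chosen fixed points, it carries $x_\infty\mapsto x_\infty$ and $x_0\mapsto x_0$. Removing the fixed points gives $S^3-\{x_0,x_\infty\}\cong S^2\times\mathbb{R}$, on which the point reflection $\rho$ acts as the (free) antipodal map on the $S^2$-factor, so the free quotient is $\mathbb{R}P^2\times\mathbb{R}$ and the two arcs $a_r,a_b$ of each $K_i$ descend to a single long knot $\mathbb{K}_i$. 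After normalizing $\varphi$ to be standard near $x_0,x_\infty$ (which we may do since the knots pass straight through the linearized fixed points), it restricts and descends to an \emph{end-fixing} homeomorphism $h\colon\mathbb{R}P^2\times\mathbb{R}\to\mathbb{R}P^2\times\mathbb{R}$ with $h(\mathbb{K}_1)=\mathbb{K}_2$.

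Next I would run the established chain of reductions. By Proposition \ref{Proposition:HomoemorphicLongKnotsAreIsotopic}, $\mathbb{K}_1$ and $\mathbb{K}_2$ are ambient isotopic; by Lemma \ref{lemma:narrowequivalence} they are narrowly equivalent, so there is a finite chain $\mathbb{K}_1=\mathbb{J}_1,\dots,\mathbb{J}_m=\mathbb{K}_2$ in which each consecutive pair is narrowly ambient isotopic inside some $N\cong D^2\times\mathbb{R}$. Making everything PL and applying Lemma \ref{lemma:deltaequivalence} to the resulting end-fixing PL homeomorphism of $N$, I replace each narrow isotopy by a finite sequence of triangle moves supported in that $N$. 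The crucial point is that $N\cong D^2\times\mathbb{R}$ is simply connected, so its preimage under the double cover $S^2\times\mathbb{R}\to\mathbb{R}P^2\times\mathbb{R}$ is two disjoint copies $N_1\sqcup N_2$ exchanged by $\rho$. Hence each triangle move lifts to an \emph{equivariant pair} of triangle moves in $S^3-\{x_0,x_\infty\}$, supported in $N_1$ and $N_2$ and disjoint from the fixed points.

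Finally I would realize these equivariant triangle moves diagrammatically. Fixing a generic $\rho$-invariant projection to $S^2$ and subdividing each triangle, the standard argument shows that a single triangle move changes the projection by a planar isotopy or by one of the moves R1, R2, R3; carried out on the symmetric pair $N_1,N_2$ simultaneously, these become SNA R0 moves and equivariant pairs of R1/R2/R3. The only new phenomenon arises when a (sub)triangle sweeps a strand so that its projection crosses the image of a fixed point $x_0$ or $x_\infty$: such a move is precisely the SNA R4 move of pulling an equivariant pair of strands across a fixed point. To conclude for the \emph{given} diagrams, I note that a generic $\rho$-invariant projection of $K_i$ differs from $D_i$ by SNA R0 moves (and a perturbation to genericity), so stringing the movies together relates $D_1$ to $D_2$ by SNA moves.

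I expect the main obstacle to be this last step: executing the ``triangle move decomposes into Reidemeister moves'' analysis \emph{equivariantly} while controlling the interaction with the fixed points, and verifying that R4 is the \emph{only} new move needed---in particular that sweeping a strand across a fixed point is always accounted for by R4 and never forces an additional move type. Maintaining genericity of the $\rho$-invariant projections throughout the movie (so crossings remain transverse and the fixed points stay generic) is the technical heart of the verification, and is consistent with the authors' remark that checking the R4 move is the delicate point of this theory.
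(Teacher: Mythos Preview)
Your proposal is correct and follows essentially the same route as the paper's proof: pass to long knots in $\mathbb{R}P^2\times\mathbb{R}$, apply Proposition~\ref{Proposition:HomoemorphicLongKnotsAreIsotopic} and Lemmas~\ref{lemma:narrowequivalence} and~\ref{lemma:deltaequivalence} to obtain triangle moves in a $D^2\times\mathbb{R}$, lift these to equivariant pairs in $S^3$, and analyze the projections. The only presentational differences are that the paper phrases the R4 criterion as ``the projections of $\Delta$ and $\rho(\Delta)$ overlap'' (after subdivision this forces a $\rho$-invariant contractible region containing a fixed point, which is equivalent to your ``projection crosses a fixed point''), and it spends a paragraph upgrading the smooth homeomorphism of $D^2\times\mathbb{R}$ to a PL one before invoking Lemma~\ref{lemma:deltaequivalence}, a step you pass over.
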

\begin{proof}
Throughout the proof, we will fix a generic $\rho$-invariant $S^2 \subset S^3$ containing all knot diagrams. We may assume, by applying a small isotopy if necessary, that $K_1$ and $K_2$ coincide in a neighborhood of each fixed point in $S^3$. This small isotopy projects to a planar isotopy of the symmetric diagram. Now removing neighborhoods of the two fixed points, we can consider the quotient of $S^3$ by the amphichiral symmetry, which is $\mathbb{R}P^2 \times \mathbb{R}$. The images of $K_1$ and $K_2$ are (oriented) long knots $\mathbb{K}_1$ and $\mathbb{K}_2$, and the equivariant homeomorphism between $K_1$ and $K_2$ descends to an end-fixing homeomorphism between $\mathbb{K}_1$ and $\mathbb{K}_2$. In fact, by Proposition \ref{Proposition:HomoemorphicLongKnotsAreIsotopic}, there is an end-fixing ambient isotopy between $\mathbb{K}_1$ and $\mathbb{K}_2$. Then by Lemma \ref{lemma:narrowequivalence}, $\mathbb{K}_1$ and $\mathbb{K}_2$ are narrowly equivalent through a sequence of long knots $\mathbb{J}_1$,\dots,$\mathbb{J}_m$. It suffices to show that the symmetric diagrams for the strongly negative amphichiral knots corresponding to $\mathbb{J}_i$ and $\mathbb{J}_{i+1}$ (which are narrowly ambient isotopic) are related by SNA Reidemeister moves. For convenience we will just assume that $\mathbb{K}_1$ and $\mathbb{K}_2$ are narrowly ambient isotopic in some $D^2 \times \mathbb{R} \subset \mathbb{R}P^2 \times \mathbb{R}$. This narrow ambient isotopy produces a homeomorphism $f\colon D^2 \times \mathbb{R} \to D^2 \times \mathbb{R}$ with $f(\mathbb{K}_1) = \mathbb{K}_2$. Furthermore, by applying small isotopies which induce planar isotopies of the corresponding symmetric diagrams, we will assume that $\mathbb{K}_1$ and $\mathbb{K}_2$ are polygonal.

We will use $f$ to get a PL homeomorphism $D^2 \times \mathbb{R} \to D^2 \times \mathbb{R}$ sending $\mathbb{K}_1$ to $\mathbb{K}_2$. (The proof is a straightforward modification to that of the corresponding statement for knots in $S^3$; see for example \cite[Theorem 6.1]{MR61822} or \cite[Theorem A.3]{MR1417494}.) By \cite[Theorem 2]{MR48805}, we can find a homeomorphism $f'$ which is an approximation of $f$ and which is piecewise-linear on the exterior $(D^2 \times \mathbb{R}) - \nu(\mathbb{K}_1)$ of $\mathbb{K}_1$, where $\nu(\mathbb{K}_1)$ is a PL-regular neighborhood of $\mathbb{K}_1$. A priori $f'(\nu(\mathbb{K}_1))$ may not be a PL-regular neighborhood of $\mathbb{K}_2$, but it is at least a PL submanifold of $D^2 \times \mathbb{R}$, since its boundary $f'(\partial \nu(\mathbb{K}_1))$ is a PL submanifold. We can then find a PL-regular neighborhood $\nu(\mathbb{K}_2)$ inside $f'(\nu(\mathbb{K}_1))$, and there is a PL strip $\mathbb{R} \times [0,1]$ interpolating between the image of a longitude $\mathbb{L}_1$ of $\mathbb{K}_1$ in $f'(\partial \nu(\mathbb{K}_1))$ and a longitude $\mathbb{L}_2$ of $\mathbb{K}_2$ in $\partial \nu(\mathbb{K'})$, by \cite[Theorem 3]{MR61377}. This strip gives a PL isotopy between $\mathbb{L}_2$ and $f'(\mathbb{L}_1)$. We also have PL isotopies between $\mathbb{K}_1$ and $\mathbb{L}_1$, and between $\mathbb{K}_2$ and $\mathbb{L}_2$. In particular, there is a composite PL homeomorphism: 
\[
(D^2 \times \mathbb{R},\mathbb{K}_1) \to (D^2 \times \mathbb{R},\mathbb{L}_1) \to (D^2 \times \mathbb{R},f'(\mathbb{L}_1)) \to (D^2 \times \mathbb{R},\mathbb{L}_2) \to (D^2 \times \mathbb{R},\mathbb{K}_2).
\]

Now by Lemma \ref{lemma:deltaequivalence}, we have a $\Delta$-equivalence between $\mathbb{K}_1$ and $\mathbb{K}_2$. Lifting the triangle moves from this equivalence to $S^3$ produces equivariant pairs of triangle moves relating $K_1$ and $K_2$. Let $\Delta$ and $\rho(\Delta)$ be an equivariant pair of triangles, and consider the projections of $\Delta$ and $\rho(\Delta)$ to the $S^2$ of the diagram. If the projections of $\Delta$ and $\rho(\Delta)$ are disjoint, then we are reduced to an equivariant isotopy or an equivariant pair of R1, R2, or R3 moves. If instead $\Delta$ and $\rho(\Delta)$ have overlapping projections, then (possibly after replacing $\Delta$ with a smaller triangle in a subdivision) the union of their projections is a $\rho$-invariant contractible region and hence contains exactly one of the two fixed points. The projections of $\Delta$ and $\rho(\Delta)$ can then be subdivided into an equivariant sequence of R1,R2, and R3 moves, and one of the two R4 moves as shown in the statement of Theorem \ref{thm:Rmoves}.
\end{proof}

\section{Table of strongly negative amphichiral knots}
\label{app:table}
For convenience we provide a table of symmetric diagrams for all strongly negative amphichiral prime knots with 12 or fewer crossings. This table was produced by taking the list of all negative amphichiral knots with 12 or fewer crossings (see for example \cite{knotinfo}) and adjusting the diagrams to be symmetric.

\begin{longtable}{c||c||c}
 $4_1$&$6_3$&$8_3$\\*
 \multirow{4}{*}{\scalebox{.2}{\includegraphics{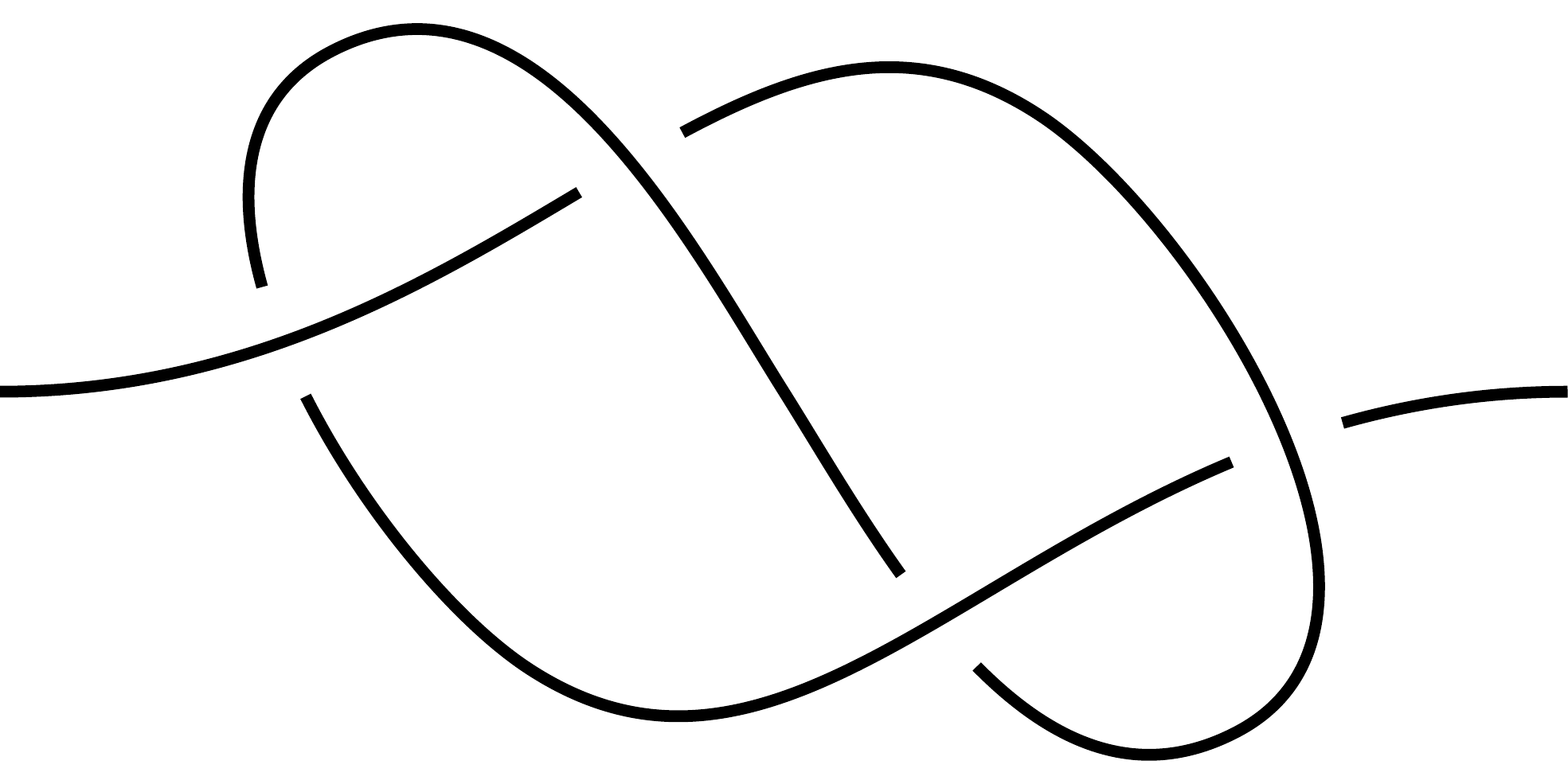}}}&\multirow{4}{*}{\scalebox{.2}{\includegraphics{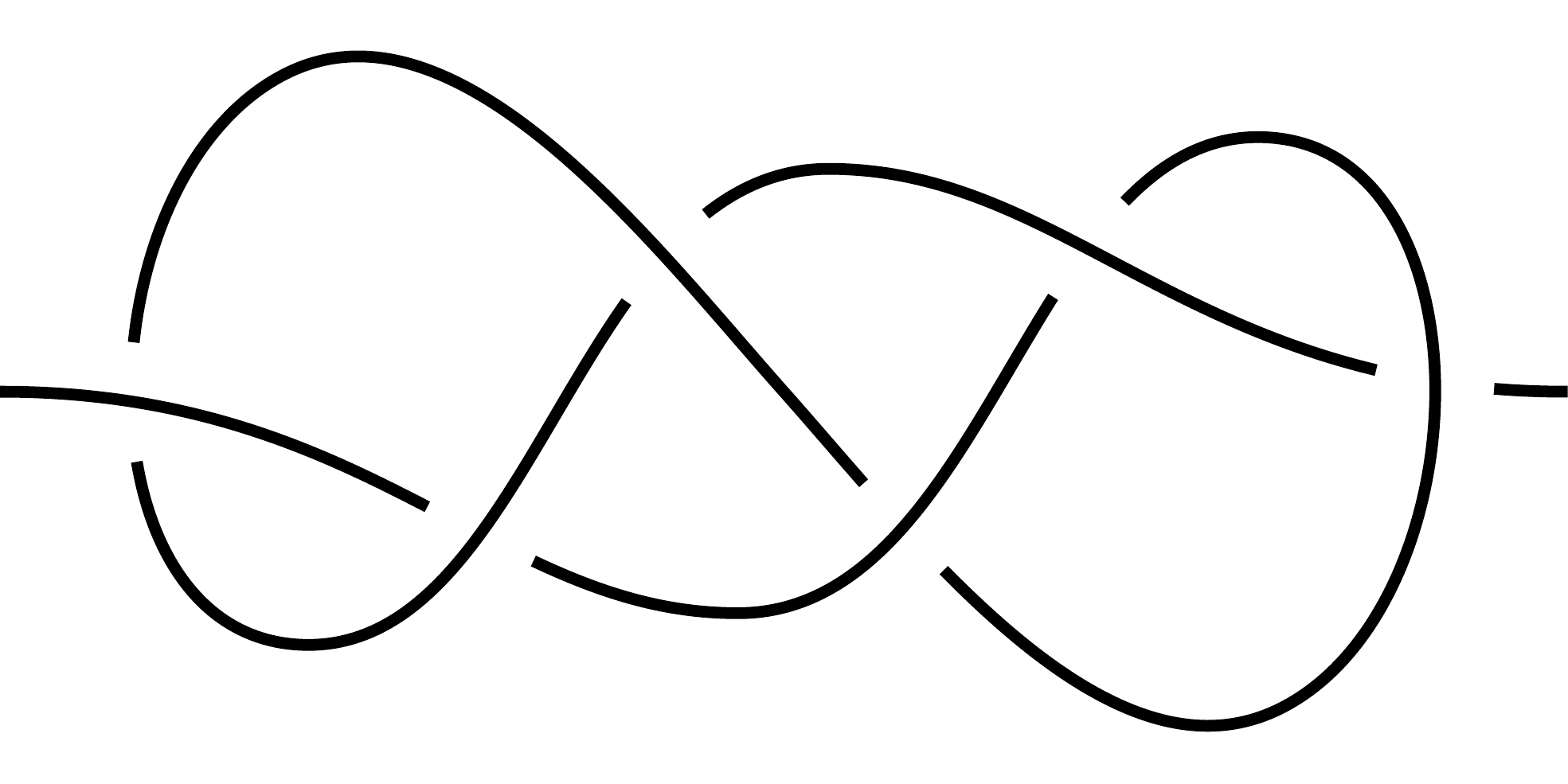}}}&
 \multirow{4}{*}{\scalebox{.2}{\includegraphics{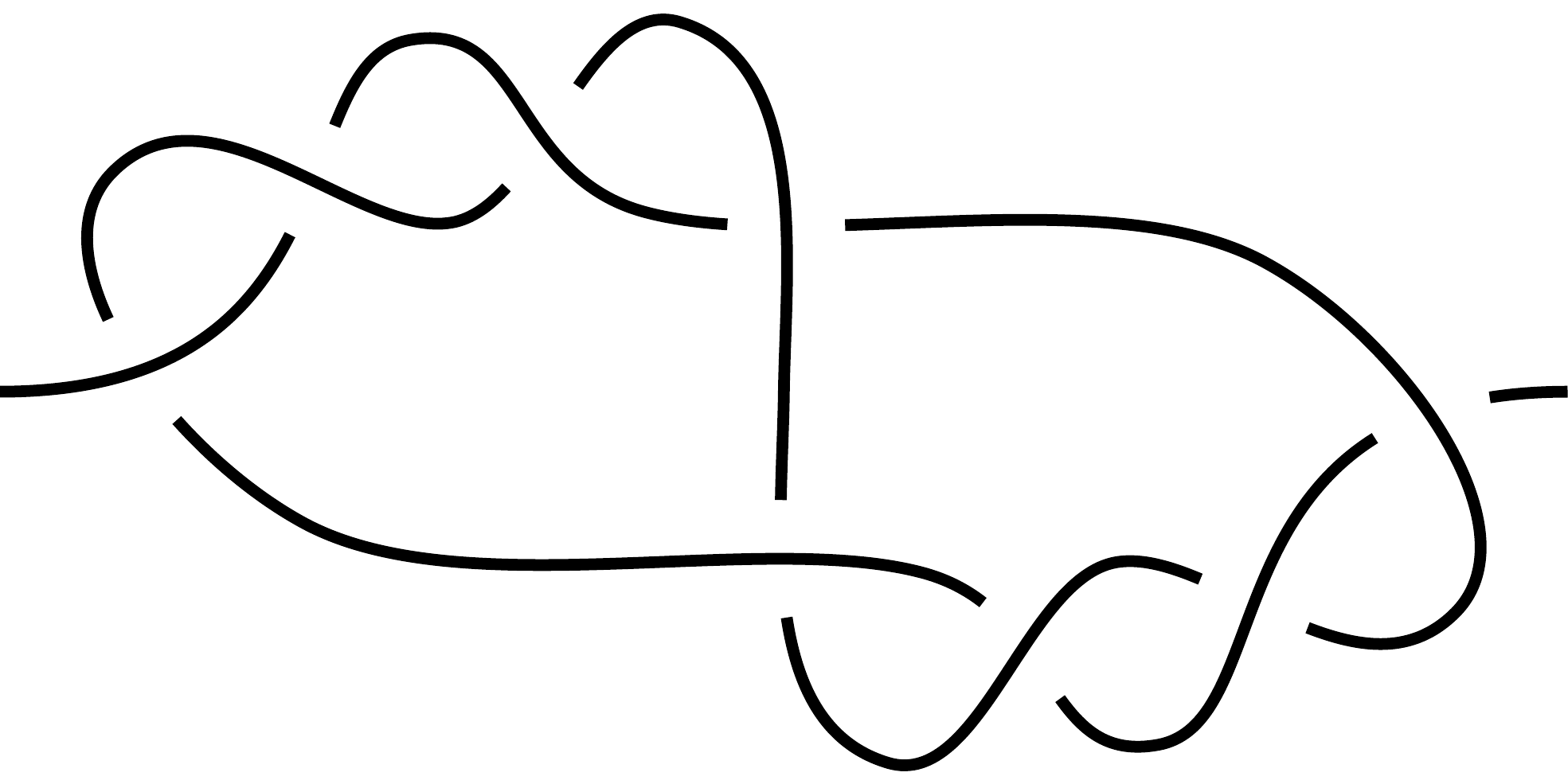}}}\\*
 &&\\*
 &&\\*
 &&\\*
 &&\\\hline\hline

 $8_9$&$8_{12}$&$8_{17}$\\*
 \multirow{4}{*}{\scalebox{.2}{\includegraphics{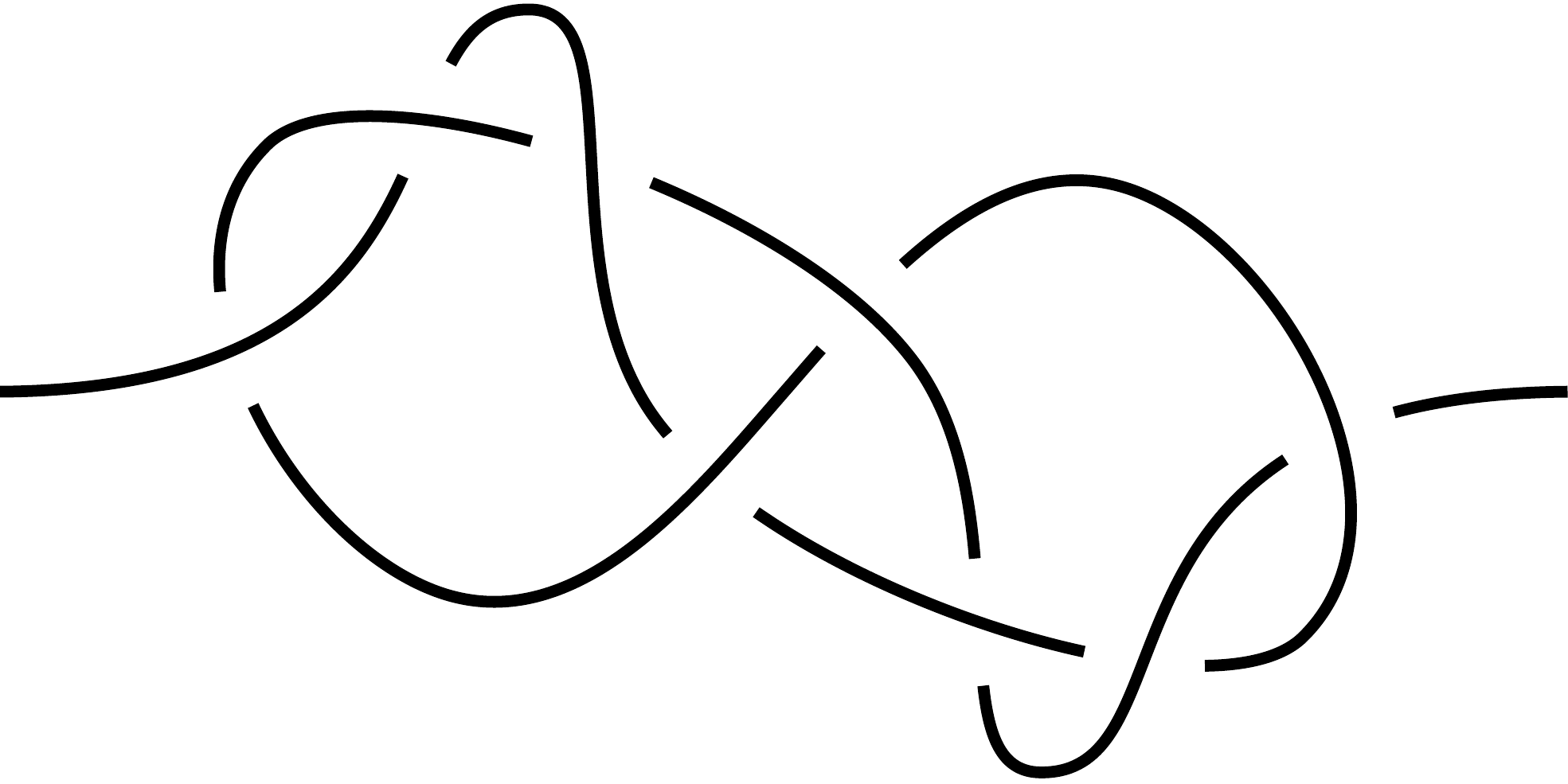}}}&\multirow{4}{*}{\scalebox{.2}{\includegraphics{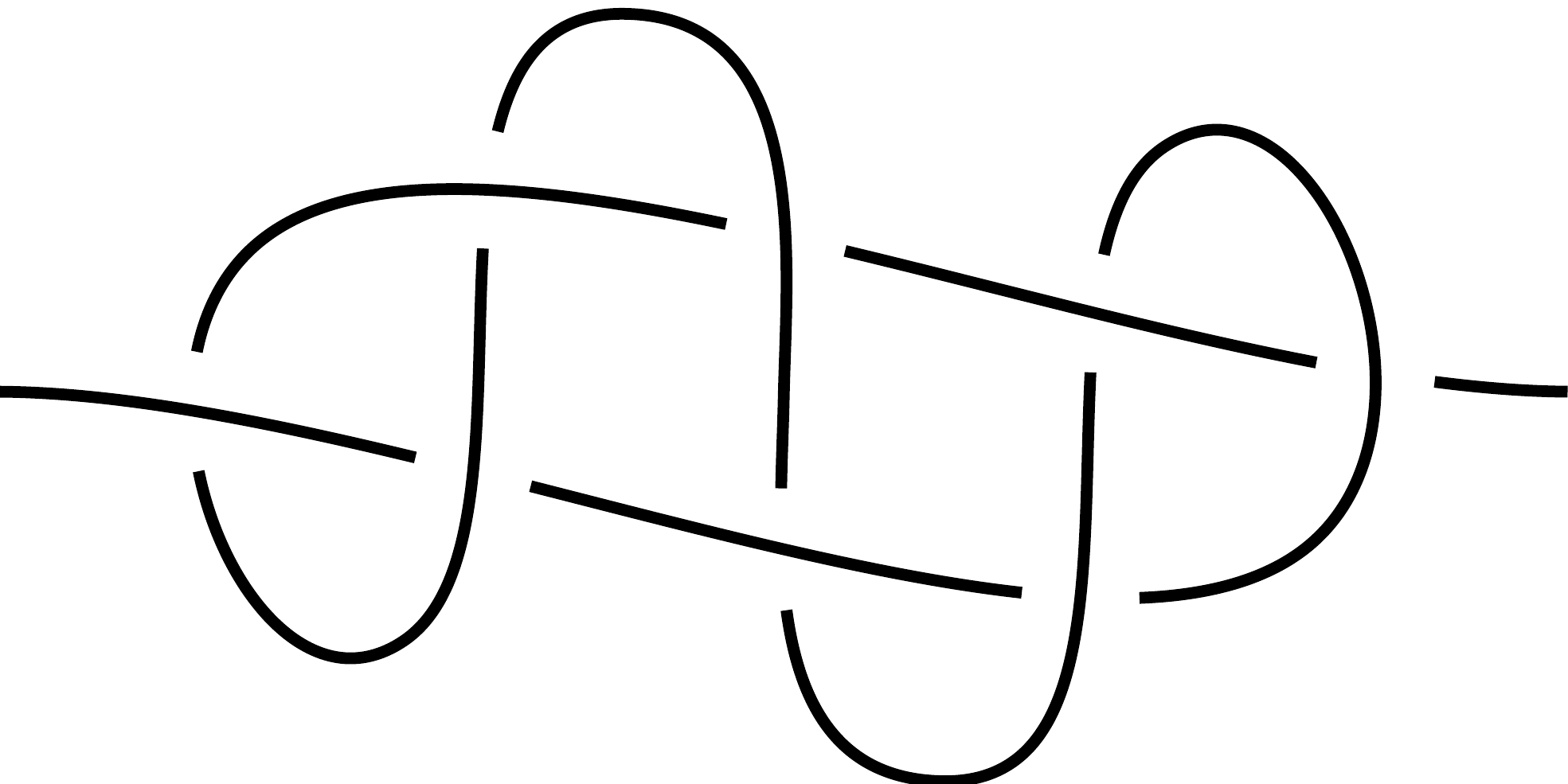}}}&
 \multirow{4}{*}{\scalebox{.2}{\includegraphics{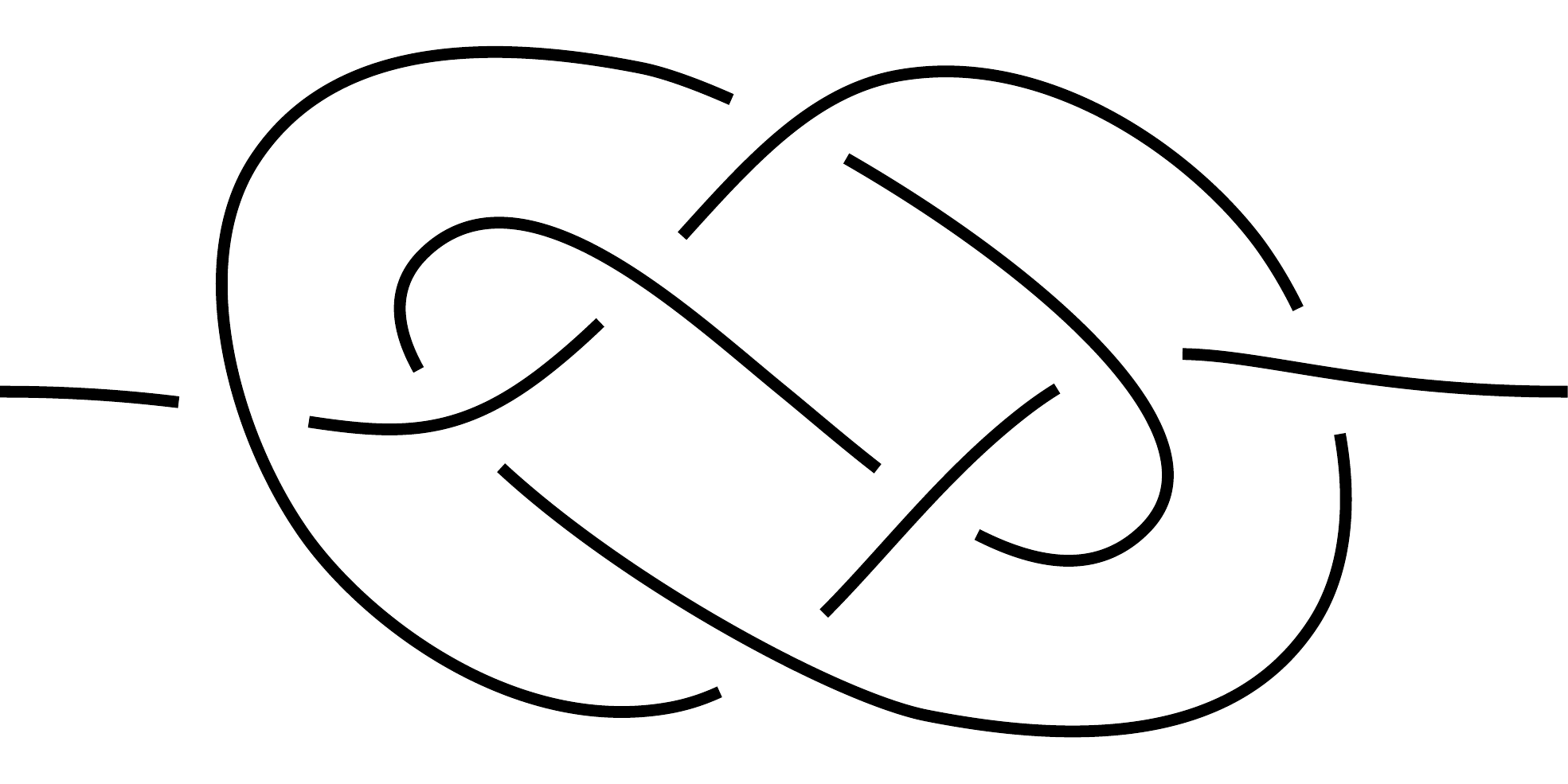}}}\\*
 &&\\*
 &&\\*
 &&\\*
 &&\\\hline\hline

 $8_{18}$&$10_{17}$&$10_{33}$\\*
 \multirow{4}{*}{\scalebox{.2}{\includegraphics{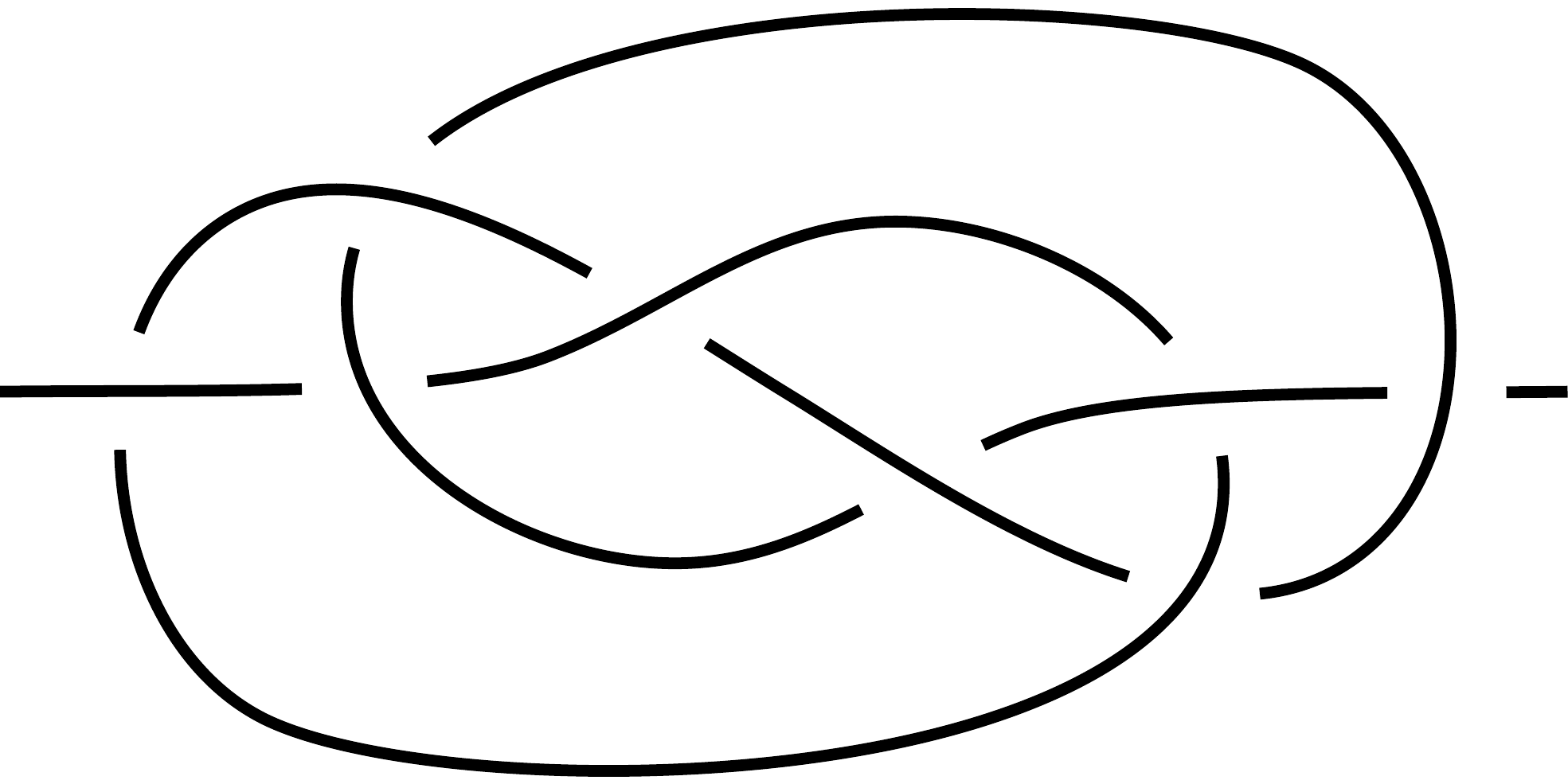}}}&\multirow{4}{*}{\scalebox{.2}{\includegraphics{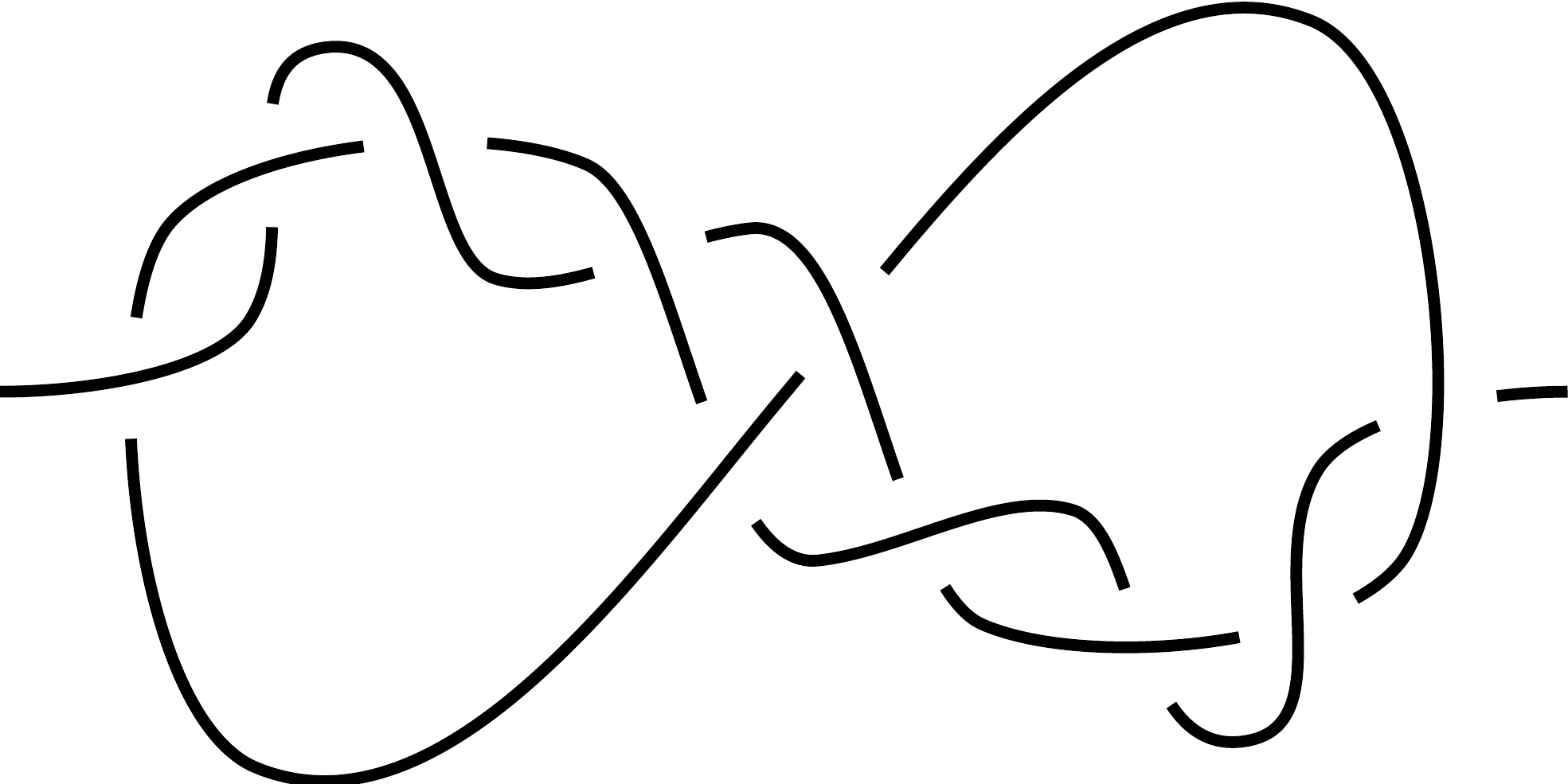}}}&
 \multirow{4}{*}{\scalebox{.2}{\includegraphics{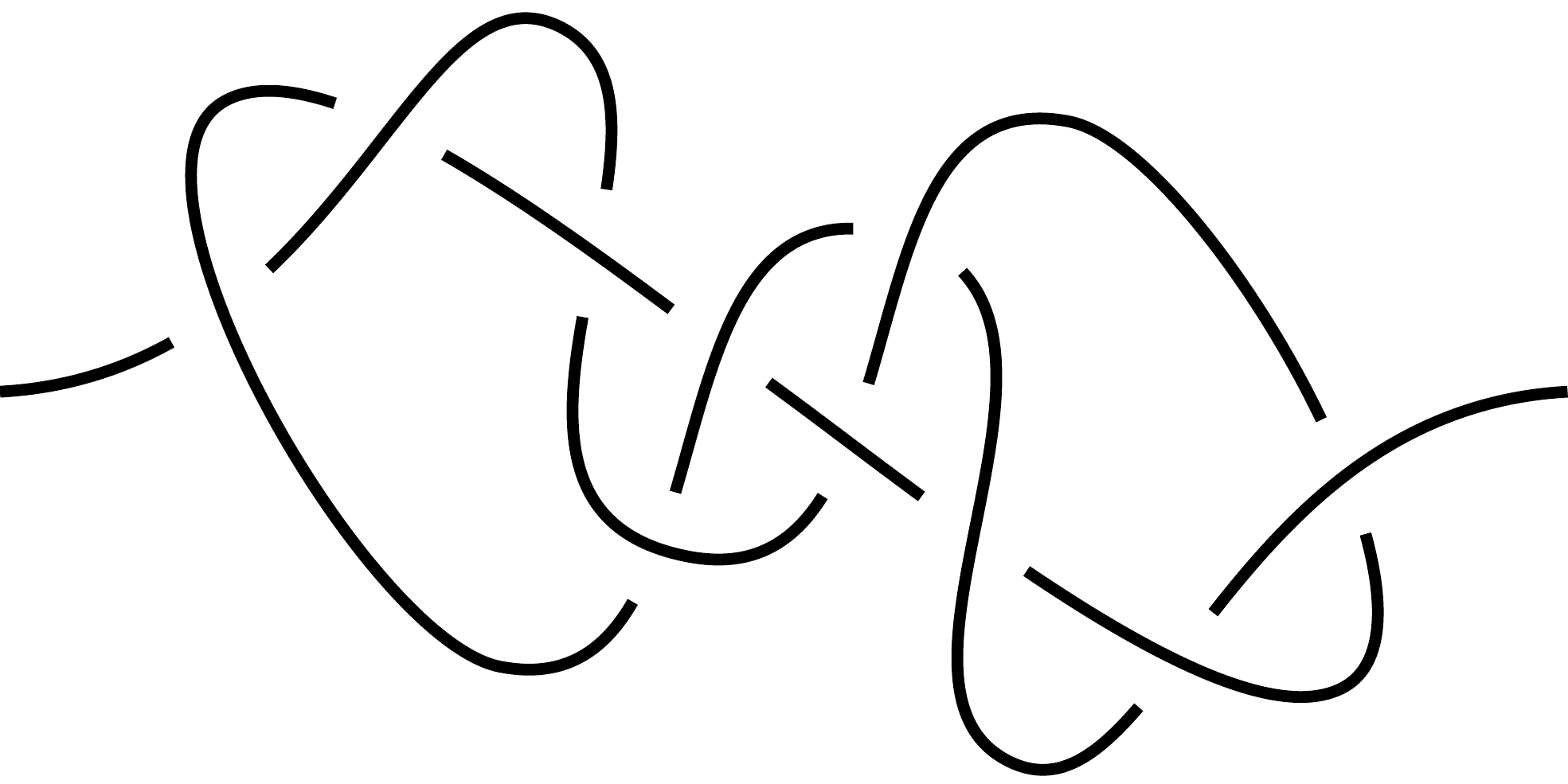}}}\\*
 &&\\*
 &&\\*
 &&\\*
 &&\\\hline\hline

 $10_{37}$&$10_{43}$&$10_{45}$\\*
 \multirow{4}{*}{\scalebox{.2}{\includegraphics{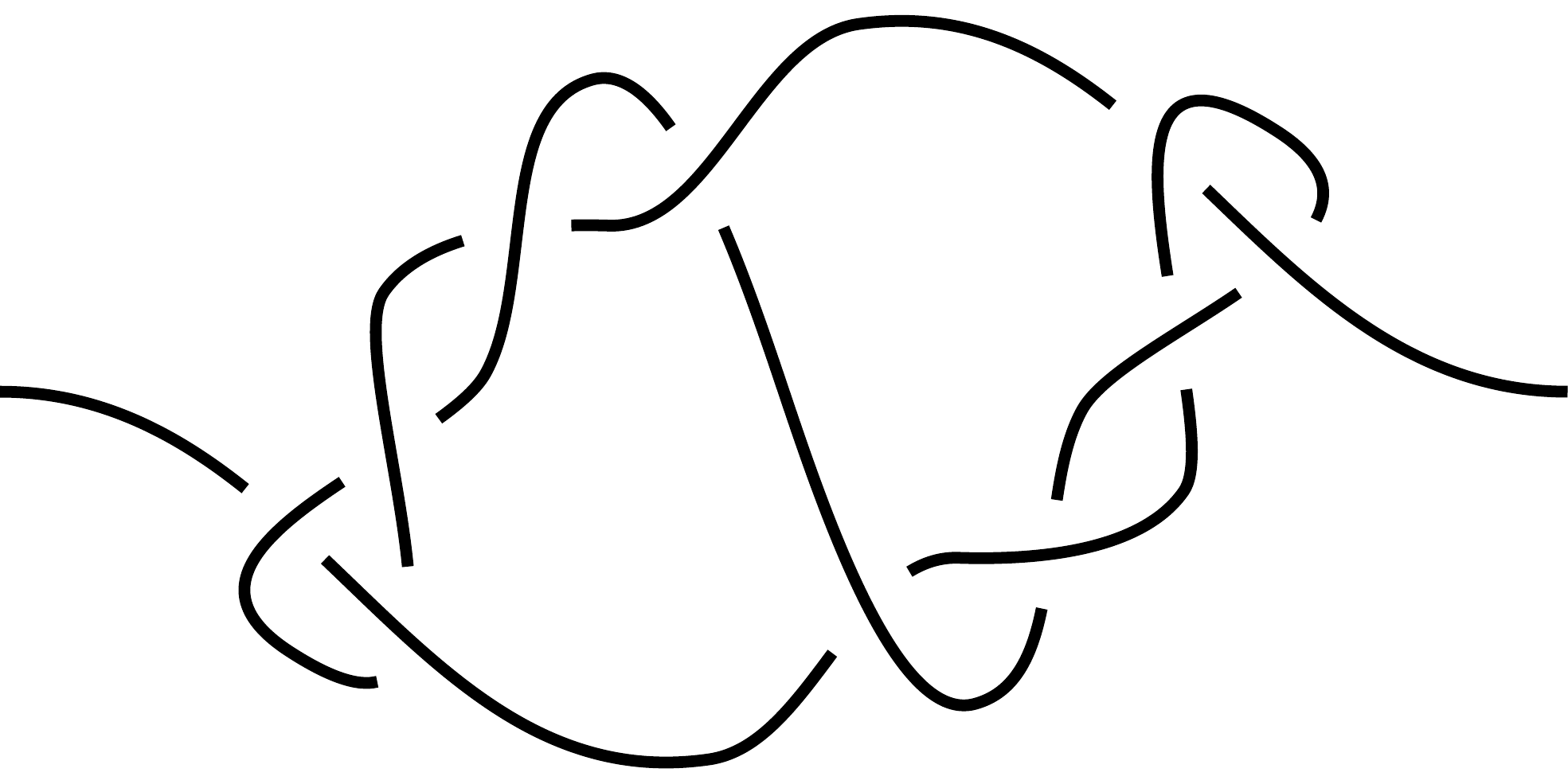}}}&\multirow{4}{*}{\scalebox{.2}{\includegraphics{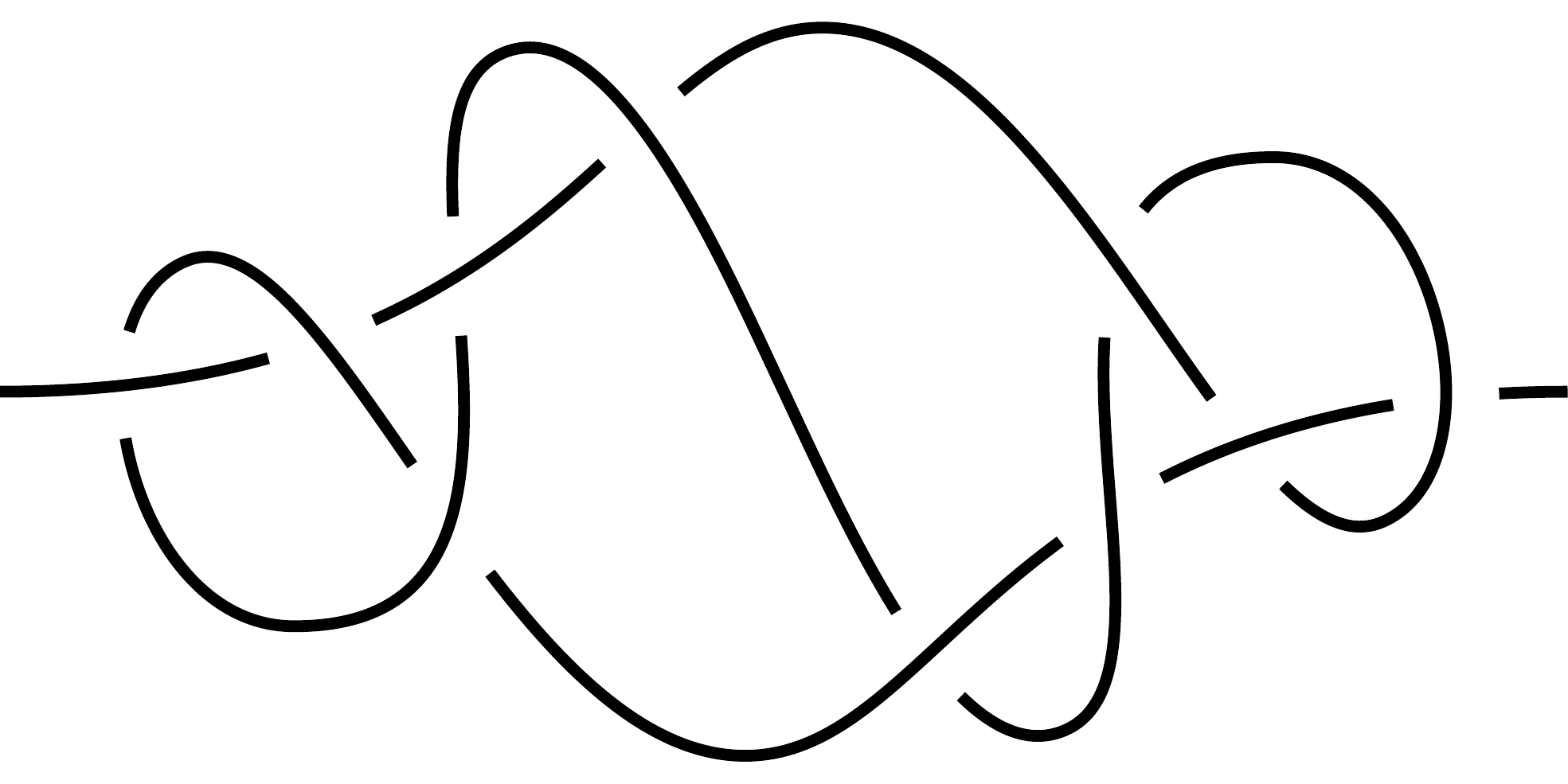}}}&
 \multirow{4}{*}{\scalebox{.2}{\includegraphics{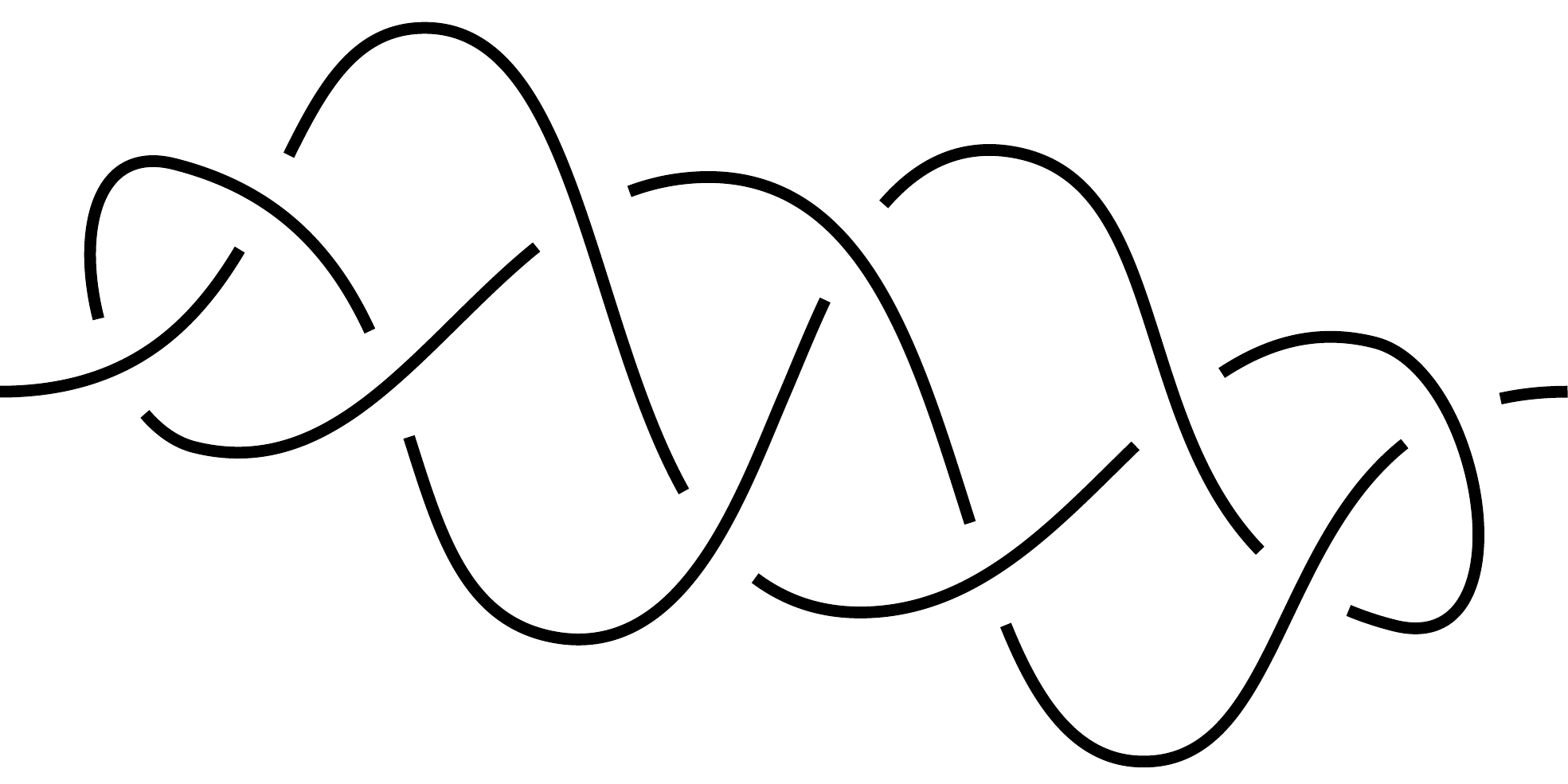}}}\\*
 &&\\*
 &&\\*
 &&\\*
 &&\\\hline\hline

 $10_{79}$&$10_{81}$&$10_{88}$\\*
 \multirow{4}{*}{\scalebox{.2}{\includegraphics{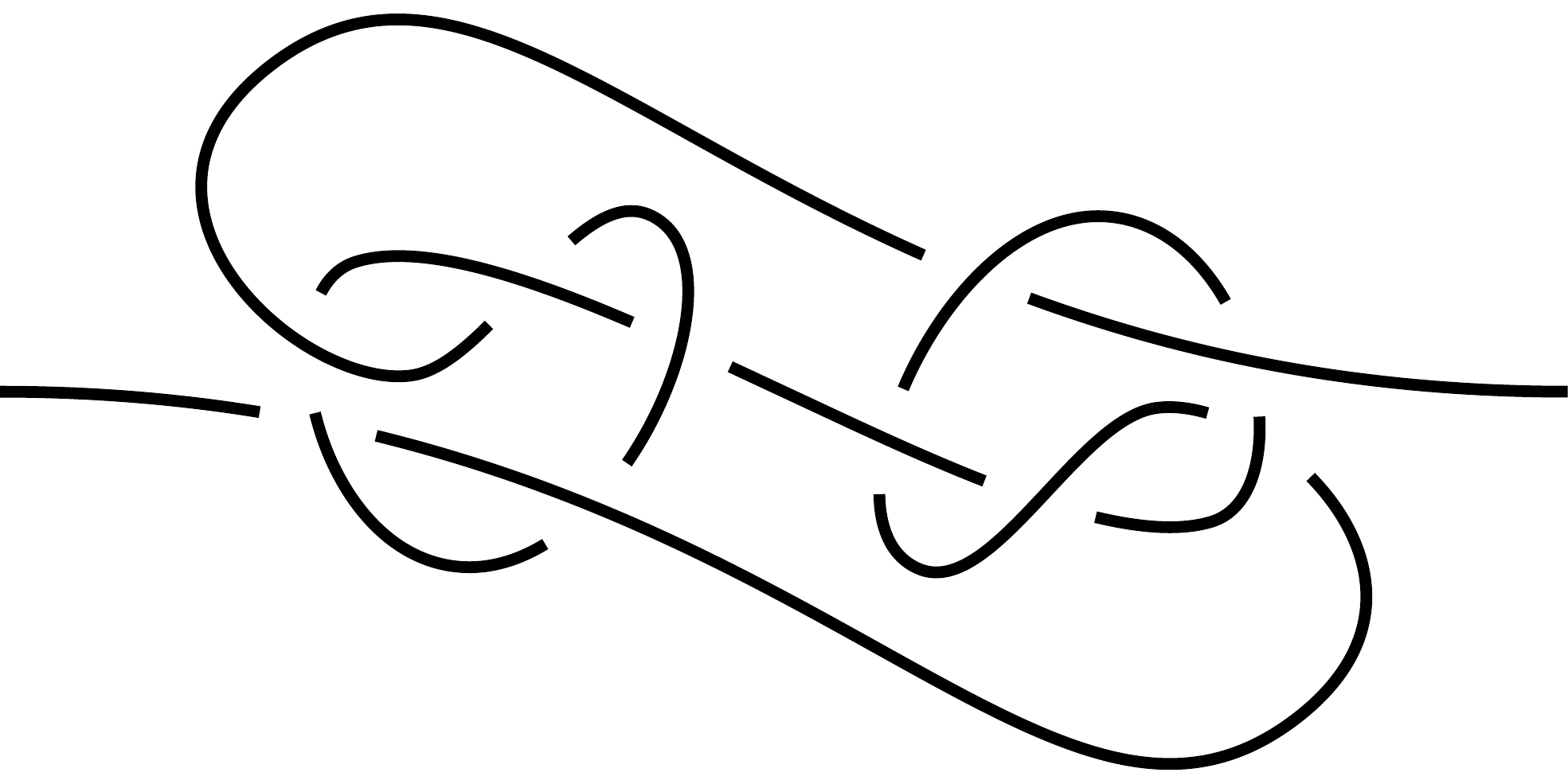}}}&\multirow{4}{*}{\scalebox{.2}{\includegraphics{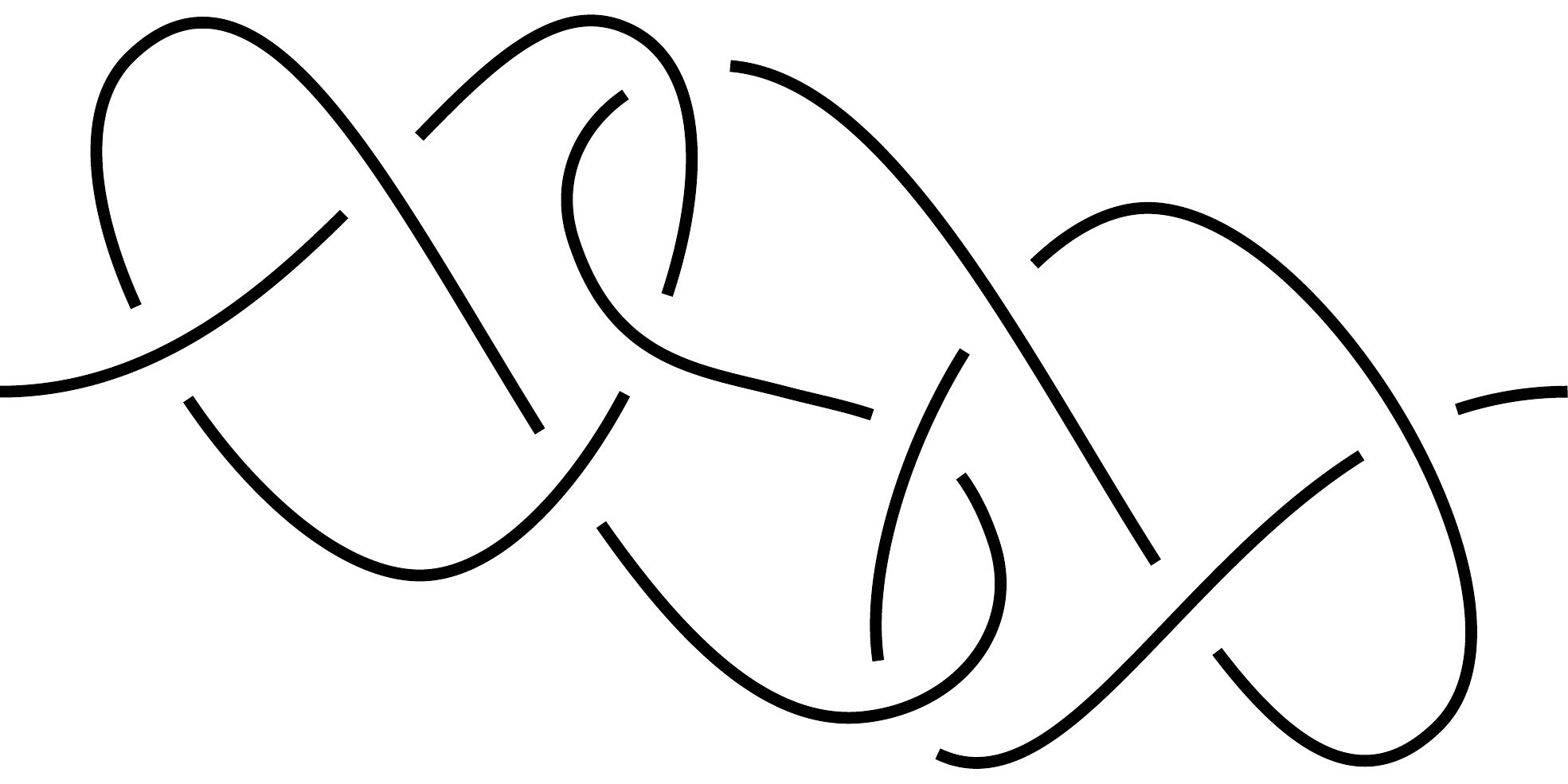}}}&
 \multirow{4}{*}{\scalebox{.2}{\includegraphics{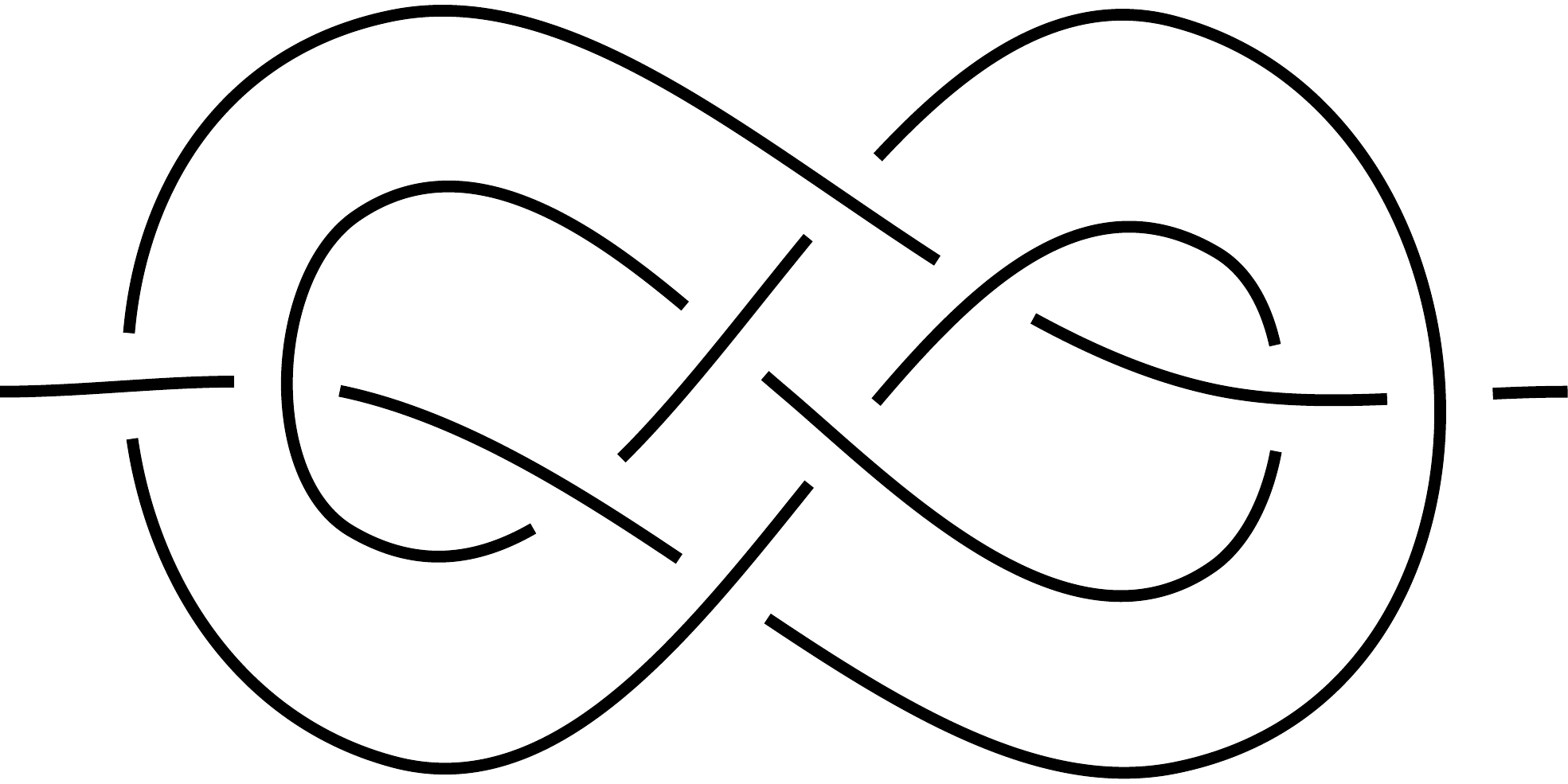}}}\\*
 &&\\*
 &&\\*
 &&\\*
 &&\\\hline\hline

 $10_{99}$&$10_{109}$&$10_{115}$\\*
 \multirow{4}{*}{\scalebox{.2}{\includegraphics{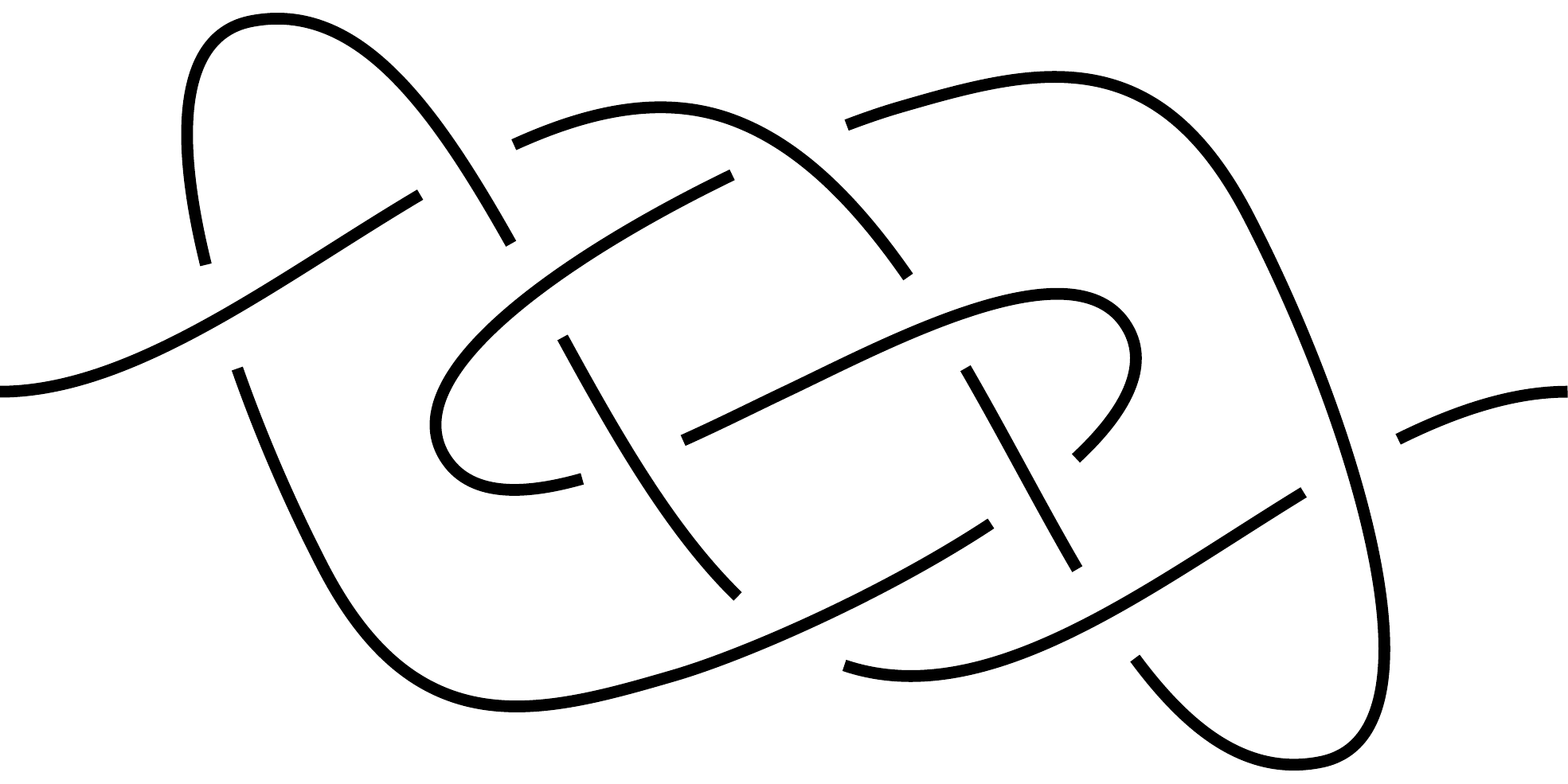}}}&\multirow{4}{*}{\scalebox{.2}{\includegraphics{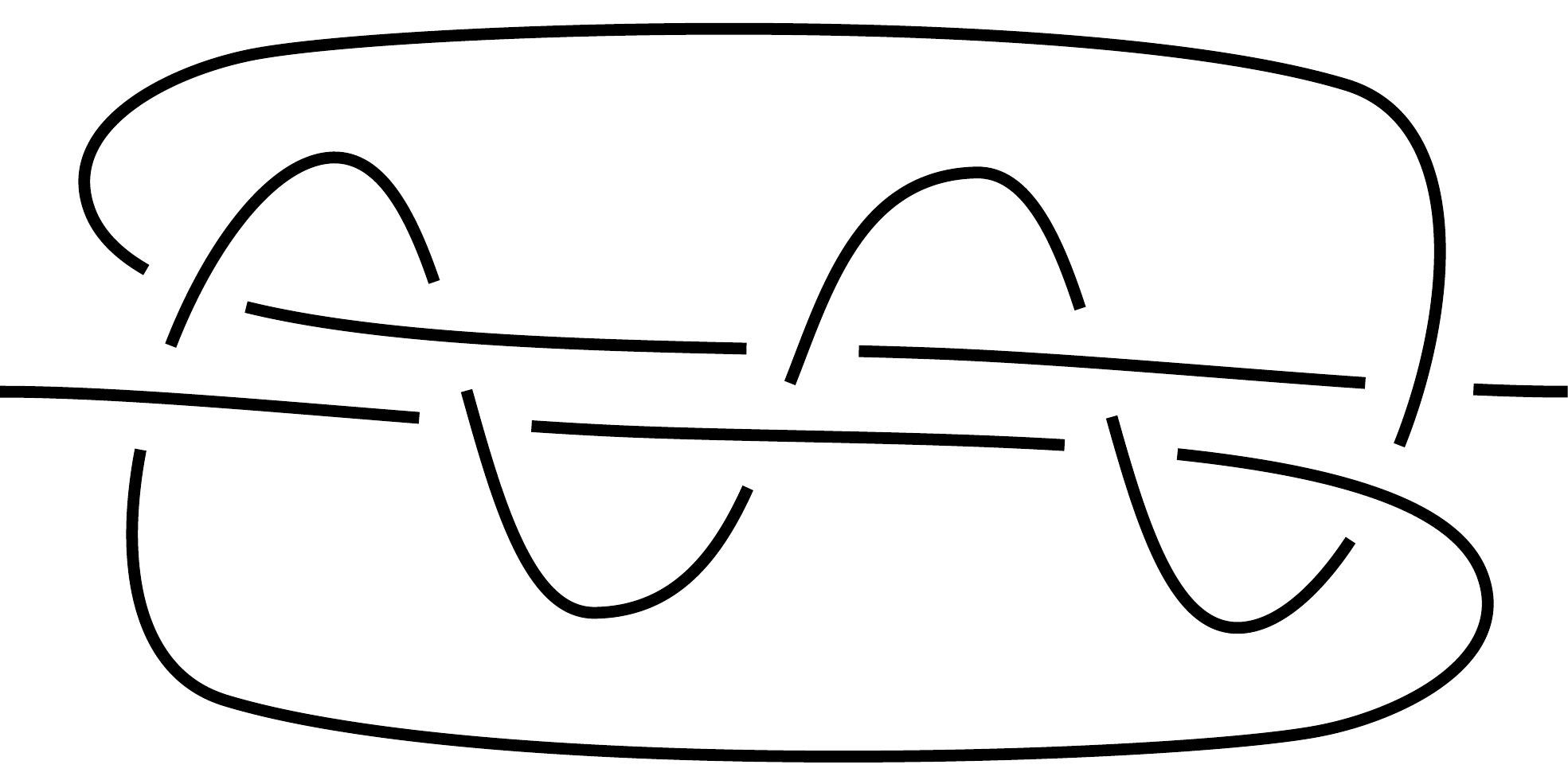}}}&
 \multirow{4}{*}{\scalebox{.2}{\includegraphics{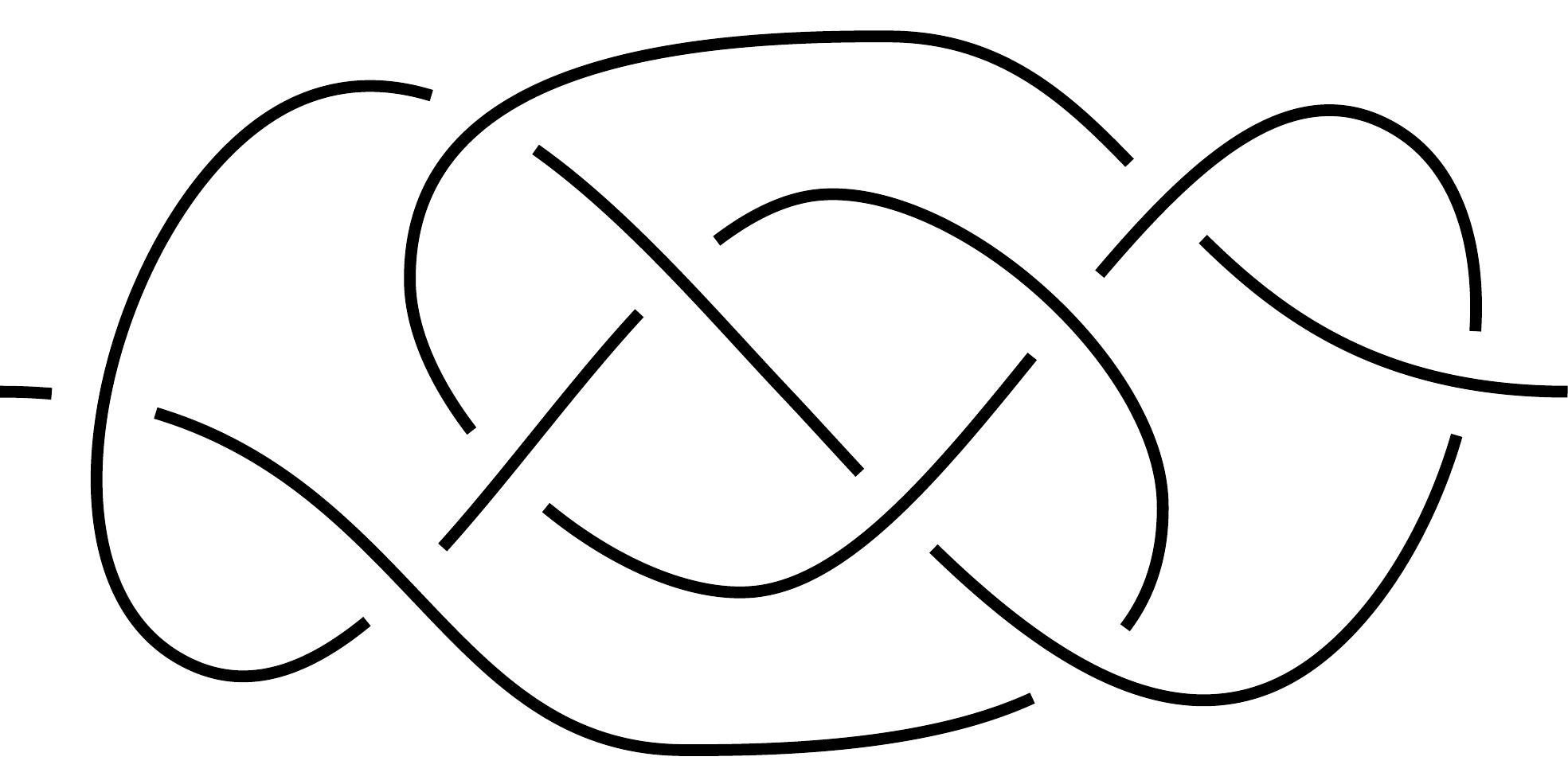}}}\\*
 &&\\*
 &&\\*
 &&\\*
 &&\\\hline\hline

 $10_{118}$&$10_{123}$&$12a_4$\\*
 \multirow{4}{*}{\scalebox{.2}{\includegraphics{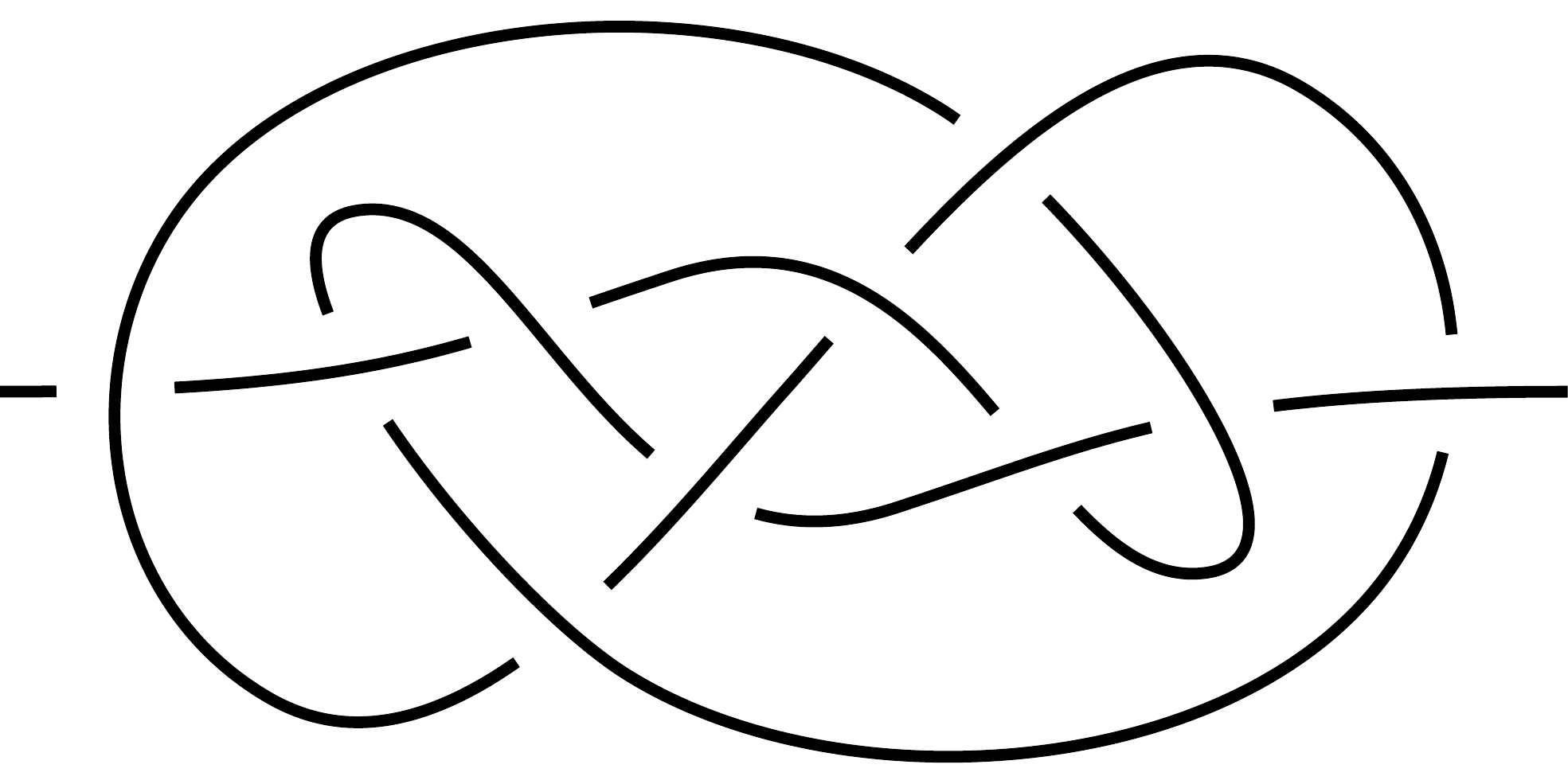}}}&\multirow{4}{*}{\scalebox{.2}{\includegraphics{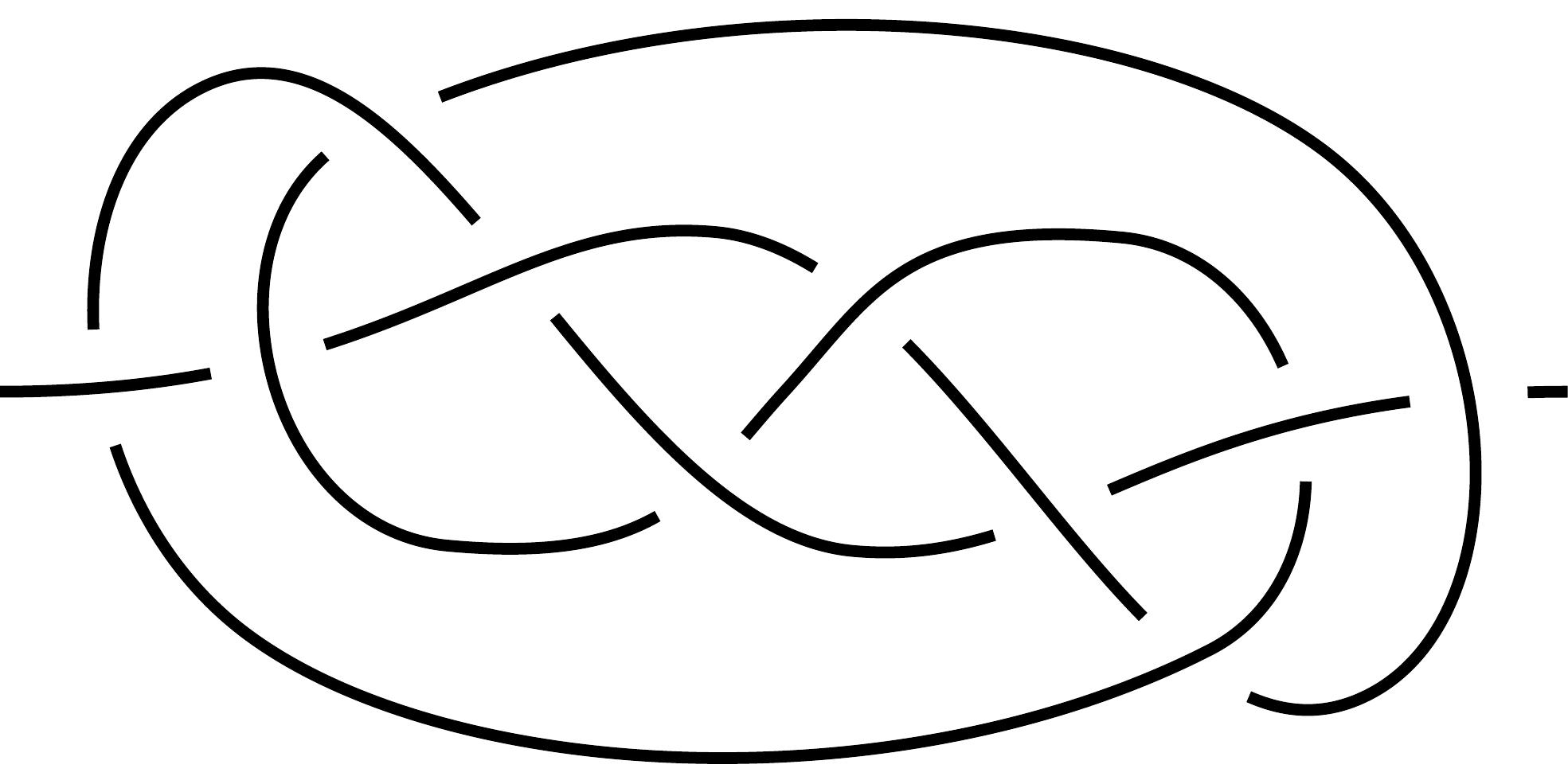}}}&
 \multirow{4}{*}{\scalebox{.2}{\includegraphics{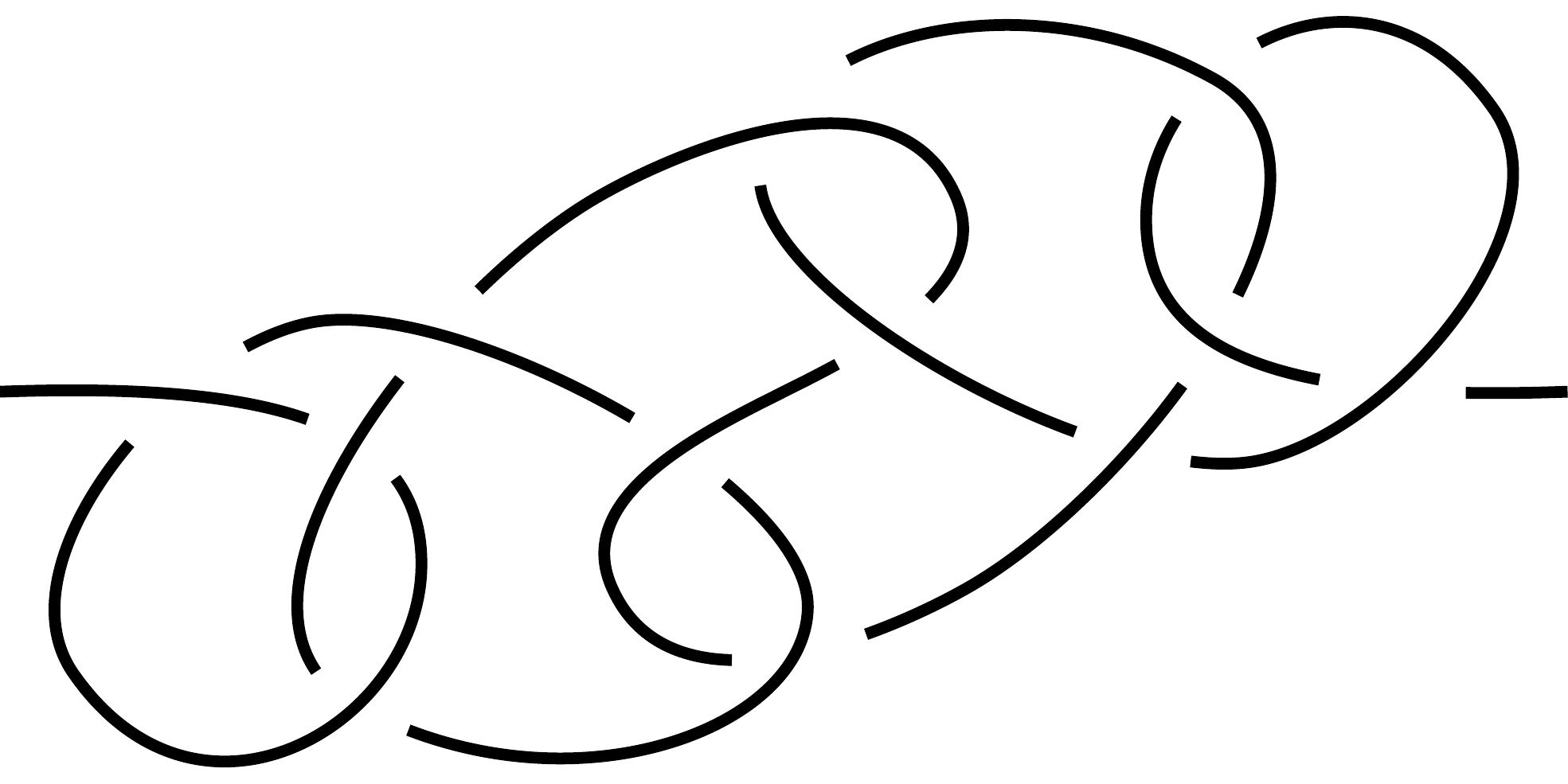}}}\\*
 &&\\*
 &&\\*
 &&\\*
 &&\\\hline\hline

 $12a_{58}$&$12a_{125}$&$12a_{268}$\\*
 \multirow{4}{*}{\scalebox{.2}{\includegraphics{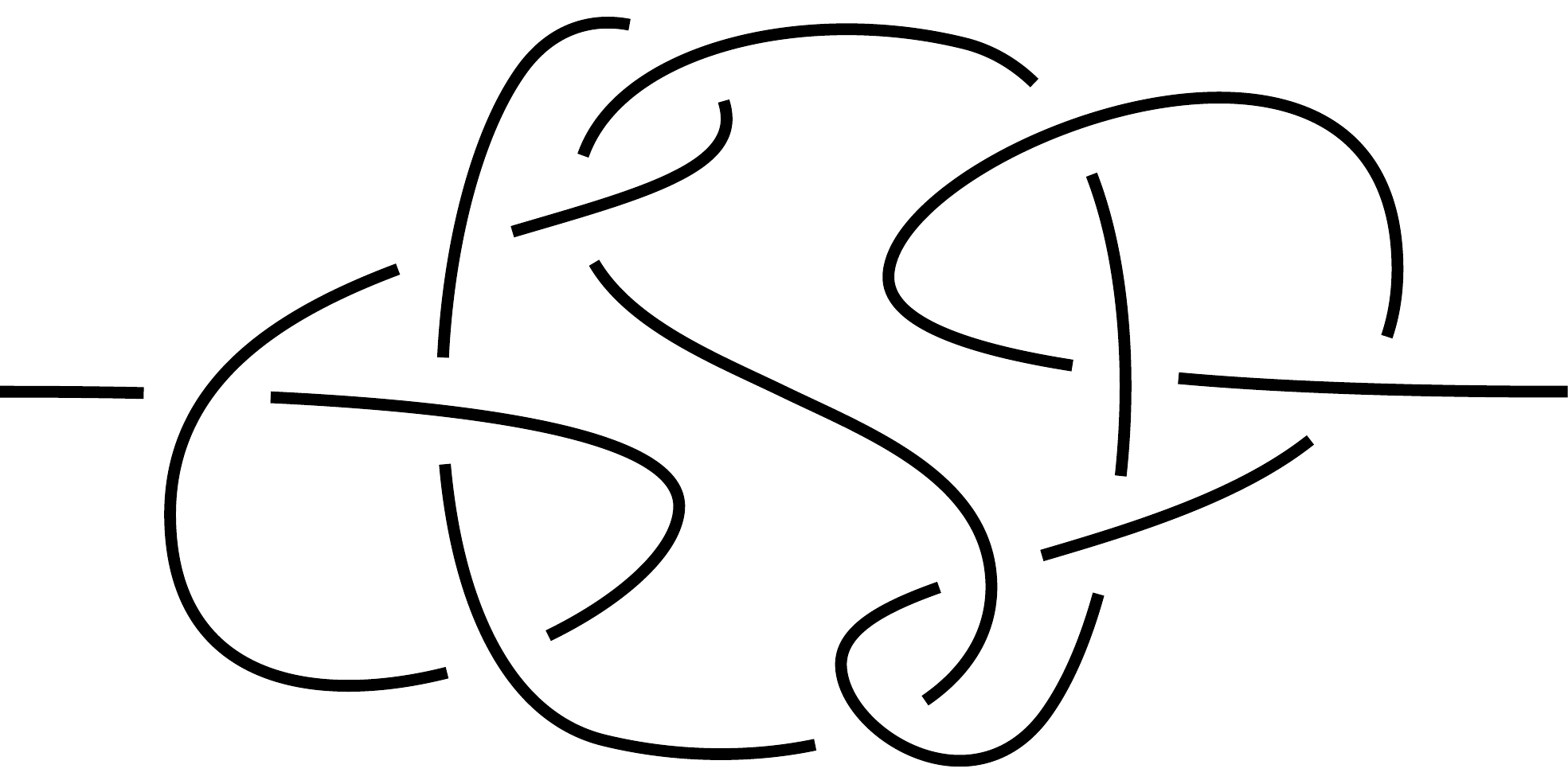}}}&\multirow{4}{*}{\scalebox{.2}{\includegraphics{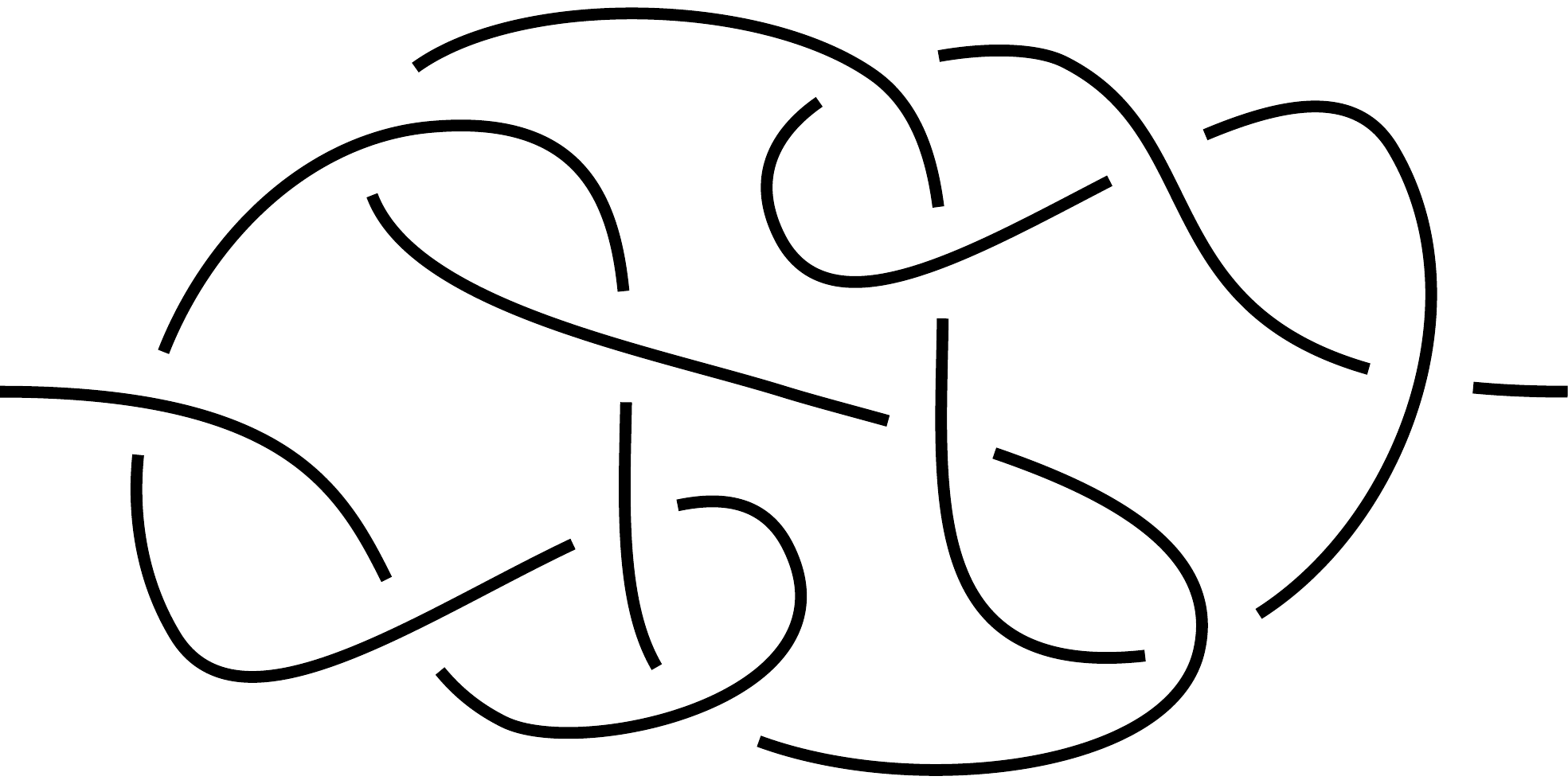}}}&
 \multirow{4}{*}{\scalebox{.2}{\includegraphics{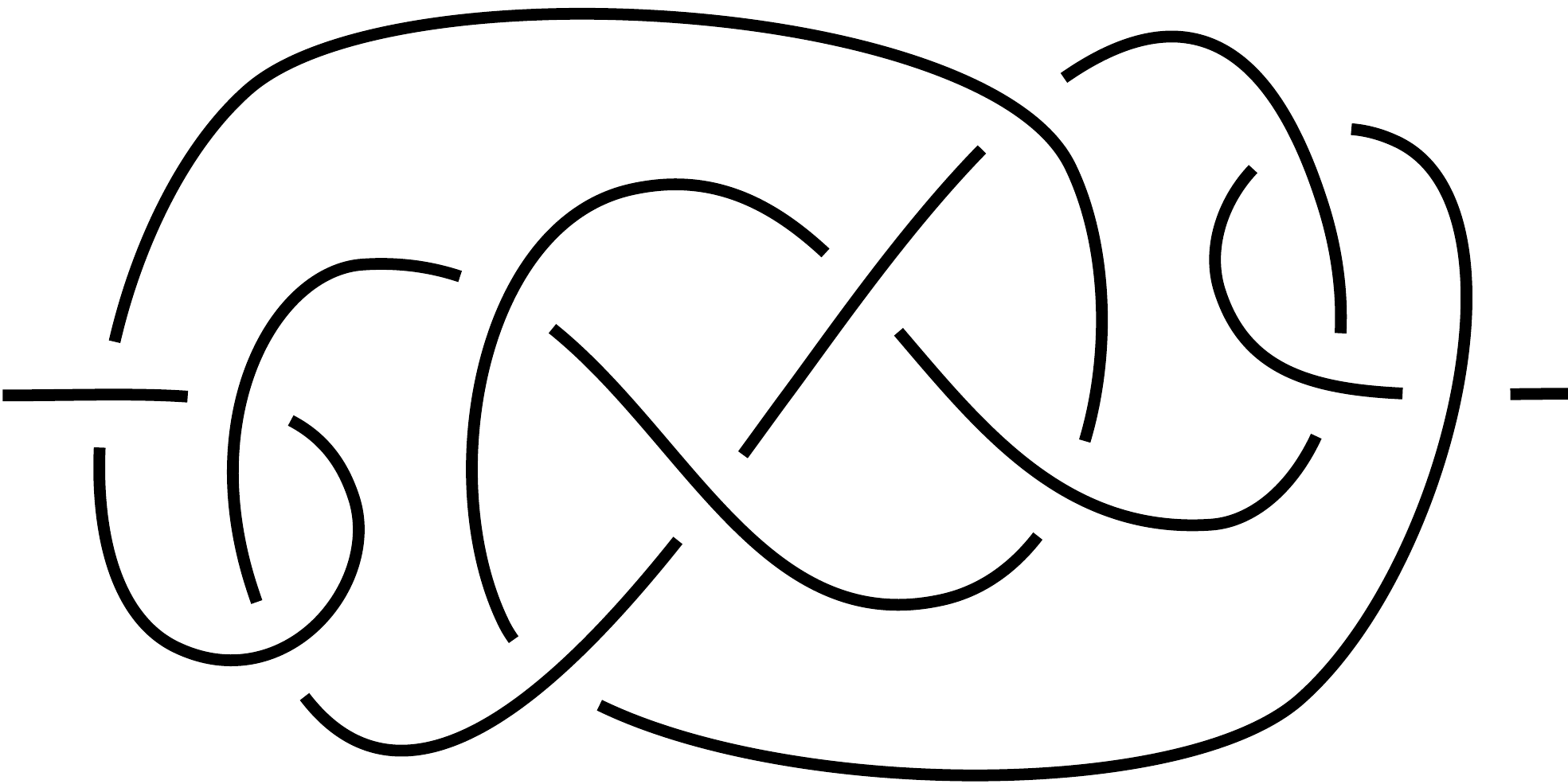}}}\\*
 &&\\*
 &&\\*
 &&\\*
 &&\\\hline\hline

 $12a_{273}$&$12a_{341}$&$12a_{435}$\\*
 \multirow{4}{*}{\scalebox{.2}{\includegraphics{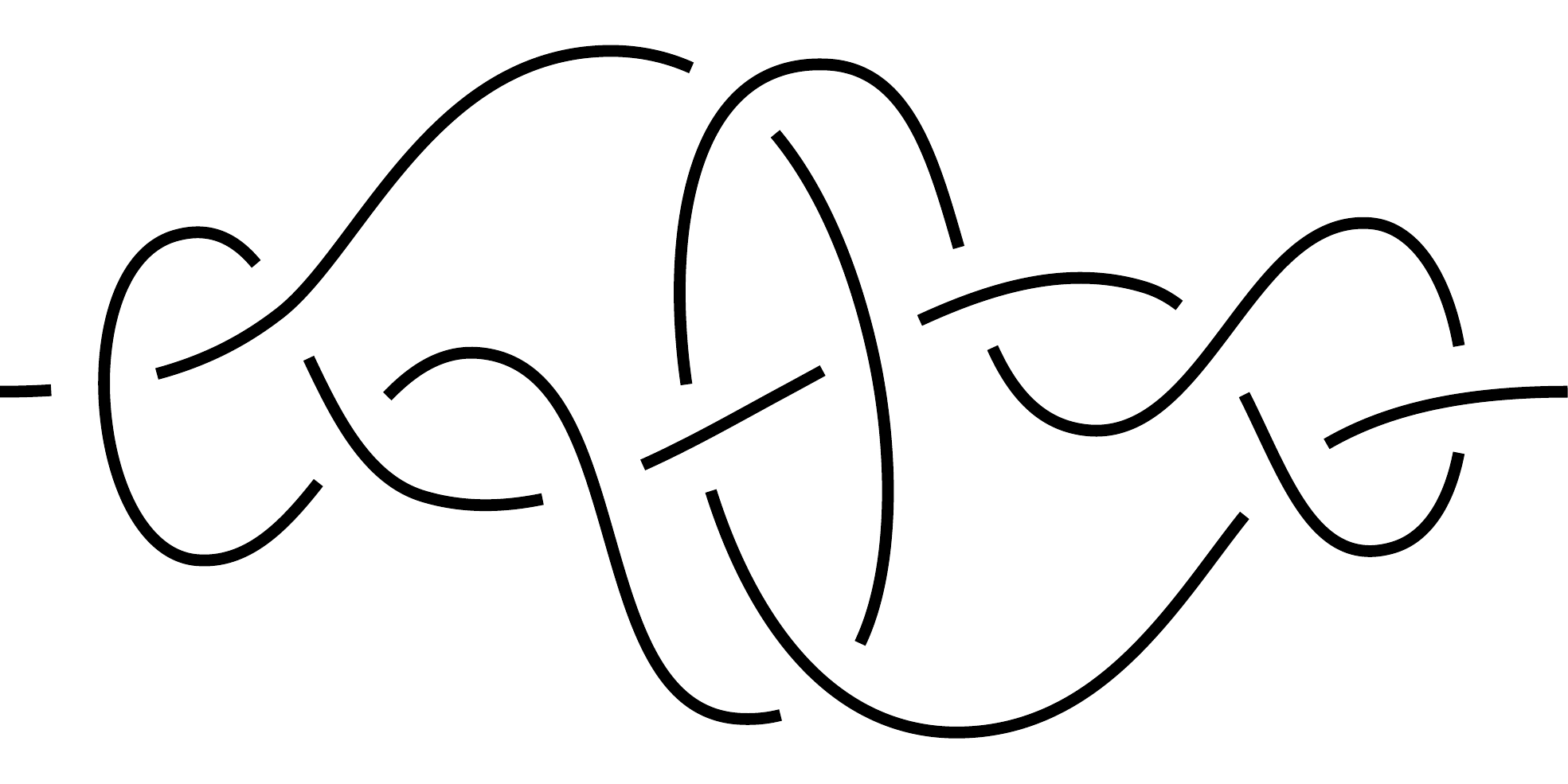}}}&\multirow{4}{*}{\scalebox{.2}{\includegraphics{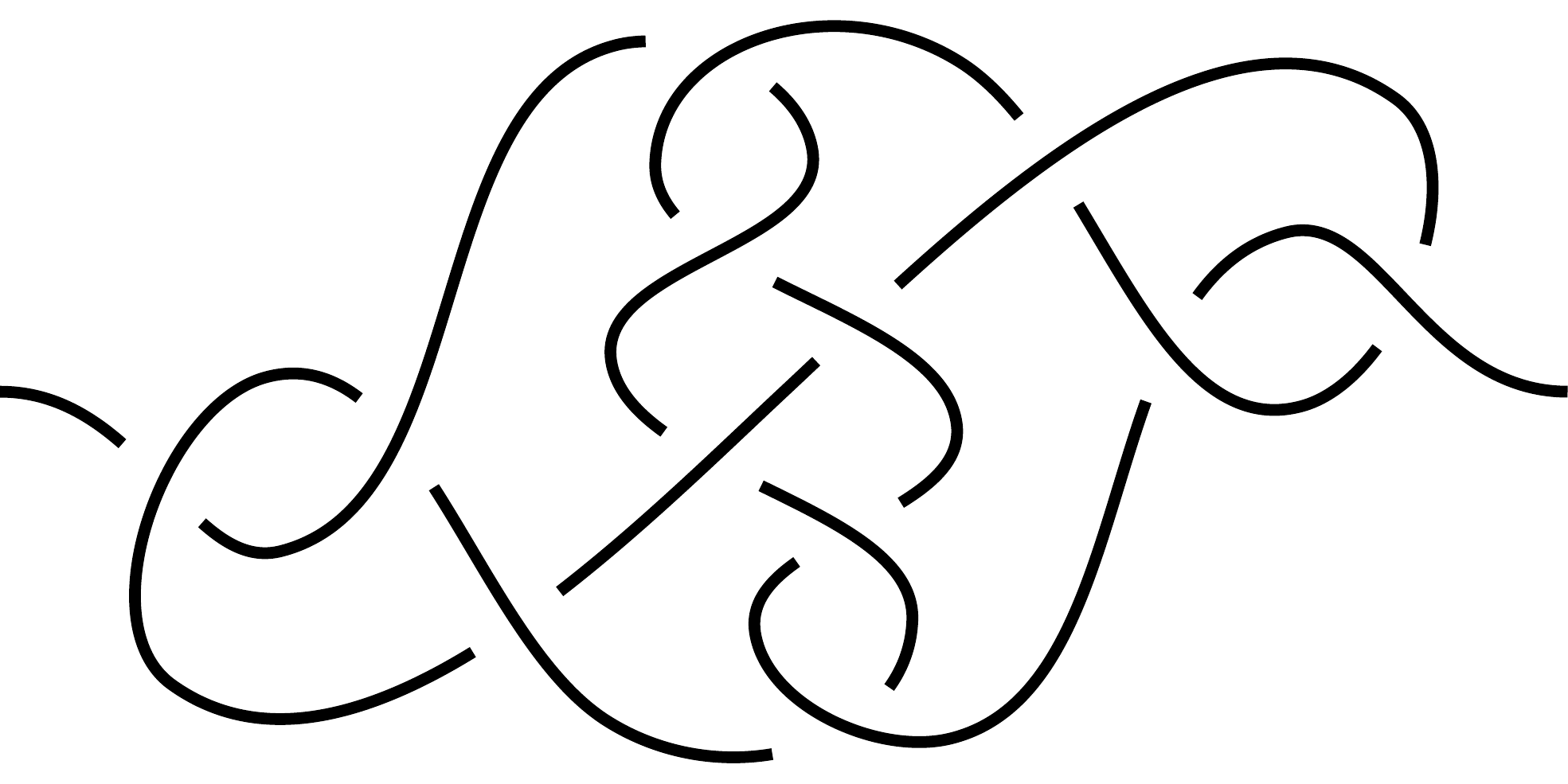}}}&
 \multirow{4}{*}{\scalebox{.2}{\includegraphics{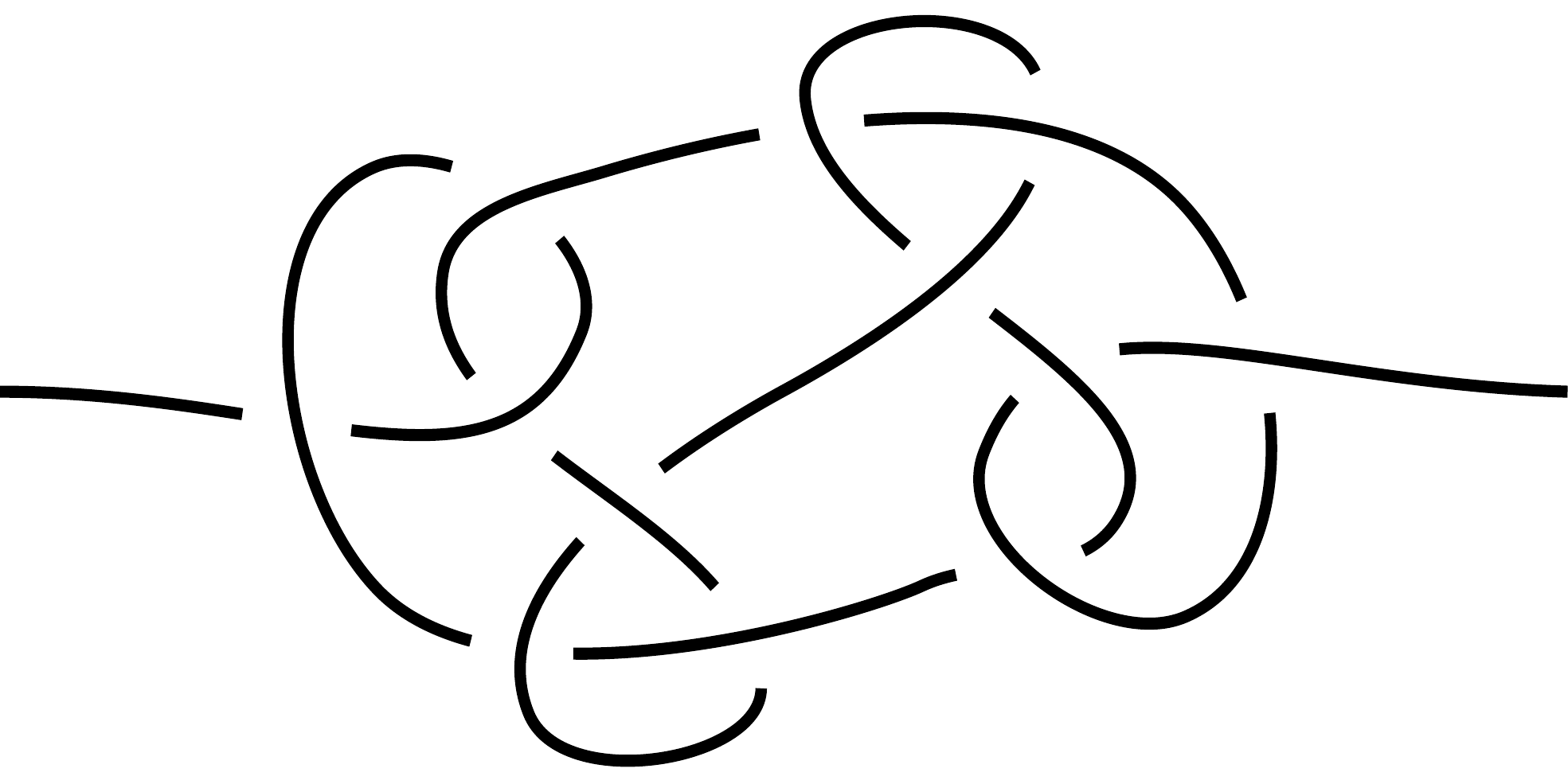}}}\\*
 &&\\*
 &&\\*
 &&\\*
 &&\\\hline\hline  

 $12a_{458}$&$12a_{462}$&$12a_{465}$\\*
 \multirow{4}{*}{\scalebox{.2}{\includegraphics{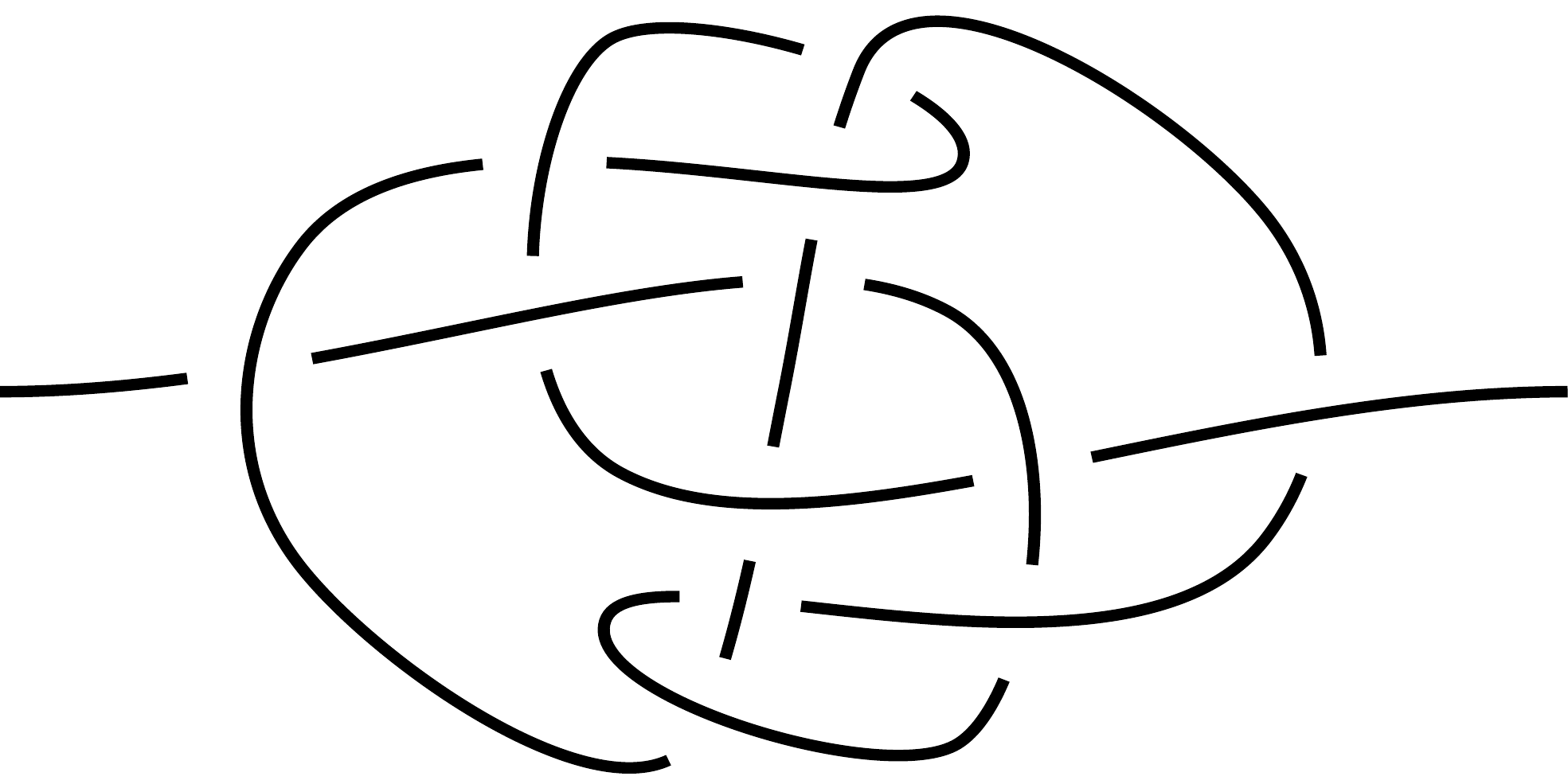}}}&\multirow{4}{*}{\scalebox{.2}{\includegraphics{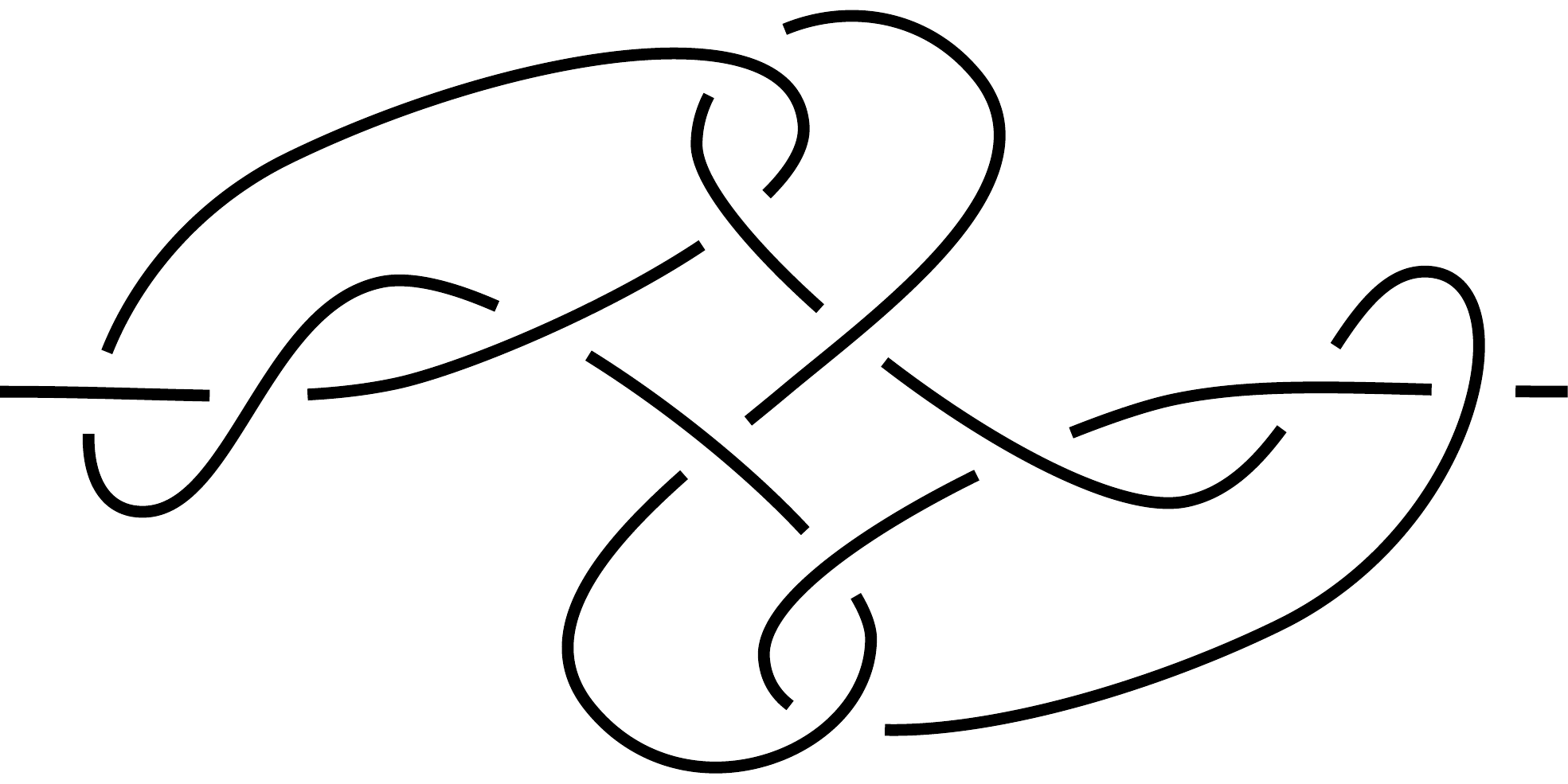}}}&
 \multirow{4}{*}{\scalebox{.2}{\includegraphics{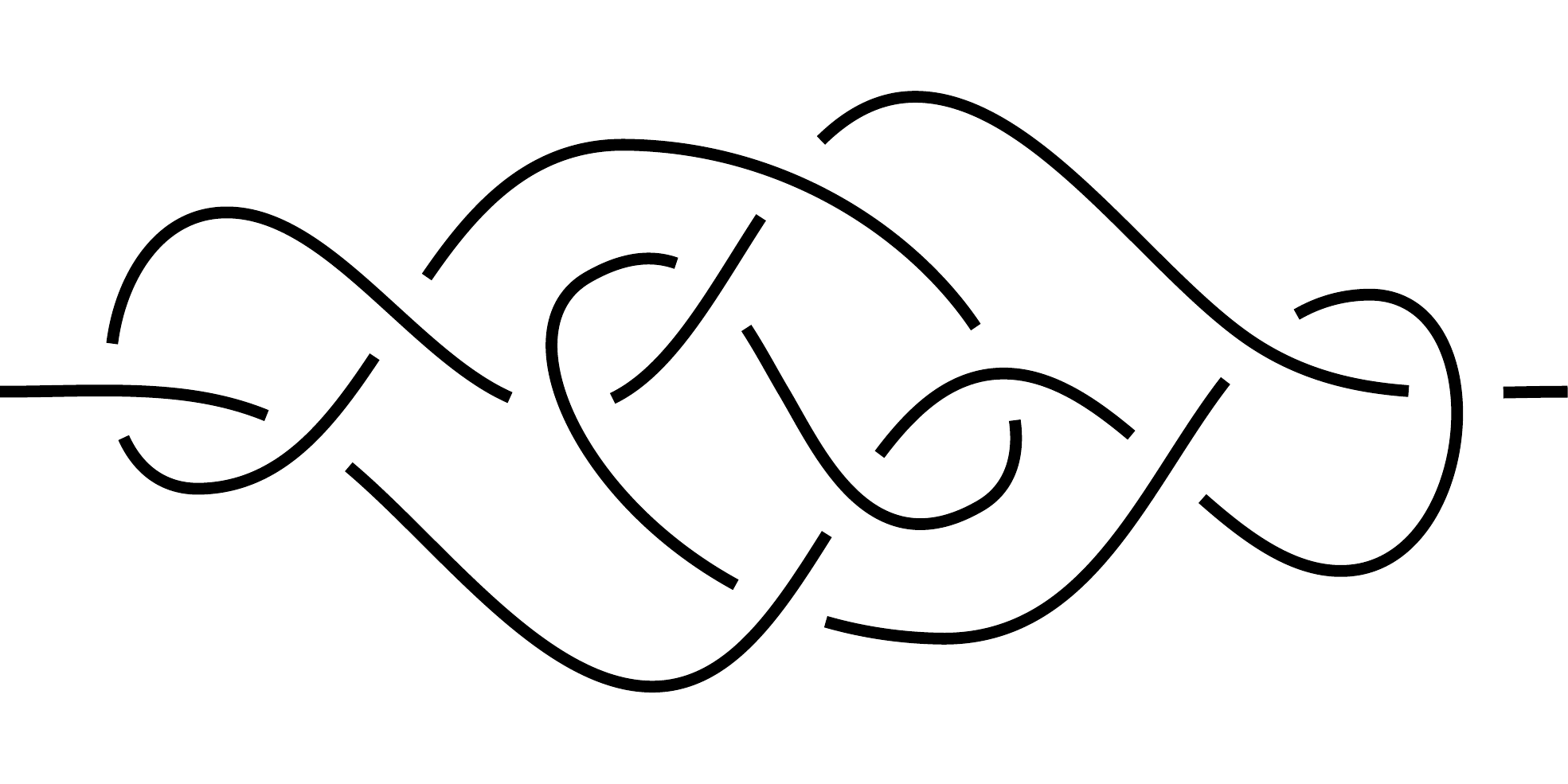}}}\\*
 &&\\*
 &&\\*
 &&\\*
 &&\\\hline\hline 

 $12a_{471}$&$12a_{477}$&$12a_{499}$\\*
 \multirow{4}{*}{\scalebox{.2}{\includegraphics{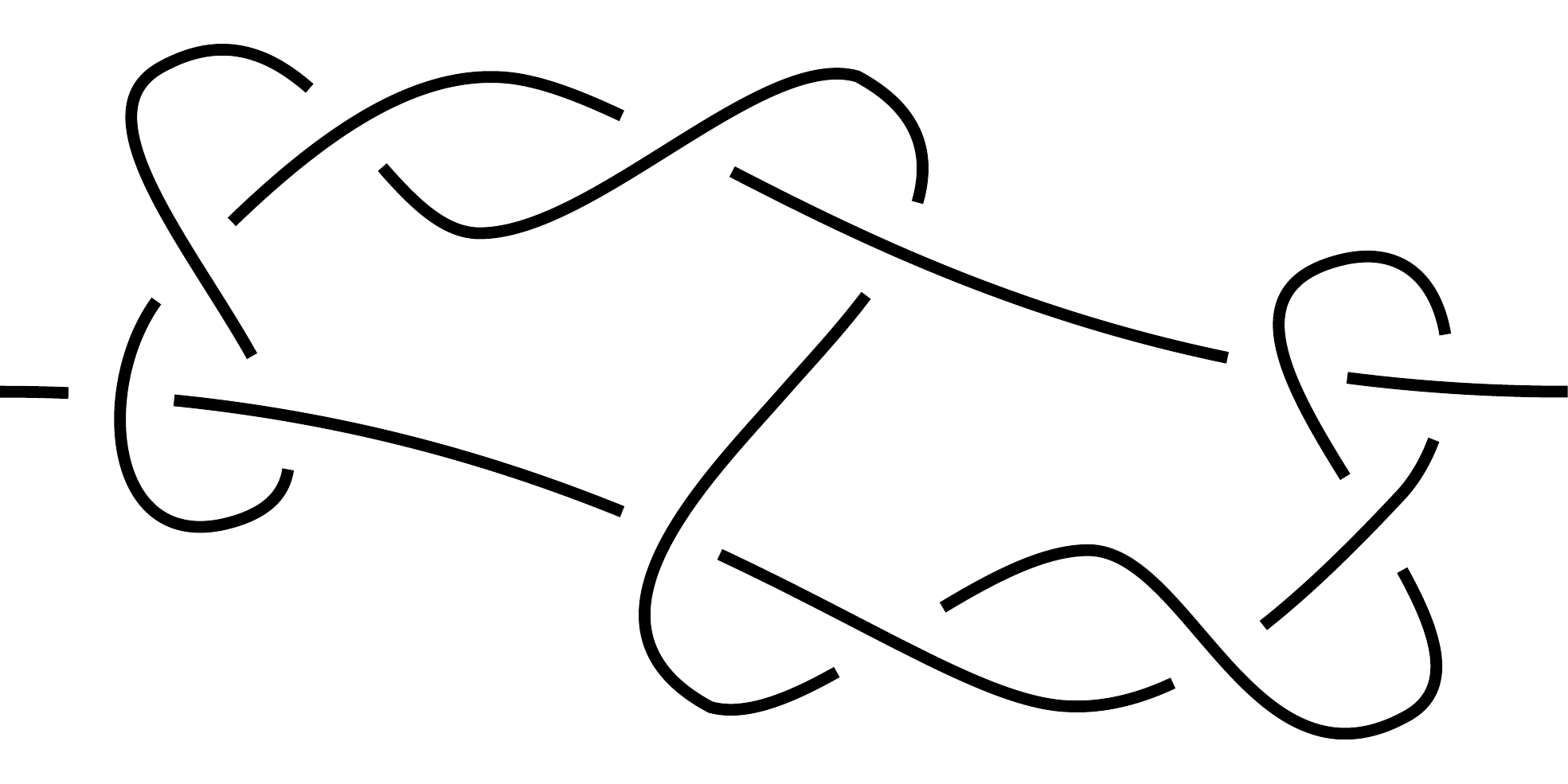}}}&\multirow{4}{*}{\scalebox{.2}{\includegraphics{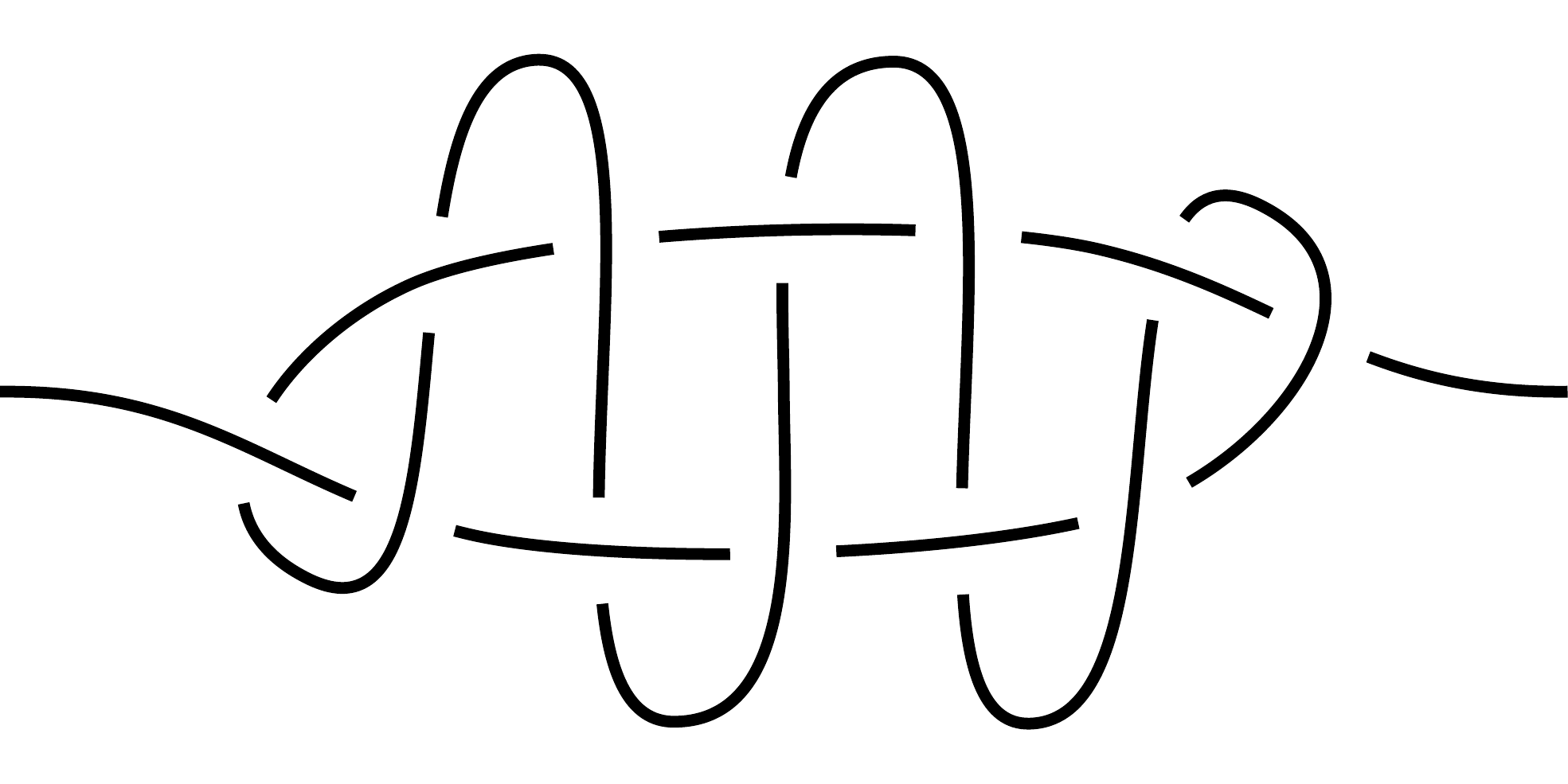}}}&
 \multirow{4}{*}{\scalebox{.2}{\includegraphics{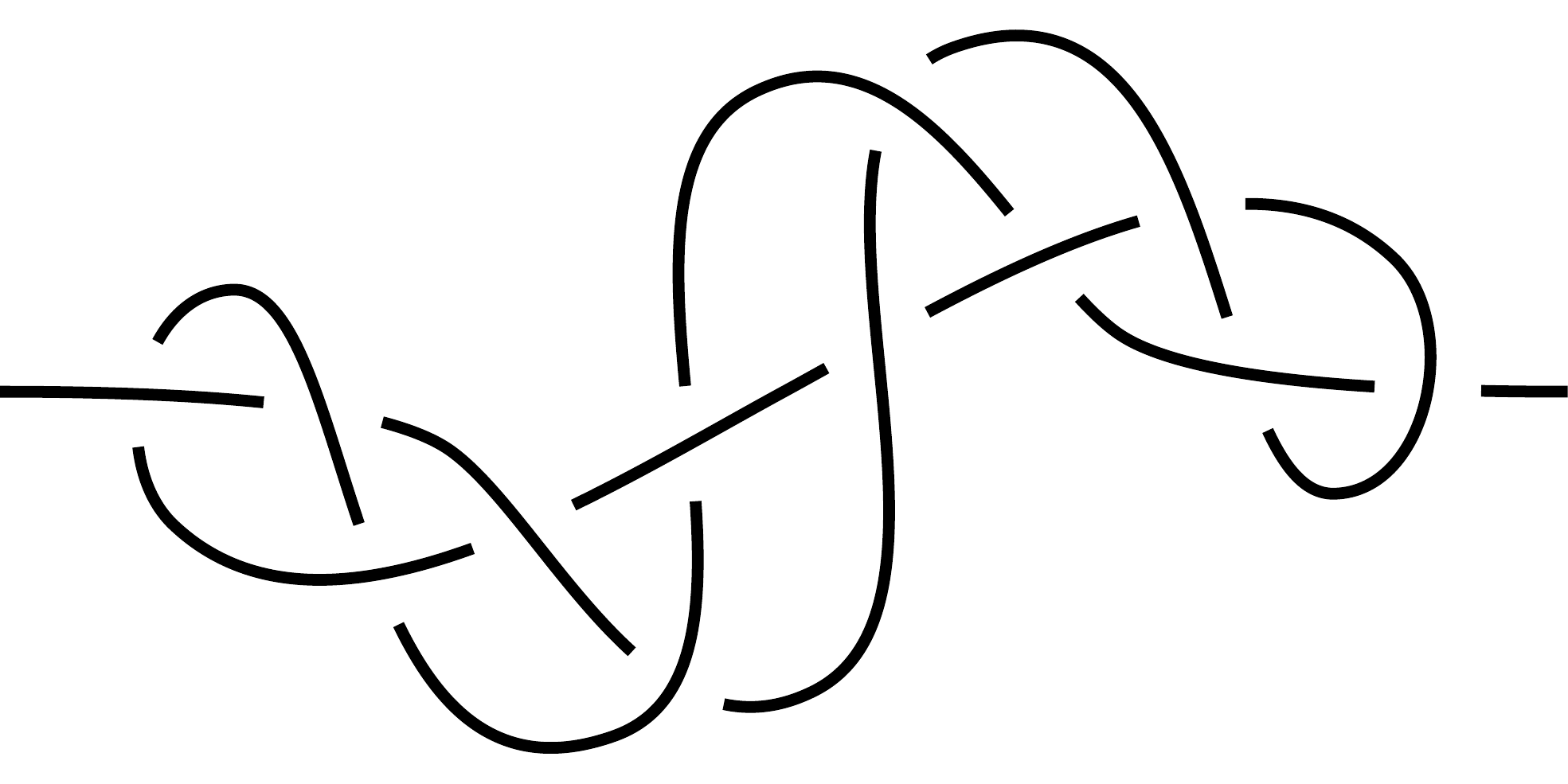}}}\\*
 &&\\*
 &&\\*
 &&\\*
 &&\\\hline\hline 

 $12a_{506}$&$12a_{510}$&$12a_{627}$\\*
 \multirow{4}{*}{\scalebox{.2}{\includegraphics{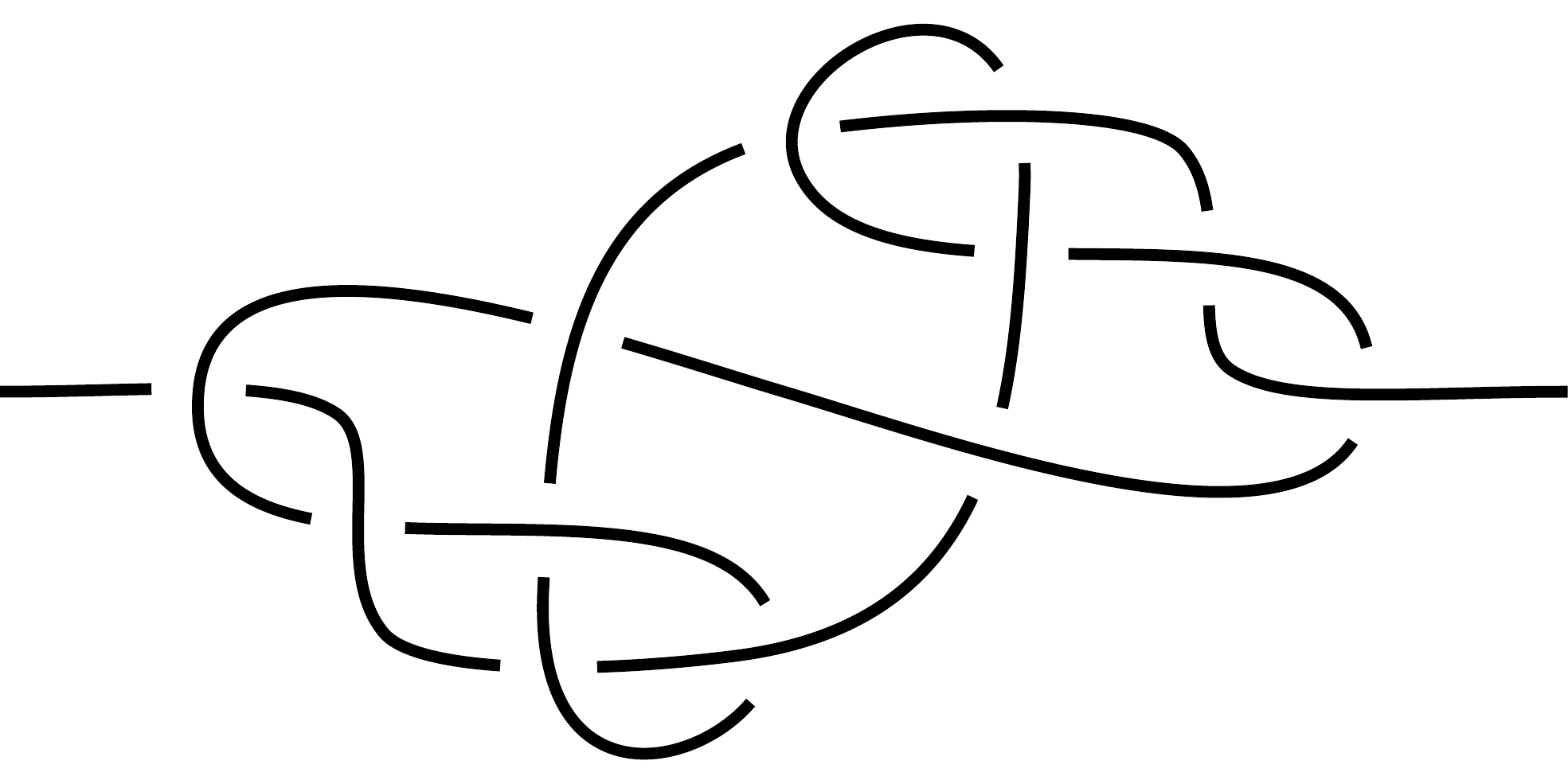}}}&\multirow{4}{*}{\scalebox{.2}{\includegraphics{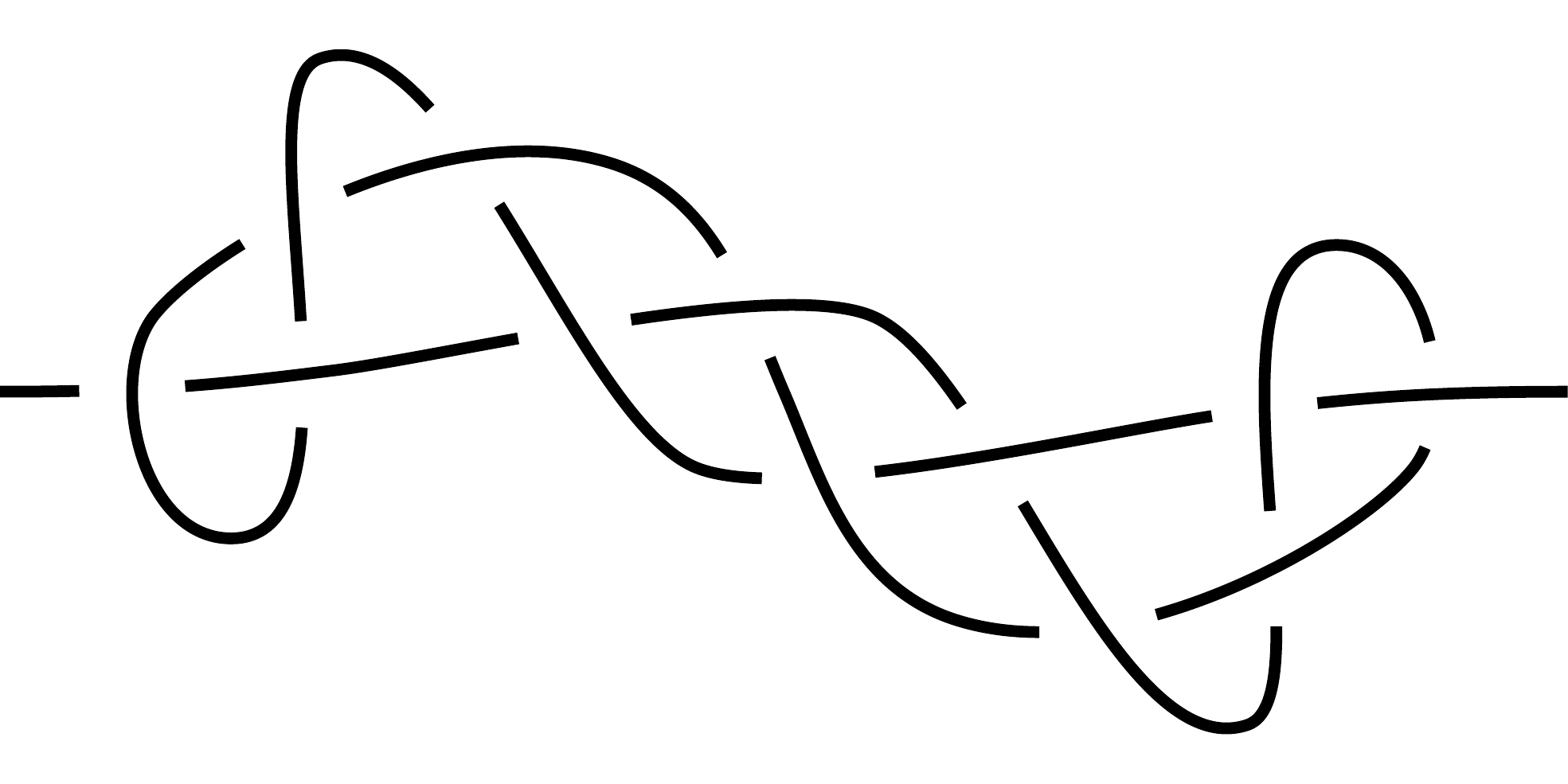}}}&
 \multirow{4}{*}{\scalebox{.2}{\includegraphics{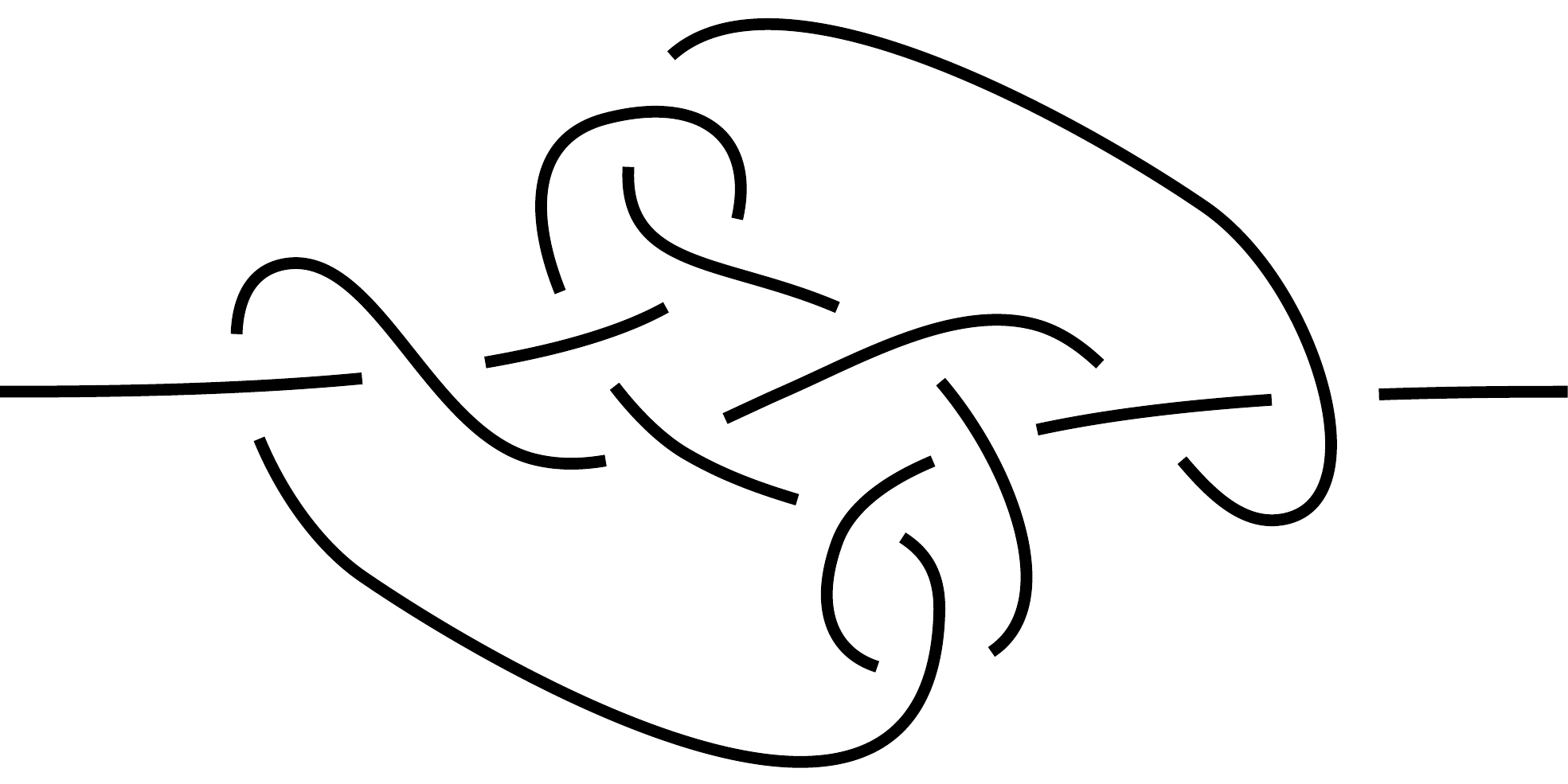}}}\\*
 &&\\*
 &&\\*
 &&\\*
 &&\\\hline\hline 

 $12a_{819}$&$12a_{821}$&$12a_{868}$\\*
 \multirow{4}{*}{\scalebox{.2}{\includegraphics{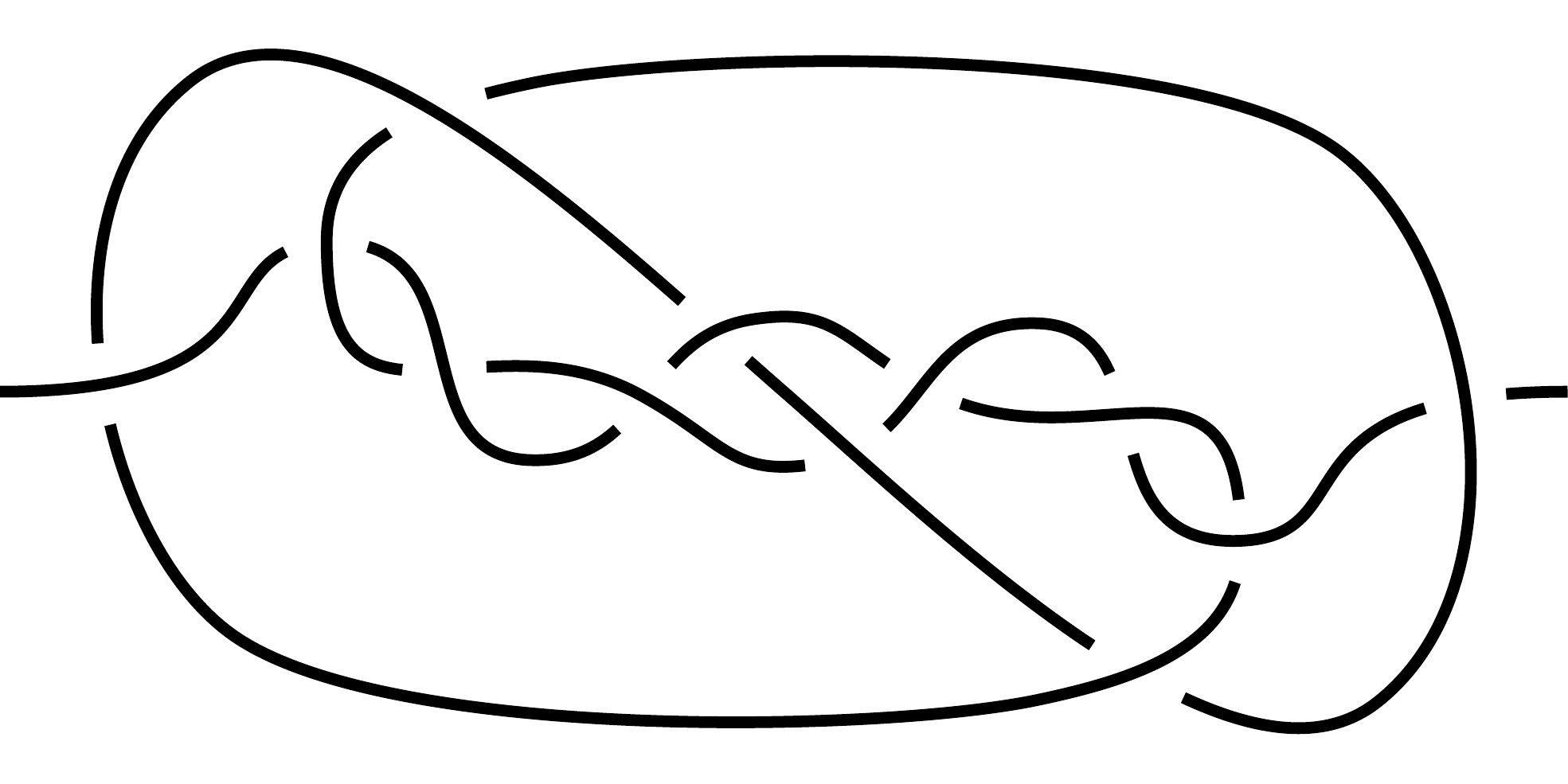}}}&\multirow{4}{*}{\scalebox{.2}{\includegraphics{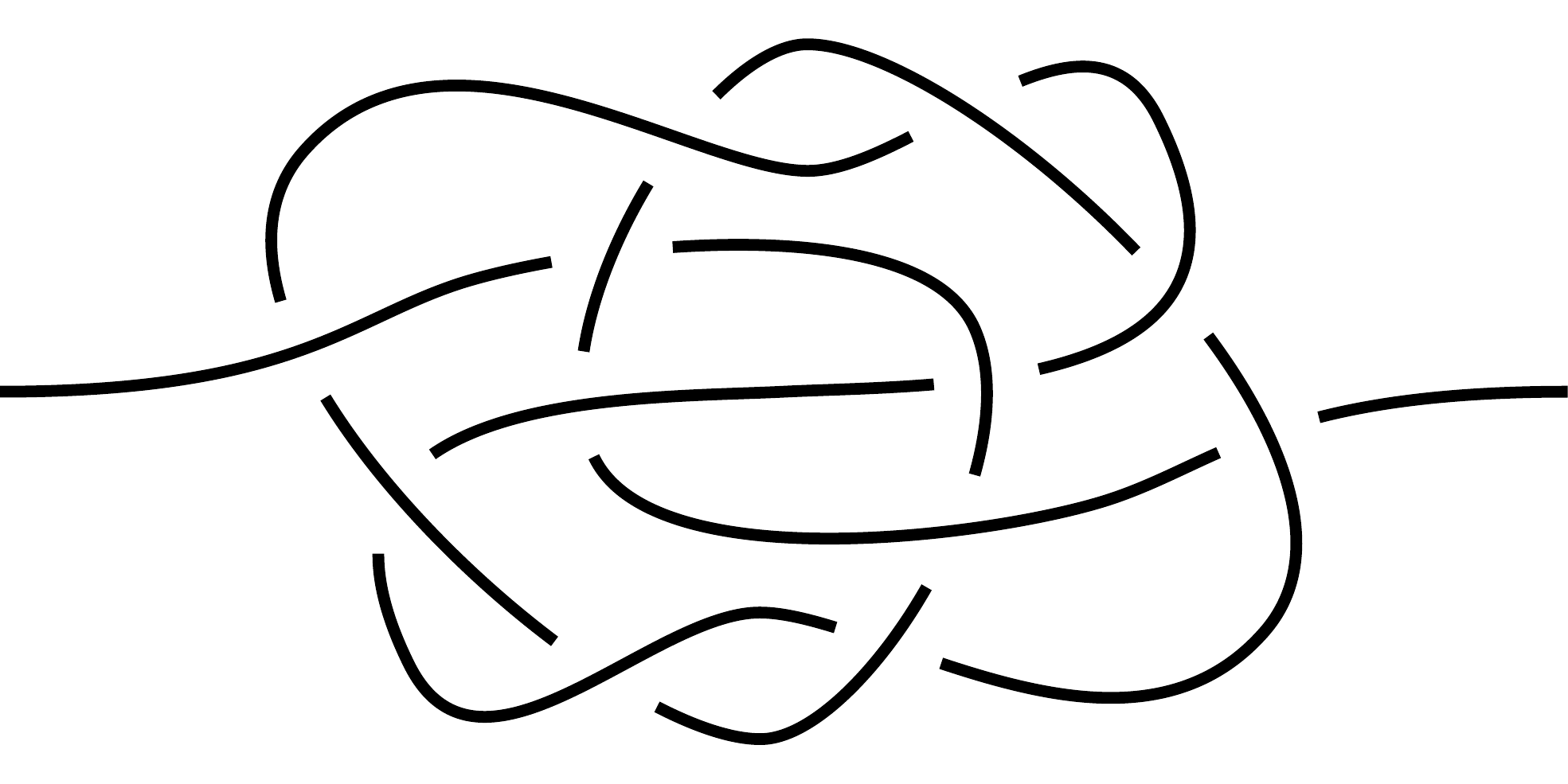}}}&
 \multirow{4}{*}{\scalebox{.2}{\includegraphics{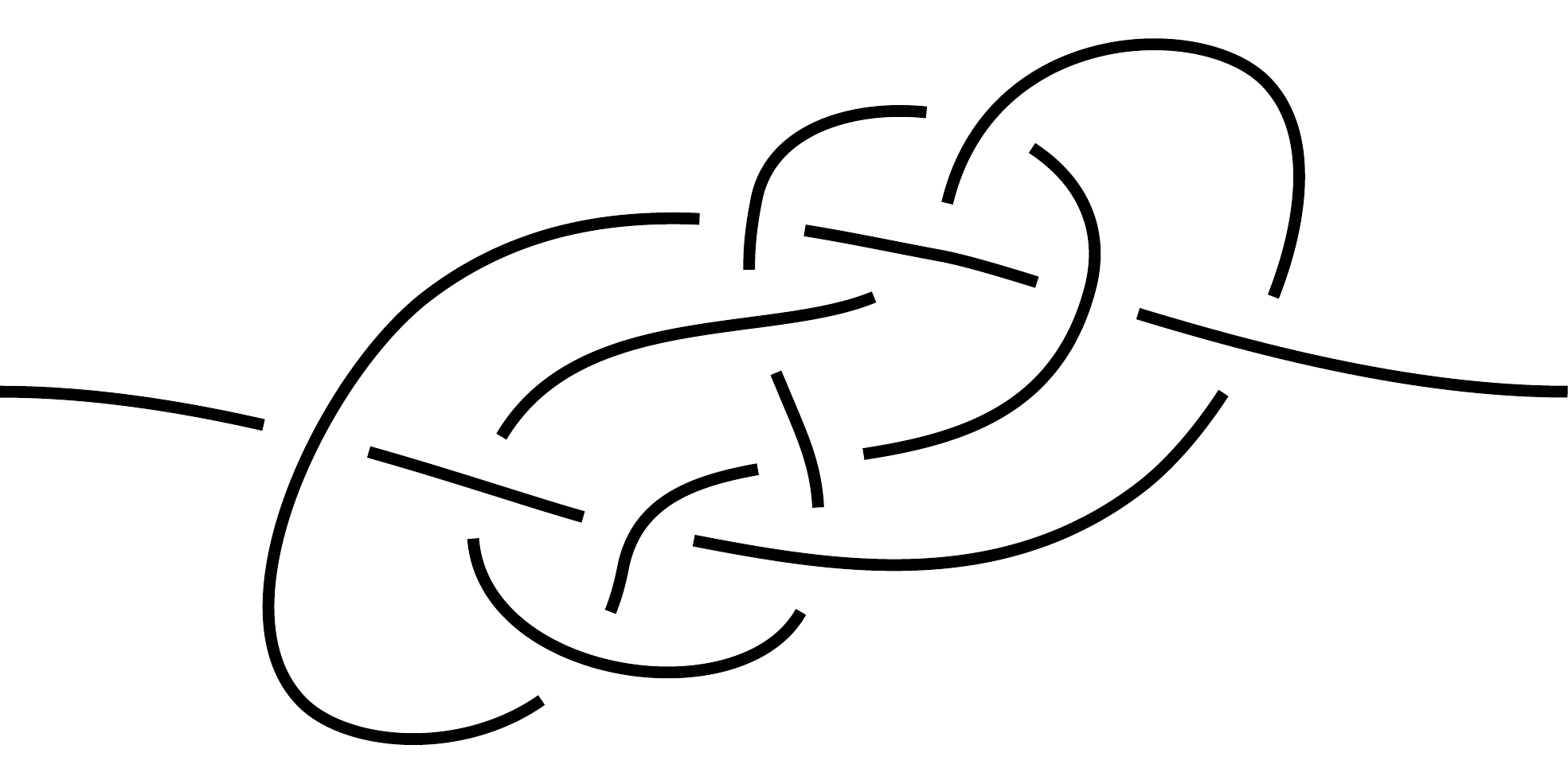}}}\\*
 &&\\*
 &&\\*
 &&\\*
 &&\\\hline\hline 

 $12a_{887}$&$12a_{890}$&$12a_{906}$\\*
 \multirow{4}{*}{\scalebox{.2}{\includegraphics{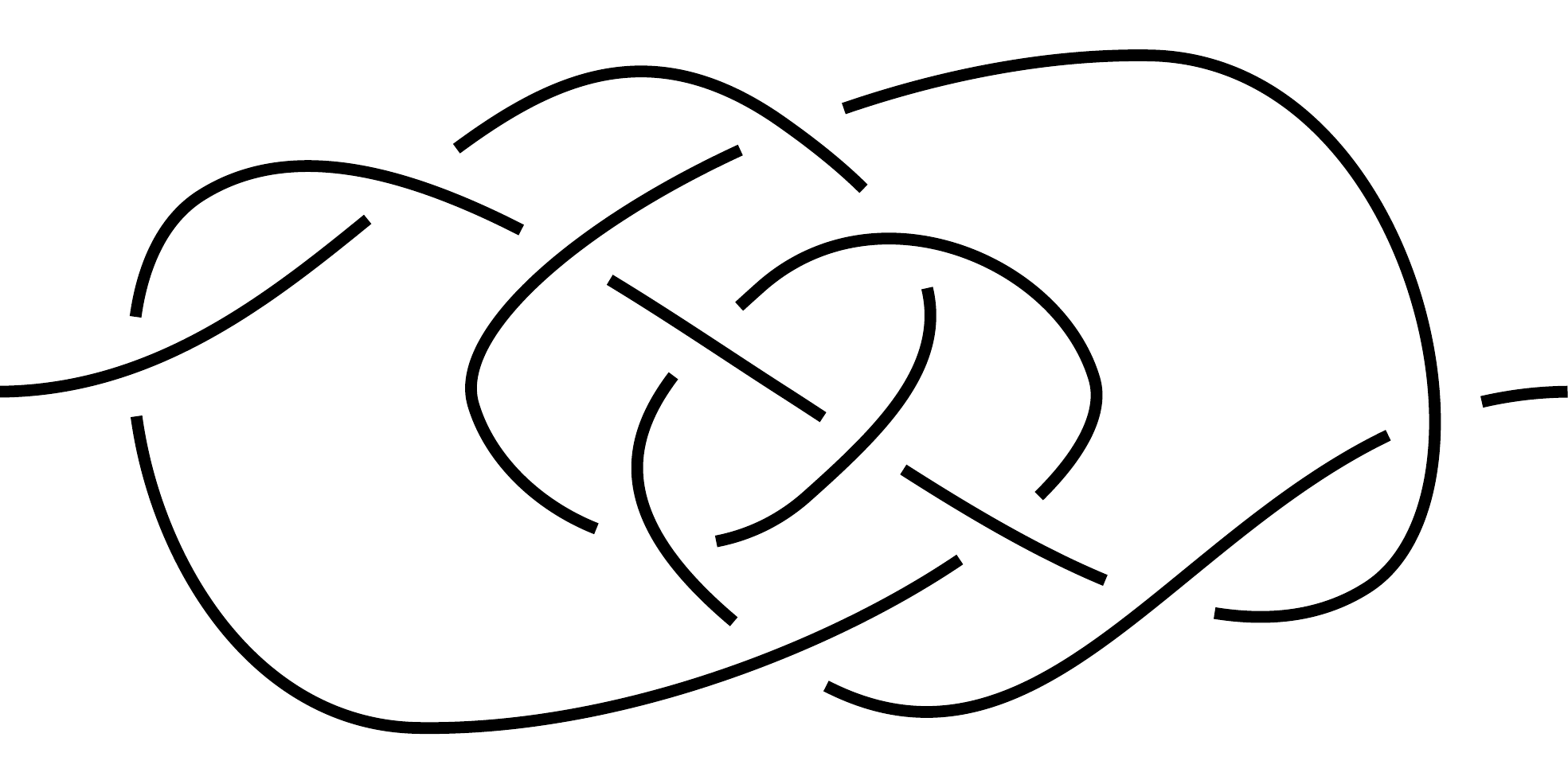}}}&\multirow{4}{*}{\scalebox{.2}{\includegraphics{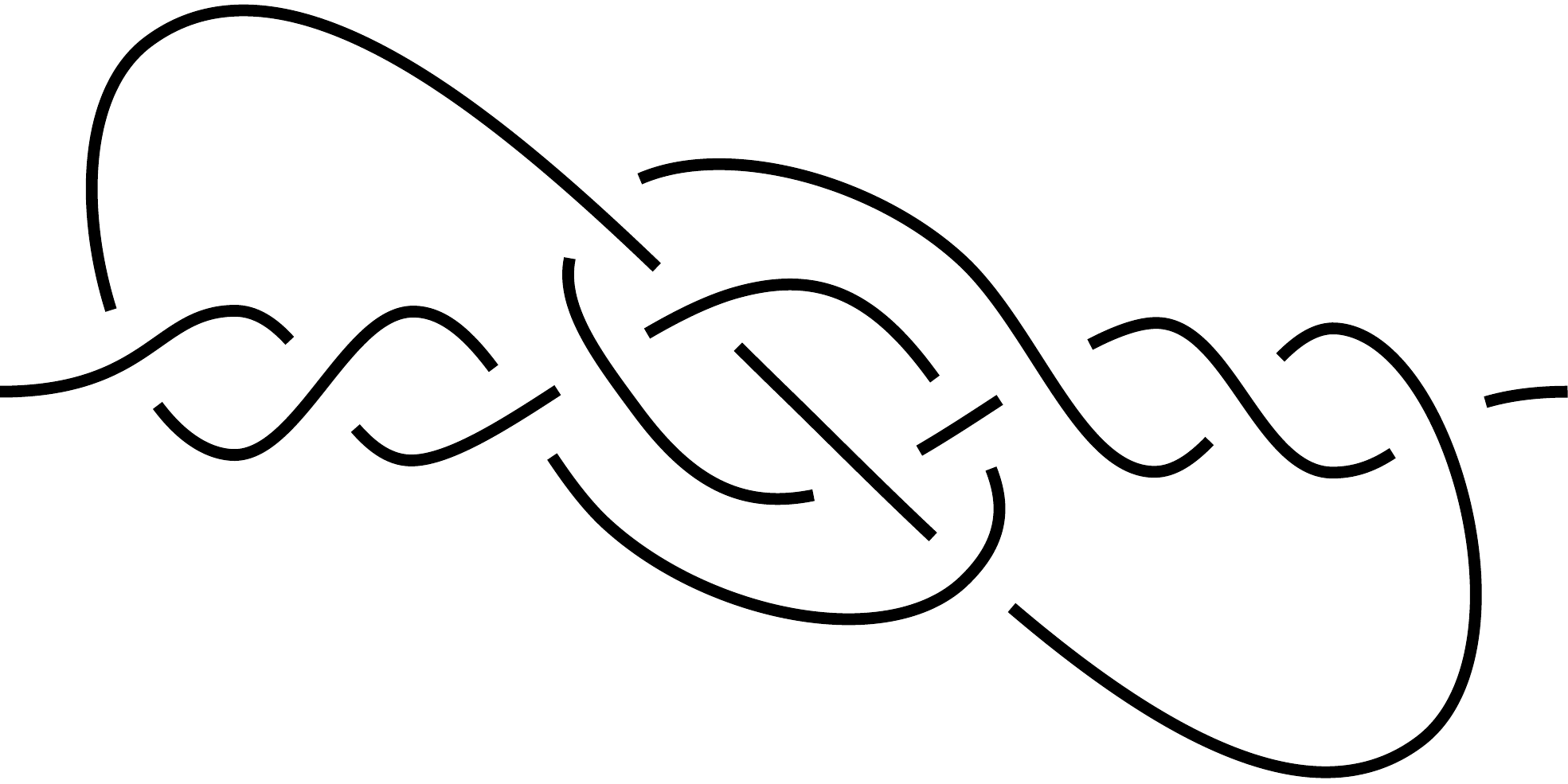}}}&
 \multirow{4}{*}{\scalebox{.2}{\includegraphics{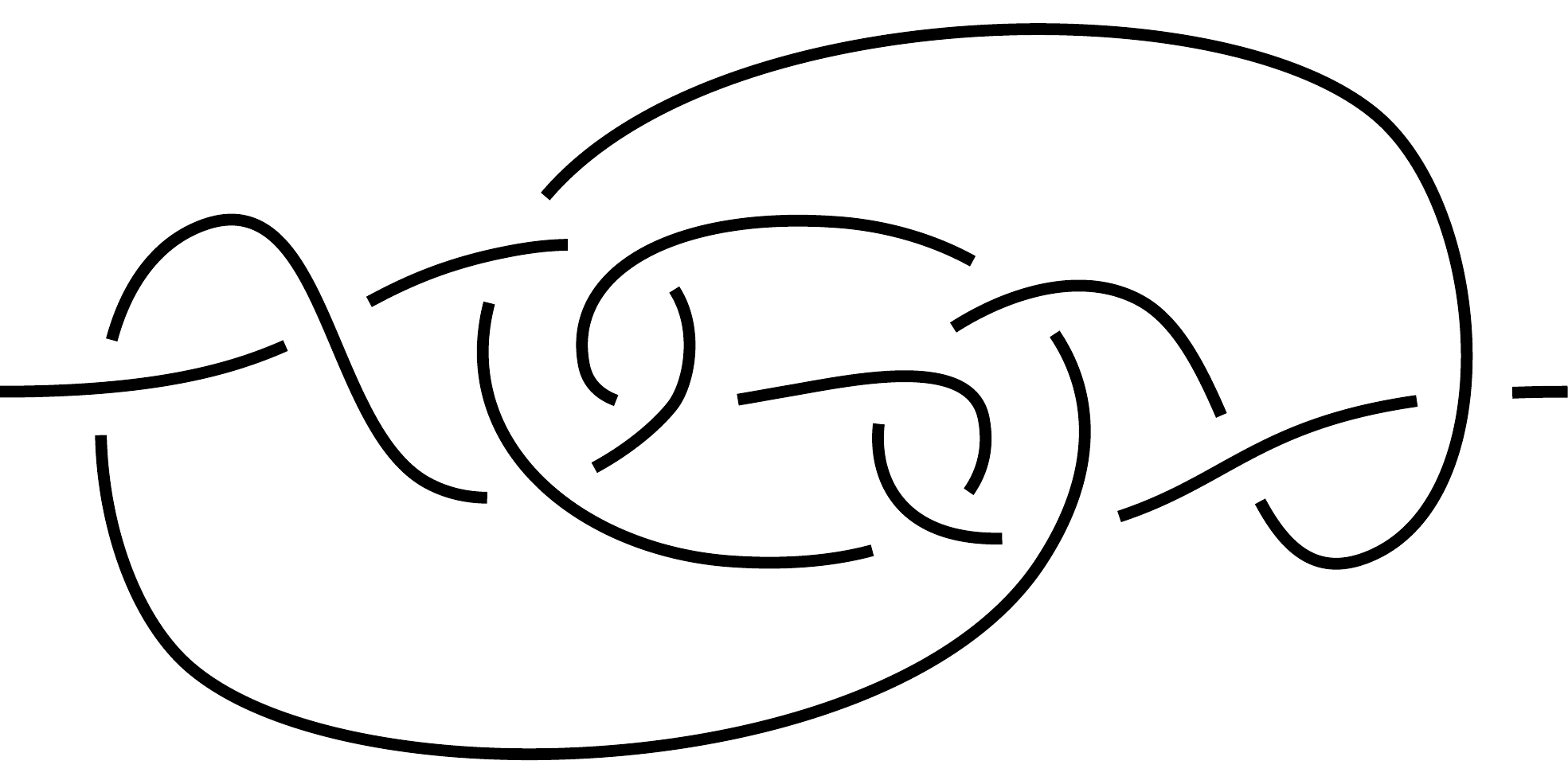}}}\\*
 &&\\*
 &&\\*
 &&\\*
 &&\\\hline\hline 

 $12a_{960}$&$12a_{990}$&$12a_{1008}$\\*
 \multirow{4}{*}{\scalebox{.2}{\includegraphics{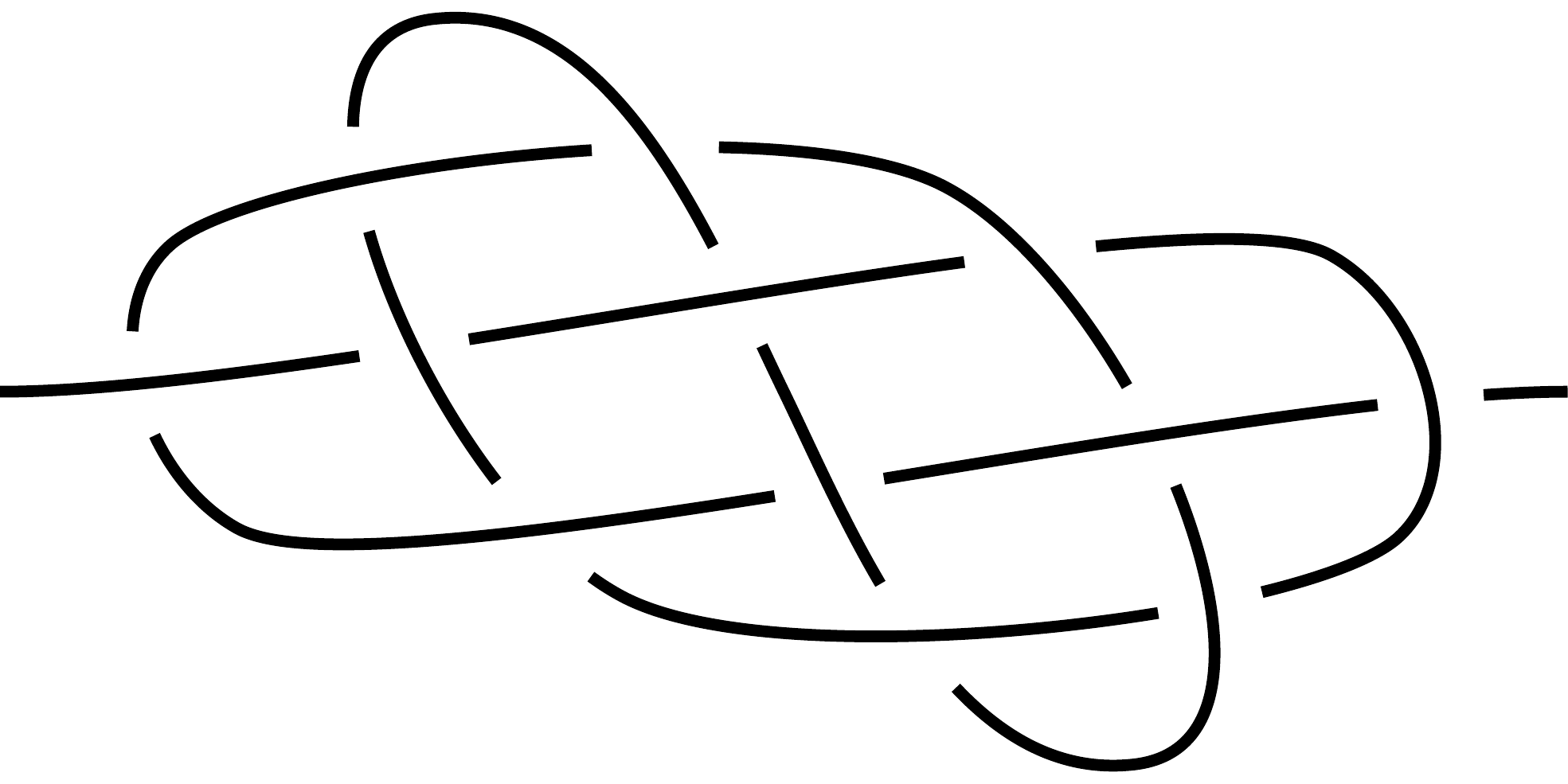}}}&\multirow{4}{*}{\scalebox{.2}{\includegraphics{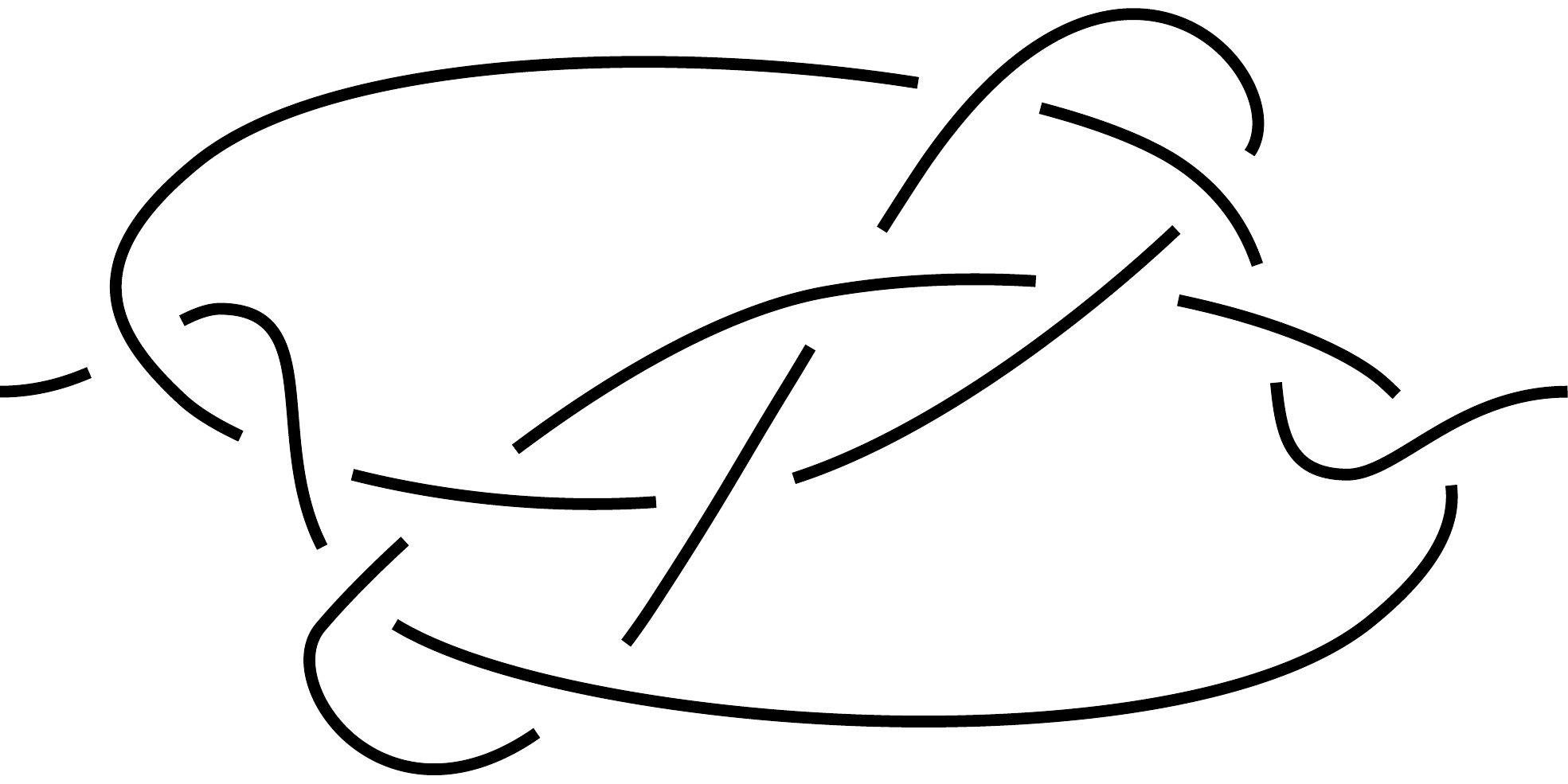}}}&
 \multirow{4}{*}{\scalebox{.2}{\includegraphics{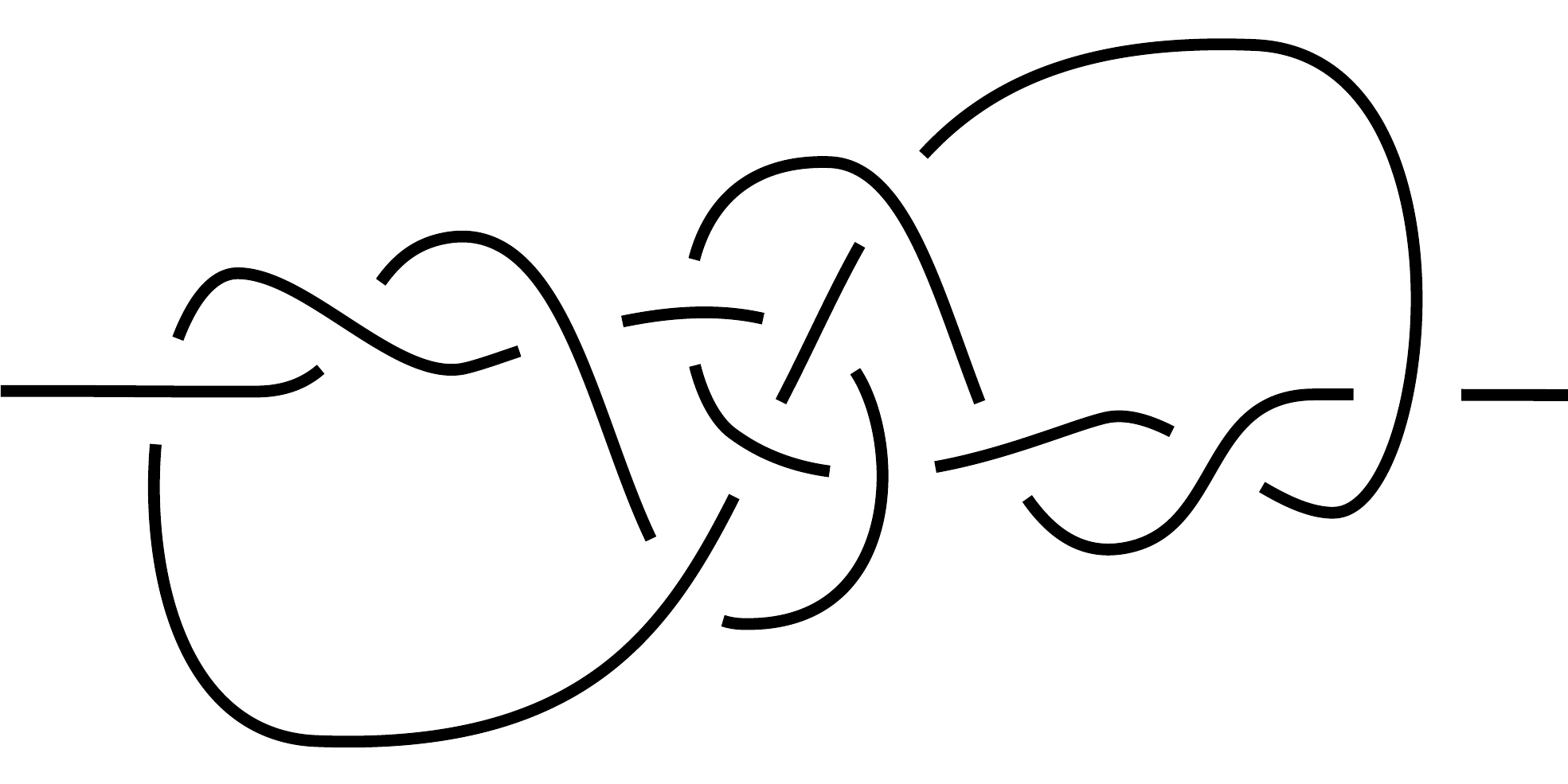}}}\\*
 &&\\*
 &&\\*
 &&\\*
 &&\\\hline\hline 

 $12a_{1019}$&$12a_{1039}$&$12a_{1102}$\\*
 \multirow{4}{*}{\scalebox{.2}{\includegraphics{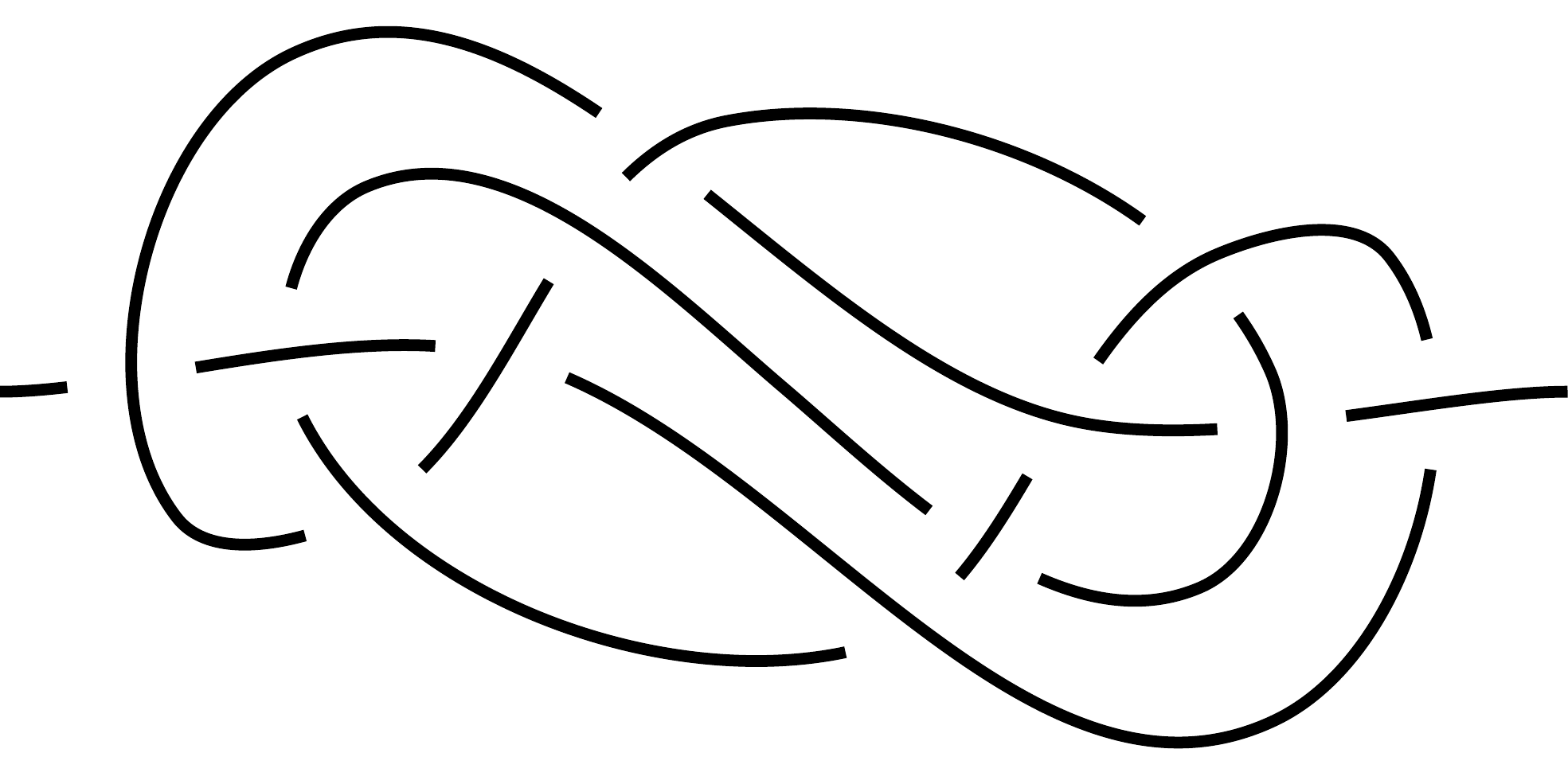}}}&\multirow{4}{*}{\scalebox{.2}{\includegraphics{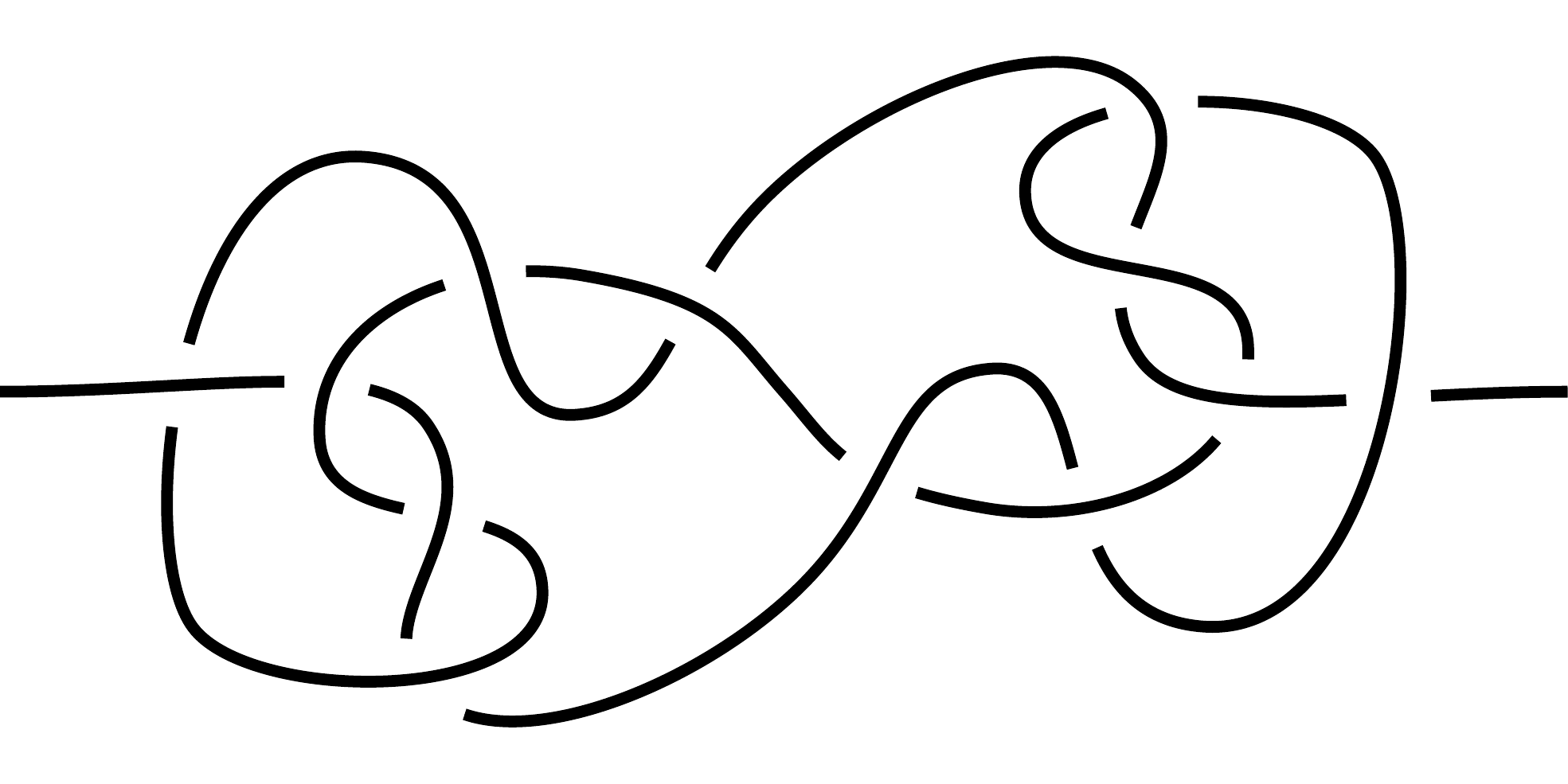}}}&
 \multirow{4}{*}{\scalebox{.2}{\includegraphics{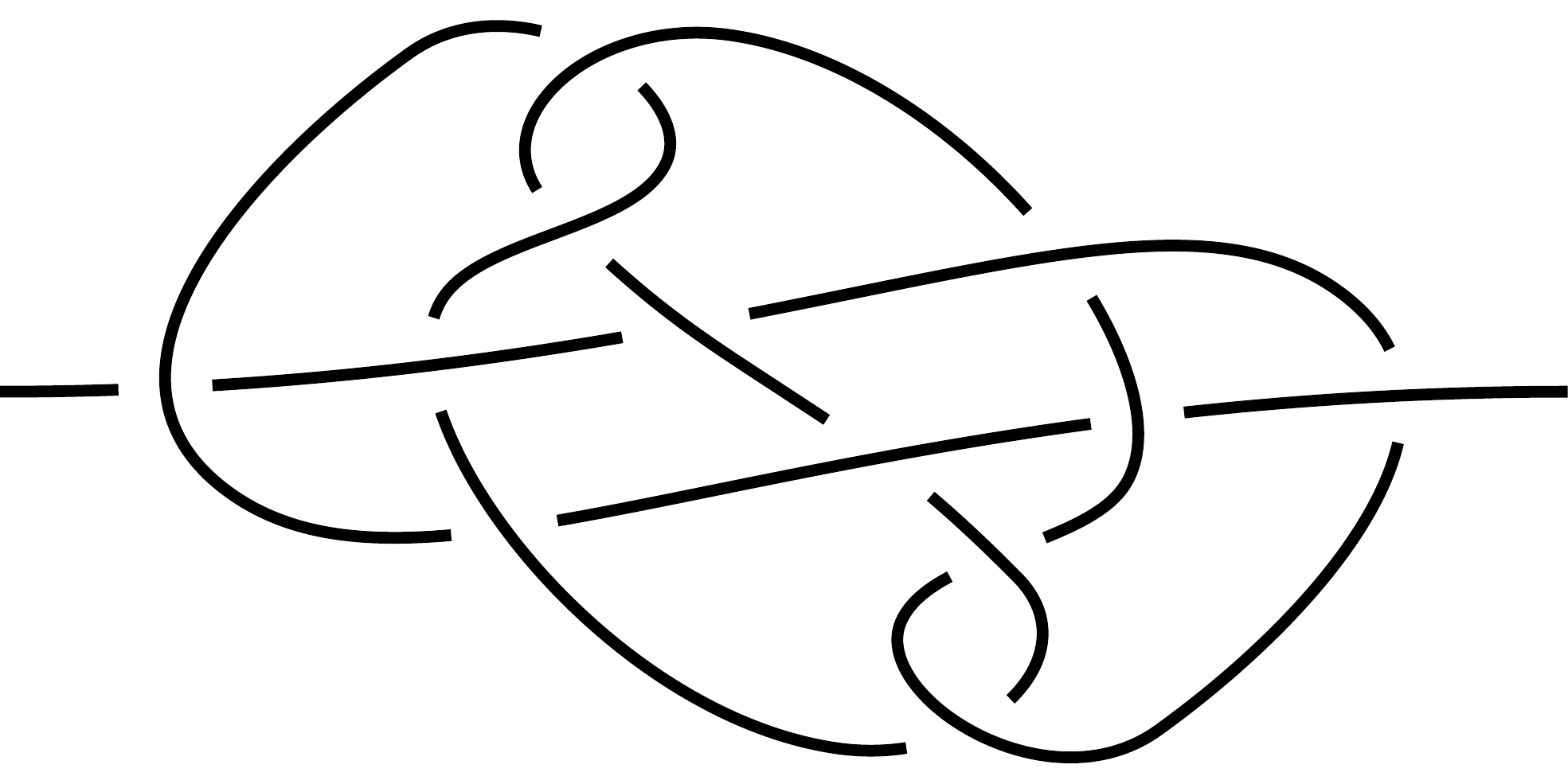}}}\\*
 &&\\*
 &&\\*
 &&\\*
 &&\\\hline\hline 

 $12a_{1105}$&$12a_{1123}$&$12a_{1124}$\\*
 \multirow{4}{*}{\scalebox{.2}{\includegraphics{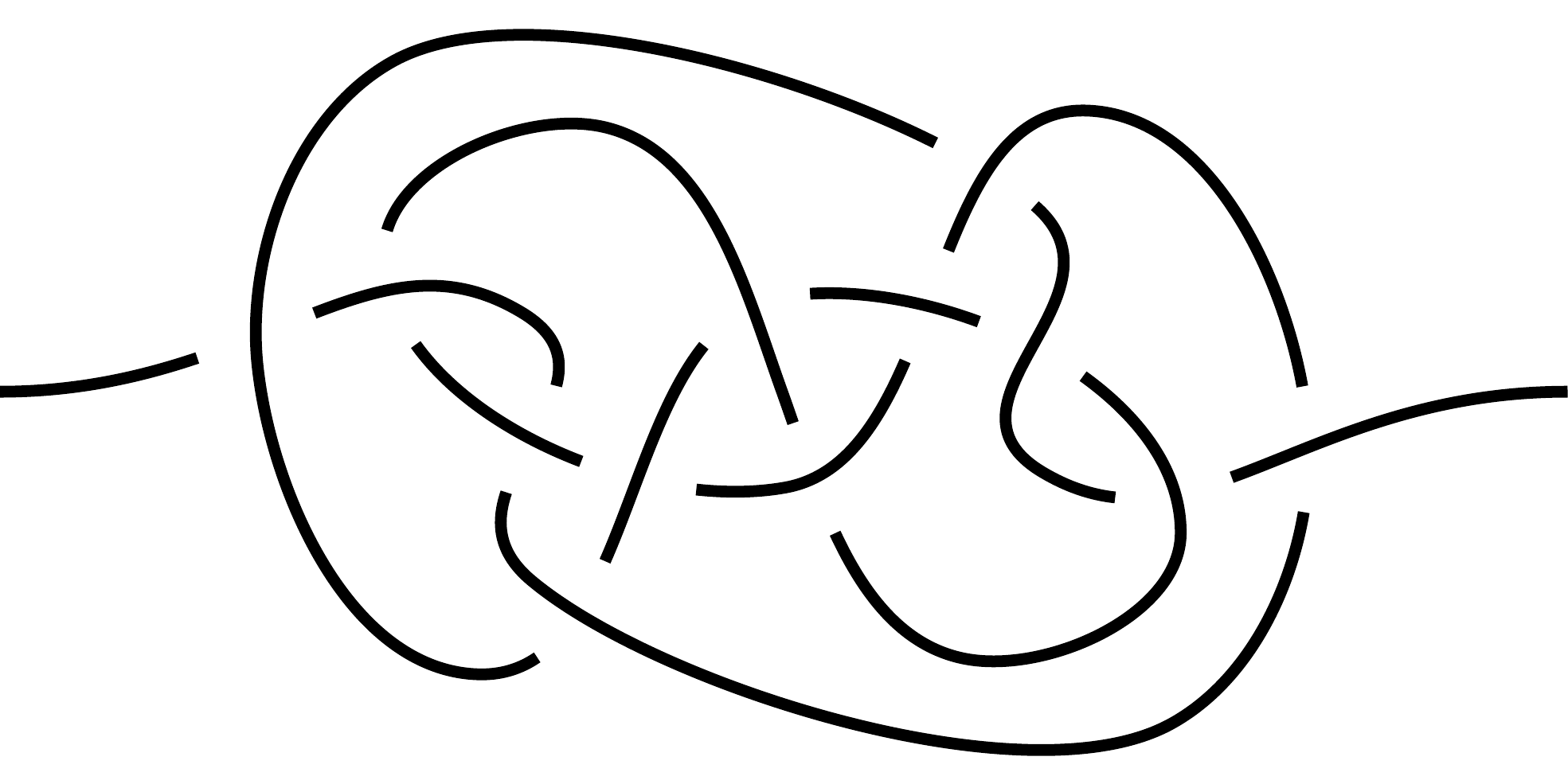}}}&\multirow{4}{*}{\scalebox{.2}{\includegraphics{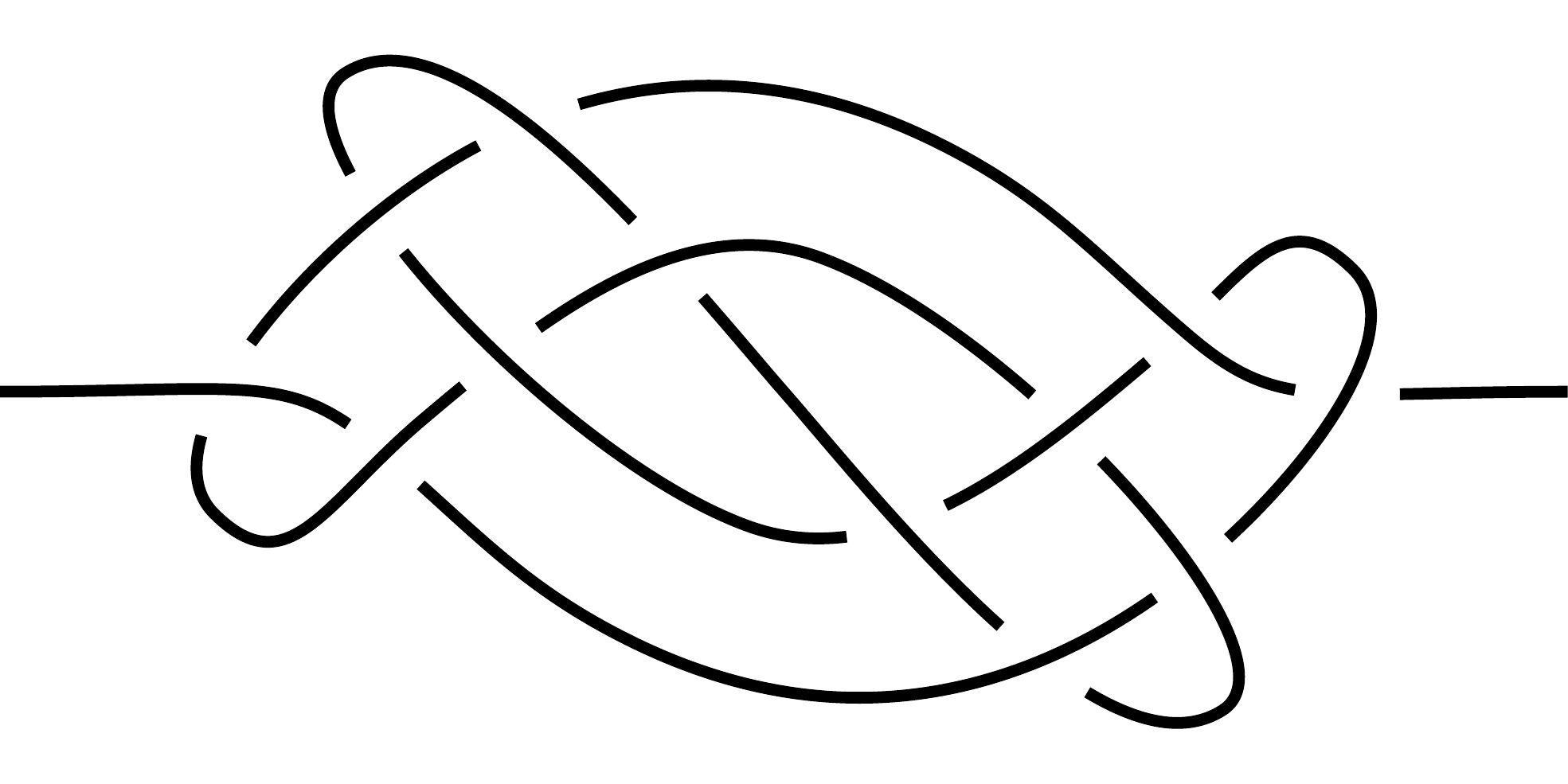}}}&
 \multirow{4}{*}{\scalebox{.2}{\includegraphics{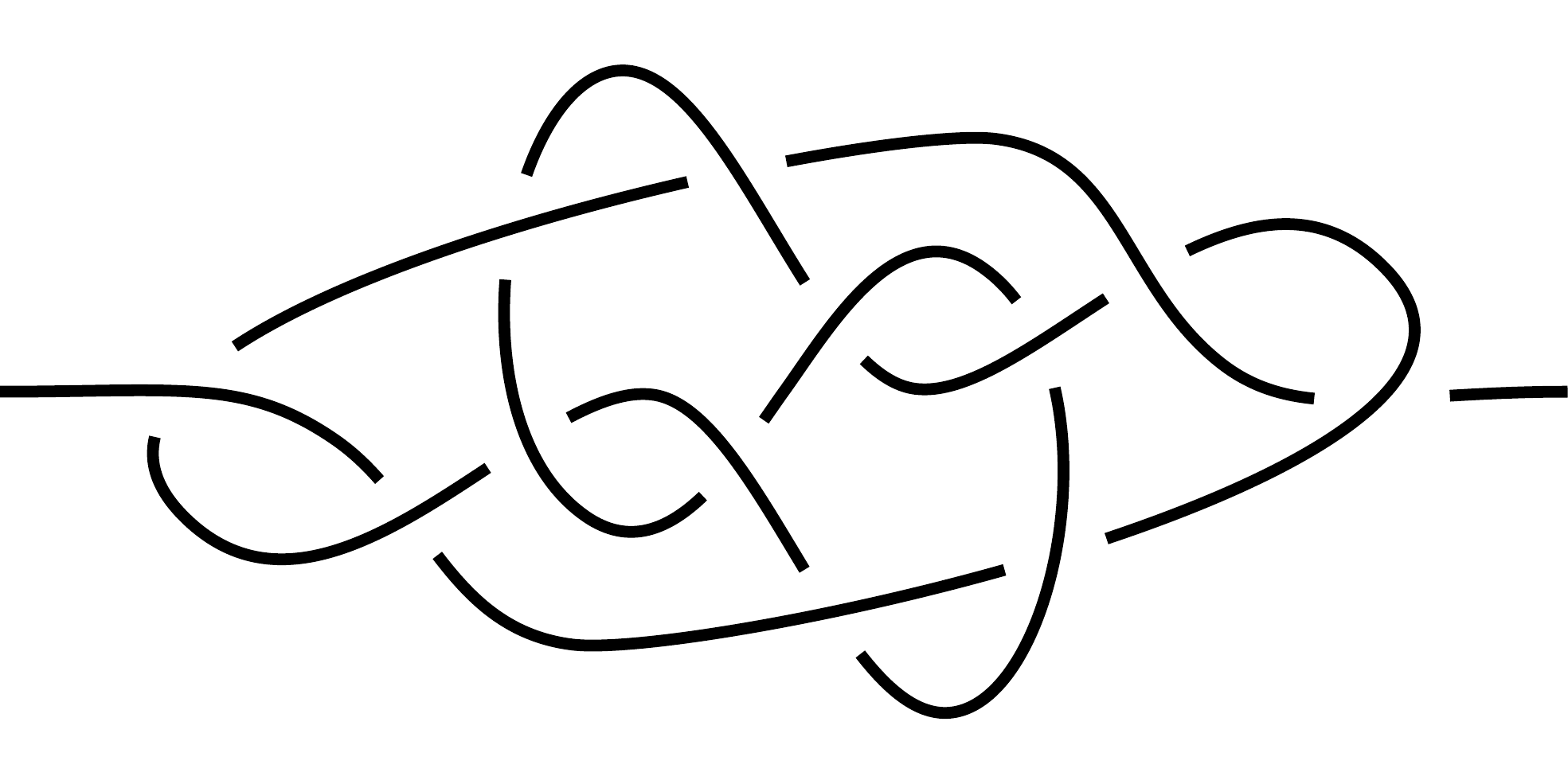}}}\\*
 &&\\*
 &&\\*
 &&\\*
 &&\\\hline\hline

$12a_{1127}$&$12a_{1152}$&$12a_{1167}$\\*
 \multirow{4}{*}{\scalebox{.2}{\includegraphics{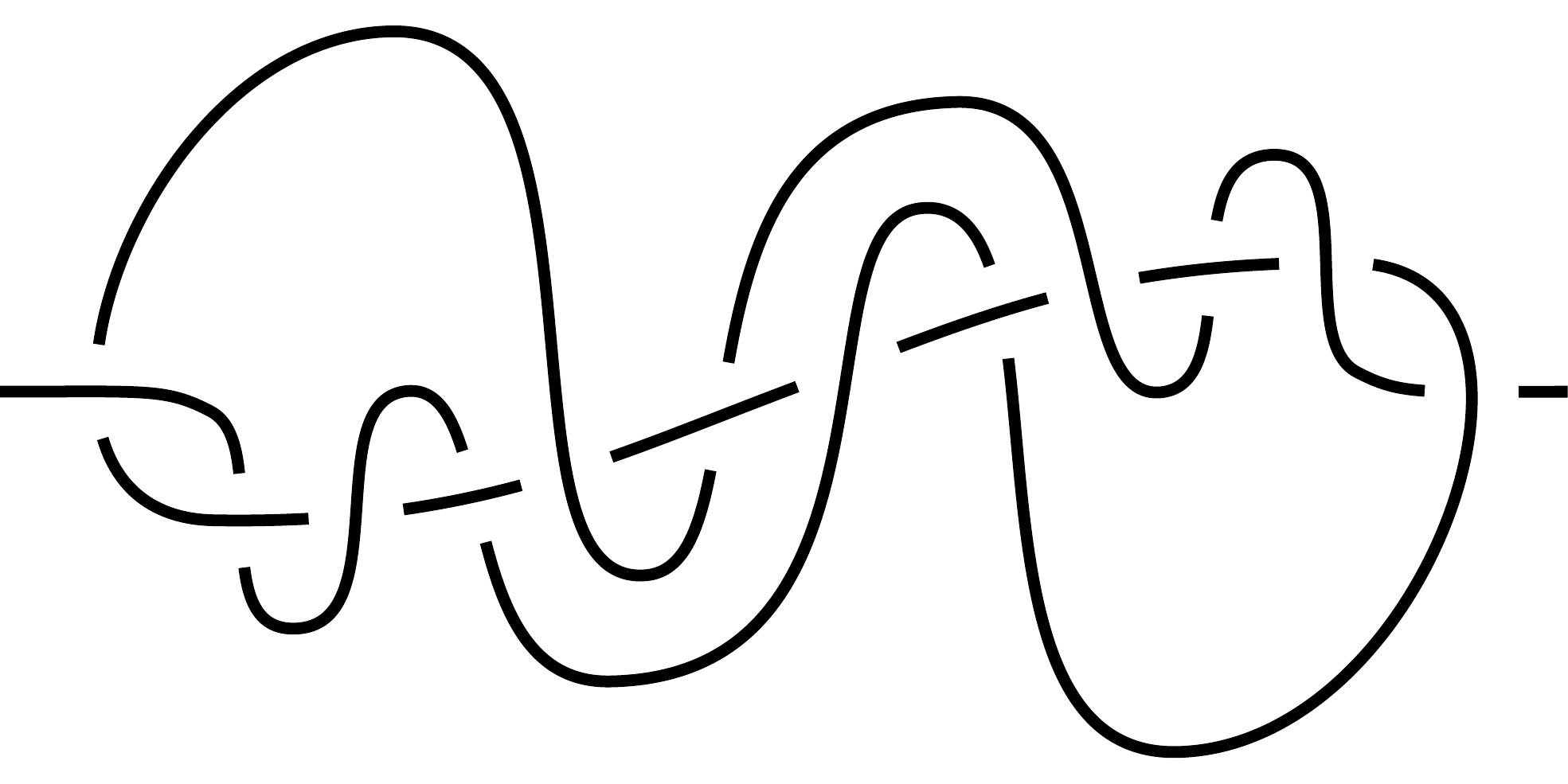}}}&\multirow{4}{*}{\scalebox{.2}{\includegraphics{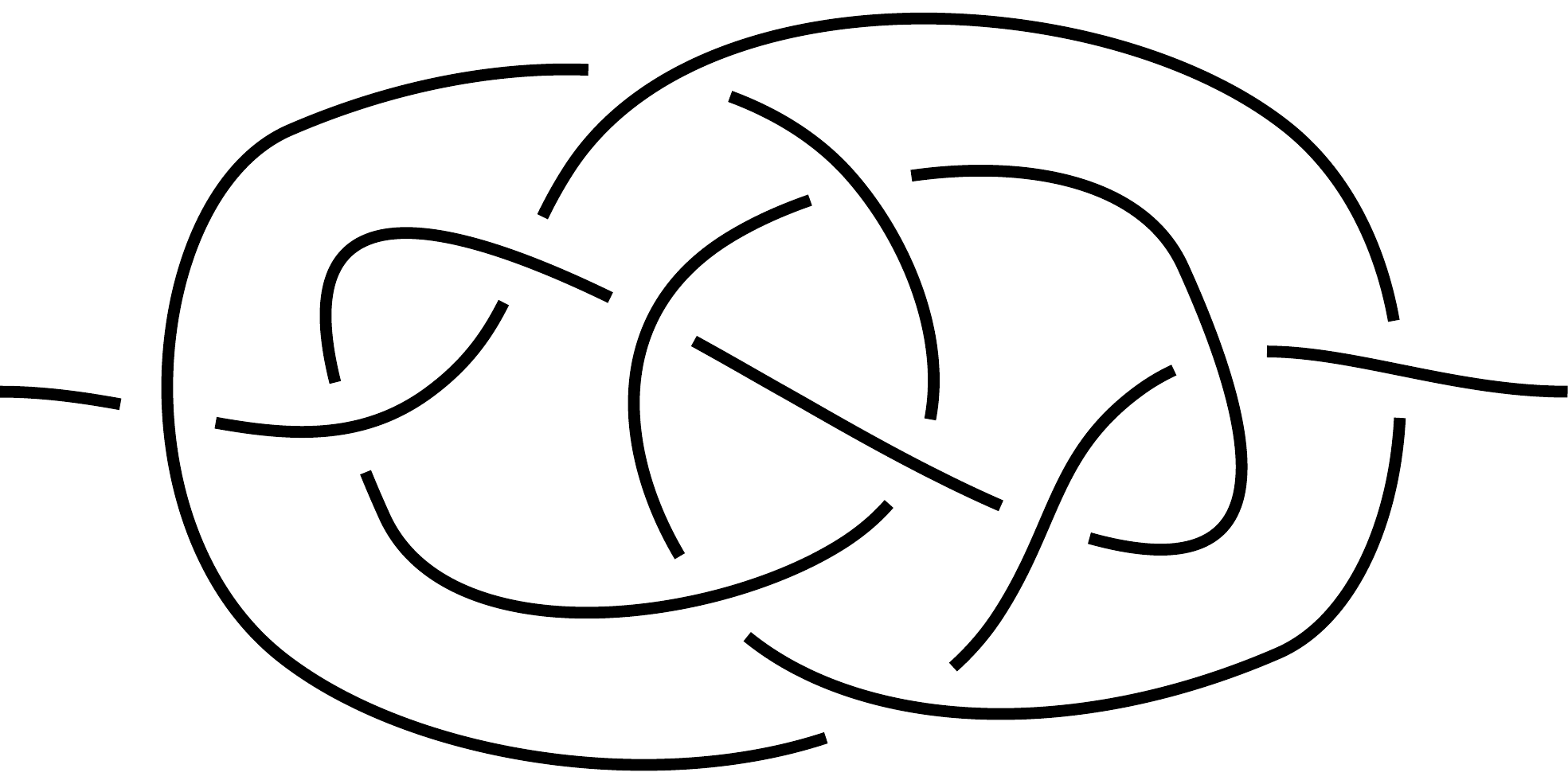}}}&
 \multirow{4}{*}{\scalebox{.2}{\includegraphics{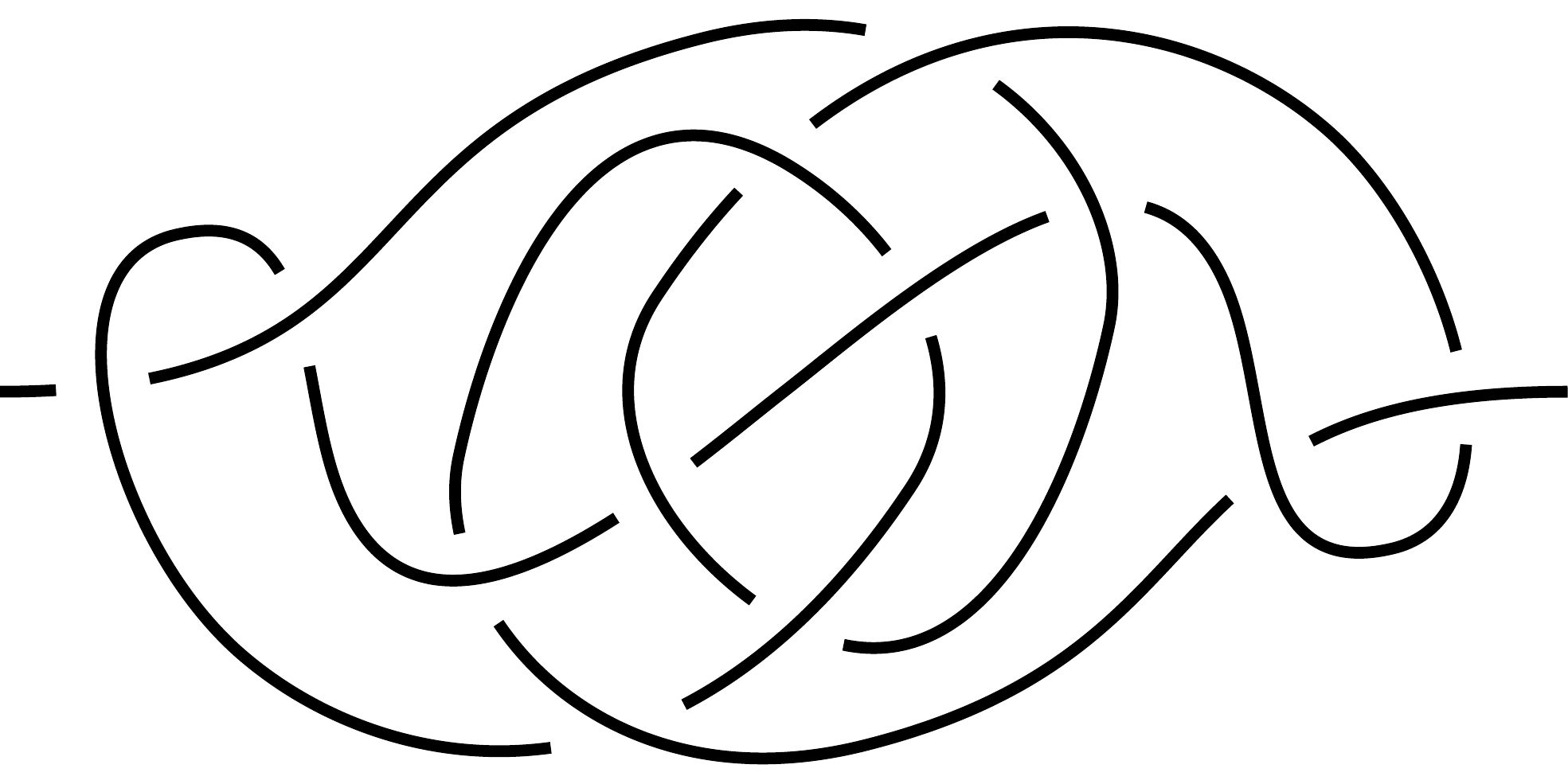}}}\\*
 &&\\*
 &&\\*
 &&\\*
 &&\\\hline\hline

$12a_{1188}$&$12a_{1202}$&$12a_{1209}$\\*
 \multirow{4}{*}{\scalebox{.2}{\includegraphics{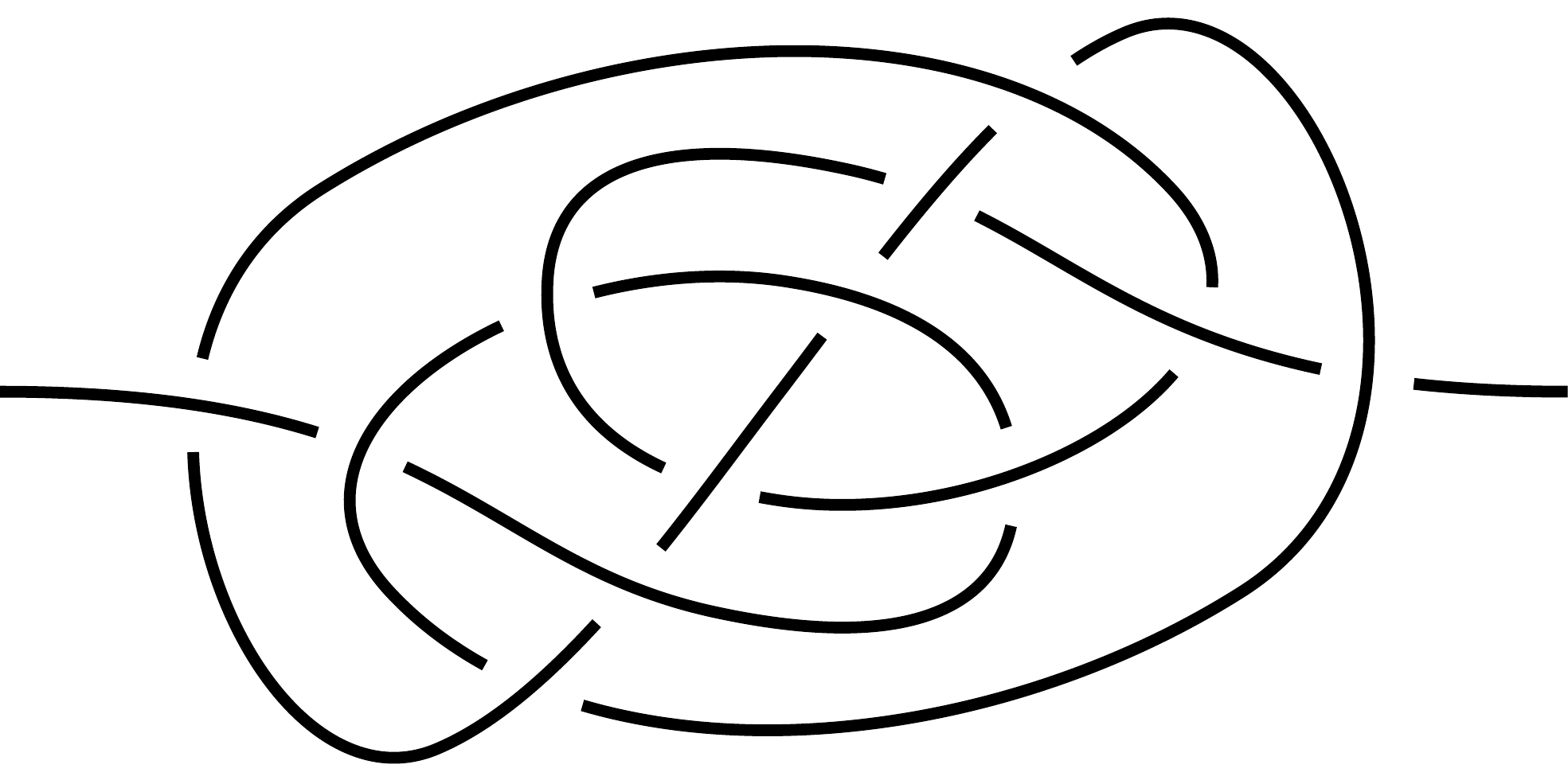}}}&\multirow{4}{*}{\scalebox{.2}{\includegraphics{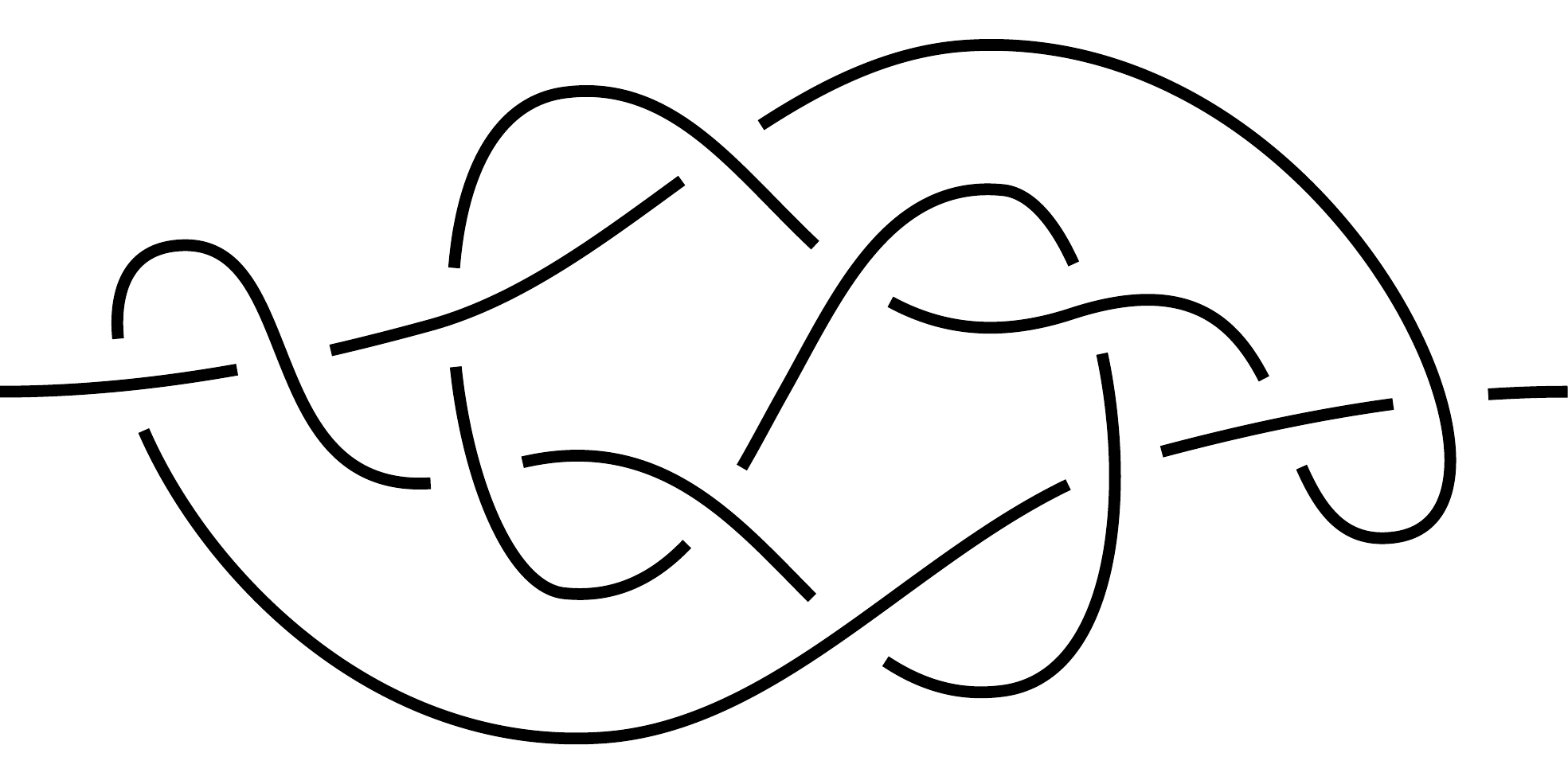}}}&
 \multirow{4}{*}{\scalebox{.2}{\includegraphics{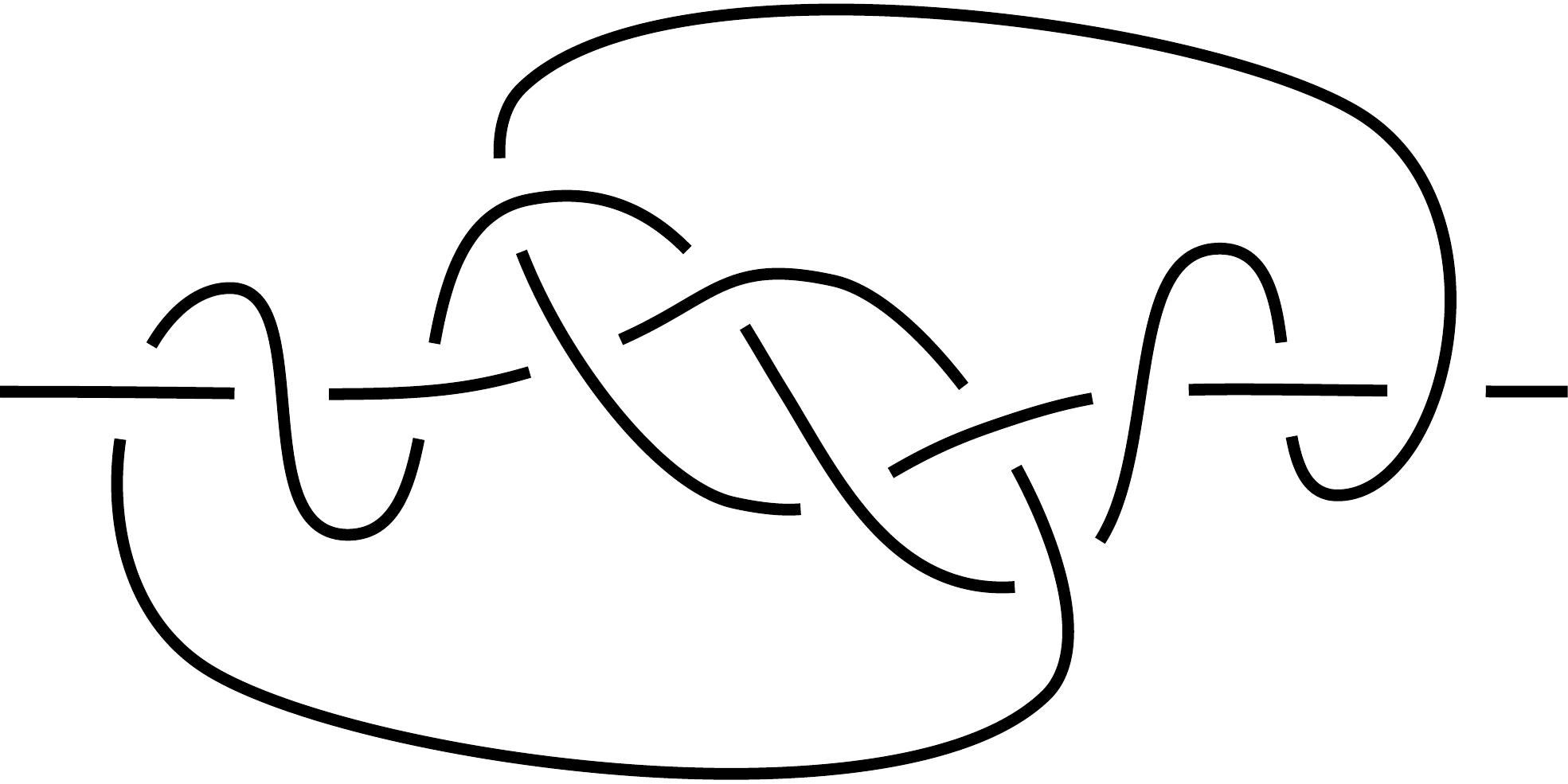}}}\\*
 &&\\*
 &&\\*
 &&\\*
 &&\\\hline\hline

$12a_{1211}$&$12a_{1218}$&$12a_{1225}$\\*
 \multirow{4}{*}{\scalebox{.2}{\includegraphics{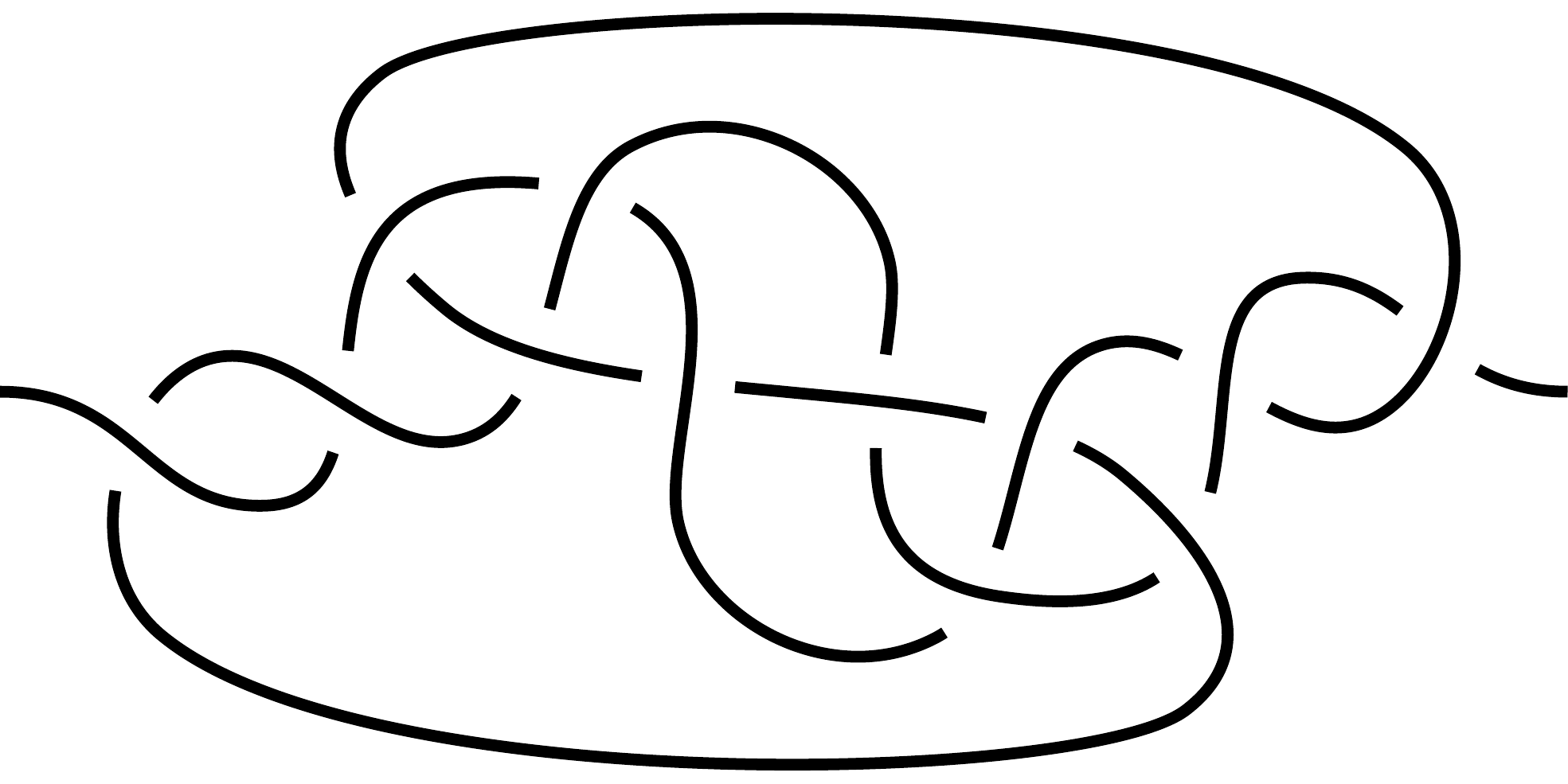}}}&\multirow{4}{*}{\scalebox{.2}{\includegraphics{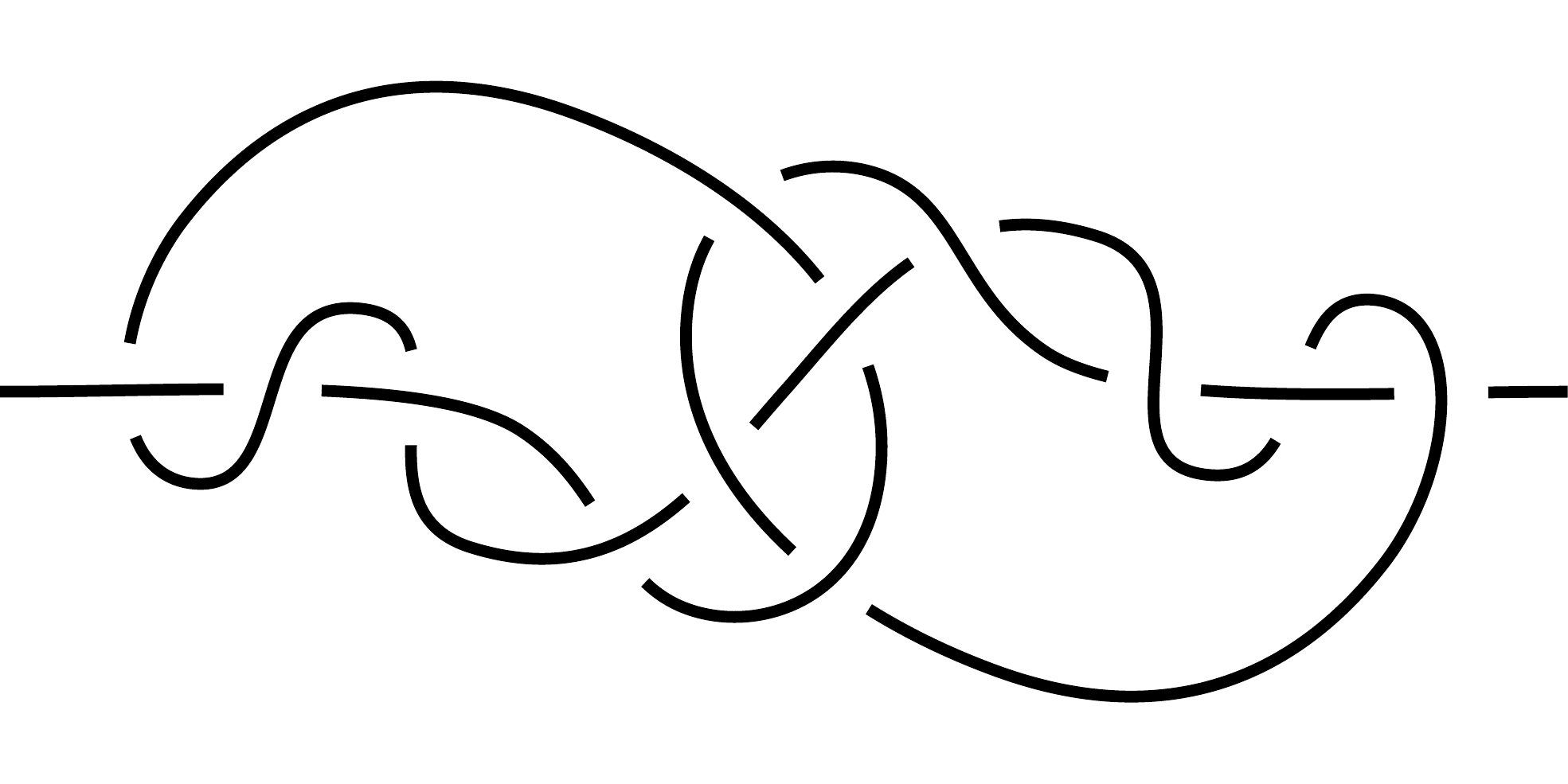}}}&
 \multirow{4}{*}{\scalebox{.2}{\includegraphics{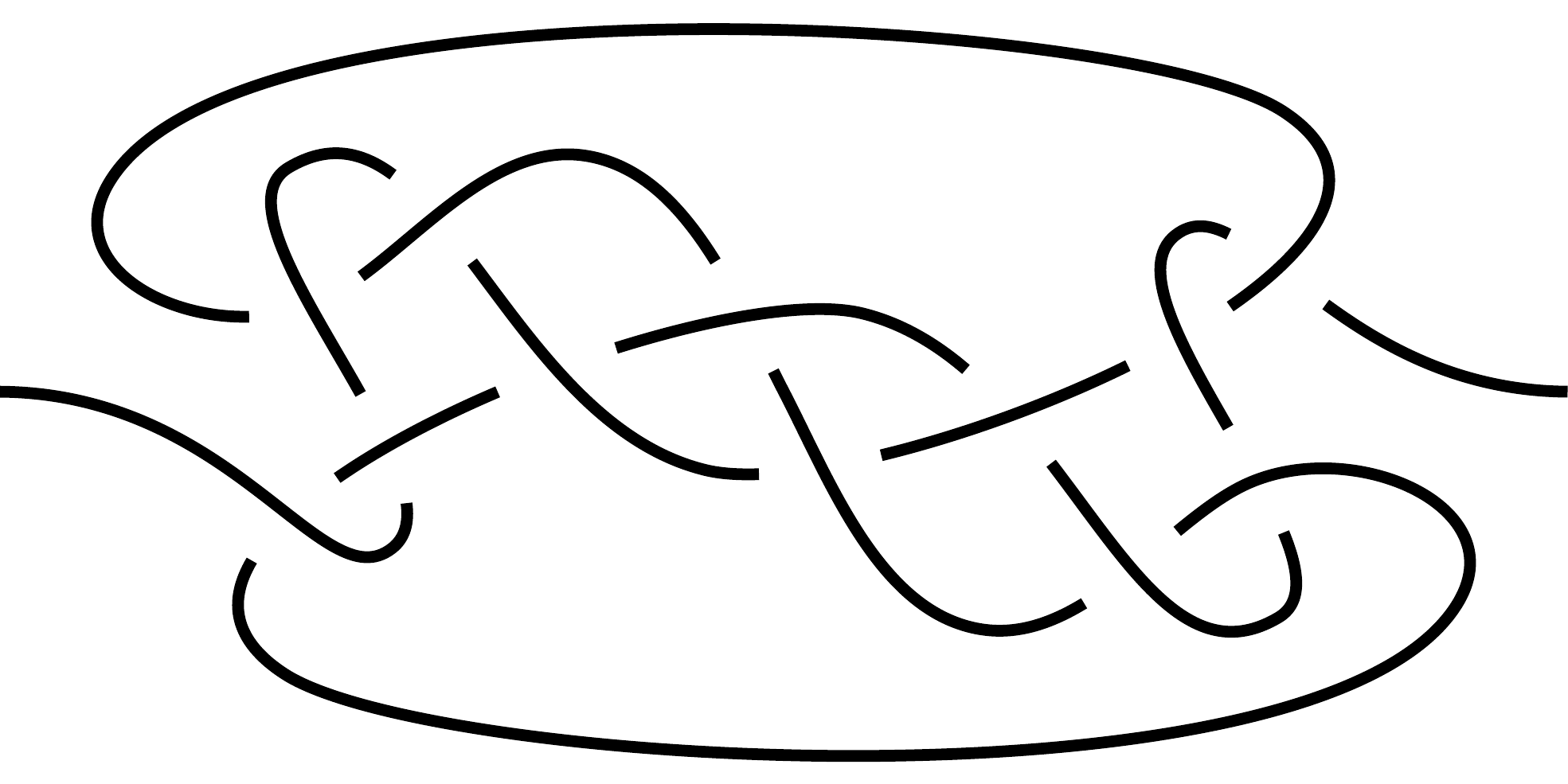}}}\\*
 &&\\*
 &&\\*
 &&\\*
 &&\\\hline\hline

$12a_{1229}$&$12a_{1249}$&$12a_{1251}$\\*
 \multirow{4}{*}{\scalebox{.2}{\includegraphics{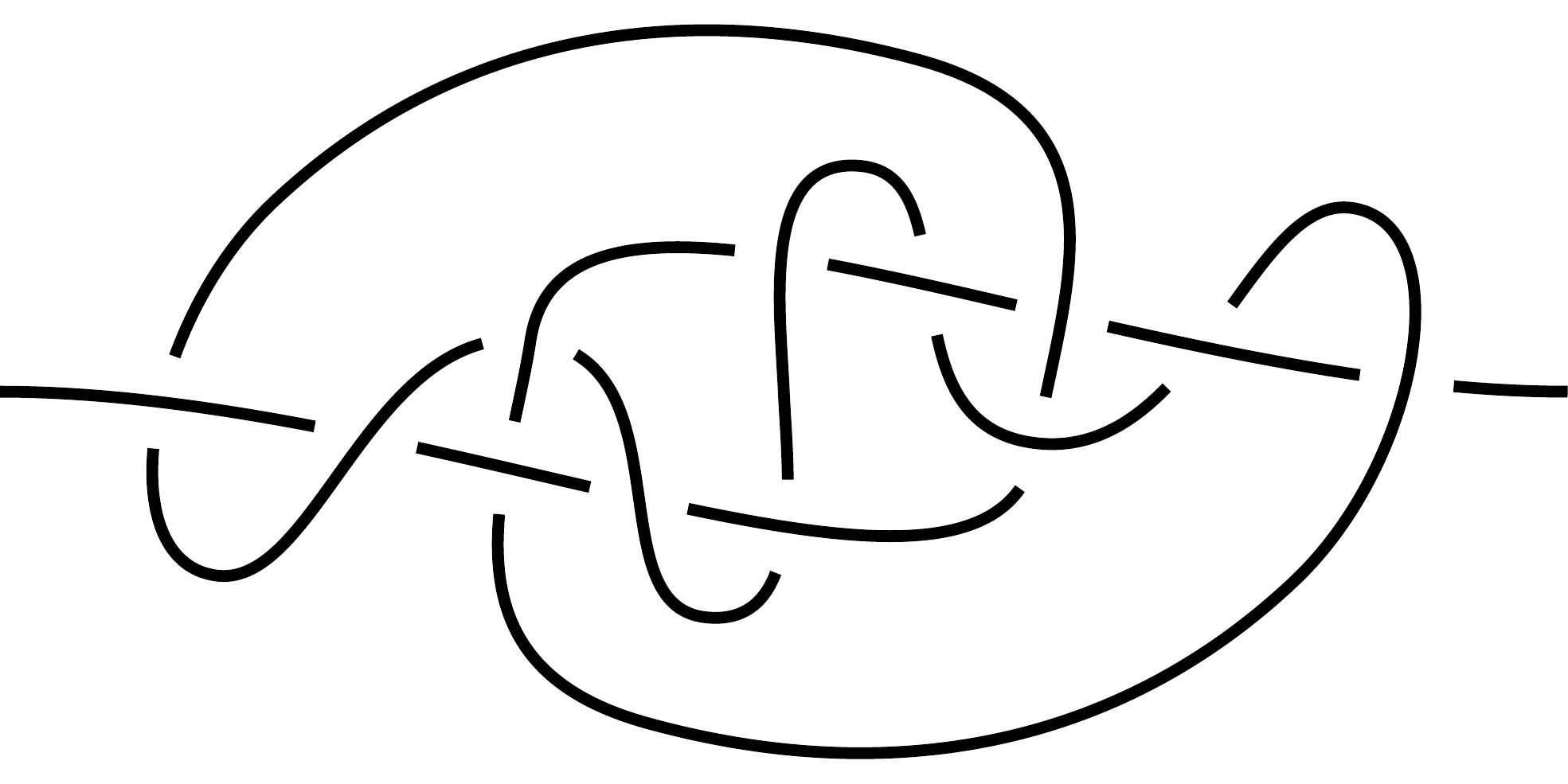}}}&\multirow{4}{*}{\scalebox{.2}{\includegraphics{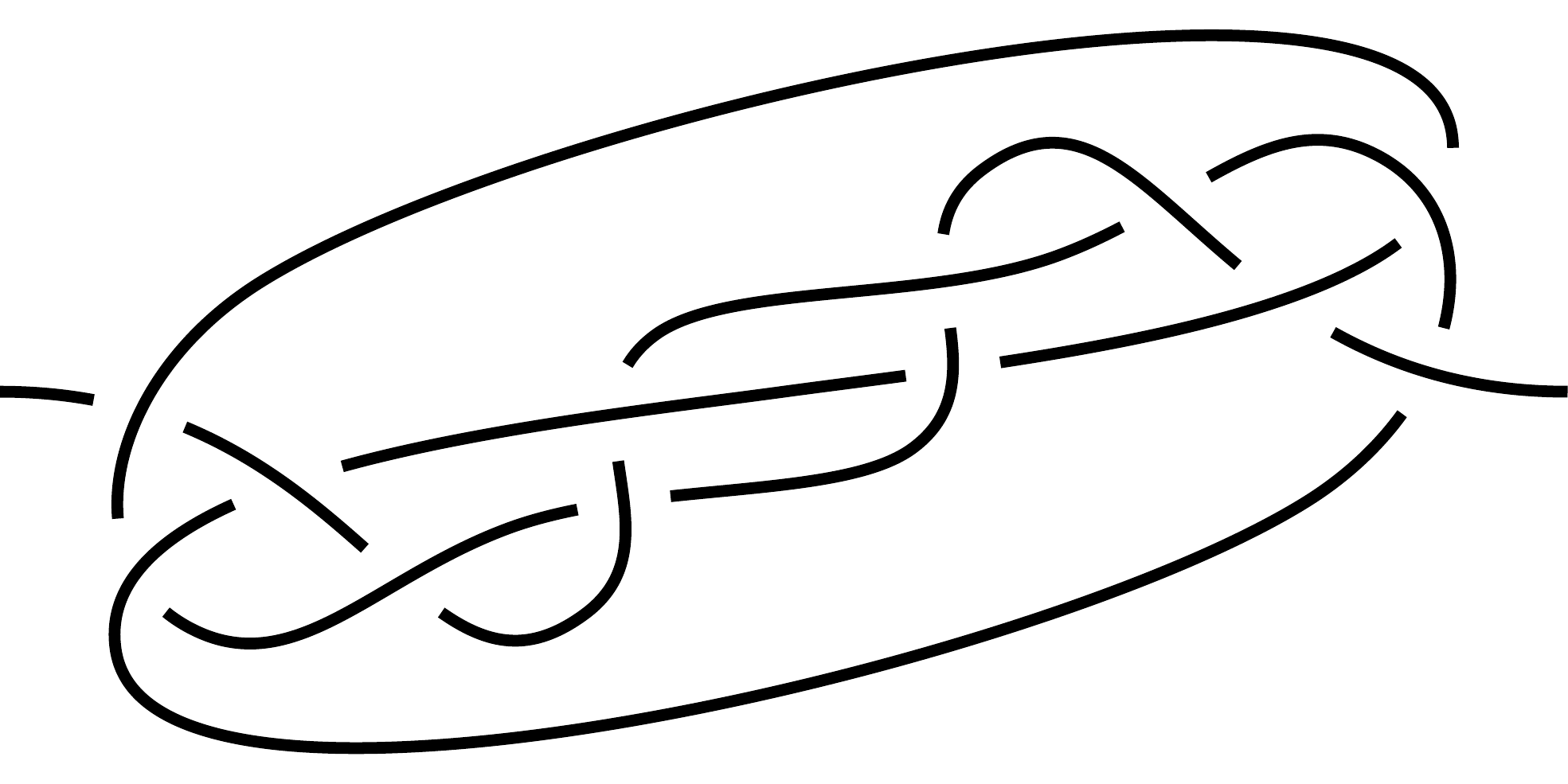}}}&
 \multirow{4}{*}{\scalebox{.2}{\includegraphics{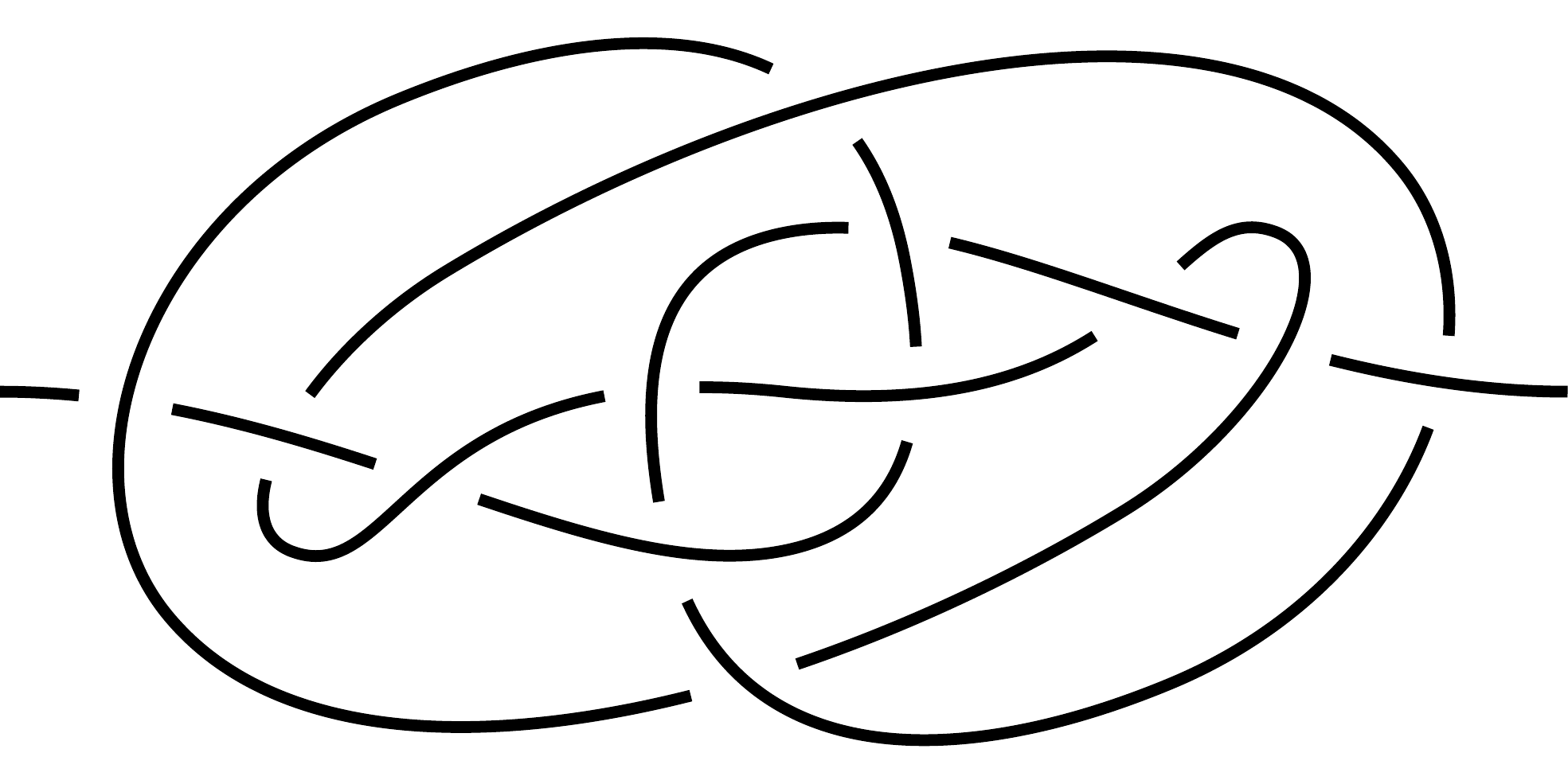}}}\\*
 &&\\*
 &&\\*
 &&\\*
 &&\\\hline\hline

$12a_{1254}$&$12a_{1260}$&$12a_{1267}$\\*
 \multirow{4}{*}{\scalebox{.2}{\includegraphics{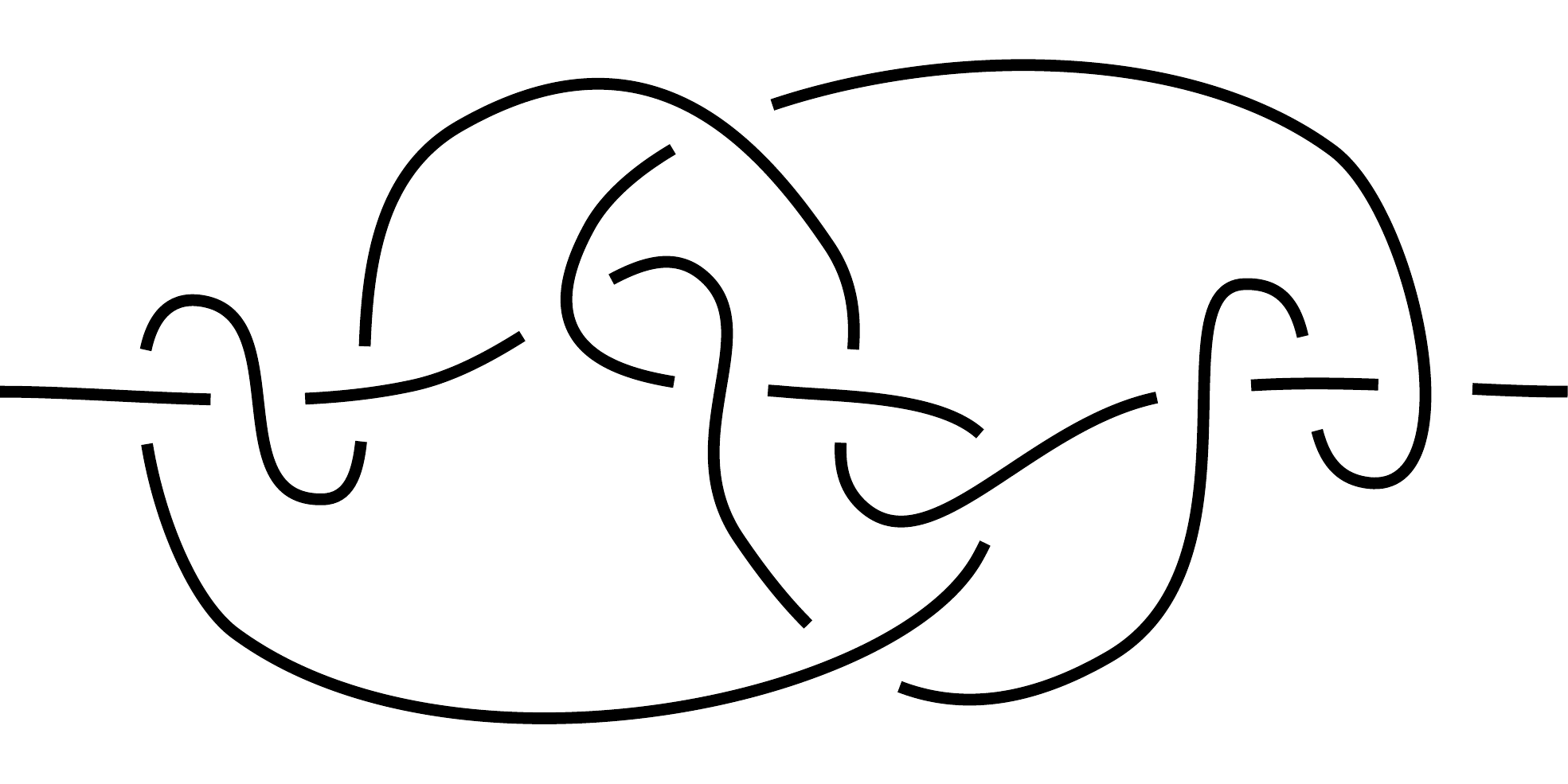}}}&\multirow{4}{*}{\scalebox{.2}{\includegraphics{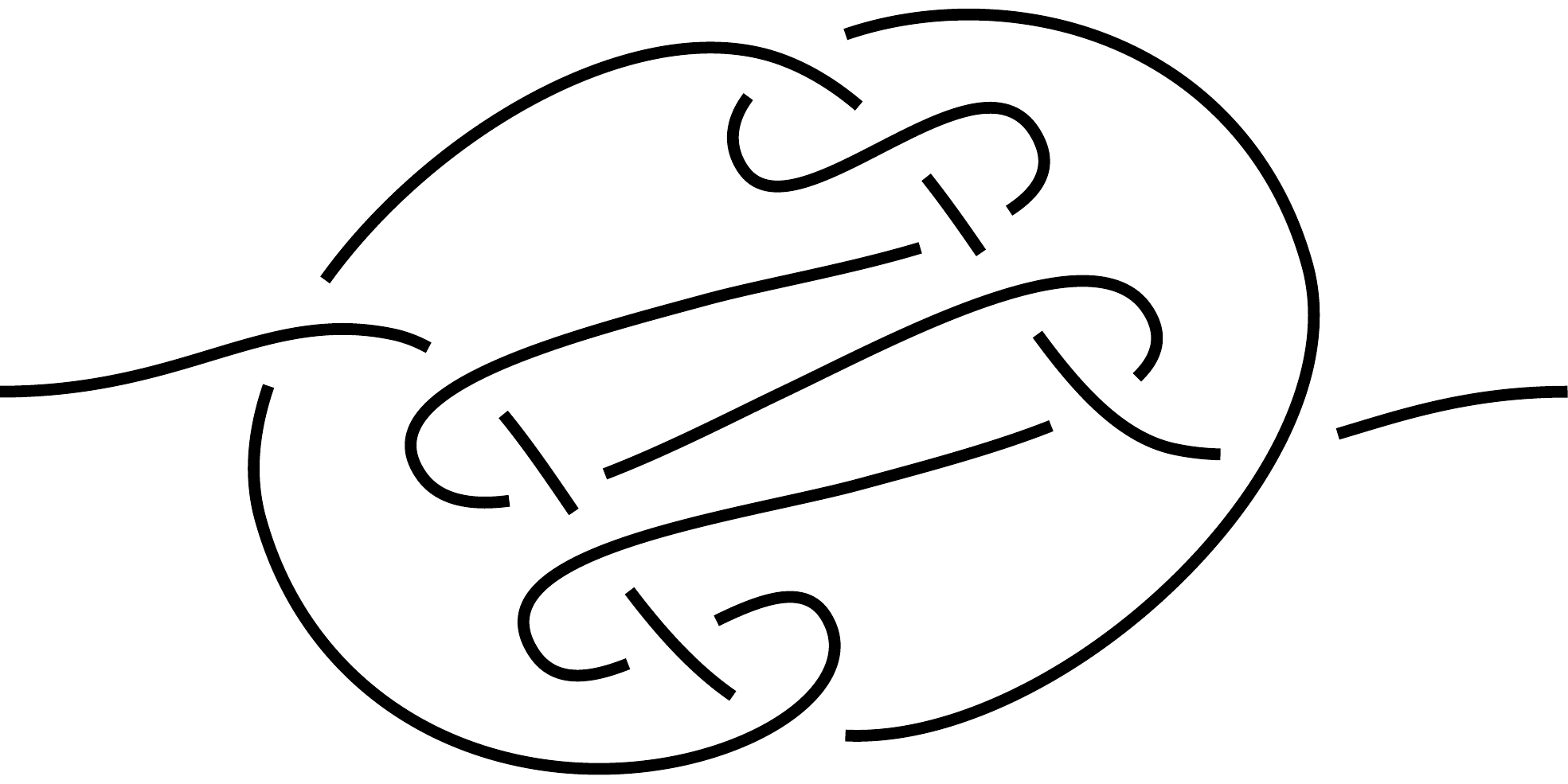}}}&
 \multirow{4}{*}{\scalebox{.2}{\includegraphics{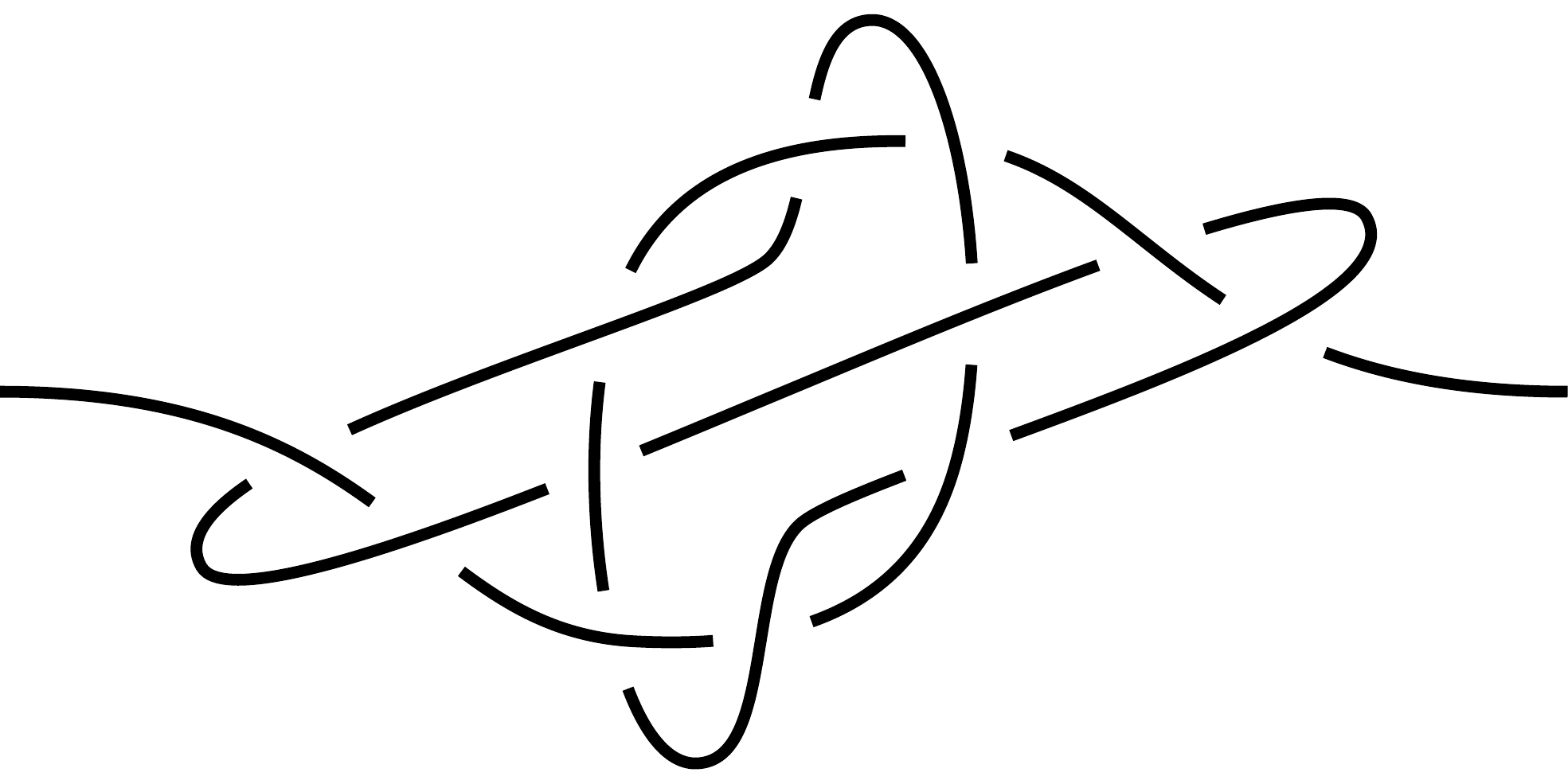}}}\\*
 &&\\*
 &&\\*
 &&\\*
 &&\\\hline\hline

$12a_{1269}$&$12a_{1273}$&$12a_{1275}$\\*
 \multirow{4}{*}{\scalebox{.2}{\includegraphics{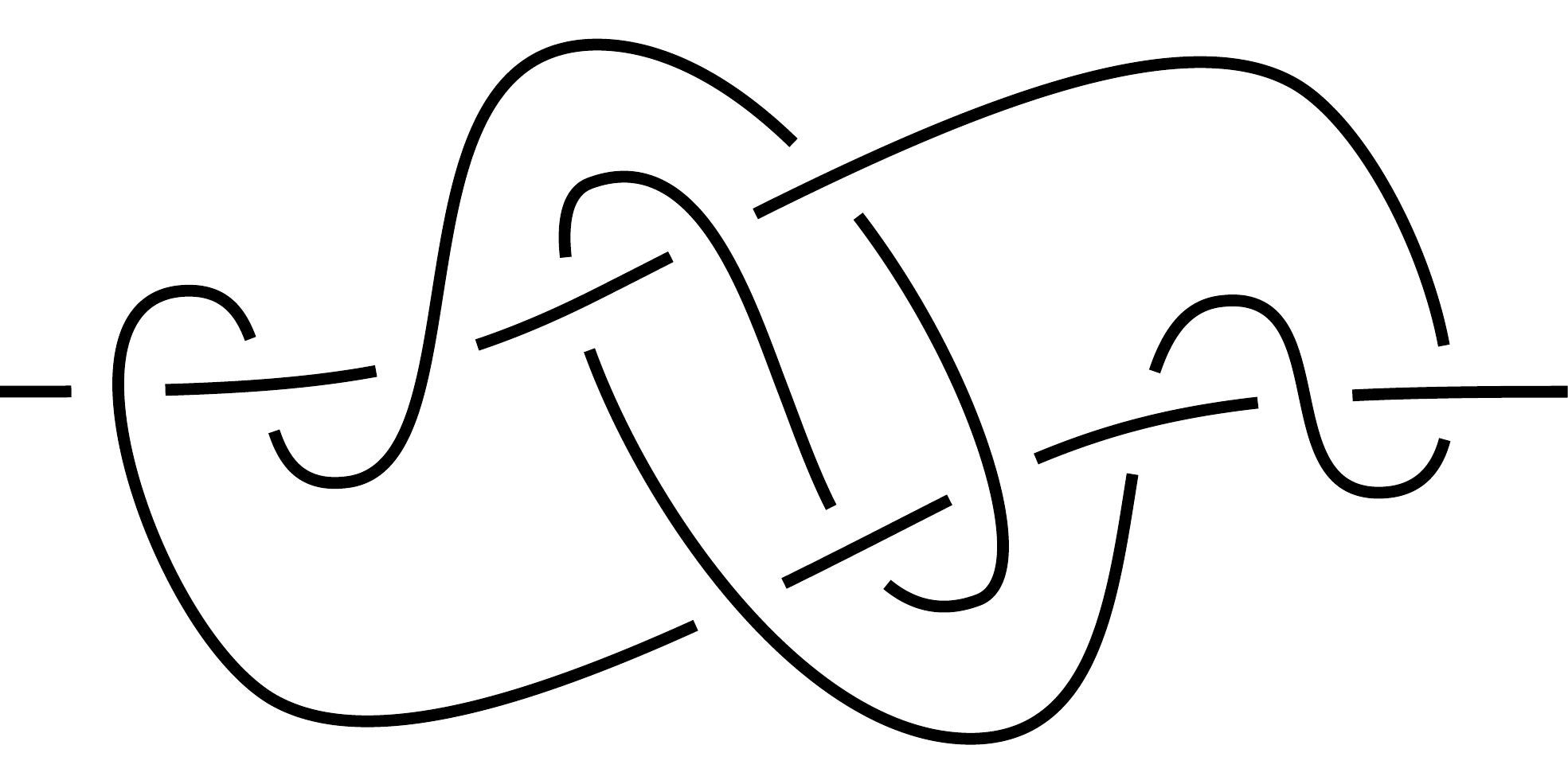}}}&\multirow{4}{*}{\scalebox{.2}{\includegraphics{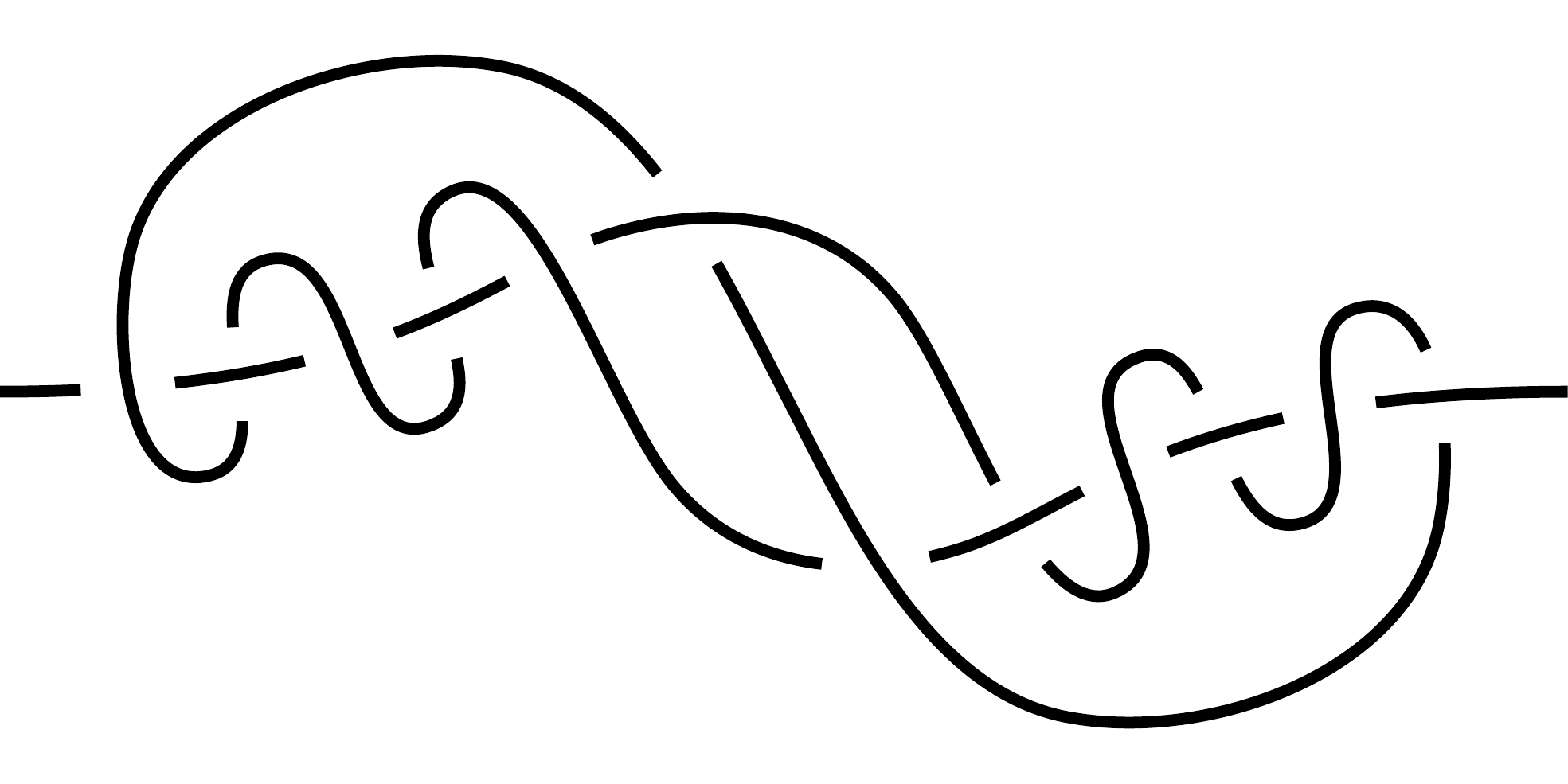}}}&
 \multirow{4}{*}{\scalebox{.2}{\includegraphics{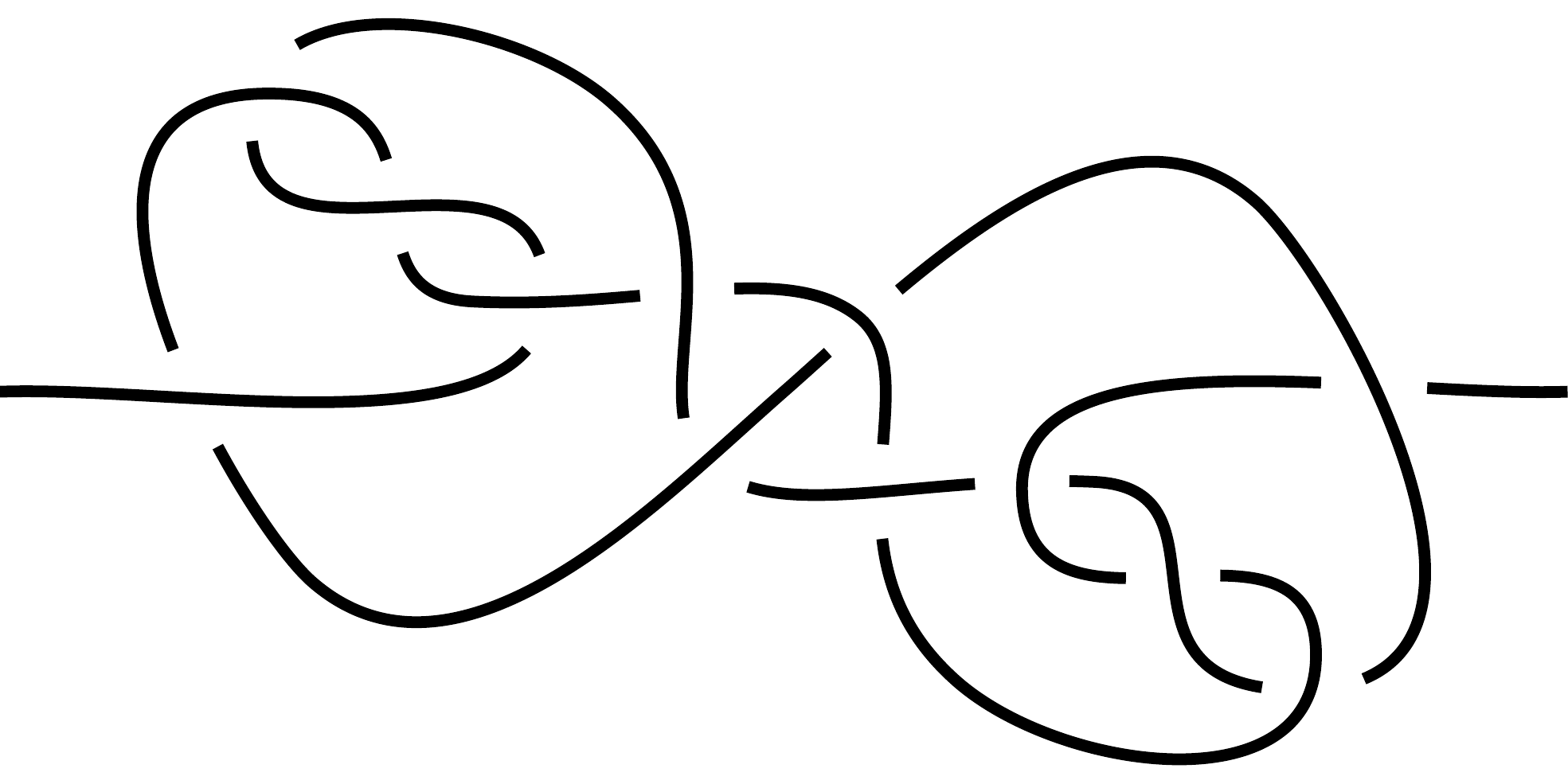}}}\\*
 &&\\*
 &&\\*
 &&\\*
 &&\\\hline\hline

$12a_{1280}$&$12a_{1281}$&$12a_{1287}$\\*
 \multirow{4}{*}{\scalebox{.2}{\includegraphics{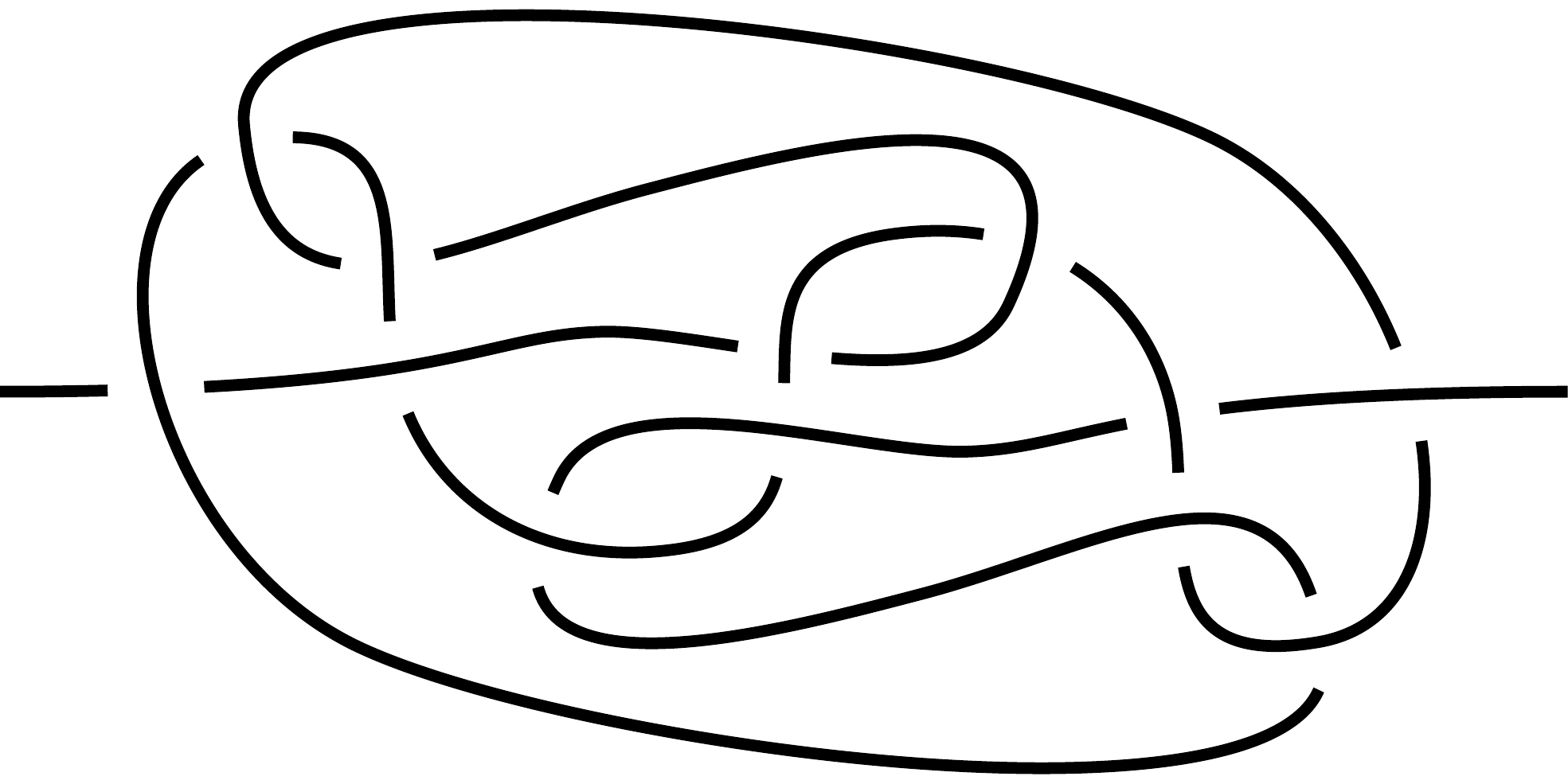}}}&\multirow{4}{*}{\scalebox{.2}{\includegraphics{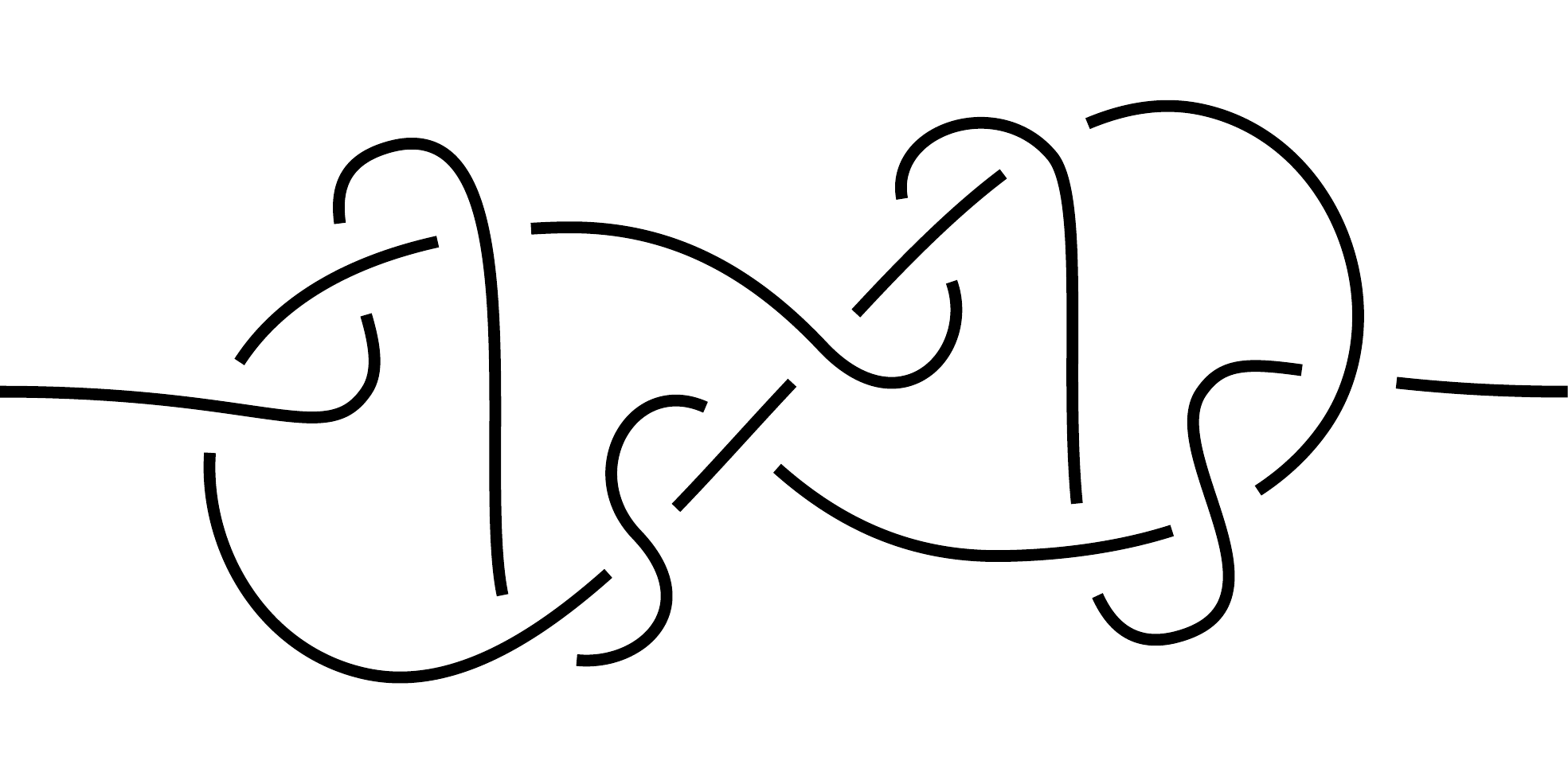}}}&
 \multirow{4}{*}{\scalebox{.2}{\includegraphics{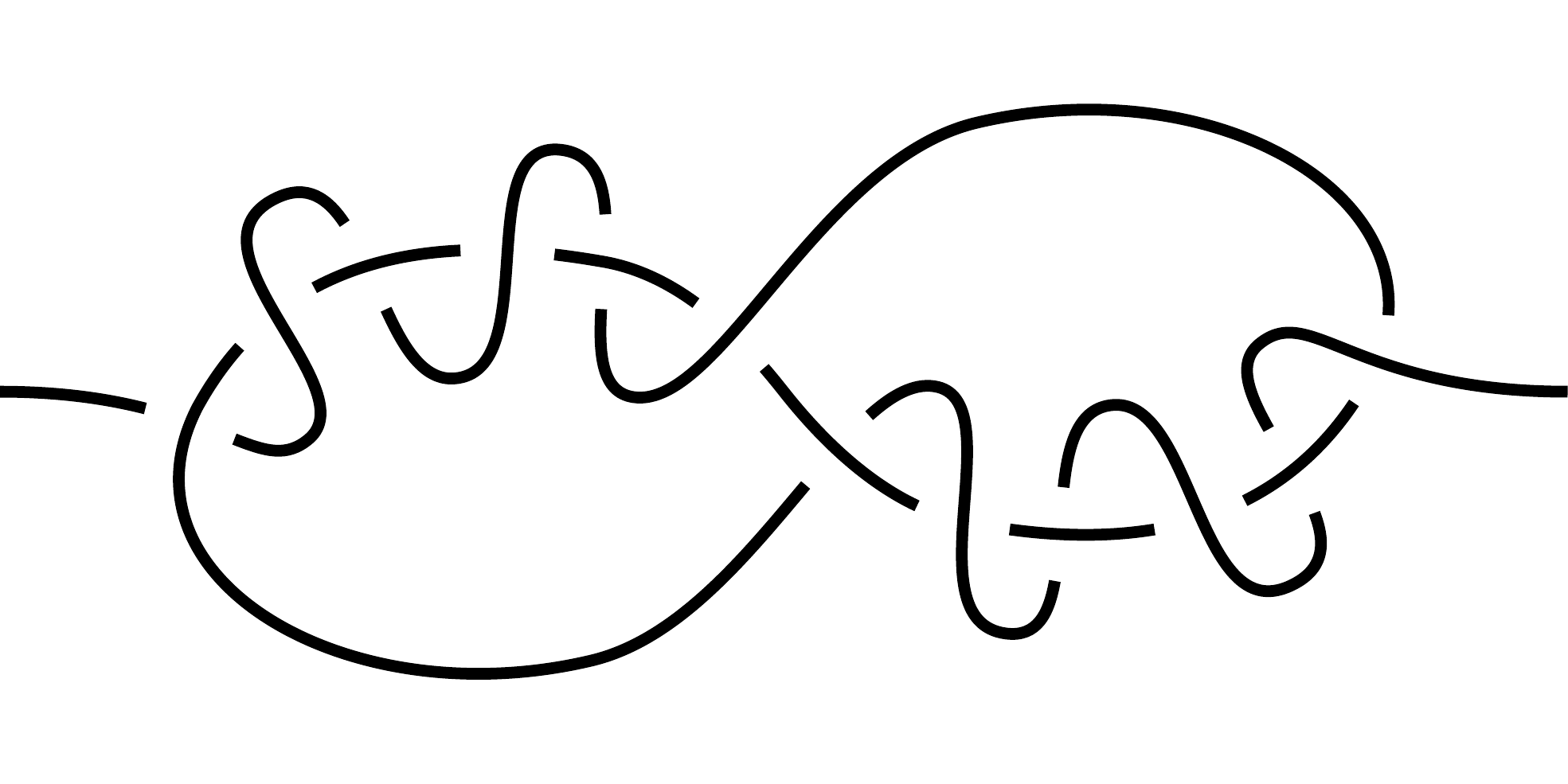}}}\\*
 &&\\*
 &&\\*
 &&\\*
 &&\\\hline\hline

$12a_{1288}$&$12n_{356}$&$12n_{462}$\\*
 \multirow{4}{*}{\scalebox{.2}{\includegraphics{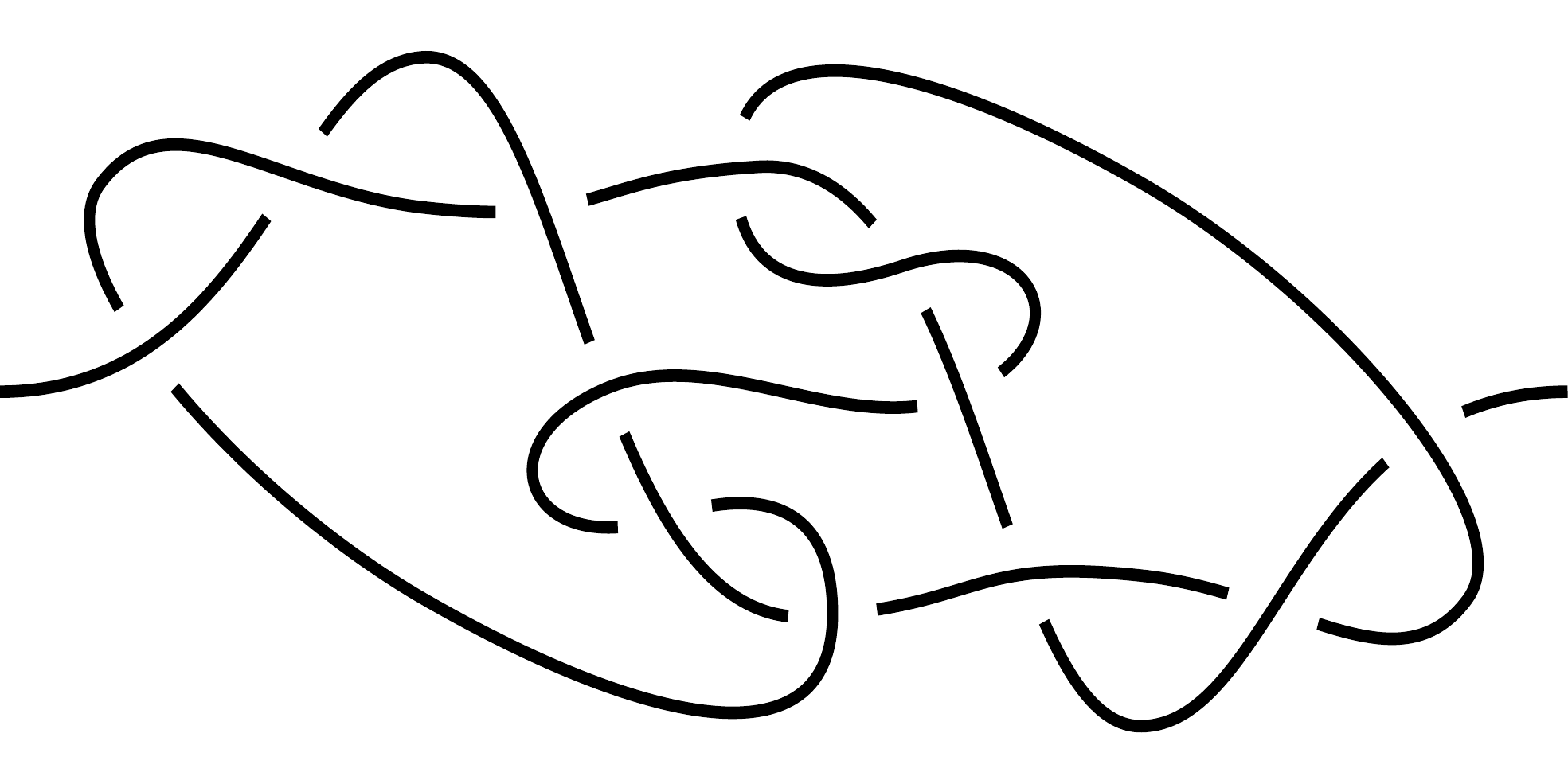}}}&\multirow{4}{*}{\scalebox{.2}{\includegraphics{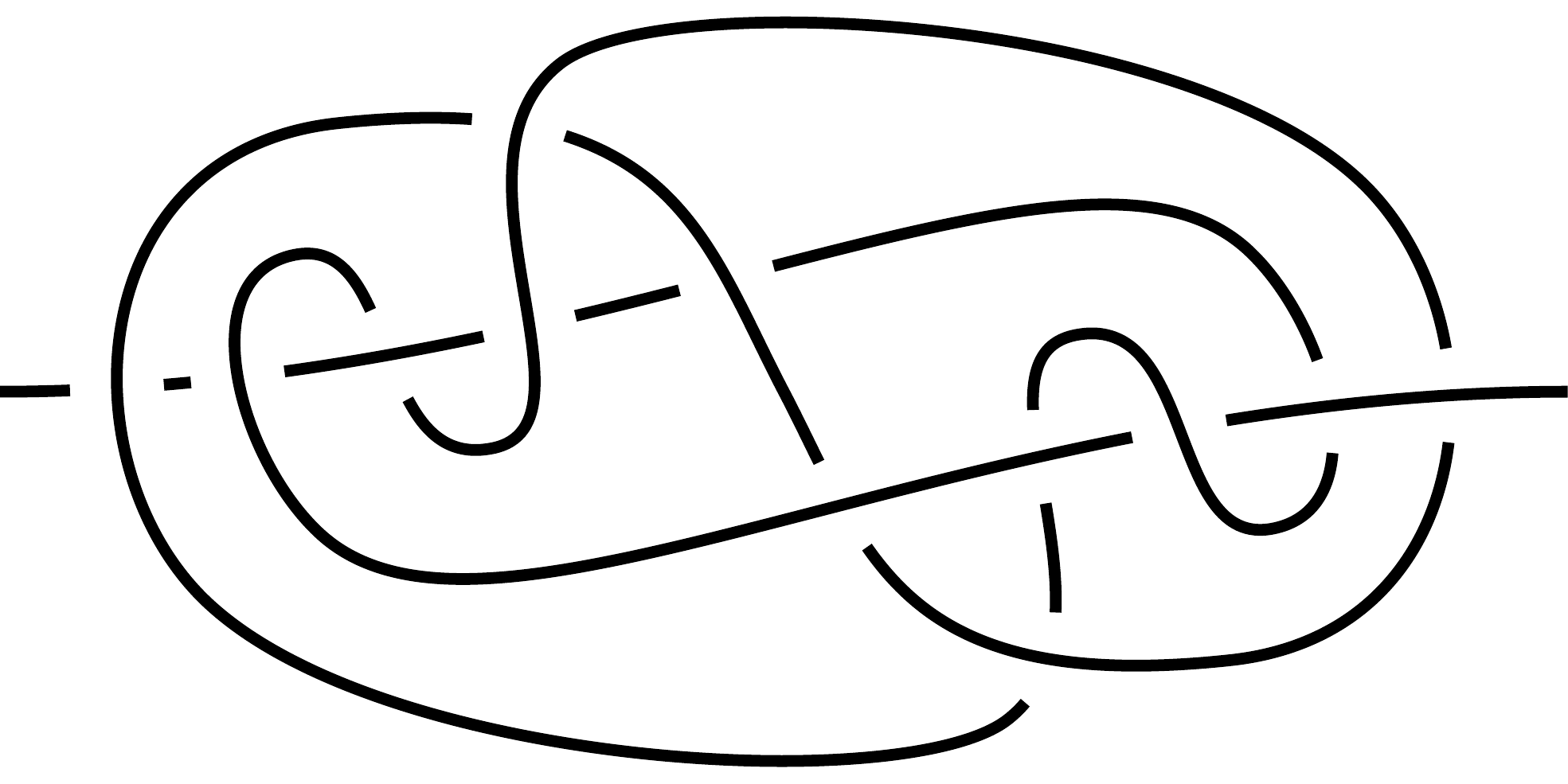}}}&
 \multirow{4}{*}{\scalebox{.2}{\includegraphics{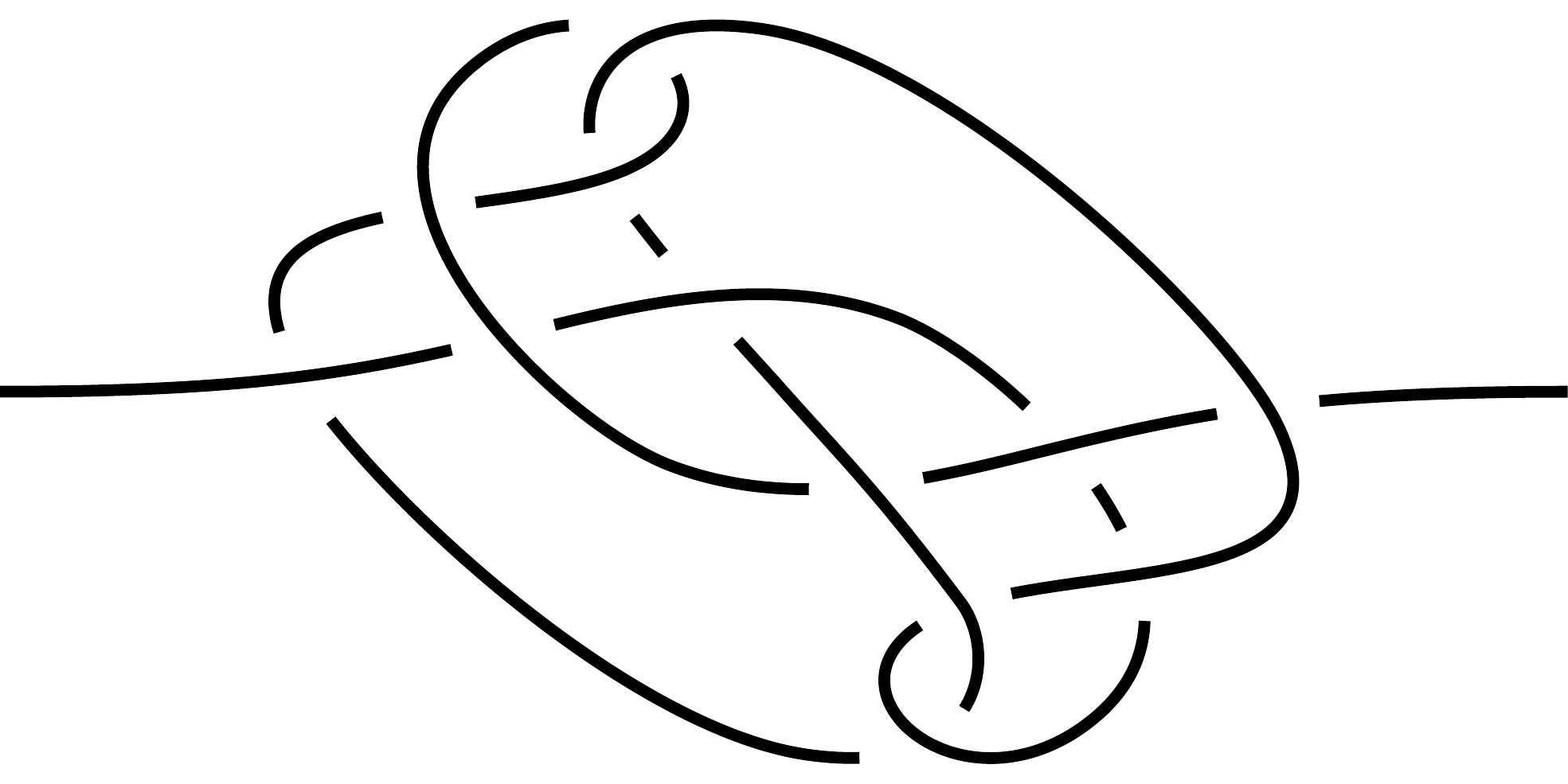}}}\\*
 &&\\*
 &&\\*
 &&\\*
 &&\\\hline\hline

$12n_{706}$&$12n_{873}$&\\*
 \multirow{4}{*}{\scalebox{.2}{\includegraphics{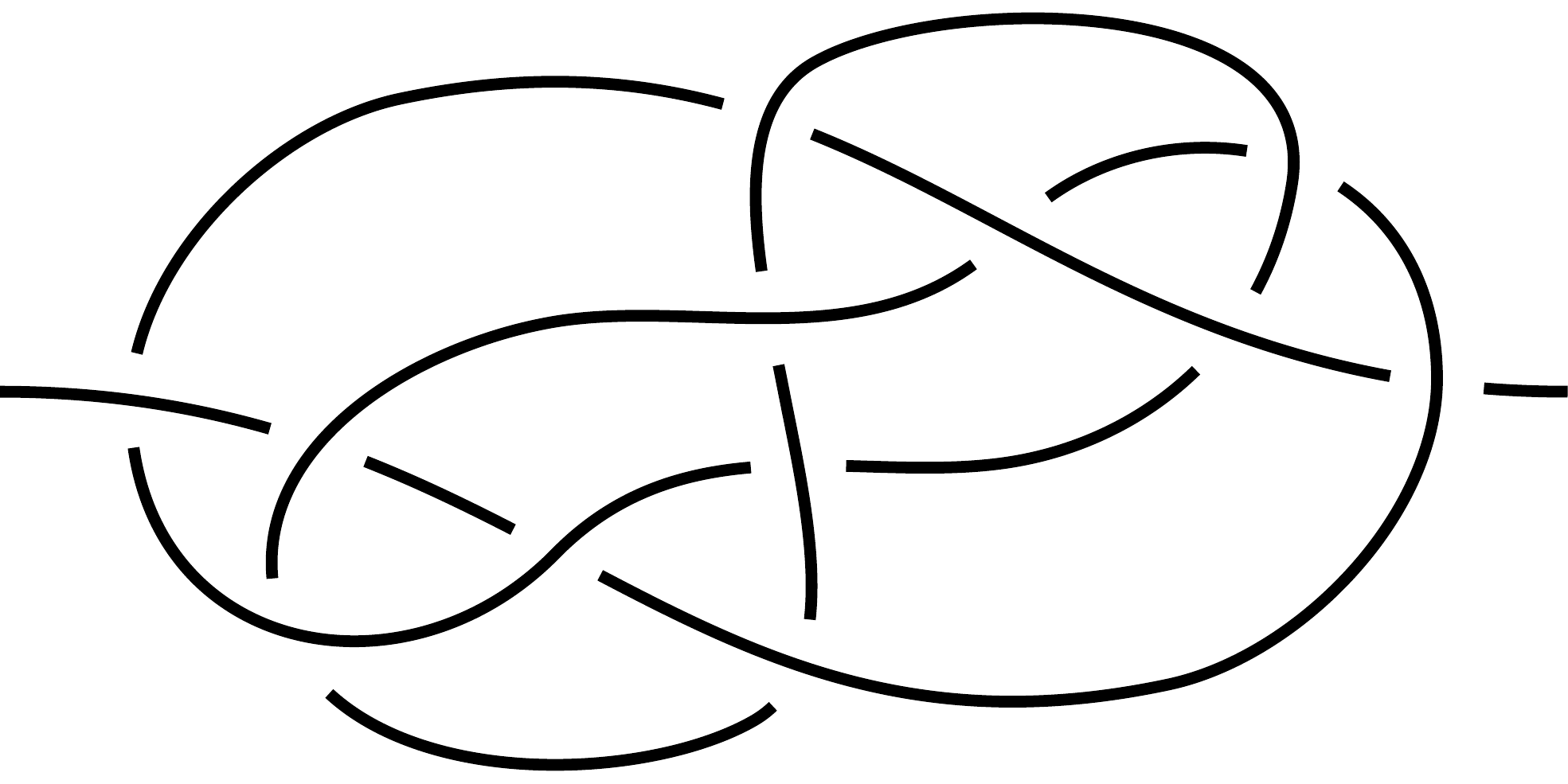}}}&\multirow{4}{*}{\scalebox{.2}{\includegraphics{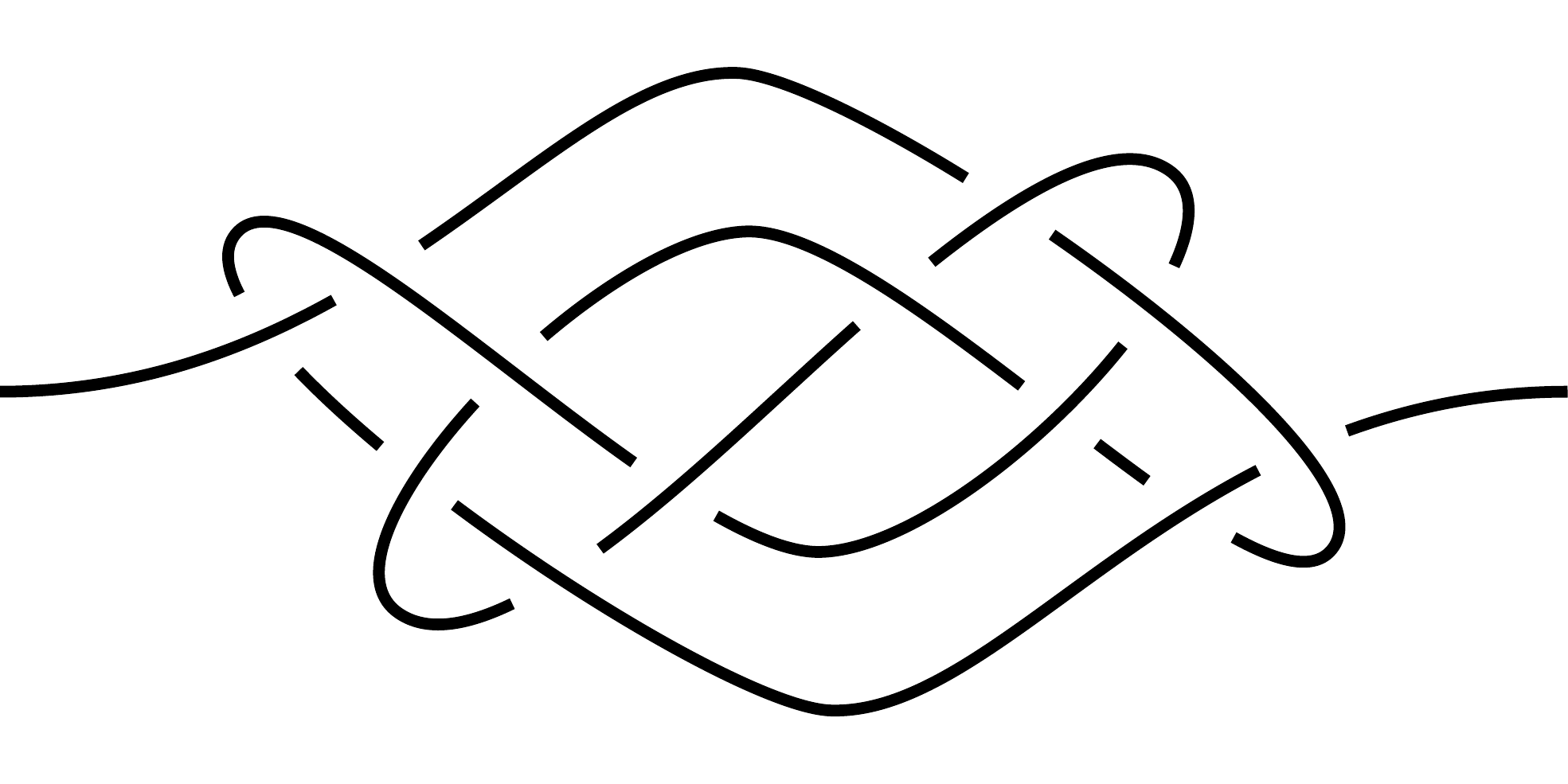}}}&\\*
 &&\\*
 &&\\*
 &&\\*
 &&\\\hline\hline

\end{longtable}

\bibliography{bibliography}
\bibliographystyle{alpha}
\end{document}